\newsavebox{\imagebox}
\newtheorem{theorem}{Theorem}[section]
\newtheorem{proposition}[theorem]{Proposition}
\newtheorem{lemma}[theorem]{Lemma}
\newtheorem{corollary}[theorem]{Corollary}
\newtheorem{theoremKolb}{Theorem}
\newtheorem{propositionKolb}[theoremKolb]{Proposition}
\newtheorem{lemmaKolb}[theoremKolb]{Lemma}
\theoremstyle{definition}
\newtheorem{definition}[theorem]{Definition}
\theoremstyle{remark}
\newtheorem{remark}{Remark}
\newcommand{\N}{\mathbb{N}}
\newcommand{\Z}{\mathbb{Z}}
\newcommand{\K}{\mathbb{K}}
\newcommand{\g}{\mathfrak{g}}
\newcommand{\h}{\mathfrak{h}}
\newcommand{\Uqg}{U_q(\mathfrak{g})}
\newcommand{\Uqgp}{U_q(\mathfrak{g}')}
\renewcommand{\k}{\mathfrak{k}}
\newcommand{\Bcs}{B_{\mathbf{c},\mathbf{s}}}
\newcommand{\bc}{\mathbf{c}}
\newcommand{\bs}{\mathbf{s}}
\newcommand{\bss}{\boldsymbol{s}}
\newcommand{\ad}{\mathrm{ad}}
\newcommand{\bl}{\boldsymbol{\ell}}
\newcommand{\plsk}{\mathfrak{p}_{\boldsymbol{\ell},\boldsymbol{s},k}^{(i,j,a_{ij})}}
\newcommand{\Aut}{\mathrm{Aut}}
\newcommand{\rlskd}{\mathfrak{r}_{\boldsymbol{\ell},\boldsymbol{s},k,d}^{(i,j,a_{ij})}}
\newcommand{\Ei}{\widetilde{E_i}}
\newcommand{\Ej}{\widetilde{E_j}}
\newcommand{\bN}{\boldsymbol{N}}
\newcommand{\equationinalign}[1]{%
	\multispan{2}%
	\hfill$\displaystyle{#1}$\hfill
	\ignorespaces
}
\title[Defining relations for quantum symmetric pair coideals of Kac-Moody type]{Defining relations for quantum symmetric pair coideals of Kac-Moody type}
\author{Hadewijch De Clercq}
\email{Hadewijch.DeClercq@UGent.be}
\address{Department of Electronics and Information Systems\\Faculty of Engineering and Architecture\\Ghent University\\Building S8, Krijgslaan 281, 9000 Gent\\ Belgium.}
\date{\today}
\keywords{Quantum groups, Kac-Moody algebras, quantum symmetric pairs, coideal subalgebras, $q$-Onsager algebra, Serre presentation, Dolan-Grady relations.}
\subjclass[2010]{17B37, 17B67, 81R50} 
\begin{document}
	
\begin{abstract}
	Classical symmetric pairs consist of a symmetrizable Kac-Moody algebra $\mathfrak{g}$, together with its subalgebra of fixed points under an involutive automorphism of the second kind. Quantum group analogs of this construction, known as quantum symmetric pairs, replace the fixed point Lie subalgebras by one-sided coideal subalgebras of the quantized enveloping algebra $U_q(\mathfrak{g})$. We provide a complete presentation by generators and relations for these quantum symmetric pair coideal subalgebras. These relations are of inhomogeneous $q$-Serre type and are valid without restrictions on the generalized Cartan matrix. We draw special attention to the split case, where the quantum symmetric pair coideal subalgebras are generalized \(q\)-Onsager algebras.
\end{abstract}

\maketitle

\section{Introduction} 

A classical symmetric pair consists of a Lie algebra \(\g\) together with its subalgebra \(\k\) of fixed points under a Lie algebra involution \(\theta\). Quantum analogs of this construction, known as quantum symmetric pairs, have emerged in the beginning of the 1990's. They replace \(\g\) by its quantized universal enveloping algebra \(U_q(\g)\) and \(\k\) by a one-sided coideal subalgebra \(B_{\bc,\bs}\) of \(U_q(\g)\), which is called a quantum symmetric pair (QSP) coideal subalgebra. The algebras \(\Bcs\) were first constructed by Noumi, Sugitani and Dijkhuizen \cite{Noumi-1996, Noumi&Sugitani-1995, Noumi&Dijkhuizen&Sugitani-1997} under the name quantum Grassmannians. A different approach, based on the Drinfeld-Jimbo presentation of \(U_q(\g)\), was pursued by Letzter. She developed a comprehensive theory of quantum symmetric pairs for semisimple Lie algebras \(\g\) in an elaborate series of papers \cite{Letzter-1999,Letzter-2002,Letzter-2003}. This has allowed to identify the zonal spherical functions on quantum symmetric spaces as Macdonald-Koornwinder polynomials \cite{Letzter-2004}. This whole theory was later extended to symmetrizable Kac-Moody algebras \(\g\) by Kolb in \cite{Kolb-2014}, which treats the structure theory of the Kac-Moody QSP coideal subalgebras \(\Bcs\) in great detail.

Over the years, it has become increasingly apparent that quantum symmetric pairs play a crucial role in quantum integrability, notably of the reflection equation \cite{Cherednik-1984, Sklyanin-1988}. The latter replaces the quantum Yang-Baxter equation when reflecting boundary conditions are imposed by \(K\)-matrices. Such boundaries break the quantum symmetry down to a coideal subalgebra of the quantum affine algebra which encodes the symmetries in the bulk of a quantum spin chain \cite{Delius&MacKay-2003}. It has been suggested that representations of the QSP coideal subalgebras \(\Bcs\) give rise to universal solutions of the reflection equation, just like solutions of the quantum Yang-Baxter equation arise naturally from representations of quantum affine algebras \cite{Delius&MacKay-2006}. This has been worked out for several specific QSPs \cite{Kolb&Stokman-2009,Kolb-2015,Reshetikhin&Stokman&Vlaar-2016}. Invaluable tools in this respect are the definition of a bar involution for quantum symmetric pairs by Balagovi\'{c} and Kolb in \cite{Balagovic&Kolb-2015}, and the construction of quasi \(K\)-matrices as QSP analogs of Lusztig's quasi \(R\)-matrices, both in recursive \cite{Balagovic&Kolb-2019} and in factorized form \cite{Dobson&Kolb-2019}. 

In view of the same idea, many more fundamental concepts in representation theory have been extended from quantum affine algebras to quantum symmetric pairs. A special class of QSP coideal subalgebras arises in the categorification of \(U_q(\g)\)-representations based on skew Howe duality, as was established by Ehrig and Stroppel in \cite{Ehrig&Stroppel-2018}. A thorough theory of canonical bases and Schur-Jimbo duality for quantum symmetric pairs was set up by Bao and Wang in \cite{Bao&Wang-2018,Bao&Wang-2018-2}, leading them to prove longstanding conjectures on irreducible characters of orthosymplectic Lie superalgebras. 

A first step towards a classification of finite-dimensional \(\Bcs\)-modules was recently set in \cite{Letzter-2019}, which establishes a quantum Cartan decomposition of QSP coideal subalgebras for semisimple Lie algebras \(\g\). Moreover, it was shown in \cite{Kolb-2020} that the finite-dimensional representations of \(\Bcs\) form a braided module category over the braided monoidal category of finite-dimensional \(U_q(\g)\)-representations. It is well-known that this braided monoidal category encodes the structure of the universal \(R\)-matrix of \(U_q(\g)\). A similar categorical framework encoding the universal \(K\)-matrix of a quantum symmetric pair was given in \cite{Weelinck-2019} under the name \(\mathbb{Z}_2\)-braided pair. In \cite{Jordan&Ma-2011} the quantum analog of the classical symmetric pair \((\mathfrak{gl}_N,\mathfrak{gl}_p\times \mathfrak{gl}_{N-p}) \) is used to relate representations of (double) affine braid groups and (double) affine Hecke algebras of type \(C^{\vee}C_n\). Finally, generalizations of quantum symmetric pairs have been considered in \cite{Regelskis&Vlaar-2018}, which replace the \(\g\)-involution \(\theta\) by a more general semisimple automorphism of \(\g\). 

In this paper we will adopt the notational conventions of \cite{Kolb-2014}. We will write \(\g = \g(A)\) for the Kac-Moody algebra associated to a symmetrizable generalized Cartan matrix \(A\) of dimension \(n\). We take \(I\) to be the set \(\{0,\dots,n-1\}\), such that we can write \(A = (a_{ij})_{i,j\in I}\). We will use Kolb's definition of admissible pairs, as will be repeated later in Definition \ref{admissible pair def}, to parametrize the involutive automorphisms of \(\g\) of the second kind. To each such admissible pair one can associate a quantum symmetric pair and hence a coideal subalgebra \(\Bcs\) of \(\Uqg\), which also depends on a multiparameter \((\bc,\bs)\).

The QSP coideal subalgebras \(\Bcs\) can be presented in terms of generators and relations. The set of generators depends on the choice of admissible pair, but always contains certain elements \(B_i\), with \(i\in I\). A set of defining relations which describe these algebras abstractly in terms of their generators was given by Kolb in \cite[Theorem 7.1]{Kolb-2014} and will be repeated in the upcoming Theorem \ref{theorem these are defining relations}. One of these relations states that
\begin{equation}
\label{first relation}
\sum_{m = 0}^{1-a_{ij}}(-1)^m\begin{bmatrix}
1-a_{ij}\\m
\end{bmatrix}_{q_i}B_i^{1-a_{ij}-m}B_jB_i^{m}
\end{equation}
can be written as a lower-degree polynomial in \(B_i\) and \(B_j\) which depends on the entry \(a_{ij}\) of \(A\). However, Kolb's theorem does not provide a precise form for this polynomial, which he denotes by \(C_{ij}(\bc)\). Up to present, expressions for \(C_{ij}(\bc)\) were only known for a few possible values of \(a_{ij}\), namely \(a_{ij}\in\{0,-1,-2,-3\}\). These have been obtained in \cite{Kolb-2014} and \cite{Balagovic&Kolb-2015} by explicit calculations, which follow similar results in \cite{Letzter-2003} for finite-dimensional \(\g\). It was suggested by Kolb that the same rationale could lead to expressions for \(C_{ij}(\bc)\) valid for all \(a_{ij}\), but this has not been explicitized before. This paper provides for the first time closed expressions for the polynomials \(C_{ij}(\bc)\), valid without restrictions on the Cartan matrix or the admissible pair. It thereby completes the presentation of the quantum symmetric pair coideal subalgebras by generators and relations.

Such a presentation is highly desirable in view of the representation theory of the algebras \(\Bcs\). This was already indicated in \cite{Balagovic&Kolb-2015}, where the definition of a new bar involution for quantum symmetric pairs was validated by showing that it respects the defining relations of \(\Bcs\). By the absence of such relations beyond the case \(\vert a_{ij}\vert \leq 3\), this could only be done for a limited class of Cartan matrices. Our results allow to extend this to any quantum symmetric pair.

Our approach will be as follows. We will rewrite (\ref{first relation}) as a complicated expression in \(U_q(\g)^{\otimes 2}\), where one of the tensor components is acted upon with a projection operator. This leads to the upcoming expressions (\ref{expr lemma Case 1 elimination}) and (\ref{expr lemma Case 2 elimination}), which were essentially already contained in \cite{Kolb-2014}. The main novelty of our approach lies in how we expand these expressions further. We will distinguish two cases, based on the behavior of \(i\) and \(j\) with respect to the admissible pair, each leading to a different expression for \(C_{ij}(\bc)\). In Propositions \ref{prop term 1 expansion with epsilon and P} and \ref{prop term 2 expansion with epsilon and P} we will perform a binary distributive expansion to rewrite (\ref{first relation}) as a polynomial which, in the first of these two cases, is of the form
\begin{equation}
\label{second relation}
\sum_{m,m'}\rho_{m,m'}\mathcal{Z}_i^{\frac{1-a_{ij}-m-m'}{2}}B_i^mB_jB_i^{m'},
\end{equation}
whereas in the second case one finds
\begin{equation}
\label{third relation}
\sum_{m,m',t}\rho_{m,m',t}\mathcal{Z}_i^tB_i^mB_jB_i^{m'}\mathcal{Z}_i^{\frac{1-a_{ij}-m-m'}{2}-t}+ \sum_{m,t}\sigma_{m,t}\mathcal{Z}_i^t\mathcal{W}_{ij}K_j\mathcal{Z}_i^{\frac{-1-a_{ij}-m}{2}-t}B_i^{m},
\end{equation}
where the elements \(\mathcal{Z}_i\), \(\mathcal{W}_{ij}\) and \(K_j\) are well-defined in terms of the generators and where all sums are finite. The major difficulty lies in the determination of the coefficients \(\rho_{m,m'}\), \(\rho_{m,m',t}\) and \(\sigma_{m,t}\), which we will refer to as the structure constants of the algebra \(\Bcs\). Initially, we will describe these in terms of monomials in \(\Uqg\) acted upon with a projection operator. Closed expressions for the actions of these projection operators and hence for the structure constants are consequently derived in Theorems \ref{theorem F_ij(B_i,B_j) for Case 1} and \ref{theorem F_ij(B_i,B_j) Case 2}. It may not surprise that the formulae we obtain there turn out to be rather computationally extensive. Indeed, even the expressions obtained in \cite{Kolb-2014} and \cite{Balagovic&Kolb-2015} for small values of \(a_{ij}\), as displayed in the upcoming Tables \ref{Table of rho_m,m'}, \ref{Table of rho_m,m',t} and \ref{Table of sigma_m,t}, were already quite intricate. Nevertheless, our formulae contain nothing but finite sums and products, which can easily be carried out either by hand or by a computer.

Many interesting examples of quantum symmetric pairs and their coideal subalgebras have appeared in the literature. The paper \cite{Aldenhoven&Koelink&Roman-2017} studies a QSP coideal subalgebra associated to the quantum group \(U_q(\mathfrak{su}(3))\). Its finite-dimensional irreducible representations are classified and its highest weight vectors are expressed in terms of dual \(q\)-Krawtchouk polynomials. For finite-dimensional \(\g\) one can easily extend the \(\g\)-involution \(\theta\) to the loop algebra \(L(\g)\) or the untwisted affine Lie algebra \(\widehat{\g}\). Quantum analogs of the fixed point Lie subalgebra of \(L(\g)\) under this extended automorphism are known as twisted quantum loop algebras (of the second kind). These comprise the twisted \(q\)-Yangians of \cite{Molev&Ragoucy&Sorba-2003} and the QSPs of type AIII of \cite{Chen&Guay&Ma-2014}. A unified approach to the structure theory of these twisted quantum loop algebras is contained in \cite[Section 11]{Kolb-2014}. Other remarkable examples of the subalgebras \(\Bcs\) for non-affine \(\g\) are the quantized GIM Lie algebras of \cite{Lv&Tan-2013}, treated by Kolb in \cite[Section 12]{Kolb-2014}. 

In this paper, we will draw special attention to the QSP coideal subalgebras in the split case, corresponding to the trivial admissible pair \((\emptyset,\mathrm{id})\). These are known as generalized \(q\)-Onsager algebras. Their name has been derived from the algebra defined by Onsager in \cite{Onsager-1944} as a tool towards his analytic solution of the planar Ising model in zero magnetic field. This algebra was presented in \cite{Dolan&Grady-1982} and \cite{Perk-1989} as the infinite-dimensional Lie algebra with generators \(B_0\) and \(B_1\) subject to the Dolan-Grady relations
\[
[B_0,[B_0,[B_0,B_1]]] = -4[B_0,B_1], \qquad [B_1,[B_1,[B_1,B_0]]] = -4[B_1,B_0].
\]
It has received much attention in special function theory and integrable lattice models \cite{Davies-1990,Klishevich&Plyushchay-2003,Hartwig&Terwilliger-2007,Baxter-2009,Vernier&OBrien&Fendley-2019}. It can be embedded in the affine Lie algebra \(\widehat{\mathfrak{sl}_2}\) as its subalgebra of fixed points under the Chevalley involution \cite{Roan-1991}, and hence together with \(\widehat{\mathfrak{sl}_2}\) it forms a (split) classical symmetric pair. The theory of quantum symmetric pairs thus offers a solid framework to deform the Onsager algebra to a quantum algebra. The resulting \(q\)-Onsager algebra \cite{Baseilhac-2005,Baseilhac&Koizumi-2005} is abstractly defined by the \(q\)-Dolan-Grady relations
\begin{equation}
\label{q-Dolan-Grady basic}
[B_0,[B_0,[B_0,B_1]_q]_{q^{-1}}] = \rho [B_0,B_1], \quad [B_1,[B_1,[B_1,B_0]_q]_{q^{-1}}] = \rho [B_1,B_0],
\end{equation}
where \([A,B]_q = qAB-q^{-1}BA\) is the \(q\)-commutator and \(\rho\) is a scalar depending on \(q\). The \(q\)-Onsager algebra has become an important object of study in quantum integrability \cite{Baseilhac-2005,Belliard&Fomin-2012,Kuniba&Pasquier-2019} and in connection with \(q\)-orthogonal polynomials \cite{Baseilhac&Vinet&Zhedanov-2017} and Leonard pairs \cite{Ito&Terwilliger-2009}. Upon adding a defining relation in its equitable presentation, the \(q\)-Onsager algebra is refined to the Askey-Wilson algebra \cite{Zhedanov-1991}, as was shown in \cite{Terwilliger-2000}. A central extension of the latter, known as the universal Askey-Wilson algebra \cite{Terwilliger-2011}, was also identified as a quotient of the \(q\)-Onsager algebra \cite{Terwilliger-2018}. This Askey-Wilson algebra provides an algebraic framework for the \(q\)-Askey scheme of orthogonal polynomials \cite{Baseilhac&Martin&Vinet&Zhedanov-2019,Koornwinder&Mazzocco-2018}, see also \cite{DeBie&DeClercq&vandeVijver-2018,DeBie&DeClercq-2019,Groenevelt-2018} for some recent multivariate generalizations.

The left-hand side of (\ref{q-Dolan-Grady basic}) can be rewritten as
\[
B_i^3B_j - [3]_{q}B_i^2B_jB_i + [3]_{q}B_iB_jB_i^2 - B_jB_i^3
\]
for \(i\neq j\in\{0,1\}\). This coincides with the expression (\ref{first relation}) for \(n = 2\) and \(a_{01} = a_{10} = -2\), i.e.\ for \(\g = \widehat{\mathfrak{sl}_2}\). It is hence apparent that the \(q\)-Onsager algebra coincides with the quantum symmetric pair coideal subalgebra \(\Bcs\) of \(U_q(\widehat{\mathfrak{sl}_2})\) for the trivial admissible pair and a special choice of the parameters \(\bc,\bs\). 

Kac-Moody generalizations of the \(q\)-Onsager algebra were constructed by Baseilhac and Belliard in \cite{Baseilhac&Belliard-2010}. A presentation with generators and relations was given for affine Lie algebras \(\g\), again for a limited set of Cartan matrices. The relations we will derive in Theorem \ref{theorem F_ij(B_i,B_j) for Case 1} extend this to symmetrizable Kac-Moody algebras without restrictions on the Cartan matrix. Moreover, we will use a recent result by Chen, Lu and Wang \cite{Chen&Lu&Wang-2019} to obtain alternative, transparent expressions of quantum Serre type for the defining relations of these generalized \(q\)-Onsager algebras. These relations, which will be given in Theorem \ref{theorem Case 1 with CLW}, turn out to hold even for any quantum symmetric pair provided the indices \(i\) and \(j\) satisfy the conditions of the aforementioned Case 1. In addition, these allow to prove symmetry properties of the structure constants \(\rho_{m,m'}\) from (\ref{second relation}).

For \(q = 1\), such inhomogeneous Serre relations for generalized Onsager algebras had already been obtained by Stokman in \cite{Stokman-2019}. His classical generalized Onsager algebras extend those of \cite{Roan-1991, Uglov&Ivanov-1996,Date&Usami-2004,Neher&Savage&Senesi-2012} to arbitrary root systems. The defining relations he provides, involve a set of coefficients which are defined in a recursive fashion. Our approach now allows to derive closed expressions for these coefficients and thus solve the recursion relations, by taking the limit \(q\to 1\) of the analogous expressions in the quantum case. This will be performed in Theorem \ref{theorem Stokman via limits}.

The paper is organized as follows. In Section \ref{Section Construction of the generators} we recall the necessary prerequisites on quantum symmetric Kac-Moody pairs in the notation of \cite{Kolb-2014}. We treat the classical symmetric pairs \((\g,\k)\) in Subsection \ref{Subsection classical case} and their quantum analogs \((U_q(\g),\Bcs)\) in Subsection \ref{Subsection quantum case}. In Subsection \ref{Subsection Kolb's projection technique} we state some of the results obtained by Kolb in \cite{Kolb-2014}, which we will need in what follows. The main body of work is contained in Section \ref{Section quantum Serre relations}, where the missing defining relations for \(\Bcs\) will be derived. In Subsection \ref{Subsection Binary expansions} we will perform a binary distributive expansion to reduce the computation of the polynomials \(C_{ij}(\bc)\) to an easier problem, namely determining the coefficients in (\ref{second relation}) and (\ref{third relation}) through the action of the counit and a certain projection operator on monomials in \(U_q(\g)\). This problem will be solved in Subsections \ref{Subsection Case 1} and \ref{Subsection Case 2} treating Cases 1 and 2 respectively. The principal results are presented in Theorems \ref{theorem F_ij(B_i,B_j) for Case 1} and \ref{theorem F_ij(B_i,B_j) Case 2}. To conclude, we will derive alternative and more accessible expressions for the polynomials \(C_{ij}(\bc)\) in Case 1 based on the work \cite{Chen&Lu&Wang-2019} in Theorem \ref{theorem Case 1 with CLW}. Finally, we turn our attention to the generalized \(q\)-Onsager algebras and their classical counterparts. We repeat the obtained relations applied to the split case and reconsider them in the limit \(q\to 1\) to solve the recursion relations of \cite{Stokman-2019} in Theorem \ref{theorem Stokman via limits}.

\section{Construction of the generators}
\label{Section Construction of the generators}

Let us start by recalling some crucial concepts and notations introduced in \cite{Kolb-2014}. 

Let \(\K\) be an algebraically closed field of characteristic 0. Let \(A\) be an indecomposable generalized Cartan matrix of dimension \(n\) and let us denote by \(I\) the set \(\{0,1,\dots,n-1\}\). This means that \(A = (a_{ij})_{i,j\in I}\) satisfies the properties
\begin{enumerate}
	\item[i.] \(a_{ii} = 2\), for all \(i\in I\),
	\item[ii.] \(a_{ij}\in \Z^{-} \), if \(i\neq j\in I\),
	\item[iii.] \(a_{ij} = 0 \Leftrightarrow a_{ji} = 0\), for any \(i,j\in I\)
	\item[iv.] For every non-empty proper subset \(I'\subset I\) there exist \(i\in I', j\in I\setminus I'\) such that \(a_{ij}\neq 0\).
\end{enumerate} 
Moreover, we assume \(A\) to be symmetrizable, i.e.\ there exists a diagonal matrix \(D = \mathrm{diag}(\epsilon_i: i\in I)\), with mutually coprime and nonzero entries \(\epsilon_i\in\N\), such that \(DA\) is symmetric. 

In Subsection \ref{Subsection classical case}, we will construct the classical symmetric pair \((\g,b_{\bs})\), where \(\g=\g(A)\) is the Kac-Moody algebra associated to \(A\). This construction will motivate the definition of the quantum symmetric pair \((U_q(\g),B_{\bc,\bs})\) inside the corresponding quantum group \(U_q(\g)\), which will be given in Subsection \ref{Subsection quantum case}.

\subsection{The classical case}
\label{Subsection classical case}

Let \((\h=\h(A), \Pi = \{\alpha_i: i\in I\}, \Pi^{\vee} = \{h_i: i\in I\})\) be a minimal realization of \(A\). This means that \(\h\) is a \(\K\)-vector space of dimension \(2n-\mathrm{rank}(A)\) and that \(\Pi^{\vee}\) and \(\Pi\) are linearly independent subsets of \(\h\) and its dual \(\h^{\ast}\) respectively, subject to \(\alpha_j(h_i) = a_{ij}\) for any \(i,j\in I\). Let \(Q = \mathbb{Z}\Pi\) be the corresponding root lattice. 

The Kac-Moody algebra \(\g = \g(A)\) associated to \(A\) is the Lie algebra over \(\K\) generated by \(\h\) and \(2n\) Chevalley generators \(e_i, f_i\) with \(i\in I\), with defining relations
\begin{gather}
\label{Kac-Moody relations 1}
[h,h'] = 0, \qquad
[h,e_i] = \alpha_i(h)e_i, \qquad [h,f_i] = -\alpha_i(h)f_i, \qquad
[e_i,f_j] = \delta_{ij}h_i, \\
\label{Kac-Moody relations 2}
\left(\mathrm{ad}\,e_i\right)^{1-a_{ij}}e_j = \left(\mathrm{ad}\,f_i\right)^{1-a_{ij}}f_j = 0,
\end{gather}
for all \(i,j\in I\) and \(h,h'\in\h\). Here we denoted by \(\mathrm{ad}\) the adjoint mapping
\begin{equation}
\label{adjoint map def}
\mathrm{ad}: \g \to \mathfrak{gl}_{\g}: x \mapsto \mathrm{ad}\,x, \qquad \mathrm{ad}\,x: \g \to \g: y \mapsto [x,y].
\end{equation}
The derived Lie subalgebra \(\g' = [\g,\g]\) of \(\g\) is generated by \(\h' = \sum_{i\in I}\K h_i\) and the elements \(e_i, f_i\) with \(i\in I\). 

As usual, we will write \(\g_{\beta} = \{x\in \g: [h,x] = \beta(h)x, \forall h\in \h \}\) for any \(\beta\in \h^{\ast}\) and \(\Phi = \{\beta\in\h^{\ast}: \g_{\beta}\neq \{0\}\}\) for the corresponding root system. 

For any \(i\in I\) we denote by \(r_i\in \mathrm{GL}(\h)\) the fundamental reflection which acts on \(h\in\h\) by
\[
r_i(h) = h - \alpha_i(h)h_i.
\]
The subgroup \(W\) of \(\mathrm{GL}(\h)\) generated by all such \(r_i\) stands as the Weyl group of \(\g\). Via duality, \(W\) can also act on \(\h^{\ast}\) and hence in particular on \(Q\), via
\begin{equation}
\label{W acting on h dual}
r_i(\alpha) = \alpha - \alpha(h_i)\alpha_i,
\end{equation}
for any \(\alpha\in\h^{\ast}\).

Consider a subset \(X\subseteq I\). Let \(\g_X\) be the corresponding Lie subalgebra of \(\g\), generated by the elements \(e_i\), \(f_i\) and \(h_i\) with \(i\in X\). Write \(\Phi_X\subseteq \Phi\) for its root system and \(\rho_X^{\vee}\) for half the sum of the positive coroots of \(\Phi_X\). We will write \(W_X\) for the parabolic subgroup of the Weyl group \(W\) associated to \(X\), and \(w_X\) for its longest element. 
Finally, let us denote by \(\mathrm{Aut}(A,X)\) the set of permutations \(\sigma\) of \(I\) subject to
\[
\sigma(X) = X \quad\mathrm{and}\quad a_{\sigma(i),\sigma(j)}=a_{ij}, \quad\forall i,j\in I.
\]
Any \(\sigma\in \Aut(A,X)\) extends to an automorphism of \(\g\) by taking
\begin{equation}
\label{extension of Aut(A,X)}
\sigma(e_i) = e_{\sigma(i)}, \quad \sigma(f_i) = f_{\sigma(i)}, \quad \sigma(h_i) = h_{\sigma(i)}
\end{equation}
and defining the action of \(\sigma\) on \(h\in\h\setminus\h'\) as described in \cite[Section 4.19]{Kac&Wang-1992}. Similarly, \(\sigma\in\Aut(A,X)\) extends to an automorphism of \(Q\) upon setting 
\begin{equation}
\label{extension of Aut(A,X) 2}
\sigma(\alpha_i) = \alpha_{\sigma(i)}.
\end{equation}

This terminology allows to repeat the definition of an admissible pair, as given in \cite[Definition 2.3]{Kolb-2014}.

\begin{definition}
	\label{admissible pair def}
	An admissible pair \((X,\tau)\) consists of a subset \(X\subseteq I\) and an automorphism \(\tau\in \mathrm{Aut}(A,X)\) subject to the following conditions:
	\begin{enumerate}
		\item\label{def admissible pair requirement 1} \(\tau\) is an involution, i.e.\ \(\tau^2 = \mathrm{id}\).
		\item\label{def admissible pair requirement 2} The action of \(\tau\) on \(X\) coincides with the corresponding action of \(-w_X\), i.e.\ for any \(j\in X\) one has \(h_{\tau(j)} = -w_X(h_j)\) and \(\alpha_{\tau(j)} = -w_X(\alpha_j)\), where we have used the interpretations of \(\tau\) and \(w_X\) according to (\ref{W acting on h dual})--(\ref{extension of Aut(A,X)})--(\ref{extension of Aut(A,X) 2}).
		\item\label{def admissible pair requirement 3} For any \(i\in I\setminus X\) satisfying \(\tau(i) = i\), one has \(\alpha_i(\rho_X^{\vee})\in\Z\). 
	\end{enumerate}
\end{definition}

An important motivation for introducing admissible pairs is that they arise naturally as Kac-Moody generalizations of Satake diagrams \cite{Araki-1962}. Moreover they parametrize the so-called involutive automorphisms of \(\g\) of the second kind \cite{Levstein-1988,Kac&Wang-1992} up to conjugation by elements of \(\mathrm{Aut}(\g)\), as was shown in \cite[Theorem 2.7]{Kolb-2014}. The automorphism \(\theta(X,\tau)\) corresponding to an admissible pair \((X,\tau)\) can be constructed using the following four key concepts.

The first is the element \(\tau\in \mathrm{Aut}(A,X)\), interpreted as an automorphism of \(\g\) according to (\ref{extension of Aut(A,X)}).

Furthermore, we will need the Chevalley involution \(\omega\in\mathrm{Aut}(\g)\) given by 
\begin{equation}
\label{Chevalley involution def}
\omega(e_i) = -f_i, \quad \omega(f_i) = -e_i, \quad \omega(h) = -h,
\end{equation}
for any \(i\in I\) and \(h\in\h\).

Moreover, the longest element \(w_X\) of \(W_X\) can be lifted to an element \(m_X\) of the Kac-Moody group of \(\g'\), with corresponding automorphism \(\mathrm{Ad}(m_X)\in\mathrm{Aut}(\g)\). For details we refer to \cite[Section 1.3]{Kac&Wang-1992} and \cite[Section 2]{Kolb-2014}.

Finally, one can define a group morphism \(s(X,\tau): Q\to \K^{\times}\) from the root lattice \(Q\) to the multiplicative group \(\K^{\times}\), by
\begin{equation}
\label{S(X,tau) def}
s(X,\tau)(\alpha_j) = \left\{
\arraycolsep=1.4pt\def\arraystretch{1.3}
\begin{array}{ll}
1 & \mathrm{if}\ j\in X\ \mathrm{or}\ \tau(j) = j, \\
i^{\alpha_j(2\rho_X^{\vee})}\quad & \mathrm{if}\ j\in I\setminus X\ \mathrm{and}\ \tau(j) > j, \\
(-i)^{\alpha_j(2\rho_X^{\vee})}\quad & \mathrm{if}\ j\in I\setminus X\ \mathrm{and}\ \tau(j) < j,
\end{array}
\right.
\end{equation}
where \(i\in\K\) is a square root of \(-1\). The corresponding automorphism \(\mathrm{Ad}(s(X,\tau))\in\mathrm{Aut}(\g)\) is defined by
\begin{equation}
\label{Ad(s(X,tau)) def}
\mathrm{Ad}(s(X,\tau))(h) = h, \quad \mathrm{Ad}(s(X,\tau))(v) = s(X,\tau)(\alpha)v,
\end{equation}
for all \(h\in\h\) and \(v\in\g_{\alpha}\), \(\alpha\in \Phi\).

These four ingredients can now be combined to yield the following involutive automorphism \(\theta(X,\tau)\).

\begin{definition}
	To each admissible pair \((X,\tau)\) we associate the automorphism \(\theta(X,\tau)\) of \(\g\) given by
	\begin{equation}
	\label{theta(X,tau) def}
	\theta(X,\tau) = \mathrm{Ad}(s(X,\tau))\circ \mathrm{Ad}(m_X)\circ \tau\circ \omega.
	\end{equation}
	It is an involutive \(\g\)-automorphism of the second kind by \cite[Theorem 2.5]{Kolb-2014}.
\end{definition}

Let us from now on fix an admissible pair \((X,\tau)\) and write \(\theta\) for the above defined automorphism \(\theta(X,\tau)\). Then \(\theta\) gives rise to an algebra which will be of special interest in this paper.

\begin{definition}
	\label{b_s definition}
	For any vector \(\bs=(s_i)_{i\in I\setminus X}\in\K^{I\setminus X}\) we define \(b_{\bs} = b_{\bs}(X,\tau)\) to be the subalgebra of \(U(\g')\) generated by the elements
	\begin{align}
	\label{b_s def}
	\begin{split}
	f_i + \theta(f_i)+s_i&\ \mathrm{with}\ i\in I\setminus X,\\
	e_i, f_i, h_i &\ \mathrm{with}\ i\in X, \\
	h_i&\ \mathrm{with}\ \theta(h_i) = h_i, i\in I.
	\end{split}
	\end{align} 
	The couple \((\g,b_{\bs})\) stands as the (classical) symmetric pair associated to the admissible pair \((X,\tau)\).
\end{definition}

In \cite{Kolb-2014}, the algebra \(b_{\bs}\) was denoted by \(U(\k')\). We have chosen to adopt this alternative notation, to emphasize that \(b_{\bs}\) is a classical counterpart of the quantum algebra \(B_{\bc,\bs}\), which we will define in Subsection \ref{Subsection quantum case}. The defining relations of \(b_{\bs}\) will then follow as a limit \(q\to 1\) of the quantum Serre relations for \(B_{\bc,\bs}\) we will derive in Section \ref{Section quantum Serre relations}.

\subsection{The quantum case}
\label{Subsection quantum case}

Let \(q\) be an indeterminate, assumed not to be a root of unity in the field \(\K\). We denote by \(\K(q)\) the field of rational functions in \(q\). 

Recall the matrix \(D = \mathrm{diag}(\epsilon_i: i\in I)\) we have introduced above. For each \(i\in I\) we set \(q_i = q^{\epsilon_i}\). For any \(m\in\N\), we define the \(q_i\)-number \([m]_{q_i}\) and the \(q_i\)-factorial \([m]_{q_i}!\) as
\[
[m]_{q_i} = \frac{q_i^m-q_i^{-m}}{q_i-q_i^{-1}}, \qquad [m]_{q_i}! = \prod_{\ell = 1}^m[\ell]_{q_i},
\]
with the convention that \([0]_{q_i}! = 1\). For \(N, m\in \N\) with \(N\geq m\), we define the \(q_i\)-binomial coefficient as
\[
\begin{bmatrix}
N \\ m
\end{bmatrix}_{q_i} = \frac{[N]_{q_i}!}{[m]_{q_i}![N-m]_{q_i}!}.
\]
Like in the classical case, one has
\begin{equation}
\label{prop q-binom symbol}
\begin{bmatrix}
N \\ m
\end{bmatrix}_{q_i} = \begin{bmatrix}
N \\ N-m
\end{bmatrix}_{q_i}.
\end{equation}
We will often use the following polynomial in two non-commutative variables \(x\) and \(y\), which we will refer to as the quantum Serre polynomial: for \(i,j\in I\) we write
\begin{equation}
\label{Fij def}
F_{ij}(x,y) = \sum_{\ell = 0}^{1-a_{ij}}(-1)^{\ell}\begin{bmatrix} 1-a_{ij} \\ \ell
\end{bmatrix}_{q_i} x^{1-a_{ij}-\ell}yx^{\ell}.
\end{equation}
Throughout the paper, we will perform calculations in the quantized universal enveloping algebra \(\Uqg\) of \(\g\). In fact, it will suffice to work with its Hopf subalgebra \(\Uqgp\), the associative \(\K(q)\)-algebra generated by \(4n\) elements \(E_i\), \(F_i\), \(K_i\) and \(K_i^{-1}\) with \(i\in I\), subject to the relations
\begin{align}
\label{U_q(g) relations}
\begin{split}
K_i^{\pm 1}K_i^{\mp 1} = 1, \qquad & [K_i^{\pm 1},K_j^{\pm 1}] = 0, \\ 
K_i E_j = q_i^{a_{ij}} E_jK_i, \qquad & K_iF_j = q_i^{-a_{ij}} F_jK_i, \\
K_i^{-1} E_j = q_i^{-a_{ij}} E_jK_i^{-1}, \qquad & K_i^{-1}F_j = q_i^{a_{ij}} F_jK_i^{-1},
\\
\end{split} \\
\equationinalign{\label{U_q(g) relations 2}
	\ [E_i,F_j] = \delta_{ij}\frac{K_i-K_i^{-1}}{q_i-q_i^{-1}},
} \\
\equationinalign{
	\label{U_q(g) relations 3}
	F_{ij}(E_i,E_j) = F_{ij}(F_i,F_j) = 0,
}
\end{align}
for all \(i,j\in I\). The relations (\ref{U_q(g) relations 3}) are referred to as the quantum Serre relations. 

\begin{remark}
	\label{Remark specialization}
	The quantum group \(\Uqgp\) can be considered a \(q\)-deformation of \(\g'\), upon viewing \(e_i\) and \(f_i\) as the limits of \(E_i\) and \(F_i\) respectively as \(q\) goes to 1, and identifying \(K_i\) with \(q_i^{h_i}\). To view the quantum Serre relations (\ref{U_q(g) relations 3}) as \(q\)-deformations of the relations (\ref{Kac-Moody relations 2}), it will be useful to introduce the \(q\)-commutators
	\[
	\mathrm{ad}_{q_i^m}: \Uqg\to\Aut(\Uqg): x \mapsto \mathrm{ad}_{q_i^m}(x), \qquad \mathrm{ad}_{q_i^m}(x):\Uqg\to\Uqg: y \mapsto [x,y]_{q_i^m} = q_i^mxy-q_i^{-m}yx,
	\]
	with \(m\in\mathbb{Q}\). Notice that \(\mathrm{ad}_{q_i^m}\) reduces to \(\mathrm{ad}\) defined in (\ref{adjoint map def}) in the limit \(q\to 1\), for any \(m\in\mathbb{Q}\). It can easily be shown by induction that one has
	\begin{equation}
	\label{nested q-commutators induction}
	\left(\overrightarrow{\prod_{m = \frac{1-r}{2}}^{\frac{r-1}{2}}}\mathrm{ad}_{q_i^m}(A)\right)(B) = \sum_{k = 0}^r(-1)^k\begin{bmatrix} r \\ k
	\end{bmatrix}_{q_i}A^{r-k}BA^k
	\end{equation}
	for any \(A,B\in\Uqg\) and any \(r\in\N\), which, upon substituting \(r = 1-a_{ij}\), becomes
	\[
	\left(\overrightarrow{\prod_{m = \frac{a_{ij}}{2}}^{\frac{-a_{ij}}{2}}}\mathrm{ad}_{q_i^m}(A)\right)(B) = F_{ij}(A,B).
	\]
	Hence in the limit \(q\to 1\), the expression \(F_{ij}(A,B)\) reduces to
	\[
	(\ad\, a)^{1-a_{ij}}(b),
	\]
	where \(a\) and \(b\) are the specializations of \(A\) and \(B\) respectively, and so (\ref{U_q(g) relations 3}) indeed translates to (\ref{Kac-Moody relations 2}). A detailed account on this notion of specialization, which is a formal way to implement this limiting process \(q \to 1\), can be found in \cite{DeConcini&Kac-1990}, \cite[Sections 3.3 and 3.4]{Hong&Kang-2002} and \cite[Section 10]{Kolb-2014}.
\end{remark}

The quantum group \(\Uqgp\) has the structure of a Hopf algebra, with the following expressions for the coproduct \(\Delta\), the counit \(\epsilon\) and the antipode \(S\):
\begin{align}
\label{Coproduct, counit, antipode def}
\begin{alignedat}{6}
\Delta(E_i) & = E_i\otimes 1 + K_i\otimes E_i, \quad& \Delta(F_i) &= F_i\otimes K_i^{-1} + 1\otimes F_i, \quad& \Delta(K_i^{\pm 1}) &= K_i^{\pm 1}\otimes K_i^{\pm 1}, \\
\epsilon(E_i) &=  0, & \epsilon(F_i) &= 0, & \epsilon(K_i^{\pm 1}) &= 1, \\
S(E_i) &= -K_i^{-1}E_i, & S(F_i) &= -F_iK_i, & S(K_i^{\pm 1}) &= K_i^{\mp 1}.
\end{alignedat}
\end{align}

Now let us once more fix an admissible pair \((X,\tau)\). A quantum analog of the automorphism \(\theta(X,\tau)\) defined in (\ref{theta(X,tau) def}) can be built from five fundamental constituents, one of which is the mapping \(\tau\in\mathrm{Aut}(A,X)\) viewed as an automorphism of \(\Uqgp\) by 
\[
\tau(E_i) = E_{\tau(i)}, \quad \tau(F_i) = F_{\tau(i)}, \quad \tau(K_i^{\pm 1}) = K_{\tau(i)}^{\pm 1}.
\]

Secondly, one can extend \(\mathrm{Ad}(s(X,\tau))\in\Aut(\g)\) to an automorphism of \(\Uqg\) by
\[
\mathrm{Ad}(s(X,\tau))(v) = s(X,\tau)(\alpha)v,
\]
for all \(v\in U_q(\g)_{\alpha} = \{u\in\Uqg: K_iu = q^{(\alpha_i,\alpha)}uK_i, \forall i\in I\}\), \(\alpha\in Q\). Here by \((\cdot,\cdot)\) we denote the bilinear form on \(\h^{\ast}\) satisfying \((\alpha_i,\alpha_j) = \epsilon_ia_{ij}\).

Next, we will need a \(q\)-deformation of the Chevalley involution (\ref{Chevalley involution def}), which we will again denote by \(\omega\). It is given by
\[
\omega(E_i) = -F_i, \quad \omega(F_i) =-E_i, \quad \omega(K_i) = K_{i}^{-1}
\]
and classifies as a coalgebra antiautomorphism of \(\Uqgp\). 

To obtain a quantum analog of the element \(\mathrm{Ad}(m_X)\) in (\ref{theta(X,tau) def}) one needs the Lusztig automorphisms \(T_i\), \(i\in I\), which appeared in \cite[Section 37.1]{Lusztig-1994} under the name \(T_{i,1}''\). Let \(w_X = r_{i_1}r_{i_2}\dots r_{i_k}\) be a reduced expression for the longest element \(w_X\) of the parabolic subgroup \(W_X\) of \(W\), then denote by \(T_{w_X}\) the corresponding automorphism \(T_{w_X} = T_{i_1}T_{i_2}\dots T_{i_k}\) of \(\Uqgp\). 

Finally, define another automorphism \(\psi:\Uqgp\to\Uqgp\) by
\[
\psi(E_i) = E_iK_i, \quad\psi(F_i) = K_i^{-1}F_i, \quad \psi(K_i) = K_i.
\]
These are all the tools needed to \(q\)-deform \(\theta(X,\tau)\).

\begin{definition}
	To each admissible pair \((X,\tau)\) we associate the automorphism \(\theta_q(X,\tau)\) of \(\Uqgp\) given by
	\begin{equation}
		\theta_q(X,\tau) = \mathrm{Ad}(s(X,\tau))\circ T_{w_X}\circ \psi \circ \tau\circ \omega.
	\end{equation}
\end{definition}
Note that \(\theta_q = \theta_q(X,\tau)\) is no longer involutive.

Finally, let us denote by \(Q^{\Theta}\) the set \(\{\alpha\in Q: -w_X\tau(\alpha) = \alpha\}\). Here, we interpret both \(\tau\in\mathrm{Aut}(A,X)\) and \(w_X\in W_X\) as automorphisms of \(Q\) according to (\ref{W acting on h dual}) and (\ref{extension of Aut(A,X) 2}). Moreover, if \(\beta = \sum_{i\in I}m_i\alpha_i\in Q\), we will write \(K_{\beta}\) for \(\prod_{i\in I}K_i^{m_i}\). 
This brings us to the definition of the quantum analog \(B_{\bc,\bs}\) of the algebra \(b_{\bs}\) defined in (\ref{b_s def}).

\begin{definition}
	For any vector \(\bc=(c_i)_{i\in I\setminus X}\in(\K(q)^{\times})^{I\setminus X}\) and \(\bs = (s_i)_{i\in I\setminus X}\in\K(q)^{I\setminus X}\), we define \(B_{\bc,\bs} = B_{\bc,\bs}(X,\tau)\) to be the subalgebra of \(\Uqgp\) generated by the elements
	\begin{align}
	\label{B_c,s def}
	\begin{split}
	B_i = F_i+c_i\theta_q(F_iK_i)K_i^{-1}+s_iK_i^{-1} & \ \mathrm{with}\ i\in I\setminus X, \\
	E_i, F_i, K_i^{\pm 1} &\ \mathrm{with}\ i\in X, \\
	K_{\beta}&\ \mathrm{with}\ \beta\in Q^{\Theta}.
	\end{split}
	\end{align}
\end{definition}

When applying the coproduct \(\Delta\) described in (\ref{Coproduct, counit, antipode def}) on the generators (\ref{B_c,s def}), one can make the following observation.

\begin{propositionKolb}[{\cite[Proposition 5.2]{Kolb-2014}}]
	\label{prop coideal subalgebra}
	For any \((\bc,\bs)\in (\K(q)^{\times})^{I\setminus X}\times \K(q)^{I\setminus X}\), the algebra \(\Bcs\) is a right coideal subalgebra of \(\Uqgp\), i.e.\ \(\Delta(\Bcs)\subset \Bcs\otimes\Uqgp\).
\end{propositionKolb}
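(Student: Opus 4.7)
The plan is to verify the coideal property $\Delta(b) \in \Bcs \otimes \Uqgp$ generator by generator, using the list in (\ref{B_c,s def}). For the generators $E_j, F_j, K_j^{\pm 1}$ with $j \in X$ and $K_\beta$ with $\beta \in Q^\Theta$, the computation is immediate from (\ref{Coproduct, counit, antipode def}): their coproducts already lie in the tensor product of the subalgebra they themselves generate with itself, which is contained in $\Bcs \otimes \Uqgp$. (Note in particular that for $j \in X$ the element $K_j$ is among the generators of $\Bcs$, so $\Delta(E_j) = E_j \otimes 1 + K_j \otimes E_j$ causes no difficulty.)

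The core of the proof concerns the generators $B_i = F_i + c_i\,\theta_q(F_iK_i)K_i^{-1} + s_iK_i^{-1}$ with $i \in I \setminus X$. Applying (\ref{Coproduct, counit, antipode def}) to the easy pieces yields
\[
\Delta(F_i) + s_i\Delta(K_i^{-1}) = (F_i + s_iK_i^{-1}) \otimes K_i^{-1} + 1 \otimes F_i.
\]
The offending summand $1 \otimes F_i$ has left tensor factor $1 \notin \Bcs$, so it must be cancelled or reshaped by contributions coming from $c_i\Delta(\theta_q(F_iK_i)K_i^{-1})$. This reduces the problem to establishing a structural lemma on the element $\theta_q(F_iK_i)K_i^{-1}$.

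The structural lemma I would prove is this: track the action of each constituent of $\theta_q = \mathrm{Ad}(s(X,\tau)) \circ T_{w_X} \circ \psi \circ \tau \circ \omega$ on $F_iK_i$. One computes $\omega(F_iK_i) = -E_iK_i^{-1}$, then $\psi \circ \tau$ sends this to $-E_{\tau(i)}$, whereupon $T_{w_X}$ produces an element lying in the subalgebra $M_X^+$ generated by $\{E_j : j \in X\}$ (times a weight-$K$-factor), multiplied on the right by a power of $K_{\tau(i)}$ determined by the weight shift $-w_X\alpha_{\tau(i)}$. A direct weight bookkeeping then shows that $\theta_q(F_iK_i)K_i^{-1}$ is a $\K(q)$-linear combination of monomials $uK_\gamma$ where $u$ lies in the subalgebra generated by $\{E_j : j \in X\}$ and $\gamma$ is in the $\Z$-span of $\{\alpha_j : j \in X\} \cup Q^\Theta$, in particular an element of $\Bcs$. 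Computing $\Delta(uK_\gamma)$ using the Hopf algebra axioms, one finds $\Delta(uK_\gamma) = uK_\gamma \otimes K_\gamma + \sum_k v_k \otimes w_k$ with all $v_k$ lying in the $M_X$-part of $\Bcs$ (because $\Delta(E_j)$ for $j \in X$ has first tensor factors $E_j$ and $K_j$, both generators of $\Bcs$).

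Putting the three pieces together and collecting $K_i^{-1}$-weighted terms on the right, one sees that $\Delta(B_i) = B_i \otimes K_i^{-1} + (\text{remainder in } \Bcs \otimes \Uqgp)$, finishing the verification. The main obstacle will be the structural lemma: the delicate point is the analysis of $T_{w_X}$ applied to $E_{\tau(i)}$ for $i \in I \setminus X$, which requires verifying that despite $T_{w_X}$ being a composition of Lusztig's braid operators, the image stays inside the subalgebra $M_X^+$ up to a predictable weight-lattice factor. This relies on the defining property of $w_X$ as the longest element of $W_X$ combined with the admissible-pair condition that $\tau(i) \in I \setminus X$ whenever $i \in I \setminus X$, which prevents the braid operators indexed by $X$ from ever creating factors outside $M_X^+$ when acting on $E_{\tau(i)}$.
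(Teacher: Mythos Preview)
The paper does not prove this proposition; it is quoted verbatim from \cite[Proposition~5.2]{Kolb-2014}. So there is no in-paper proof to compare against, and your task is simply to reproduce Kolb's argument. Unfortunately your outline contains two genuine errors that would derail it.

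First, you write that ``$1\otimes F_i$ has left tensor factor $1\notin\Bcs$''. This is false: $\Bcs$ is by definition a subalgebra of $\Uqgp$ and hence contains the unit. The term $1\otimes F_i$ already lies in $\Bcs\otimes\Uqgp$ and requires no cancellation. The genuinely problematic contributions from $\Delta(F_i)+s_i\Delta(K_i^{-1})$ are $F_i\otimes K_i^{-1}$ and $s_iK_i^{-1}\otimes K_i^{-1}$, since for $i\in I\setminus X$ neither $F_i$ nor, in general, $K_i^{-1}$ belongs to $\Bcs$.

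Second, your structural lemma is false as stated. You assert that $T_{w_X}(E_{\tau(i)})$ lands in $\mathcal{M}_X^+$ up to a weight factor. But Lemma~\ref{lemma Z_i^+} (i.e.\ \cite[Theorem~4.4]{Kolb-2014}) gives $T_{w_X}(E_{\tau(i)})=a_{\tau(i)}\,\ad(Z_{\tau(i)}^+)(E_{\tau(i)})$ with $Z_{\tau(i)}^+\in\mathcal{M}_X^+$; this expression still contains the factor $E_{\tau(i)}$ with $\tau(i)\in I\setminus X$, which is \emph{not} a generator of $\Bcs$. Consequently $\theta_q(F_iK_i)K_i^{-1}$ is not an element of $\Bcs$, and your argument that its coproduct automatically has first tensor factors in $\Bcs$ collapses.

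The correct mechanism is more delicate. One computes $\Delta\bigl(\ad(Z_{\tau(i)}^+)(E_{\tau(i)})\bigr)$ and shows that it decomposes as $\ad(Z_{\tau(i)}^+)(E_{\tau(i)})\otimes 1$ plus terms whose first tensor factor lies in $\mathcal{M}_X^+K_{\tau(i)}$ (the single occurrence of $E_{\tau(i)}$ having moved to the second tensor factor). After multiplying by $K_i^{-1}$ and using that $K_{\tau(i)}K_i^{-1}\in{U_\Theta^0}'$, the first piece combines with $F_i\otimes K_i^{-1}$ and $s_iK_i^{-1}\otimes K_i^{-1}$ to produce $B_i\otimes K_i^{-1}$, while the remaining pieces have first tensor factors in $\mathcal{M}_X^+{U_\Theta^0}'\subset\Bcs$; compare Lemma~\ref{lemma Delta(B_i)} for the case $\tau(i)=i$. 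The nontrivial step is verifying the cancellation that prevents ``partial'' cross terms with $E_{\tau(i)}$ on the left from surviving; this is where the actual work in Kolb's proof lies, and it is precisely the step your outline skips by asserting the false structural lemma.
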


Upon comparing (\ref{B_c,s def}) with (\ref{b_s def}) in the light of Remark \ref{Remark specialization}, it is immediately clear that \(\Bcs\) is a \(q\)-deformation of the algebra \(b_{\bs}\) under certain conditions on the parameters \(c_i\) and \(s_i\), and that it reduces to the latter under the specialization \(q\to 1\). The precise conditions are described in the following theorem.

\begin{theoremKolb}[{\cite[Theorems 10.8, 10.11]{Kolb-2014}}]
	\label{theorem specialization}
		Let \(\bc=(c_i)_{i\in I\setminus X}\) be a vector of parameters taking values in
		\begin{equation}
		\label{def C}
		\mathcal{C} = \{\bc\in(\K(q)^{\times})^{I\setminus X}: c_i = c_{\tau(i)}\ \mathrm{if}\ \tau(i)\neq i \ \mathrm{and}\ (\alpha_i,-w_X\tau(\alpha_i)) = 0\},
		\end{equation}
		where \(\tau\) and \(w_X\) are again interpreted as automorphisms of \(Q\). Let \(\bs = (s_i)_{i\in I\setminus X}\) be a vector of parameters with values in
		\begin{equation}
		\label{def S}
		\mathcal{S} = \{\bs \in \K(q)^{I\setminus X}: s_i \neq 0 \Rightarrow (i\in I_{ns}\ \mathrm{and}\ a_{ji}\in -2\N\setminus\{0\}, \forall j\in I_{ns}\setminus\{i\}) \},
		\end{equation}
		where
		\[
		I_{ns} = \{i\in I\setminus X: \tau(i) = i\ \mathrm{and}\ a_{ij} = 0, \forall j\in X \}.
		\]
		Moreover, let us assume that the vector \((\bc,\bs)\) is specializable, i.e.\ \(\lim_{q\to 1}(c_i) = 1\) for any \(i\in I\) and all \(c_i, s_i\) lie in the localization \(\K[q]_{(q-1)}\) of the polynomial ring \(\K[q]\) with respect to the ideal generated by \(q-1\). Then \(\Bcs\) reduces to the algebra \(b_{\bs}\) under the formal specialization \(q\to 1\) and is maximal with this property.
\end{theoremKolb}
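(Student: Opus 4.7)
The plan is a standard integral-form specialization argument. Let $A = \K[q]_{(q-1)}$ and let $\mathcal{U}_A$ denote the $A$-subalgebra of $\Uqgp$ generated by $E_i, F_i, K_i^{\pm 1}$ and $(K_i - K_i^{-1})/(q_i - q_i^{-1})$. As recalled in Remark \ref{Remark specialization}, this is the non-restricted integral form with $\mathcal{U}_A/(q-1)\mathcal{U}_A \cong U(\g')$ under $E_i \mapsto e_i$, $F_i \mapsto f_i$, $K_i^{\pm 1} \mapsto 1$, and $(K_i - K_i^{-1})/(q_i - q_i^{-1}) \mapsto h_i$. Under the specializability hypothesis, each parameter $c_i, s_i$ lies in $A$ with $c_i \equiv 1 \pmod{q-1}$. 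So the first goal is to verify that the generators (\ref{B_c,s def}) of $\Bcs$ all belong to $\mathcal{U}_A$ and that their images in $U(\g')$ are precisely the generators (\ref{b_s def}) of $b_{\bs}$.

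The only non-trivial case is the mixed generator $B_i = F_i + c_i\theta_q(F_iK_i)K_i^{-1} + s_iK_i^{-1}$ for $i \in I\setminus X$. I would unwind $\theta_q = \mathrm{Ad}(s(X,\tau))\circ T_{w_X}\circ \psi\circ \tau\circ \omega$ term by term on $F_iK_i$: the composition $\psi\circ\tau\circ\omega$ maps $F_iK_i \mapsto -E_{\tau(i)}$, and then Lusztig's integrality result \cite[Section 37]{Lusztig-1994} guarantees that $T_{w_X}$ preserves $\mathcal{U}_A$ and specializes to $\mathrm{Ad}(m_X)$ on $U(\g')$. The outer factor $\mathrm{Ad}(s(X,\tau))$ acts by a scalar in $\K^\times$ on each weight component, which passes directly to the classical side. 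The trailing $K_i^{-1}$ compensates the weight introduced by $K_i$ on the right of $F_i$, keeping $\theta_q(F_iK_i)K_i^{-1}$ inside $\mathcal{U}_A$ and forcing its classical limit to equal exactly $\theta(f_i)$ as in (\ref{theta(X,tau) def}). Combined with $c_i \mapsto 1$ and $s_iK_i^{-1} \mapsto s_i$, this gives $B_i \mapsto f_i + \theta(f_i) + s_i$. For the remaining generators: for $i \in X$ the pair $E_i, F_i$ specializes to $e_i, f_i$, and $[E_i, F_i]$ delivers $h_i$ via $(K_i-K_i^{-1})/(q_i-q_i^{-1})$; for $\beta \in Q^{\Theta}$, the same trick on $K_\beta$ recovers the $\theta$-fixed Cartan elements. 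Together these show that $\Bcs$ specializes to a subalgebra containing $b_{\bs}$, and a weight-graded dimension count shows the inclusion is an equality.

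For maximality, I would argue that the defining conditions of $\mathcal{C}$ and $\mathcal{S}$ are each forced by a separate compatibility. If $c_i \ne c_{\tau(i)}$ when $\tau(i) \ne i$ and $(\alpha_i,-w_X\tau(\alpha_i)) = 0$, then the coideal property of Proposition \ref{prop coideal subalgebra} fails to interact correctly with the symmetry relating $B_i$ and $B_{\tau(i)}$, and the specialized generators no longer satisfy the relations defining $b_{\bs}$. If $s_i \ne 0$ at an index outside $I_{ns}$, or at an index where some $a_{ji}$ violates $a_{ji}\in -2\N\setminus\{0\}$ for $j \in I_{ns}\setminus\{i\}$, then the inhomogeneous quantum Serre relation of Theorem \ref{theorem these are defining relations} picks up a term whose $q\to 1$ limit has no analogue inside $U(\g')$, again preventing the specialization from landing in $b_{\bs}$. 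In each excluded case one produces an explicit generator or relation witnessing the obstruction, showing that enlarging $\mathcal{C}\times \mathcal{S}$ destroys the specialization.

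The hard part is the precise bookkeeping inside the factorization $\theta_q = \mathrm{Ad}(s(X,\tau))\circ T_{w_X}\circ \psi\circ \tau\circ \omega$: one must check that $T_{w_X}$ restricted to the integral form specializes to $\mathrm{Ad}(m_X)$ without stray powers of $q$, and match the trailing $K_i^{-1}$ in $B_i$ against the $K_i^{-1}$ introduced by $\psi$ so that the resulting element has trivial Cartan weight and a clean $q\to 1$ limit. All other steps are essentially a careful term-by-term comparison against (\ref{theta(X,tau) def}) and (\ref{b_s def}).
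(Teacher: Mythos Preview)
This theorem is not proved in the present paper. It is stated as Theorem~\ref{theorem specialization} with the citation \cite[Theorems 10.8, 10.11]{Kolb-2014} and no proof is given here; it serves purely as background imported from Kolb's work. So there is no ``paper's own proof'' to compare your proposal against.

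That said, your sketch is broadly in the spirit of what \cite[Section 10]{Kolb-2014} does: work over the integral form $\mathcal{U}_A$, verify that each generator of $\Bcs$ lies in $\mathcal{U}_A$ and specializes to the corresponding generator of $b_{\bs}$, and then argue that the parameter constraints are necessary. Two points where your outline is thinner than what is actually required: first, the ``weight-graded dimension count'' you invoke to upgrade the inclusion to an equality is not a triviality---in \cite{Kolb-2014} this relies on the Iwasawa-type decomposition of $\Bcs$ and a careful comparison of filtrations, not a bare dimension match. Second, your maximality discussion conflates two separate mechanisms: the condition defining $\mathcal{C}$ is tied to the structure of $\Bcs$ as a coideal of the correct size (specifically to the identification $\Bcs\cap {U^0}' = {U_{\Theta}^0}'$), while the condition defining $\mathcal{S}$ is what ensures the defining relations of Theorem~\ref{theorem these are defining relations} specialize correctly. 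Your phrasing suggests both are about relations failing at $q\to 1$, which is not quite accurate for $\mathcal{C}$.
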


Although the assumption of specializability is required to obtain \(b_{\bs}\) as an exact limit of \(B_{\bc,\bs}\) for \(q\to 1\), it is still commonly accepted to view \(B_{\bc,\bs}\) as a quantum analog of \(b_{\bs}\) even if the latter condition is not fulfilled. Hence Proposition \ref{prop coideal subalgebra} suggests the following terminology.

\begin{definition}
	For \((\bc,\bs)\in\mathcal{C}\times \mathcal{S}\), the algebra \(B_{\bc,\bs}\) is called a quantum symmetric pair coideal subalgebra.
\end{definition}

Throughout the rest of this paper, we will fix a vector of parameters \((\bc,\bs)\in \mathcal{C}\times\mathcal{S}\) and work with the corresponding quantum symmetric pair coideal subalgebra \(\Bcs\).

\subsection{Kolb's projection technique}
\label{Subsection Kolb's projection technique}

In this section, we repeat some of the results obtained by Kolb in \cite{Kolb-2014}, which we will use in Section \ref{Section quantum Serre relations} to derive the defining relations of the quantum symmetric pair coideal subalgebras \(\Bcs\). For ease of notation, we will write \(\mathcal{M}_X^+\) and \({U_{\Theta}^0}'\) for the subalgebras of \(\Uqgp\) generated by the sets \(\{E_i: i\in X\}\) and \(\{K_{\beta}: \beta\in Q^{\Theta}\}\) respectively, and set \(B_j:=F_j\) for \(j\in X\). Let \(U^+\), \(U^-\) and \({U^0}'\) be the subalgebras of \(\Uqgp\) generated by \(\{E_i: i\in I\}\), \(\{F_i: i\in I\}\) and \(\{K_i^{\pm 1}: i\in I\}\) respectively. It was explained in \cite[Section 5]{Kolb-2014} that \(\Bcs\cap {U^0}' = {U_{\Theta}^0}'\) for \((\bc,\bs)\in \mathcal{C}\times\mathcal{S}\). Hence one can describe \(\Bcs\) as the subalgebra of \(\Uqgp\) generated by \(\{B_i:i\in I\}\cup\mathcal{M}_X^+ \cup{U_{\Theta}^0}'\). Furthermore, for any \(J\in I^m\), \(m\in \N \), we will write \(B_J\) for the product \(B_{j_1}B_{j_2}\dots B_{j_m} = \overrightarrow{\prod_{k=1}^m}B_{j_k}\). Let us also denote by \(\mathcal{J}_{i,j}\) the set of multi-indices given by
\[
\mathcal{J}_{i,j}=\big\{ (\underbrace{i,i,\dots,i}_{s\ \mathrm{times}}): s\leq 1-a_{ij}\big\} \cup \big\{(\underbrace{i,\dots,i}_{\ell\ \mathrm{times}},j,\underbrace{i,\dots,i}_{s-\ell\ \mathrm{times}}): s \leq-a_{ij},\ \ell\leq s \big\}.
\]
With this notation, one can write down the following theorem.

\begin{theoremKolb}[{\cite[Theorem 7.1]{Kolb-2014}}]
	\label{theorem these are defining relations}
	For any distinct \(i,j\in I\) there exist elements
	\[
	C_{ij}(\bc) \in \sum_{J\in \mathcal{J}_{i,j}}\mathcal{M}_X^+{U_{\Theta}^0}'B_J
	\]
	depending on the parameter vector \(\bc\), such that \(F_{ij}(B_i,B_j) = C_{ij}(\bc)\), or equivalently: \(F_{ij}(B_i,B_j)\) can be expressed as a polynomial in \(B_i\) and \(B_j\) of smaller total degree with coefficients in \(\mathcal{M}_X^+{U_{\Theta}^0}'\), possibly depending on \(\bc\) but not on \(\bs\). Moreover, the algebra \(B_{\bc,\bs}\) is abstractly defined by the relations
	\begin{align}
	\label{defining relations B_c,s 1}
	F_{ij}(B_i,B_j) = C_{ij}(\bc) & \ \mathrm{for} \ i\neq j\in I, \\
	\label{defining relations B_c,s 2}
	[E_i,B_j] = \delta_{ij}\frac{K_i-K_i^{-1}}{q_i-q_i^{-1}} & \ \mathrm{for}\ i\in X, j\in I,\\
	\label{defining relations B_c,s 3}
	K_{\beta}B_i = q^{-(\beta,\alpha_i)}B_iK_{\beta} & \ \mathrm{for}\ \beta\in Q^{\Theta}, i\in I,
	\end{align}
	together with the relations
	\begin{align*}
	K_{\beta}K_{\beta'} = K_{\beta'}K_{\beta} & \ \mathrm{for}\ \beta,\beta'\in Q^{\Theta}, \\
	F_{ij}(E_i,E_j) = 0 & \ \mathrm{for}\ i,j\in X, \\
	K_{\beta}E_i = q^{(\beta,\alpha_i)}E_iK_{\beta}& \ \mathrm{for}\ i\in X\ \mathrm{and}\ \beta\in Q^{\Theta}
	\end{align*}
	describing \(\mathcal{M}_X^+\) and \({U_{\Theta}^0}'\) that follow from (\ref{U_q(g) relations}) and (\ref{U_q(g) relations 3}).
\end{theoremKolb}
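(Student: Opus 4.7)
The plan is to attack the theorem in two stages: first establish the existence of $C_{ij}(\bc)$ with the stated structural form, and then promote these to defining relations by a triangular/PBW-type argument.

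For the existence of $C_{ij}(\bc)$, the starting point is the quantum Serre relation $F_{ij}(F_i,F_j)=0$ in $\Uqgp$. Substituting $B_i = F_i + c_i\theta_q(F_iK_i)K_i^{-1}+s_iK_i^{-1}$ (or $B_i = F_i$ for $i\in X$) into $F_{ij}(B_i,B_j)$ and performing a binary distributive expansion produces a sum of monomials; the contribution in which every generator comes from the $F_i,F_j$ piece vanishes identically. What remains has at least one factor coming from $c_i\theta_q(F_iK_i)K_i^{-1}$ or $s_iK_i^{-1}$, and these factors live in the ``upper'' part of the triangular decomposition $\Uqgp \cong U^-\otimes {U^0}'\otimes U^+$ because $\theta_q$ involves $T_{w_X}\circ\psi\circ\tau\circ\omega$ which sends each $F_i$ into $\mathcal{M}_X^+$ up to a known Cartan correction. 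One then uses the triangular decomposition to move all $U^+$ and ${U^0}'$ factors to the left of all $U^-$ factors, and rewrites the remaining $U^-$-monomials back in terms of the generators $B_j$ by iteratively solving $F_j = B_j - (\textrm{higher piece})$. The key ingredient is a projection operator onto $\mathcal{M}_X^+\, {U_\Theta^0}'\cdot U^-$ parallel to the remaining PBW directions, essentially the one sketched by Kolb, which makes the rewriting well-defined.

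For the abstract presentation, the plan is a universal-algebra argument. Let $\widetilde{B}$ be the algebra defined by generators and the listed relations, and let $\pi:\widetilde{B}\twoheadrightarrow \Bcs$ be the canonical surjection. To prove injectivity, I would construct a spanning set of $\widetilde{B}$ of the form $\mathcal{M}_X^+\cdot {U_\Theta^0}'\cdot\{B_{J}: J\text{ ``standard''}\}$ using only the defining relations: the relations (\ref{defining relations B_c,s 3}) let one pull $K_\beta$'s to the left, the $\mathcal{M}_X^+$--relations and (\ref{defining relations B_c,s 2}) let one pull $E_i$'s with $i\in X$ to the far left, and the inhomogeneous Serre relation (\ref{defining relations B_c,s 1}) reduces any monomial in the $B_i$ containing a ``bad'' substring to a sum of strictly lower-degree monomials in $B_i,B_j$ times elements of $\mathcal{M}_X^+{U^0_\Theta}'$, thanks to the containment $C_{ij}(\bc)\in \sum_{J\in\mathcal{J}_{i,j}}\mathcal{M}_X^+{U_\Theta^0}'B_J$. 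One then compares this spanning set to a known basis of $\Bcs$ (for instance the coideal PBW basis established by Kolb), and concludes that $\pi$ has trivial kernel.

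The main obstacle is the precise membership $C_{ij}(\bc)\in \sum_{J\in \mathcal{J}_{i,j}}\mathcal{M}_X^+{U_\Theta^0}'B_J$. The binary expansion produces a priori many nested commutators and Lusztig-twisted expressions whose raw form does not manifestly lie in the prescribed subspace; one has to combine the defining properties of an admissible pair (condition \ref{def admissible pair requirement 2} is what bounds the root supports, condition \ref{def admissible pair requirement 3} what prevents unwanted half-integer weights) with detailed commutation between $T_{w_X}(F_i)$ and $F_j$ to show that, after projection, only multi-indices $J$ of the form $(i,\dots,i)$ of length $\leq 1-a_{ij}$ or $(i,\dots,i,j,i,\dots,i)$ of total length $\leq -a_{ij}$ survive. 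The independence from $\bs$ is then a byproduct: every $s_iK_i^{-1}$ factor is absorbed into ${U_\Theta^0}'$ during the rewriting, and the calculation never distinguishes between different values of $\bs$. The rest of the paper's Section~\ref{Section quantum Serre relations} is in effect devoted to carrying out this obstacle explicitly and giving closed expressions for the resulting coefficients.
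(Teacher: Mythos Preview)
This statement is not proved in the present paper at all; it is quoted verbatim from \cite[Theorem~7.1]{Kolb-2014} and used as a black box. The paper's contribution is the \emph{explicit computation} of the polynomials $C_{ij}(\bc)$ whose existence Kolb established. Your two-stage outline (binary expansion plus triangular rewriting for existence, then PBW comparison for the presentation) is essentially the route Kolb takes in \cite{Kolb-2014}, so as a sketch of the cited proof it is on the right track.

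There is, however, a genuine gap in your argument for $\bs$-independence. You write that ``every $s_iK_i^{-1}$ factor is absorbed into ${U_\Theta^0}'$''. This is false: by the definition of $\mathcal{S}$ in \eqref{def S}, $s_i\neq 0$ forces $i\in I_{ns}$, i.e.\ $\tau(i)=i$ and $a_{ij}=0$ for all $j\in X$; then also $a_{ji}=0$ for all $j\in X$, so $w_X(\alpha_i)=\alpha_i$ and $-w_X\tau(\alpha_i)=-\alpha_i\neq\alpha_i$. Hence $\alpha_i\notin Q^\Theta$ and $K_i^{-1}\notin {U_\Theta^0}'$. The correct mechanism is visible in the projection approach: since $\Delta(s_iK_i^{-1})=s_iK_i^{-1}\otimes K_i^{-1}$, the $s_i$-piece of $\Delta(B_i)$ is entirely contained in the summand $B_i\otimes K_i^{-1}$ of Lemma~\ref{lemma Delta(B_i)}, so in the identity \eqref{starting expression} the parameter $s_i$ occurs only inside the generators $B_i,B_j$ and never in the scalar coefficients of the expansion. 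Your final sentence also mischaracterises Section~\ref{Section quantum Serre relations}: that section does not carry out the membership $C_{ij}(\bc)\in\sum_{J\in\mathcal J_{i,j}}\mathcal M_X^+{U_\Theta^0}'B_J$---this is taken as input from Kolb---but computes the coefficients explicitly once that membership is granted.
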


Our main goal in this paper will be to find explicit expressions for these lower degree polynomials \(C_{ij}(\bc)\), which, up to present, had not been written down in general. A few special cases had however already been treated by Kolb.

\begin{theoremKolb}[{\cite[equation (5.20), Theorem 7.3]{Kolb-2014}}]
	\label{theorem F_ij(B_i,B_j) = 0}
	For any \(i,j\in I\) such that either \(i\in X\) or \(\tau(i)\notin\{i,j\}\), one has
	\(F_{ij}(B_i,B_j) = C_{ij}(\bc) = 0\).
\end{theoremKolb}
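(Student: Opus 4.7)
I would prove the theorem by case analysis on the two alternative hypotheses, in each case reducing $F_{ij}(B_i,B_j)$ to combinations of the quantum Serre relations (\ref{U_q(g) relations 3}) and of Cartan-type commutations (\ref{U_q(g) relations})--(\ref{U_q(g) relations 2}), while exploiting the parameter constraints (\ref{def C}) and (\ref{def S}). Throughout, it is convenient to use the iterated $q$-commutator presentation (\ref{nested q-commutators induction}), since this is linear in its second argument and a polynomial of degree $1-a_{ij}$ in its first, which makes it tractable to distribute the three summands of $B_i = F_i + c_i\theta_q(F_iK_i)K_i^{-1} + s_iK_i^{-1}$ inside the expression.

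\smallskip

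\textbf{Case $i\in X$.} Here $B_i = F_i$, so by linearity in the second slot one has $F_{ij}(F_i, B_j) = F_{ij}(F_i, F_j) + F_{ij}(F_i, B_j - F_j)$. The first summand vanishes by (\ref{U_q(g) relations 3}). If $j\in X$ the second summand is zero; otherwise $B_j-F_j = c_j\theta_q(F_jK_j)K_j^{-1} + s_jK_j^{-1}$, and I treat the two pieces separately. For the $s_jK_j^{-1}$ piece, the constraint (\ref{def S}) together with $i\in X$ forces $a_{ij}=0$, so $1-a_{ij}=1$ and the Serre polynomial collapses to $[F_i,K_j^{-1}]=0$ by (\ref{U_q(g) relations}). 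For the $c_j\theta_q(F_jK_j)K_j^{-1}$ piece, I would unwind $\theta_q = \mathrm{Ad}(s(X,\tau))\circ T_{w_X}\circ \psi\circ \tau\circ \omega$ applied to $F_jK_j$ step by step, obtaining up to a scalar the element $T_{w_X}(E_{\tau(j)})K_j^{-1}$, and then use Lusztig's braid relations to transport the iterated $q$-commutator by $F_i\in U_q(\g_X)$ through $T_{w_X}^{-1}$ into a quantum Serre-type relation in $U^+\cdot{U^0}'$, which vanishes by (\ref{U_q(g) relations 3}).

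\smallskip

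\textbf{Case $\tau(i)\notin \{i,j\}$.} Here necessarily $i\in I\setminus X$, since $\tau(X)=X$ reduces $i\in X$ to the previous case. Writing $B_i = F_i + c_iX_i + s_iK_i^{-1}$ with $X_i$ proportional to $T_{w_X}(E_{\tau(i)})K_i^{-1}$ by the same unwinding, I would distributively expand (\ref{nested q-commutators induction}) into $3^{1-a_{ij}}$ terms according to the selection of $F_i$, $X_i$, or $K_i^{-1}$ in each of the $1-a_{ij}$ adjoint slots. The hypothesis $\tau(i)\notin\{i,j\}$ ensures that $E_{\tau(i)}$ lies at a weight disjoint from any constituent of $B_j$, so that after transport through $T_{w_X}^{-1}$ each family of terms containing at least one $X_i$ reassembles into a quantum Serre relation in $U^+\cdot{U^0}'$ for the pair $(\tau(i),k)$ with $k\in\{j,\tau(j)\}$, which vanishes by (\ref{U_q(g) relations 3}). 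The remaining $F_i$- and $K_i^{-1}$-only contributions collapse by an argument parallel to Case 1, using the $\mathcal{S}$-constraint and the $K$-commutation rules of (\ref{U_q(g) relations}).

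\smallskip

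\textbf{Main obstacle.} The principal difficulty is the combinatorial bookkeeping in Case 2: the distributive expansion produces many intertwined terms, and one must track the $q_i^m$-twists introduced by each $\mathrm{ad}_{q_i^m}$ operator, the $K$-factors accumulating through repeated commutation past $X_i$ and $K_i^{-1}$, and the alignment of weight subspaces when transporting through $T_{w_X}^{-1}$. The hypothesis $\tau(i)\notin\{i,j\}$ is used precisely to ensure that the Lusztig-image $T_{w_X}(E_{\tau(i)})$ does not produce any cancellations beyond those already implied by (\ref{U_q(g) relations 3}), so that no additional structural input is needed.
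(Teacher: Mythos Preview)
The paper does not prove this statement; it is quoted from \cite{Kolb-2014}. Kolb's argument for the case $\tau(i)\notin\{i,j\}$ (his Theorem 7.3) uses the projection machinery recalled in Subsection~\ref{Subsection Kolb's projection technique}: one applies formula~(\ref{starting expression}) and observes that the second tensor leg of every non-leading summand of $\Delta(B_i)$ carries a factor $E_{\tau(i)}$ or $E_k$ with $k\in X$, none of which can pair with any $F_i$ or $F_j$ (since $\tau(i)\neq i,j$ and $i\notin X$) to survive $\epsilon\circ P_{-\lambda_{ij}}$. This is structurally different from your direct distributive expansion.

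Your Case~1 outline is essentially workable, though the $\mathcal{S}$-constraint is not actually needed for the $s_jK_j^{-1}$ piece: the identity $\sum_{\ell=0}^{1-a_{ij}}(-1)^\ell q_i^{a_{ij}\ell}\begin{bmatrix}1-a_{ij}\\\ell\end{bmatrix}_{q_i}=0$ already forces $F_{ij}(F_i,K_j^{-1})=0$ for every $a_{ij}\leq 0$. The ``transport through $T_{w_X}^{-1}$'' step for the $\theta_q$-piece is vague, but since for $i\in X$ one can verify that $\theta_q(F_jK_j)K_j^{-1}$ has the same $K_i$-weight as $F_j$, a careful $\mathrm{ad}$-type argument does go through.

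Your Case~2 has a genuine gap. First, a minor point: when $\tau(i)\neq i$ the constraint~(\ref{def S}) forces $s_i=0$ outright, so there are no $K_i^{-1}$ terms to handle. More seriously, your central claim---that the families of mixed $F_i$/$X_i$ terms ``reassemble into a quantum Serre relation in $U^+\cdot{U^0}'$ for the pair $(\tau(i),k)$''---is not justified and is not what actually happens. The $3^{1-a_{ij}}$ (really $2^{1-a_{ij}}$) expanded terms do not regroup into Serre polynomials: the cross-terms mix $F_i$'s and $X_i$'s at arbitrary positions relative to $B_j$, and applying $T_{w_X}^{-1}$ does not convert such a mixture into $F_{\tau(i),k}(E_{\tau(i)},E_k)$. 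What can be made to work directly is a $q$-commutation argument: one checks that $F_iX_i=q_i^{-2}X_iF_i$ (using $i\notin X$ and $\tau(i)\neq i$), that $X_i$ $q$-commutes with each constituent of $B_j$ (using $\tau(i)\neq j$ and $\tau(i)\neq\tau(j)$), and then reorganises via a $q$-binomial expansion---but this is a different mechanism from the one you describe, and carrying it out requires nontrivial weight computations that your proposal does not supply. Kolb's projection argument bypasses all of this bookkeeping.
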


Another case was treated by Balagovi\'{c} and Kolb in \cite{Balagovic&Kolb-2015}. It requires us to introduce some more notation. We will denote by \(\mathrm{ad}\) the left adjoint action of \(\Uqg\) on itself: for every \(x,u\in\Uqg\) one has
\[
\mathrm{ad}(x)(u) = x_{(1)}uS(x_{(2)}),
\]
where we have used the Sweedler notation, i.e.\ \(\Delta(x) = \sum x_{(1)}\otimes x_{(2)}\). It is not to be confused with the adjoint map of the Kac-Moody algebra \(\g\), which we have introduced in (\ref{adjoint map def}) under the same notation. Recall also the notation \(T_{w_X}\) for the product of Lusztig automorphisms corresponding to a reduced expression of \(w_X\).

\begin{lemmaKolb}[{\cite[equation (4.4), Theorem 4.4]{Kolb-2014}}]
	\label{lemma Z_i^+}
	For any \(i\in I\setminus X\) there exists a monomial
	\begin{equation}
	\label{Z_i^+ def}
	Z_i^+ = E_{j_1}E_{j_2}\dots E_{j_r} \in\mathcal{M}_X^+,
	\end{equation}
	with \(j_1,\dots,j_r\in X\), such that
	\[
	T_{w_X}(E_i) = a_i \mathrm{ad}(Z_i^+)(E_i),
	\]
	for some \(a_i\in\K(q)\). Moreover, one has
	\[
	\theta_q(F_iK_i)=-v_i\ad(Z_{\tau(i)}^+)(E_{\tau(i)}),
	\]
	for some \(v_i\in\K(q)^{\times}\).
\end{lemmaKolb}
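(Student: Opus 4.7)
For the first statement, I would argue by induction on the length $\ell = \ell(w_X)$, fixing a reduced expression $w_X = s_{j_1}s_{j_2}\cdots s_{j_\ell}$ with $j_k \in X$ and hence $T_{w_X} = T_{j_1}\circ T_{j_2}\circ\cdots\circ T_{j_\ell}$. For the base case $\ell = 1$, Lusztig's explicit formula
\[
T_j(E_i) = \sum_{r+s = -a_{ji}}(-1)^r q_j^{-r} E_j^{(s)}E_iE_j^{(r)}
\]
is matched against the iterated adjoint action $\ad(E_j)^{-a_{ji}}(E_i)$. Since $\Delta(E_j) = E_j\otimes 1 + K_j\otimes E_j$ gives $\ad(E_j)(x) = E_jx - K_j x K_j^{-1}E_j$, iterating and using $K_jE_iK_j^{-1} = q_j^{a_{ji}}E_i$ reproduces the $q$-binomial sum above up to an explicit nonzero scalar in $\K(q)$. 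Hence one may take $Z_i^+ = E_j^{-a_{ji}}$ in the base case.

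For the inductive step, write $w_X = s_{j_1}w'$ with $\ell(w') = \ell - 1$, so by the inductive hypothesis $T_{w'}(E_i) = a'_i\,\ad(Z')(E_i)$ for some monomial $Z' \in \mathcal{M}_X^+$. Two observations then close the induction: (i) $\ad$ is multiplicative, $\ad(xy) = \ad(x)\circ\ad(y)$, as follows directly from the Sweedler formula $\ad(x)(u) = x_{(1)}uS(x_{(2)})$ together with coassociativity, so $\ad(Z')(E_i)$ is an iterated application of single $\ad(E_{j_k})$'s to $E_i$; (ii) $T_{j_1}$ preserves $\mathcal{M}_X^+$, because $s_{j_1}$ permutes the positive roots of $\Phi_X$, and in fact preserves the cyclic $\ad(\mathcal{M}_X^+)$-module generated by $E_i$ inside $U^+$, since this module coincides with the sum of weight spaces in $U^+$ corresponding to roots of the form $\alpha_i + \sum_{k\in X}n_k\alpha_k$ with $n_k \geq 0$, a set stable under $s_{j_1}$. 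Combining (i) and (ii) yields $T_{w_X}(E_i) = a_i\,\ad(Z_i^+)(E_i)$ for a monomial $Z_i^+ \in \mathcal{M}_X^+$ and some $a_i \in \K(q)$.

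For the second statement, I would evaluate $\theta_q = \mathrm{Ad}(s(X,\tau))\circ T_{w_X}\circ\psi\circ\tau\circ\omega$ step by step on $F_iK_i$. Computing in turn, $\omega(F_iK_i) = -E_iK_i^{-1}$, then $\tau$ sends this to $-E_{\tau(i)}K_{\tau(i)}^{-1}$, then $\psi$ produces $-E_{\tau(i)}K_{\tau(i)}K_{\tau(i)}^{-1} = -E_{\tau(i)}$, then $T_{w_X}$ yields $-a_{\tau(i)}\,\ad(Z_{\tau(i)}^+)(E_{\tau(i)})$ by the first part, and finally $\mathrm{Ad}(s(X,\tau))$ multiplies this weight vector by the nonzero scalar $s(X,\tau)(\mu)$, where $\mu$ is the weight of $\ad(Z_{\tau(i)}^+)(E_{\tau(i)})$. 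Setting $v_i = a_{\tau(i)}\,s(X,\tau)(\mu) \in \K(q)^\times$ gives the claim.

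The main obstacle is observation (ii) in the inductive step. Since $T_{j_1}$ is not a Hopf algebra automorphism, showing that it respects the $\ad(\mathcal{M}_X^+)$-module generated by $E_i$ cannot be handled by a purely formal coalgebra manipulation. The cleanest route is the weight-space argument above, which uses that the relevant weight spaces of $U^+$ are generated entirely by monomial $\ad(\mathcal{M}_X^+)$-images of $E_i$, so that stability under $T_{j_1}$ reduces to a root-combinatorial check.
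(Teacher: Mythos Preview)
This lemma is not proved in the present paper; it is quoted verbatim from \cite{Kolb-2014} (equation (4.4) and Theorem 4.4 there), so there is no in-paper argument to compare against. Your second part, the step-by-step evaluation of $\theta_q(F_iK_i)$, is correct and is exactly how one unwinds the definition once the first part is in hand.

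The first part, however, has a genuine gap in observation (ii). Two of the supporting claims are false as stated. First, the cyclic $\ad(\mathcal{M}_X^+)$-module generated by $E_i$ does \emph{not} in general coincide with the full sum of weight spaces $\bigoplus_{\gamma\in Q_X^+}U^+_{\alpha_i+\gamma}$: already when $a_{ji}=-1$ for some $j\in X$ the quantum Serre relation forces $\ad(E_j)^{2}(E_i)=0$, while $U^+_{\alpha_i+2\alpha_j}$ is nonzero. Second, the set $\{\alpha_i+\gamma:\gamma\in Q_X^+\}$ is \emph{not} stable under $s_{j_1}$: taking $\gamma=N\alpha_{j_1}$ with $N>\vert a_{j_1i}\vert$ already gives a counterexample. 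So the weight-space stability argument does not establish that $T_{j_1}$ preserves the $\ad$-module, and even if it did, you would still owe an explanation of why $T_{j_1}(\ad(Z')(E_i))$ is a scalar multiple of a \emph{single} monomial $\ad$-image rather than a linear combination.

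What actually closes the induction (and is essentially Kolb's argument) is to recognise that the $\ad(U_q(\g_X))$-module generated by $E_i$ is the simple finite-dimensional module of lowest weight $\alpha_i|_{\h_X}$: the lowest-weight property follows from $\ad(F_j)(E_i)=0$ for $j\in X$, and simplicity from the quantum Serre relations $\ad(E_j)^{1-a_{ji}}(E_i)=0$. In a simple module the extremal weight spaces are one-dimensional, and along a reduced expression $w_X=s_{j_1}\cdots s_{j_\ell}$ each intermediate vector $T_{j_{k+1}}\cdots T_{j_\ell}(E_i)$ is extremal; since the corresponding inner product $\langle s_{j_{k+1}}\cdots s_{j_\ell}(\alpha_i),\alpha_{j_k}^\vee\rangle\leq 0$ (the reflected coroot is positive in $\Phi_X^\vee$ and $i\notin X$), this vector is annihilated by $\ad(F_{j_k})$, and Lusztig's formula then gives $T_{j_k}$ acting as a nonzero scalar times $\ad(E_{j_k})^{m_k}$. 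Iterating yields the monomial $Z_i^+$ and a nonzero $a_i$ simultaneously.
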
 

For any \(i\in I\setminus X\) we may now define
\begin{equation}
\label{Z_i def}
\mathcal{Z}_i = -v_i \mathrm{ad}(Z_{\tau(i)}^+)(K_{\tau(i)}^2)K_{\tau(i)}^{-1}K_i^{-1},
\end{equation}
where \(Z_{\tau(i)}^+\) and \(v_i\) are as defined in Lemma \ref{lemma Z_i^+}. It follows immediately from (\ref{Z_i^+ def}) and the expression (\ref{Coproduct, counit, antipode def}) for \(\Delta(E_j)\) that \(\mathcal{Z}_i\) is a \(\K(q)\)-linear combination of elements of \(\mathcal{M}_X^+\), multiplied by \(K_{\tau(i)}K_i^{-1}\). For any \(i\in I\setminus X\) we have  \(K_{\tau(i)}K_i^{-1}\in\Bcs\cap {U^0}' ={U_{\Theta}^0}'\) by the requirement (\ref{def admissible pair requirement 2}) in Definition \ref{admissible pair def}, and hence \(\mathcal{Z}_i\) lies in \(\mathcal{M}_X^+{U_{\Theta}^0}'\).

Furthermore, we will use the notation
\[
(x;x)_m = \prod_{k=1}^m(1-x^k).
\]
This enables us to state the following theorem by Balagovi\'{c} and Kolb.

\begin{theoremKolb}[{\cite[Theorem 3.6]{Balagovic&Kolb-2015}}]
	\label{theorem Balagovic}
	For any \(i\in I\setminus X\) satisfying \(\tau(i) = j\neq i\) one has
	\[
	C_{ij}(\bc) = -\frac{1}{(q_i-q_i^{-1})^2}\left( 
	q_i^{a_{ij}-1}(q_i^2;q_i^2)_{1-a_{ij}}c_iB_i^{-a_{ij}}\mathcal{Z}_i+q_i(q_i^{-2};q_i^{-2})_{1-a_{ij}}c_jB_i^{-a_{ij}}\mathcal{Z}_j.
	\right)
	\]
\end{theoremKolb}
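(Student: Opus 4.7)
The plan is to compute $F_{ij}(B_i,B_j)$ directly and identify the surviving contributions modulo strictly lower-order terms in $\sum_{J\in\mathcal{J}_{i,j}}\mathcal{M}_X^+{U_\Theta^0}'B_J$. Since $\tau(i)=j\neq i$, neither $i$ nor $j$ lies in $I_{ns}$, so $s_i=s_j=0$ and
\[
B_i = F_i + c_i\theta_q(F_iK_i)K_i^{-1},\qquad B_j = F_j + c_j\theta_q(F_jK_j)K_j^{-1},
\]
with $\theta_q(F_iK_i)=-v_i\ad(Z_j^+)(E_j)$ and $\theta_q(F_jK_j)=-v_j\ad(Z_i^+)(E_i)$ by Lemma \ref{lemma Z_i^+}. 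First I would expand $F_{ij}(B_i,B_j)$ by binary distributivity, writing it as a sum over $2^{2-a_{ij}}$ choices of ``$F$-part'' versus ``$\theta_q$-part'' in each of the $2-a_{ij}$ factors. The all-$F$ contribution vanishes by the quantum Serre relation $F_{ij}(F_i,F_j)=0$.

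The next step is to apply Kolb's projection technique from Subsection \ref{Subsection Kolb's projection technique} together with the weight-grading induced by (\ref{defining relations B_c,s 3}). Most of the remaining binary-expansion terms either have strictly smaller degree in the $B_k$'s (and so are absorbed into $C_{ij}(\bc)$'s lower-order freedom) or carry the wrong $Q^\Theta$-weight to contribute at maximal degree. The only surviving configurations are (a) the one in which only the $B_j$-factor contributes its $\theta_q$-part, producing a term proportional to $c_j B_i^{-a_{ij}}\mathcal{Z}_j$; and (b) those in which exactly one $B_i$-factor contributes its $\theta_q$-part while the remaining $B_i$'s and the $B_j$-factor contribute their $F$-parts, where commuting the resulting $E_j$ past $F_j$ via (\ref{U_q(g) relations 2}) releases a factor of $(K_j-K_j^{-1})/(q_j-q_j^{-1})$ and, modulo lower-order terms, produces a multiple of $c_iB_i^{-a_{ij}}\mathcal{Z}_i$.

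The computational heart is the evaluation of two $q$-binomial sums, one for each family of surviving terms. Each reduces, after commuting the $K$-factors to one side via (\ref{U_q(g) relations}), to a sum of the form $\sum_{\ell=0}^{1-a_{ij}}(-1)^\ell\begin{bmatrix}1-a_{ij}\\\ell\end{bmatrix}_{q_i}q_i^{c\ell}$ for a suitable integer $c$ depending on the case, which evaluates to a $q$-Pochhammer symbol by the $q$-binomial theorem $\sum_{\ell=0}^N(-1)^\ell\begin{bmatrix}N\\\ell\end{bmatrix}_q q^{\ell(\ell-1)/2}x^\ell=(x;q)_N$. With the appropriate shift, case (a) yields $(q_i^{-2};q_i^{-2})_{1-a_{ij}}$, while case (b) yields $(q_i^{2};q_i^{2})_{1-a_{ij}}$ together with the extra prefactor $q_i^{a_{ij}-1}$ arising from the $K_j$ commutation.

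The main obstacle is the bookkeeping: one must carefully verify that all the error terms generated along the way lie in the lower-degree ideal $\sum_{J\in\mathcal{J}_{i,j}}\mathcal{M}_X^+{U_\Theta^0}'B_J$. This requires combining the quantum Serre relations in $\mathcal{M}_X^+$ (which control the $\ad(Z_k^+)$ factors), the relations (\ref{defining relations B_c,s 2})--(\ref{defining relations B_c,s 3}), and the fact that $K_{\tau(i)}K_i^{-1}\in{U_\Theta^0}'$ guaranteed by requirement (\ref{def admissible pair requirement 2}) of Definition \ref{admissible pair def}. Once this bookkeeping is complete and the definition (\ref{Z_i def}) of $\mathcal{Z}_i$ is substituted, the two surviving $q$-Pochhammer coefficients combine with the constants $v_i,v_j$ to give precisely the stated expression with prefactor $-1/(q_i-q_i^{-1})^2$.
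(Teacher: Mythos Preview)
The paper does not prove this statement: Theorem~\ref{theorem Balagovic} is quoted without proof from \cite[Theorem~3.6]{Balagovic&Kolb-2015}, so there is no in-paper argument to compare your proposal against. What the present paper does instead is develop the projection-plus-binary-expansion machinery for the complementary Cases~1 and~2 (where $\tau(i)=i$), treating the case $\tau(i)=j\neq i$ as already settled by Balagovi\'c--Kolb.

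Your outline is in the right spirit and is broadly consistent with how \cite{Balagovic&Kolb-2015} proceeds: the projection technique isolates the maximal-degree contributions, and the two surviving families---one from the $\theta_q$-part of $B_j$ alone, one from a single $\theta_q$-part among the $B_i$'s together with the commutator $[E_j,F_j]$---are indeed what produce the $\mathcal{Z}_j$- and $\mathcal{Z}_i$-terms respectively. The evaluation via a $q$-binomial sum collapsing to a $q$-Pochhammer symbol is also the correct endgame. That said, your sketch is still far from a proof: the phrase ``absorbed into $C_{ij}(\bc)$'s lower-order freedom'' is misleading, since $C_{ij}(\bc)$ has no free parameters---every term on the right-hand side must be accounted for exactly. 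Making this precise requires, as in \cite{Balagovic&Kolb-2015}, a careful analysis of $(\mathrm{id}\otimes P_{-\lambda_{ij}})\Delta$ applied to the expansion rather than a direct degree argument, and the bookkeeping of the $\ad(Z_k^+)$-factors inside $\mathcal{M}_X^+$ (when $X\neq\emptyset$) is nontrivial.
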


By Theorems \ref{theorem F_ij(B_i,B_j) = 0} and \ref{theorem Balagovic}, it only remains to compute \(C_{ij}(\bc)\) in 2 cases, namely
\begin{description}
	\item[Case 1] \(i\in I\setminus X\), \(j\in I\setminus X\) and \(\tau(i) = i\),
	\item[Case 2] \(i\in I\setminus X\), \(j\in X\) and \(\tau(i) = i\).
\end{description}
These cases turn out to be remarkably complicated. In \cite{Kolb-2014} and \cite{Balagovic&Kolb-2015}, explicit calculations have led to expressions for \(C_{ij}(\bc)\) for \(a_{ij}\in\{0,-1,-2,-3\}\) in Case 1 and for \(a_{ij}\in\{0,-1,-2\}\) in Case 2, but no attempt has been made to write down relations valid without restrictions on \(a_{ij}\). In Section \ref{Section quantum Serre relations}, we will derive such relations for both cases. As could be expected from the above mentioned calculations, these expressions will be rather intricate, but nevertheless easily computable, as they involve only finite sums and products of elements of \(\K(q)\).

The key tool to obtain such relations is the projection \(P_{-\lambda_{ij}}\) introduced by Kolb. The classical triangular decomposition for quantum groups can be deformed to 
\begin{equation}
\label{triangular decomposition deformed}
\Uqgp \cong U^+\otimes {U^0}'\otimes S(U^-),
\end{equation}
where the isomorphism is given by multiplication, and consequently
\begin{equation}
\label{decomposition with P}
\Uqgp = \bigoplus_{\beta\in Q}U^+K_{\beta}S(U^-).
\end{equation} 
Let 
\begin{equation}
\label{P_lambda def}
P_{-\lambda_{ij}}: U_q(\g')\to U^+K_{-\lambda_{ij}}S(U^-)
\end{equation} 
denote the corresponding projection with respect to the decomposition (\ref{decomposition with P}), where 
\begin{equation}
\label{lambda_ij def}
\lambda_{ij} = (1-a_{ij})\alpha_i+\alpha_j\in Q.
\end{equation}
Then one can prove the following statements.

\begin{lemmaKolb}[{\cite[equation (5.14)]{Kolb-2014}}]
	\label{lemma (5.14)}
	\(P_{-\lambda_{ij}}\) is a homomorphism of left \(\Uqgp\)-comodules:
	\[
	(\Delta\circ P_{-\lambda_{ij}})(v) = (\mathrm{id}\otimes P_{-\lambda_{ij}})\Delta(v),
	\]
	for any \(v\in U_q(\g')\).
\end{lemmaKolb}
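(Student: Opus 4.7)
The plan is to exploit the structure of the decomposition (\ref{decomposition with P}) directly: the projection $P_{-\lambda_{ij}}$ simply reads off the middle $K$-factor in the triangular form $u^+K_\beta u^-$, and I want to show that $\Delta$ preserves this middle factor on the right-hand tensor slot. More precisely, the intermediate claim to establish is that for every $\beta\in Q$,
\[
\Delta\bigl(U^+K_\beta S(U^-)\bigr)\subseteq \Uqgp\otimes\bigl(U^+K_\beta S(U^-)\bigr).
\]

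To prove this inclusion, I would first check two simpler ``right-tensor-factor'' inclusions, namely $\Delta(U^+)\subseteq \Uqgp\otimes U^+$ and $\Delta(S(U^-))\subseteq \Uqgp\otimes S(U^-)$. The first is immediate from $\Delta(E_i)=E_i\otimes 1+K_i\otimes E_i$, whose right tensor factors both lie in $U^+$; since the right tensor factor of a product of two tensors is the product of the right factors and $U^+$ is a subalgebra, the property propagates to all of $U^+$ by induction on word length in the $E_i$. The second inclusion rests on the identity $\Delta(S(F_i))=S(F_i)\otimes 1+K_i\otimes S(F_i)$, which one obtains by combining $S(F_i)=-F_iK_i$ with the formulas in (\ref{Coproduct, counit, antipode def}) for $\Delta(F_i)$ and $\Delta(K_i)$; the same propagation argument then closes it up on the subalgebra $S(U^-)$. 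Combining these two inclusions with $\Delta(K_\beta)=K_\beta\otimes K_\beta$, for any $v=u^+K_\beta u^-$ every summand of $\Delta(v)=\Delta(u^+)(K_\beta\otimes K_\beta)\Delta(u^-)$ has right tensor factor of the form $y\,K_\beta\,w$ with $y\in U^+$ and $w\in S(U^-)$, hence lying in $U^+K_\beta S(U^-)$.

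With the intermediate claim in hand, the lemma follows by linearity: writing an arbitrary $v\in\Uqgp$ as $v=\sum_\beta P_\beta(v)$ according to (\ref{decomposition with P}), each term $\Delta(P_\beta(v))$ has right tensor factor in $U^+K_\beta S(U^-)$, so applying $\mathrm{id}\otimes P_{-\lambda_{ij}}$ to $\Delta(v)$ annihilates every summand with $\beta\neq -\lambda_{ij}$ and retains exactly $\Delta(P_{-\lambda_{ij}}(v))$. I do not foresee a substantive obstacle; the argument is essentially bookkeeping against the direct-sum decomposition (\ref{decomposition with P}). The only spot deserving care is verifying $\Delta(S(F_i))=S(F_i)\otimes 1+K_i\otimes S(F_i)$, since the coproduct on $F_i$ places $K_i^{-1}$ in the opposite tensor slot and one must check that multiplying by $\Delta(K_i)$ cleans this up into the same shape as $\Delta(E_i)$, which is the structural reason the projection onto the middle $K$-slot respects the coproduct on the right.
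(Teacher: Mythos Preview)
Your argument is correct. The paper does not prove this lemma; it merely cites it from \cite[equation (5.14)]{Kolb-2014}. Your proof is the natural one: showing that each direct summand $U^+K_\beta S(U^-)$ is a left $\Uqgp$-subcomodule via the explicit coproduct formulas, so that the projection $P_{-\lambda_{ij}}$ is automatically a comodule map. The computation $\Delta(S(F_i))=S(F_i)\otimes 1+K_i\otimes S(F_i)$ is indeed the key check, and you have it right.
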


\begin{propositionKolb}[{\cite[Proposition 5.16]{Kolb-2014}}]
	\label{prop 5.16 Kolb}
	For any distinct \(i,j\in I\) one has 
	\[
	P_{-\lambda_{ij}}\left(F_{ij}(B_i,B_j)\right) = 0.
	\]
\end{propositionKolb}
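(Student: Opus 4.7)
The plan is to reduce $P_{-\lambda_{ij}}(F_{ij}(B_i,B_j))$ to the pure-$F$ expression $F_{ij}(F_i,F_j)$, which vanishes by the quantum Serre relation (\ref{U_q(g) relations 3}) in $\Uqgp$. To begin, decompose each generator as $B_k=F_k+Y_k$, where $Y_k=c_k\theta_q(F_kK_k)K_k^{-1}+s_kK_k^{-1}$ for $k\in I\setminus X$ and $Y_k=0$ otherwise. By Lemma \ref{lemma Z_i^+} one has $\theta_q(F_kK_k)\in U^+$, so $Y_k\in U^+K_{-\alpha_k}\subset U^+K_{-\alpha_k}S(U^-)$; and since $S(F_k)=-F_kK_k$, a short computation gives $F_k=-q_k^{-2}K_{-\alpha_k}S(F_k)\in U^+K_{-\alpha_k}S(U^-)$. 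Hence each $B_k$ lies entirely inside the single summand $U^+K_{-\alpha_k}S(U^-)$ of the decomposition (\ref{decomposition with P}).

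Next, I would establish a filtration lemma. The key identity is that an element $uK_\beta v$ with $u\in U^+$ and $v\in U^-$ of weight $-\nu$ can be rewritten as $uK_{\beta-\nu}S(\widetilde v)$ for some $\widetilde v\in U^-$, so it lies in $U^+K_{\beta-\nu}S(U^-)$. Combined with the commutation rule $[F_l,E_m]=\delta_{lm}(K_l-K_l^{-1})/(q_l-q_l^{-1})$, one checks inductively that a product $x_1\cdots x_n$ with each $x_s\in U^+K_{\beta_s}S(U^-)$ lies in $\bigoplus_{\gamma\geq \sum_s\beta_s}U^+K_\gamma S(U^-)$, where the ``excess'' $\gamma-\sum\beta_s\in Q^+$ comes precisely from $[F_l,E_m]$-contractions during normal-ordering. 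Applied to the monomial $B_i^{1-a_{ij}-m}B_jB_i^m$, this places it in $\bigoplus_{\gamma\geq -\lambda_{ij}}U^+K_\gamma S(U^-)$, and its $K_{-\lambda_{ij}}$-component is captured exactly by those orderings in which no $[F_l,E_m]$-contraction is applied.

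In this minimal stratum each $Y_k$-factor (being of the form $uK_k^{-1}$ with $u\in U^+$) slides past any neighboring $F$- and $K$-letters producing only scalar $q$-powers, while the $F$-letters are merely rearranged modulo $q$-powers determined by the bilinear form $(\cdot,\cdot)$. Collecting decorations, $P_{-\lambda_{ij}}(B_i^{1-a_{ij}-m}B_jB_i^m)$ takes the shape of a sum of $Y$-decorations times the ordered monomial $F_i^{1-a_{ij}-m}F_jF_i^m$. Summing over $m$ with the Serre weights $(-1)^m\begin{bmatrix}1-a_{ij}\\m\end{bmatrix}_{q_i}$, the $q$-decorations factor out of the $Y$-free piece, which telescopes to a scalar multiple of $F_{ij}(F_i,F_j)=0$.

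The principal obstacle is the careful bookkeeping of $q$-power decorations in the last step, ensuring that the Serre coefficients combine compatibly across the various positions of $Y$-factors within the monomial. A cleaner alternative, likely matching Kolb's actual approach, is to exploit Lemma \ref{lemma (5.14)} together with the coideal property $\Delta(\Bcs)\subseteq\Bcs\otimes\Uqgp$ from Proposition \ref{prop coideal subalgebra}: iteratively applying $(\mathrm{id}\otimes P_{-\lambda_{ij}})\circ\Delta$ to $F_{ij}(B_i,B_j)$ and exploiting the explicit form of $\Delta(B_k)$ reduces the problem to verifying the quantum Serre relation for $F_i,F_j$ inside $\Uqgp$, which is (\ref{U_q(g) relations 3}).
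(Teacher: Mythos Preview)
The paper does not supply a proof of this proposition; it is quoted verbatim from \cite[Proposition~5.16]{Kolb-2014}. So there is no in-paper argument to compare against, and I assess your proposal on its own.

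Your filtration claim is correct in its weak form: a product of factors from $U^+K_{\beta_s}S(U^-)$ lands in $\bigoplus_{\gamma\ge\sum_s\beta_s}U^+K_\gamma S(U^-)$. The sharpening you need, however --- that the bottom stratum $\gamma=\sum_s\beta_s$ is attained \emph{only} when no $[E_l,F_l]$-contraction is performed --- is false. Reordering $S(F_l)E_l$ gives, besides the swapped term $q_l^{2}E_lS(F_l)$ and a $K_l^{2}$-term that does raise $\gamma$, a \emph{scalar} piece $-q_l^{2}/(q_l-q_l^{-1})$ that does \emph{not} raise $\gamma$. Hence contractions do feed the minimal stratum. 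This already invalidates step~4: when $Y_i$ contains $E_i$ (as it does for $\tau(i)=i$), the factor $Y_i$ does not ``slide past'' $F_i$ producing merely a $q$-power in that stratum. Concretely, in the split case one checks that each of $F_iF_j$, $F_iY_j$, $Y_iF_j$, $Y_iY_j$ lies in $U^+K_{-\alpha_i-\alpha_j}S(U^-)$, so $P_{-\alpha_i-\alpha_j}(B_iB_j)=B_iB_j\neq F_iF_j$; for the triple product $B_i^2B_j$ the $K_i^{-1}$-half of $[E_i,F_i]$ likewise contributes at $\gamma=-\lambda_{ij}$. Thus $P_{-\lambda_{ij}}(B_J)$ is not of the shape ``$Y$-decorations times $F_i^{1-a_{ij}-m}F_jF_i^{m}$'', and the advertised telescoping to $F_{ij}(F_i,F_j)=0$ does not follow.

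Your proposed alternative via Lemma~\ref{lemma (5.14)} is not a proof as stated either: applying $(\epsilon\otimes\mathrm{id})$ to that identity just returns $P_{-\lambda_{ij}}=P_{-\lambda_{ij}}$, and in the paper Proposition~\ref{prop 5.16 Kolb} is used as \emph{input} to obtain (\ref{term is 0}), not the other way round. Kolb's actual argument in \cite{Kolb-2014} controls more than the $K$-label $\beta$ alone: one needs to track simultaneously the $S(U^-)$-weight $\nu$ inside $U^+K_{-\lambda_{ij}}S(U^-_{-\nu})$. The contraction pieces that pollute the minimal $\beta$-stratum all lower $\nu$ strictly, so the unique contribution with $(\beta,\nu)=(-\lambda_{ij},\lambda_{ij})$ is the pure-$F$ term $F_{ij}(F_i,F_j)=0$; the remaining $\nu<\lambda_{ij}$ pieces are then handled separately. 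Your outline would need a refinement of this kind to close the gap.
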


Combining Lemma \ref{lemma (5.14)}, Proposition \ref{prop 5.16 Kolb} and the fact that \(\Delta\) is an algebra morphism, we find that
\begin{align}
\label{term is 0}
\begin{split}
(\mathrm{id}\otimes P_{-\lambda_{ij}})(F_{ij}(\Delta(B_i),\Delta(B_j))) &= (\mathrm{id}\otimes P_{-\lambda_{ij}})\Delta(F_{ij}(B_i,B_j)) \\ & = (\Delta\circ P_{-\lambda_{ij}})(F_{ij}(B_i,B_j)) = 0.
\end{split}
\end{align}
Since \(K_{-\lambda_{ij}}\) is invariant under \(P_{-\lambda_{ij}}\) and sent to \(1\) by the counit \(\epsilon\), the expression (\ref{term is 0}) asserts 
\begin{equation}
\label{starting expression}
F_{ij}(B_i,B_j) = C_{ij}(\bc) = (\mathrm{id}\otimes \epsilon)(\mathrm{id}\otimes P_{-\lambda_{ij}})\left(F_{ij}(B_i,B_j)\otimes K_{-\lambda_{ij}}-F_{ij}(\Delta(B_i),\Delta(B_j))\right),
\end{equation}
where we identify \(\Uqgp\) with \(\K\otimes\Uqgp\).
Our main purpose in Section \ref{Section quantum Serre relations} will be to expand the right-hand side of (\ref{starting expression}) as a polynomial in \(B_J\), \(J\in\mathcal{J}_{i,j}\), with coefficients in \(\mathcal{M}_X^+{U_{\Theta}^0}'\). To do so, we will need an expression for the \(\Delta(B_i)\) and \(\Delta(B_j)\) in (\ref{starting expression}). These follow from the following lemma.

\begin{lemmaKolb}[{\cite[Lemma 7.7]{Kolb-2014}}]
	\label{lemma Delta(B_i)}
	Let \(i\in I\setminus X\) be such that \(\tau(i) = i\) and \(j\in X\). Then there exists an element \(\mathcal{W}_{ij}\in\mathcal{M}_X^+\), independent of \(\bc\), such that
	\begin{equation}
	\label{Delta(B_i) expression}
	\Delta(B_i) = B_i\otimes K_i^{-1} + 1\otimes F_i + c_i\mathcal{Z}_i\otimes E_iK_i^{-1} + c_i\mathcal{W}_{ij}K_j\otimes(E_jE_i-q_i^{a_{ij}}E_iE_j)K_i^{-1}+\Upsilon_i,
	\end{equation}
	for some 
	\[
	\Upsilon_i\in\mathcal{M}_X^+{U_{\Theta}^0}'\otimes \widehat{U^+_i}K_i^{-1},
	\]
	where \(\widehat{U^+_i} = \{u\in \mathcal{M}_X^+E_i\mathcal{M}_X^+: \exists \gamma\in Q, \gamma>\alpha_i, \gamma\neq\alpha_i+\alpha_j: u\in\Uqgp_{\gamma}\} \).
\end{lemmaKolb}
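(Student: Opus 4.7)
My plan is to apply $\Delta$ directly to the defining expression
\[B_i = F_i + c_i\,\theta_q(F_iK_i)K_i^{-1} + s_iK_i^{-1}\]
and decompose the outcome according to the weight of the right tensor factor. Since $\Delta(F_i) = F_i\otimes K_i^{-1} + 1\otimes F_i$ and $\Delta(K_i^{-1}) = K_i^{-1}\otimes K_i^{-1}$ are immediate, the nontrivial work reduces to computing $\Delta(\theta_q(F_iK_i))(K_i^{-1}\otimes K_i^{-1})$. Invoking Lemma \ref{lemma Z_i^+} together with $\tau(i) = i$, I would substitute $\theta_q(F_iK_i) = -v_i\,\ad(Z_i^+)(E_i)$ with $Z_i^+ = E_{j_1}\cdots E_{j_r}\in\mathcal{M}_X^+$; then $\ad(Z_i^+)(E_i)\in U^+$ is a polynomial in which each monomial contains exactly one factor of $E_i$.

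I would next compute $\Delta(\ad(Z_i^+)(E_i))$ by expanding $\Delta(E_\ell) = E_\ell\otimes 1 + K_\ell\otimes E_\ell$ at each position in each monomial, indexing the resulting terms by the subset $S$ of positions at which the second option is chosen: the right factor is then $\prod_{\ell\in S}E_\ell$, while the left factor is a product of $E_\ell$'s for $\ell\notin S$ and $K_\ell$'s for $\ell\in S$. Sorting by the weight of the right factor, one would match the four explicit summands of the claimed formula: right weight $0$ reassembles, together with the contributions of $\Delta(F_i)$ and $\Delta(s_iK_i^{-1})$ at the same right weight, to $B_i\otimes K_i^{-1}$; right weight $-\alpha_i$ yields $1\otimes F_i$; right weight $\alpha_i$, obtained when $S$ contains only the $E_i$ position, matches $c_i\mathcal{Z}_i\otimes E_iK_i^{-1}$ upon identifying the resulting sum with $\mathcal{Z}_i = -v_i\,\ad(Z_i^+)(K_i^2)K_i^{-2}$; and right weight $\alpha_i+\alpha_j$, obtained when $S$ additionally contains some position $k$ with $j_k = j$, produces a right factor proportional to $E_jE_i - q_i^{a_{ij}}E_iE_j$, defining $\mathcal{W}_{ij}\in\mathcal{M}_X^+$ (independent of $\bc$ as the expansion only involves $Z_i^+$ and $v_i$). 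All remaining right weights $\gamma > \alpha_i$ with $\gamma\neq\alpha_i+\alpha_j$ feed into $\Upsilon_i$: the left factor lies in $\mathcal{M}_X^+{U_{\Theta}^0}'$ since $\alpha_{j_k}\in Q^{\Theta}$ for $j_k\in X$ and the excess $K_i$ cancels against $K_i^{-1}$, while the right factor lies in $\widehat{U_i^+}K_i^{-1}$.

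The main obstacle will be the two key identities that pin down the explicit summands, together with the cancellation of all right-weight pieces whose weight lies in $\Z\Pi_X\setminus\{0\}$ (such terms, if nonzero, would produce left factors containing $E_i$ and violate the claimed form of $\Upsilon_i$). Both rest on the Leibniz-like behavior of the adjoint action and are naturally handled by induction on the length $r$ of $Z_i^+$: the $\alpha_i$-piece identity stems from the formal skew derivation of $U^+$ that extracts the unique $E_i$ factor, whose action on $\ad(Z_i^+)(E_i)$ reproduces $\ad(Z_i^+)(K_i^2)K_i^{-2}$, while the $(\alpha_i+\alpha_j)$-piece identity comes directly from $\ad(E_j)(u) = E_ju - q^{(\alpha_j,\mathrm{wt}(u))}uE_j$, which manifests the $q$-commutator $E_jE_i - q_i^{a_{ij}}E_iE_j$ on the right tensor factor when $K_j\otimes E_j$ is selected at the outermost $E_j$ position. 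The cancellation of the undesired right-weight pieces follows from the very same quantum commutation relations and can be verified by a parallel induction on $r$.
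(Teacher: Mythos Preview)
Your approach is correct and coincides with what the paper indicates. The paper does not give a self-contained proof of this lemma: it is quoted from \cite[Lemma~7.7]{Kolb-2014}, and the only additional justification offered is the remark immediately following the statement, namely that the slightly sharpened description of $\Upsilon_i$ ``is readily verified upon computing $\Delta(\ad(Z_i^+)(E_i))$''. That computation---expanding each $\Delta(E_\ell)$, sorting by the weight of the right tensor leg, and matching the weight-$0$, $\alpha_i$ and $\alpha_i+\alpha_j$ pieces to $B_i\otimes K_i^{-1}$, $c_i\mathcal{Z}_i\otimes E_iK_i^{-1}$ and the $\mathcal{W}_{ij}$-term respectively---is exactly what you propose, including the inductive handling (on the length of $Z_i^+$) of the two identifications and of the vanishing of the pieces with right weight in $Q_X^+\setminus\{0\}$.
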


Note that the formulation of this lemma is somewhat stronger than the original one in \cite{Kolb-2014}, but one readily verifies the correctness of this extra restriction on \(\Upsilon_i\) upon computing \(\Delta(\ad(Z_i^+)(E_i))\).

Finally, let us note that the following relations follow immediately from (\ref{defining relations B_c,s 2})--(\ref{defining relations B_c,s 3}).

\begin{lemmaKolb}
	\label{lemma commutation B_i, Z_i, W_ij}
	Let \(i\in I\setminus X\) be such that \(\tau(i) = i\), then for any \(j\in I\setminus X\) one has
	\[
	[B_j,\mathcal{Z}_i] = 0,
	\]
	whereas for \(j\in X\) one has
	\[
	B_i\mathcal{W}_{ij}K_j = q_i^{a_{ij}}\mathcal{W}_{ij}K_jB_i.
	\]
\end{lemmaKolb}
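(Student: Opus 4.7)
The plan is to derive both identities by combining the explicit description of $\mathcal{Z}_i$ and $\mathcal{W}_{ij}$ as elements of $\mathcal{M}_X^+$ (possibly multiplied by an element of ${U_{\Theta}^0}'$) with the two cited relations (\ref{defining relations B_c,s 2}) and (\ref{defining relations B_c,s 3}). The starting observation for both parts is that the relation $[E_k,B_\ell]=\delta_{k\ell}(K_k-K_k^{-1})/(q_k-q_k^{-1})$, valid for $k\in X$ and $\ell\in I$, collapses to $[E_k,B_\ell]=0$ as soon as $\ell\in I\setminus X$, because then $\delta_{k\ell}=0$ automatically. By the Leibniz rule this lifts to $[P,B_\ell]=0$ for every $P\in\mathcal{M}_X^+$ and every $\ell\in I\setminus X$.

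For the first identity, I would recall the remark immediately following (\ref{Z_i def}), which states that $\mathcal{Z}_i$ is a $\K(q)$-linear combination of elements of $\mathcal{M}_X^+$ multiplied by $K_{\tau(i)}K_i^{-1}$. The hypothesis $\tau(i)=i$ collapses this factor to $1$, so in fact $\mathcal{Z}_i\in\mathcal{M}_X^+$. Applying the preceding paragraph with $\ell=j\in I\setminus X$ then gives $[B_j,\mathcal{Z}_i]=0$ directly.

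For the second identity, Lemma \ref{lemma Delta(B_i)} already provides $\mathcal{W}_{ij}\in\mathcal{M}_X^+$, so applying the same principle with $\ell=i\in I\setminus X$ yields $B_i\mathcal{W}_{ij}=\mathcal{W}_{ij}B_i$. It then suffices to commute $B_i$ past $K_j$, for which I would invoke (\ref{defining relations B_c,s 3}) with $\beta=\alpha_j$. The one small verification required is that $\alpha_j\in Q^{\Theta}$ whenever $j\in X$: using requirement (\ref{def admissible pair requirement 2}) of Definition \ref{admissible pair def}, which gives $\alpha_{\tau(j)}=-w_X(\alpha_j)$ for $j\in X$, one finds $-w_X\tau(\alpha_j)=-w_X(\alpha_{\tau(j)})=w_X^2(\alpha_j)=\alpha_j$, so $K_j\in {U_{\Theta}^0}'$. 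Then (\ref{defining relations B_c,s 3}) produces $K_jB_i=q^{-(\alpha_j,\alpha_i)}B_iK_j=q_i^{-a_{ij}}B_iK_j$, and combining with the previous commutation delivers the desired $B_i\mathcal{W}_{ij}K_j=q_i^{a_{ij}}\mathcal{W}_{ij}K_jB_i$. The identification of $K_j$ as an element of ${U_{\Theta}^0}'$ is the only nontrivial ingredient, and past that small hurdle both identities are immediate consequences of the cited defining relations.
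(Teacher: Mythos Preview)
Your proof is correct and follows exactly the approach the paper indicates: the paper merely states that both identities ``follow immediately from (\ref{defining relations B_c,s 2})--(\ref{defining relations B_c,s 3})'' without giving any details, and you have faithfully unpacked precisely those two relations. Your extra verification that $\alpha_j\in Q^\Theta$ for $j\in X$ (via $w_X^2=\mathrm{id}$ and requirement (\ref{def admissible pair requirement 2})) is the only step that requires any thought, and you handled it correctly.
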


\section{Quantum Serre relations for the algebras $B_{\bc,\bs}$}
\label{Section quantum Serre relations}

We are now ready to derive closed expressions for the quantum Serre relations (\ref{defining relations B_c,s 1}) by expanding the right-hand side of (\ref{starting expression}). Crucial in this respect is the presence of the morphism \(\mathrm{id}\otimes\epsilon\), which by (\ref{Coproduct, counit, antipode def}) tells us that no term containing a nontrivial element of \(U^+U^-\) in the second tensor component will survive in (\ref{starting expression}). This allows us to eliminate some of the terms in (\ref{Delta(B_i) expression}).

We will first focus on Case 1.

\begin{lemma}
	\label{lemma Case 1 elimination}
	Let \(i,j\in I\setminus X\) be distinct such that \(\tau(i) = i\). Then one has
	\begin{equation}
	\label{expr lemma Case 1 elimination}
	F_{ij}(B_i,B_j) = (\mathrm{id}\otimes (\epsilon\circ P_{-\lambda_{ij}}))\left[F_{ij}(B_i,B_j)\otimes K_{-\lambda_{ij}}-F_{ij}(B_i\otimes K_i^{-1}+1\otimes F_i+c_i\mathcal{Z}_i\otimes E_iK_i^{-1},B_j\otimes K_j^{-1})\right].
	\end{equation}
\end{lemma}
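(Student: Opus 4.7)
The strategy is to start from identity (\ref{starting expression}) and show that inside the bracket, only the "low-weight" parts of $\Delta(B_i)$ and $\Delta(B_j)$ survive the projection $\mathrm{id}\otimes(\epsilon\circ P_{-\lambda_{ij}})$. Set $A_i=B_i\otimes K_i^{-1}+1\otimes F_i+c_i\mathcal{Z}_i\otimes E_iK_i^{-1}$ and $A_j=B_j\otimes K_j^{-1}$, and write $\Delta(B_i)=A_i+R_i$, $\Delta(B_j)=A_j+R_j$. Then the lemma amounts to showing that every monomial in the expansion of $F_{ij}(A_i+R_i,A_j+R_j)-F_{ij}(A_i,A_j)$, which by construction contains at least one $R_i$- or $R_j$-factor, is annihilated by $\mathrm{id}\otimes(\epsilon\circ P_{-\lambda_{ij}})$.

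To achieve this I would first classify the weights of the second-tensor components in $R_i$ and $R_j$. By Lemma \ref{lemma Z_i^+}, $\theta_q(F_iK_i)=-v_i\,\mathrm{ad}(Z_i^+)(E_i)$ with $Z_i^+\in\mathcal{M}_X^+$; applying the coproduct formula $\Delta(E_k)=E_k\otimes 1+K_k\otimes E_k$ to the nested adjoint expression shows that $R_i$ consists exactly of terms whose second-tensor weight has the form $n_i\alpha_i+\sum_{k\in X}n_k\alpha_k$ with $\sum_{k\in X}n_k\geq 1$. A parallel analysis for $\Delta(B_j)$, for which $\theta_q(F_jK_j)\in\mathcal{M}_X^+\cdot E_{\tau(j)}\cdot\mathcal{M}_X^+$, splits $R_j$ into $1\otimes F_j$ (second-tensor weight $-\alpha_j$) plus terms of second-tensor weight $n_{\tau(j)}\alpha_{\tau(j)}+\sum_{k\in X}n_k\alpha_k$ with $n_{\tau(j)}\geq 1$. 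A weight-counting argument then applies: $P_{-\lambda_{ij}}$ preserves weight and $\epsilon(u^+K_\beta S(u^-))=\epsilon(u^+)\epsilon(u^-)$ vanishes unless both $u^+$ and $u^-$ are scalars, so a monomial $u\otimes v$ can survive the projection only if $v$ has weight zero. In Case 1 the admissibility condition $\tau(i)=i$ combined with $\tau^2=\mathrm{id}$ forces $\tau(j)\in(I\setminus X)\setminus\{i,j\}$ whenever $\tau(j)\neq j$; consequently no available second-tensor component offers a negative $\alpha_k$-contribution with $k\in X$, a negative $\alpha_{\tau(j)}$-contribution when $\tau(j)\neq j$, or any $\alpha_j$-contribution beyond the single $1\otimes F_j$. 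Any monomial involving an $R_i$- or $R_j$-factor therefore carries a non-cancellable positive contribution in $\alpha_k$, $\alpha_{\tau(j)}$ or $\alpha_j$, and is killed.

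The principal technical obstacle is the first step: the explicit decomposition by weight of the second-tensor components of $\Delta(\theta_q(F_iK_i))$ and $\Delta(\theta_q(F_jK_j))$, and in particular the isolation of the $\mathcal{Z}_i\otimes E_i$-contribution from the other positive-weight parts of $\Delta(B_i)$. This uses the same type of inductive coproduct computation that underlies Lemma \ref{lemma Delta(B_i)} and requires the symmetrizability identity $q_i^{a_{ij}}=q_j^{a_{ji}}$ to correctly identify which weight strata appear; once it is in place, the remaining weight-counting argument is essentially automatic.
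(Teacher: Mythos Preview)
Your approach is correct and is essentially the paper's argument phrased as weight-counting. The paper proceeds more directly: it invokes Lemma~\ref{lemma Delta(B_i)} (rather than redoing the coproduct analysis of $\theta_q(F_iK_i)$) to see that $\Delta(B_i)=A_i+(\text{terms whose second leg carries some }E_{k'},\ k'\in X)$, and observes that every term of $\Delta(B_j)$ other than $B_j\otimes K_j^{-1}$ carries in its second leg a factor $F_j$, $E_{\tau(j)}$ or $E_k$ ($k\in X$) which nothing from $\Delta(B_i)$ can cancel (since $j\notin X$ means no $E_j$ or $F_j$ ever appears there); these surviving factors are then killed by $\epsilon$. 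Your root-lattice bookkeeping encodes the same obstruction.

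One small correction: your description of $R_j$ as having $n_{\tau(j)}\geq 1$ in every non-$F_j$ term is too strong. The coproduct of $\mathrm{ad}(Z_{\tau(j)}^+)(E_{\tau(j)})$ also produces second-tensor components lying purely in $\mathcal{M}_X^+$, i.e.\ of weight $\sum_{k\in X}n_k\alpha_k$ with some $n_k>0$ but $n_{\tau(j)}=0$. This does not damage your argument, since you already list a positive $\alpha_k$-contribution as one of the non-cancellable obstructions. Also, the symmetrizability identity $q_i^{a_{ij}}=q_j^{a_{ji}}$ plays no role in this lemma; the paper simply quotes Lemma~\ref{lemma Delta(B_i)}, so the ``principal technical obstacle'' you anticipate is already taken care of.
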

\begin{proof}
	First, let us note that the polynomial \(F_{ij}\) is of degree 1 and hence linear in its second argument. Since \(j\notin X\), the expression (\ref{Delta(B_i) expression}) for \(\Delta(B_i)\) contains no factors \(E_j\) or \(F_j\) in its second tensor component. Since \(\epsilon(E_j) = \epsilon(F_j) = 0\), the expression for \(\Delta(B_j)\) obtained from Lemma \ref{lemma Delta(B_i)}, together with the relation (\ref{starting expression}), asserts
	\begin{equation}
	\label{still Delta(B_i) to be expanded}
	F_{ij}(B_i,B_j) = (\mathrm{id}\otimes (\epsilon\circ P_{-\lambda_{ij}}))\left(F_{ij}(B_i,B_j)\otimes K_{-\lambda_{ij}}-F_{ij}(\Delta(B_i),B_j\otimes K_j^{-1})\right).
	\end{equation}
	When expanding \(\Delta(B_i)\) according to (\ref{Delta(B_i) expression}), there will be no contribution from the two latter terms
	\begin{equation}
	\label{rest does not contribute}
	c_i\mathcal{W}_{ik}K_k\otimes(E_kE_i-q_i^{a_{ik}}E_iE_k)K_i^{-1}+\Upsilon_i,
	\end{equation}
	with \(k\in X\), since each term in (\ref{rest does not contribute}) contains at least one factor \(E_{k'}\), \(k'\in X\), in its second tensor component, and again \(\epsilon(E_{k'}) = 0\). Hence \(\Delta(B_i)\) in (\ref{still Delta(B_i) to be expanded}) can be replaced by \(B_i\otimes K_i^{-1} + 1\otimes F_i + c_i\mathcal{Z}_i\otimes E_iK_i^{-1}\). 
\end{proof}

The same simplification can be performed for Case 2.

\begin{lemma}
	\label{lemma Case 2 elimination}
	Let \(i\in I\setminus X\) be such that \(\tau(i) = i\) and let \(j\in X\). Then one has
	\begin{align}
	\label{expr lemma Case 2 elimination}
	\begin{split}
	F_{ij}(B_i,B_j) =\ & (\mathrm{id}\otimes (\epsilon\circ P_{-\lambda_{ij}}))\left[F_{ij}(B_i,B_j)\otimes K_{-\lambda_{ij}}-F_{ij}(B_i\otimes K_i^{-1}+1\otimes F_i+c_i\mathcal{Z}_i\otimes E_iK_i^{-1},B_j\otimes K_j^{-1})\right. \\
	&\left.
	-F_{ij}(B_i\otimes K_i^{-1}+1\otimes F_i+c_i\mathcal{Z}_i\otimes E_iK_i^{-1}+c_i\mathcal{W}_{ij}K_j\otimes(E_jE_i-q_i^{a_{ij}}E_iE_j)K_i^{-1},1\otimes F_j)
	\right].
	\end{split}
	\end{align}
\end{lemma}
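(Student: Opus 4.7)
The plan is to follow the logic of Lemma \ref{lemma Case 1 elimination}, adapted to the new feature that $j\in X$, so that $B_j = F_j$ and $\Delta(B_j) = B_j\otimes K_j^{-1} + 1\otimes F_j$ contributes two surviving terms instead of one. By linearity of $F_{ij}$ in its second argument,
\begin{equation*}
F_{ij}(\Delta(B_i),\Delta(B_j)) = F_{ij}(\Delta(B_i),B_j\otimes K_j^{-1}) + F_{ij}(\Delta(B_i),1\otimes F_j),
\end{equation*}
and I would analyse each subcase separately, identifying which terms in the expansion of $\Delta(B_i)$ actually contribute under $\mathrm{id}\otimes (\epsilon\circ P_{-\lambda_{ij}})$, and combine the two results via (\ref{starting expression}).

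The guiding principle, already underlying Case 1, is that $\epsilon\circ P_{-\lambda_{ij}}$ extracts the coefficient of $K_{-\lambda_{ij}}$ in the PBW decomposition (\ref{decomposition with P}). For a monomial in the second tensor to contribute, every occurrence of a root vector $E_k$ must pair with a matching $F_k$ through (\ref{U_q(g) relations 2}); in particular, the $E_k$- and $F_k$-counts must agree for each $k\in I$. Expanding each $\Delta(B_i)$-factor via Lemma \ref{lemma Delta(B_i)} yields five possible second-tensor types: $K_i^{-1}$, $F_i$, $E_iK_i^{-1}$, $(E_jE_i-q_i^{a_{ij}}E_iE_j)K_i^{-1}$, and $\Upsilon_i$-contributions in $\widehat{U^+_i}K_i^{-1}$. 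Let $n_{(\mathrm{iv})}$ and $n_{(\mathrm{v})}$ denote the number of occurrences of the last two types among the $1-a_{ij}$ factors.

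In the first subcase the $F_j$-count is zero, forcing the total $E_j$-count to vanish, so $n_{(\mathrm{iv})} = 0$; moreover, any $\Upsilon_i$-contribution has weight $\gamma = \alpha_i+\mu$ with $\mu = \sum_{k\in X}m_k\alpha_k\neq 0,\alpha_j$, and $m_j = 0$ then forces $m_k>0$ for some $k\in X\setminus\{j\}$, producing an uncancellable $E_k$, so $n_{(\mathrm{v})} = 0$ as well, matching the first occurrence of $F_{ij}$ in (\ref{expr lemma Case 2 elimination}). In the second subcase the total $E_j$-count must equal one while the $E_k$-count for $k\in X\setminus\{j\}$ still vanishes, so any $\Upsilon_i$-usage is forced to have $\mu = m_j\alpha_j$; the strengthened restriction $\mu\neq 0,\alpha_j$ then demands $m_j\geq 2$, producing at least two $E_j$'s per $\Upsilon_i$-usage and contradicting an $E_j$-count of one. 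Hence $n_{(\mathrm{v})} = 0$ and $n_{(\mathrm{iv})} = 1$, with the $\mathcal{W}_{ij}$-term supplying the single $E_j$ that cancels $F_j$ in the PBW reduction; this matches the second occurrence of $F_{ij}$ in (\ref{expr lemma Case 2 elimination}). The main obstacle will be precisely this elimination of $\Upsilon_i$ in the second subcase, which relies essentially on the strengthened weight restriction $\gamma\neq\alpha_i+\alpha_j$ noted in the paragraph following Lemma \ref{lemma Delta(B_i)}: without it, the weight $\mu=\alpha_j$ would be allowed and $\Upsilon_i$ could produce exactly one $E_j$ with no stray $E_k$'s, spoiling the argument.
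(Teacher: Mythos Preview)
Your proposal is correct and follows essentially the same approach as the paper's own proof: split $\Delta(B_j)=B_j\otimes K_j^{-1}+1\otimes F_j$ by linearity, then use the weight restriction on $\Upsilon_i$ to argue that any $\Upsilon_i$-usage leaves an uncancellable $E_k$ (with $k\in X$) in the second tensor, killing the term under $\epsilon$. Your counting formulation is equivalent to the paper's more direct ``each term contains an $E_k$ with $k\neq i$ and $\epsilon(E_k)=0$'' phrasing. One small remark: in the second subcase you go slightly further than the lemma requires by also pinning down $n_{(\mathrm{iv})}=1$; the lemma's second $F_{ij}$-expression still allows all $n_{(\mathrm{iv})}\geq 0$, and only $n_{(\mathrm{v})}=0$ is needed there (the restriction to $n_{(\mathrm{iv})}=1$ is indeed true but is isolated later, in Proposition~\ref{prop term 2 expansion with epsilon and P}). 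This does not affect the validity of your argument, since the terms with $n_{(\mathrm{iv})}\neq 1$ die under $\epsilon\circ P_{-\lambda_{ij}}$ anyway.
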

\begin{proof}
	Since \(j\in X\), we have \(B_j = F_j\). Hence it follows from (\ref{starting expression}), (\ref{Coproduct, counit, antipode def}) and the linearity of \(F_{ij}\) in its second argument that 
	\[
	F_{ij}(B_i,B_j) =  (\mathrm{id}\otimes (\epsilon\circ P_{-\lambda_{ij}}))\left(F_{ij}(B_i,B_j)\otimes K_{-\lambda_{ij}}-F_{ij}(\Delta(B_i),B_j\otimes K_j^{-1})-F_{ij}(\Delta(B_i),1\otimes F_j)\right).
	\]
	We will now expand \(\Delta(B_i)\) using (\ref{Delta(B_i) expression}) with the given \(j\). In the first occurrence of \(\Delta(B_i)\), both \(c_i\mathcal{W}_{ij}K_j\otimes(E_jE_i-q_i^{a_{ij}}E_iE_j)K_i^{-1}\) and \(\Upsilon_i\) will not contribute, since each of their terms contains at least one factor \(E_k\) with \(k\neq i\) in the second tensor component, and \(\epsilon(E_k) = 0\). For the second occurrence of \(\Delta(B_i)\), the situation is different. The term \(c_i\mathcal{W}_{ij}K_j\otimes(E_jE_i-q_i^{a_{ij}}E_iE_j)K_i^{-1}\) will effectively contribute, since when expanding \(F_{ij}(\Delta(B_i),1\otimes F_j)\), we may use the rule \(F_jE_j = E_jF_j - \frac{K_j-K_j^{-1}}{q_j-q_j^{-1}}\) and the last term in this expansion will turn out to be significant, as will be explained in what follows. The term \(\Upsilon_i\) in \(\Delta(B_i)\) however, will still not contribute. Indeed, each term in \(\Upsilon_i\) contains either a factor \(E_j^2\) or a factor \(E_k\) with \(k\in X\setminus \{j\}\), and both \(F_jE_j^2\) and \(F_jE_k\) cannot be expanded to yield a non-vanishing term under \(\epsilon\). This proves the claim.
\end{proof}

One observes immediately that the right-hand side of (\ref{expr lemma Case 2 elimination}) equals the right-hand side of (\ref{expr lemma Case 1 elimination}), added with a second term.
In what follows, we will treat both terms separately and thereby obtain explicit expressions for each of the two cases.

\subsection{Binary expansions}
\label{Subsection Binary expansions}

In this section, we will expand the right-hand sides of (\ref{expr lemma Case 1 elimination}) and (\ref{expr lemma Case 2 elimination}). We will first treat the right-hand side of (\ref{expr lemma Case 1 elimination}), which occurs in (\ref{expr lemma Case 2 elimination}) as well and which can, to a large extent, be rewritten irrespective of whether or not \(j\) lies in \(X\). The second term, which appears only in (\ref{expr lemma Case 2 elimination}), i.e.\ for \(j\in X\), will be addressed afterwards.

Our main strategy will be to perform a\ \textquotedblleft binary\textquotedblright\ distributive expansion, which requires summation over binary tuples \(\bl\in \{0,1\}^N\), \(N\in\N\). For any such tuple \(\bl\), we will use the notation
\begin{equation}
\vert \bl\vert = \ell_1+\ell_2+\dots+\ell_N, \quad \vert\bl\vert_{r;s} = \left\{
\begin{array}{ll}
\ell_r+\ell_{r+1}+\dots+\ell_s \qquad & \mathrm{if}\ r \leq s, \\
0 & \mathrm{otherwise}.
\end{array}\right.
\end{equation}
Throughout this paper, we will often meet finite sums and products over natural numbers. We will use the convention that a sum vanishes if its lower bound exceeds its upper bound or equivalently if it ranges over the empty set, whereas a product reduces to one in this situation. Otherwise stated, for any function \(a\) of \(r\) and any \(M>N\) we take \(\sum_{r=M}^N a(r) = \sum_{r\in\emptyset}a(r) = 0\) and \(\prod_{r=M}^Na(r) = \prod_{r \in\emptyset}a(r) = 1\). Note also that in our convention \(0\) is a natural number, i.e.\ \(0\in\N\), and \(0^0 = 1\).

\begin{proposition}
	\label{prop term 1 expansion with epsilon and P}
	Let \(i\in I\setminus X\) be such that \(\tau(i) = i\) and let \(j\in I\) be distinct from \(i\). Then one has
	\begin{align}
	\label{term 1 expansion with epsilon and P}
	\begin{split}
	&\left(\mathrm{id}\otimes(\epsilon\circ P_{-\lambda_{ij}})\right)\left[F_{ij}(B_i,B_j)\otimes K_{-\lambda_{ij}}-F_{ij}(B_i\otimes K_i^{-1}+1\otimes F_i+c_i\mathcal{Z}_i\otimes E_iK_i^{-1}, B_j\otimes K_j^{-1})\right] \\
	=\ & \sum_{k = 0}^{1-a_{ij}}\sum_{\substack{\bl\in\{0,1\}^{1-a_{ij}}\\\vert\bl\vert\neq 1-a_{ij}}}\sum_{\bss\in\{0,1\}^{1-a_{ij}-\vert\bl\vert}}(-1)^{k+1}\begin{bmatrix}
	1-a_{ij} \\ k
	\end{bmatrix}_{q_i}(\epsilon\circ P_{-\lambda_{ij}})(\mathfrak{p}_{\bl,\bss,k}^{(i,j, a_{ij})})\\&(c_i\mathcal{Z}_i)^{\sum_{r = 1}^{1-a_{ij}-k}(1-\ell_r)(1-s_{r-\vert\bl\vert_{1;r}})}B_i^{\vert\bl\vert_{1;1-a_{ij}-k}}B_jB_i^{\vert\bl\vert_{2-a_{ij}-k;1-a_{ij}}}(c_i\mathcal{Z}_i)^{\sum_{r = 2-a_{ij}-k}^{1-a_{ij}}(1-\ell_r)(1-s_{r-\vert\bl\vert_{1;r}})},
	\end{split}
	\end{align} 
	where
	\begin{equation}
	\label{p_l,s,k def}
	\mathfrak{p}_{\bl,\bss,k}^{(i,j,a_{ij})} =  \left(\overrightarrow{\prod_{r = 1}^{1-a_{ij}-k}} \mathcal{T}^{i}_{\bl,\bss,r}\right) K_j^{-1}\left(\overrightarrow{\prod_{r = 2-a_{ij}-k}^{1-a_{ij}}}\mathcal{T}^{i}_{\bl,\bss,r} \right),
	\end{equation}
	with
	\begin{equation}
	\label{Ti_l,s,r def}
	\mathcal{T}^{i}_{\bl,\bss,r} = K_i^{-\ell_r}F_i^{(1-\ell_r)s_{r-\vert\bl\vert_{1;r}}}(E_iK_i^{-1})^{(1-\ell_r)(1-s_{r-\vert\bl\vert_{1;r}})}.
	\end{equation}
\end{proposition}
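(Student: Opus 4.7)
The plan is to unfold the right-hand side by a purely mechanical distributive expansion of the polynomial $F_{ij}$ applied to the three-term sum
$A := B_i\otimes K_i^{-1}+1\otimes F_i+c_i\mathcal{Z}_i\otimes E_iK_i^{-1}$
in its first argument. Expanding $F_{ij}(A,B_j\otimes K_j^{-1})$ via the defining formula (\ref{Fij def}) replaces the outer sum by the index $k$. Inside, the factor $A^{1-a_{ij}-k}(B_j\otimes K_j^{-1})A^k$ must itself be expanded, and by multiplicativity of the tensor product, $(a\otimes b)(c\otimes d)=ac\otimes bd$, each of the $3^{1-a_{ij}}$ resulting summands factors as a single pure tensor. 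The natural way I would parametrize them is with a binary tuple $\bl\in\{0,1\}^{1-a_{ij}}$ recording whether the $r$-th factor of $A$ was chosen to be $B_i\otimes K_i^{-1}$ (if $\ell_r = 1$) or one of the remaining two terms, and then for each of the $1-a_{ij}-\vert\bl\vert$ positions with $\ell_r = 0$ a further bit $s_{r-\vert\bl\vert_{1;r}}$ selecting between $1\otimes F_i$ and $c_i\mathcal{Z}_i\otimes E_iK_i^{-1}$. With this convention the second tensor component of the generic term is exactly $\plsk$ by the definitions (\ref{p_l,s,k def})--(\ref{Ti_l,s,r def}).

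Next I would collect the first tensor component. The factor corresponding to position $r$ equals $B_i$, $1$ or $c_i\mathcal{Z}_i$ according to the above case distinction, with $B_j$ sitting between positions $1-a_{ij}-k$ and $2-a_{ij}-k$. Consequently all $B_i$'s and $\mathcal{Z}_i$'s to the left of $B_j$ lie in a common product, and similarly on the right. The crucial point is that $B_i$ and $\mathcal{Z}_i$ commute, which is the case $j:=i$ of Lemma \ref{lemma commutation B_i, Z_i, W_ij}; this permits pushing all $c_i\mathcal{Z}_i$ factors from the left portion to the leftmost position and those from the right portion to the rightmost position, without ever moving anything across $B_j$. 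A direct count of how many indices $r$ contribute a given factor then yields the claimed expression, with $B_i$-exponents $\vert\bl\vert_{1;1-a_{ij}-k}$ and $\vert\bl\vert_{2-a_{ij}-k;1-a_{ij}}$, and $\mathcal{Z}_i$-exponents $\sum_{r=1}^{1-a_{ij}-k}(1-\ell_r)(1-s_{r-\vert\bl\vert_{1;r}})$ and $\sum_{r=2-a_{ij}-k}^{1-a_{ij}}(1-\ell_r)(1-s_{r-\vert\bl\vert_{1;r}})$.

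Finally I subtract off the leading term. The tuple $\bl = (1,1,\ldots,1)$ corresponds to choosing $B_i\otimes K_i^{-1}$ at every position, and its contribution to $F_{ij}(A,B_j\otimes K_j^{-1})$ is $\sum_k(-1)^k\begin{bmatrix}1-a_{ij}\\k\end{bmatrix}_{q_i}B_i^{1-a_{ij}-k}B_jB_i^k\otimes K_i^{-(1-a_{ij}-k)}K_j^{-1}K_i^{-k}$. Since $K_i^{-(1-a_{ij}-k)}K_j^{-1}K_i^{-k}=K_{-\lambda_{ij}}$ by (\ref{lambda_ij def}), this contribution is nothing but $F_{ij}(B_i,B_j)\otimes K_{-\lambda_{ij}}$ and hence cancels exactly against the leading term of the bracket. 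Only the tuples with $\vert\bl\vert<1-a_{ij}$ survive, with overall sign $(-1)^{k+1}$, and applying $\mathrm{id}\otimes(\epsilon\circ P_{-\lambda_{ij}})$ turns each second tensor component $\plsk$ into the scalar $(\epsilon\circ P_{-\lambda_{ij}})(\plsk)$; reading off what remains produces (\ref{term 1 expansion with epsilon and P}).

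The main obstacle is really only notational: keeping track of the reindexing $r\mapsto r-\vert\bl\vert_{1;r}$ that selects the correct entry of $\bss$ as $r$ varies, and verifying that the empty-sum and empty-product conventions line up correctly in degenerate cases (for example when $k = 0$ or $k = 1-a_{ij}$, so that one of the two half-products is empty, or when $\bl$ has no zeros so that $\bss$ ranges over the singleton $\{0,1\}^0$). Once this bookkeeping is in place, the argument rests only on the defining formula of $F_{ij}$, the multiplication rule for the tensor product, and the single commutation identity $[B_i,\mathcal{Z}_i]=0$.
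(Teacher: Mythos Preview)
Your proposal is correct and follows essentially the same approach as the paper: both expand $F_{ij}(A,B_j\otimes K_j^{-1})$ via (\ref{Fij def}), parametrize the distributive expansion of the powers of $A$ by the nested binary tuples $\bl$ and $\bss$, invoke $[B_i,\mathcal{Z}_i]=0$ from Lemma~\ref{lemma commutation B_i, Z_i, W_ij} to collect the first tensor leg, and observe that the tuple $\bl=(1,\dots,1)$ reproduces $F_{ij}(B_i,B_j)\otimes K_{-\lambda_{ij}}$ and hence cancels. Your remark that nothing is moved across $B_j$ is exactly the reason the paper's formula carries separate $\mathcal{Z}_i$-exponents on the left and right of $B_j$ rather than a single combined power.
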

\begin{proof}
	By the definition (\ref{Fij def}) of \(F_{ij}\), we have
	\begin{align*}
	& -F_{ij}(B_i\otimes K_i^{-1}+1\otimes F_i+c_i\mathcal{Z}_i\otimes E_iK_i^{-1},B_j\otimes K_j^{-1})\\ =\ & \sum_{k = 0}^{1-a_{ij}}(-1)^{k+1}\begin{bmatrix} 1-a_{ij} \\ k
	\end{bmatrix}_{q_i}\left(B_i\otimes K_i^{-1}+1\otimes F_i+c_i\mathcal{Z}_i\otimes E_iK_i^{-1} \right)^{1-a_{ij}-k}\left(B_j\otimes  K_j^{-1}\right)\\&\phantom{\sum\ \,}\left(B_i\otimes K_i^{-1}+1\otimes F_i+c_i\mathcal{Z}_i\otimes E_iK_i^{-1}\right)^{k}.
	\end{align*}
	The term \(\left(B_i\otimes K_i^{-1}+1\otimes F_i+c_i\mathcal{Z}_i\otimes E_iK_i^{-1}\right)^{1-a_{ij}-k}\) can be expanded distributively as 
	\[
	\sum_{\bl\in\{0,1\}^{1-a_{ij}-k}}\overrightarrow{\prod_{r = 1}^{1-a_{ij}-k}}\left(B_i\otimes K_i^{-1}\right)^{\ell_r}\left(1\otimes F_i+c_i\mathcal{Z}_i\otimes E_iK_i^{-1}\right)^{1-\ell_r}
	\] 
	and for each \(\bl\in\{0,1\}^{1-a_{ij}-k}\) one has
	\begin{align*}
	&\overrightarrow{\prod_{r = 1}^{1-a_{ij}-k}}\left(B_i\otimes K_i^{-1}\right)^{\ell_r}\left(1\otimes F_i+c_i\mathcal{Z}_i\otimes E_iK_i^{-1}\right)^{1-\ell_r} \\
	= & \sum_{\bss\in\{0,1\}^{1-a_{ij}-k-\vert\bl\vert_{1;1-a_{ij}-k}}}\overrightarrow{\prod_{r = 1}^{1-a_{ij}-k}}\left(B_i\otimes K_i^{-1}\right)^{\ell_r}\left(1\otimes F_i\right)^{(1-\ell_r)s_{r-\vert\bl\vert_{1;r}}}\left(c_i\mathcal{Z}_i\otimes E_iK_i^{-1}\right)^{(1-\ell_r)(1-s_{r-\vert\bl\vert_{1;r}})}.
	\end{align*}
	The rationale of this expansion is that for \(\ell_r = 1\), we get the contribution of \(B_i\otimes K_i^{-1}\), for \(\ell_r = 0\) and \(s_{r-\vert\bl\vert_{1;r}}= 1\) we find \(1\otimes F_i\), whereas for \(\ell_r = 0\) and \(s_{r-\vert\bl\vert_{1;r}}=0\) we have \(c_i\mathcal{Z}_i\otimes E_iK_i^{-1}\). Note that the indexation of the \(s\)-variables was chosen in such a way that there is only a summation over these variables in case the corresponding \(\ell_r = 0\). Indeed, if we were to write \(s_r\) instead of \(s_{r-\vert\bl\vert_{1;r}}\) and sum over all \(s_1,\dots,s_{1-a_{ij}-k}\in\{0,1\}\), then the terms corresponding to \(\ell_r = 1\) would contribute twice. 
	
	Since \(B_i\) commutes with \(\mathcal{Z}_i\) by Lemma \ref{lemma commutation B_i, Z_i, W_ij}, we obtain
	\begin{align*}
	&\left(B_i\otimes K_i^{-1}+1\otimes F_i+c_i\mathcal{Z}_i\otimes E_iK_i^{-1}\right)^{1-a_{ij}-k} \\
	=& \sum_{\bl\in\{0,1\}^{1-a_{ij}-k}}\sum_{\bss\in\{0,1\}^{1-a_{ij}-k-\vert\bl\vert_{1;1-a_{ij}-k}}}
	(c_i\mathcal{Z}_i)^{\sum_{r = 1}^{1-a_{ij}-k}(1-\ell_r)(1-s_{r-\vert\bl\vert_{1;r}})}B_i^{\vert\bl\vert_{1;1-a_{ij}-k}}\otimes \overrightarrow{\prod_{r = 1}^{1-a_{ij}-k}} \mathcal{T}^{i}_{\bl,\bss,r},
	\end{align*}
	with \(\mathcal{T}^{i}_{\bl,\bss,r}\) as in (\ref{Ti_l,s,r def}). Performing a similar expansion for the term \(\left(B_i\otimes K_i^{-1}+1\otimes F_i+c_i\mathcal{Z}_i\otimes E_iK_i^{-1}\right)^{k}\), we find that \(-F_{ij}(B_i\otimes K_i^{-1}+1\otimes F_i+c_i\mathcal{Z}_i\otimes E_iK_i^{-1},B_j\otimes K_j^{-1})\) equals
	\begin{align*}
	& \sum_{k = 0}^{1-a_{ij}}\sum_{\boldsymbol{\ell}\in\{0,1\}^{1-a_{ij}}}\sum_{\bss\in\{0,1\}^{1-a_{ij}-\vert\bl\vert}}
	(-1)^{k+1}\begin{bmatrix} 1-a_{ij} \\ k
	\end{bmatrix}_{q_i}\\& (c_i\mathcal{Z}_i)^{\sum_{r = 1}^{1-a_{ij}-k}(1-\ell_r)(1-s_{r-\vert\bl\vert_{1;r}})}B_i^{\vert\bl\vert_{1;1-a_{ij}-k}}B_jB_i^{\vert\bl\vert_{2-a_{ij}-k;1-a_{ij}}}(c_i\mathcal{Z}_i)^{\sum_{r = 2-a_{ij}-k}^{1-a_{ij}}(1-\ell_r)(1-s_{r-\vert\bl\vert_{1;r}})}\otimes\mathfrak{p}_{\bl,\bss,k}^{(i,j,a_{ij})},
	\end{align*}
	with \(\mathfrak{p}_{\boldsymbol{\ell},\bss,k}^{(i,j,a_{ij})}\) as in (\ref{p_l,s,k def}). It remains only to observe that the term corresponding to \(\vert\bl\vert = 1-a_{ij}\), i.e.\ \(\bl = (1,1,\dots,1)\), yields
	\[
	\sum_{k = 0}^{1-a_{ij}}
	(-1)^{k+1}\begin{bmatrix} 1-a_{ij} \\ k
	\end{bmatrix}_{q_i} B_i^{1-a_{ij}-k}B_jB_i^{k}\otimes K_i^{-(1-a_{ij})}K_j^{-1} = -F_{ij}(B_i,B_j)\otimes K_{-\lambda_{ij}}.
	\]
\end{proof}

Many of the \(\bss\) in the sum in (\ref{term 1 expansion with epsilon and P}) will have a vanishing contribution. One can make the following observation.

\begin{lemma}
	\label{lemma restrictions on s}
	Let \(i\in I\setminus X\) be such that \(\tau(i) = i\) and let \(j\in I\) be distinct from \(i\). Let \(\bl\in\{0,1\}^{1-a_{ij}}\) with \(\vert\bl\vert\neq 1-a_{ij}\), \(\bss\in\{0,1\}^{1-a_{ij}-\vert\bl\vert}\) and \(k\in\{0,\dots,1-a_{ij}\}\). Then one has
	\[
	(\epsilon\circ P_{-\lambda_{ij}})(\mathfrak{p}_{\bl,\bss,k}^{(i,j,a_{ij})}) = 0
	\] 
	if one of the following conditions is fulfilled:
	\begin{enumerate}[label=(\alph*)]
		\item\label{lemma restrictions on s condition a} \(a_{ij}+\vert\bl\vert\) is even,
		\item\label{lemma restrictions on s condition b} \(\vert\bss\vert \neq \frac{1-a_{ij}-\vert\bl\vert}{2}\),
		\item\label{lemma restrictions on s condition c} There exists \(p\in\{1,\dots,1-a_{ij}-\vert\bl\vert\}\) such that \(\vert \bss\vert_{1;p} < \tfrac{p}{2}\).
	\end{enumerate}
\end{lemma}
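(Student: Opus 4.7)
My plan is to handle (a), (b) by a uniform weight argument and (c) by a finer PBW analysis. The key observation for (a), (b) is that $\epsilon\circ P_{-\lambda_{ij}}$ vanishes on every element of nonzero $Q$-weight: indeed $P_{-\lambda_{ij}}$ lands in $U^+K_{-\lambda_{ij}}S(U^-)$, and a term $uK_{-\lambda_{ij}}v$ survives $\epsilon$ only when $u\in U^+$, $v\in S(U^-)$ are both scalars, hence of weight zero. Weighing the factors of $\plsk$ (each $K_i^{-1}$ and the single $K_j^{-1}$ contribute $0$, each $F_i$ contributes $-\alpha_i$, each $E_iK_i^{-1}$ contributes $+\alpha_i$) yields total weight $(1-a_{ij}-|\bl|-2|\bss|)\alpha_i$, whose vanishing rules out exactly the scenarios in (a) (where $1-a_{ij}-|\bl|$ is odd so the equation $1-a_{ij}-|\bl|=2|\bss|$ has no integer solution) and (b) (where it fails outright).

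For (c), I would first use the commutations $K_i^{-1}F_i=q_i^{2}F_iK_i^{-1}$, $K_i^{-1}E_i=q_i^{-2}E_iK_i^{-1}$ and their $K_j$-analogues to push all $K_i^{-1}$'s and the $K_j^{-1}$ to the right of $\plsk$. This costs only a scalar and rewrites $\plsk=c\cdot W\cdot K_i^{-(1-a_{ij}-|\bss|)}K_j^{-1}$, where $W$ is the word in $E_i$ and $F_i$ obtained from the non-$K_i^{-1}$-letters of $\plsk$ in the same left-to-right order. Commuting the $K$-factor through the triangular decomposition of $W$ and matching with $-\lambda_{ij}=-(1-a_{ij})\alpha_i-\alpha_j$ reduces the problem to showing that the coefficient of $1\cdot K_i^{-|\bss|}\cdot 1$ in the PBW expansion of $W$ inside $U_q(\mathfrak{sl}_2)\subset\Uqgp$ vanishes.

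To establish this, let $p^*$ be the smallest $p$ with $|\bss|_{1;p}<p/2$. Minimality gives $|\bss|_{1;p^*-1}\geq(p^*-1)/2$, so the prefix count $\#E_i-\#F_i$ in $W$ passes from a non-positive value at position $p^*-1$ to a positive value at position $p^*$; since the count changes by exactly $\pm 1$ per letter, the $p^*$-th letter of $W$ is forced to be $E_i$ and the prefix $W'$ of $W$ ending there has weight exactly $+\alpha_i$. Writing $W'=\sum_{\gamma}u_{\gamma}K_{\gamma}v_{\gamma}$ in PBW form, the constraint $\mathrm{wt}(u_{\gamma})+\mathrm{wt}(v_{\gamma})=\alpha_i$ together with $\mathrm{wt}(u_{\gamma})\in\Z_{\geq 0}\alpha_i$ and $\mathrm{wt}(v_{\gamma})\in\Z_{\leq 0}\alpha_i$ forces $\mathrm{wt}(u_{\gamma})\geq\alpha_i$, so every $u_{\gamma}$ is a nontrivial power of $E_i$. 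Factoring $W=W'W''$ and re-expanding in PBW form, each term in the triangular decomposition of $W$ retains a nontrivial left $E_i$-factor from some $u_{\gamma}$, so no monomial of the form $1\cdot K_i^m\cdot 1$ can appear, forcing the targeted coefficient to vanish.

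The main obstacle is the PBW step for (c): one must verify that the minimality of $p^*$ indeed pins the weight of $W'$ to be exactly $+\alpha_i$, and that the PBW re-expansion after multiplying on the right by $W''$ preserves the nontrivial left $E_i$-factor without any cancellation. The latter follows from $U^+$ being a polynomial ring in $E_i$ once restricted to $U_q(\mathfrak{sl}_2)$, hence an integral domain, so products of $E_i$-monomials of strictly positive degree with any $U^+$-element remain of positive degree. Both points are straightforward from standard PBW theory and the definition of $p^*$, but deserve careful bookkeeping of scalars arising from the $K$-commutations.
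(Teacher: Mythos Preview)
Your proposal is correct and follows essentially the same approach as the paper: a weight/balance argument for (a) and (b), and a normal-ordering argument for (c) showing that a prefix with strictly more $E_i$'s than $F_i$'s forces every PBW term of the full word to carry a nontrivial left $E_i$-factor. Your presentation is more formal --- you use the $Q$-grading directly, invoke minimality of $p^*$ to pin the prefix weight to exactly $+\alpha_i$, and make the integral-domain step in $U^+$ explicit --- whereas the paper argues tersely that the difference $\#E_i-\#F_i$ is preserved under the commutation relation (\ref{U_q(g) relations 2}); note that pinning the prefix weight to exactly $+\alpha_i$ via minimality is more than needed (any prefix of strictly positive weight already forces $\mathrm{wt}(u_\gamma)\geq\alpha_i$ in its PBW decomposition), but it does no harm.
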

\begin{proof}
	To acquire the action of \(\epsilon\circ P_{-\lambda_{ij}}\) on \(\plsk\), we will write \(\plsk\) in a standard ordering, namely as a \(\K(q)\)-linear combination of elements of the form \(E_i^{N_1}F_i^{N_2}K_i^{N_3}K_j^{-1}\), with \(N_1,N_2\in\N\) and \(N_3\in\Z\). We may do so by applying the \(U_q(\g')\)-relations (\ref{U_q(g) relations})--(\ref{U_q(g) relations 2}). Each such element will be projected to either itself or 0 by \(P_{-\lambda_{ij}}\). But when applying \(\epsilon\), such a term can only survive if \(N_1 = N_2 = 0\), by (\ref{Coproduct, counit, antipode def}). Suppose now \(\bss\) is such that \(\plsk\) contains an unequal number of factors \(F_i\) and \(E_i\). Then each term in its standard ordering will still contain an unequal number of factors \(F_i\) and \(E_i\), as follows from (\ref{U_q(g) relations 2}). Hence the standard ordering will consist of terms \(E_i^{N_1}F_i^{N_2}K_i^{N_3}K_j^{-1}\) with either \(N_1\) or \(N_2\) non-zero, which will be killed by \(\epsilon\). Thus we must have an equal number of factors \(F_i\) and \(E_i\) in \(\plsk\). This number must then of course be half the total number of factors in \(\plsk\) with \(\ell_r = 0\), i.e.
	\[
	\vert\bss\vert = \frac{1-a_{ij}-\vert\bl\vert}{2}.
	\]
	
	If \(a_{ij}+\vert\bl\vert\) is even, then the total number of factors in \(\plsk\) with \(\ell_r = 0\) will be odd and hence the number of factors \(F_i\) and \(E_i\) in \(\plsk\) will always be unequal, for any \(\bss\). 
	
	Finally, suppose \(p\in\{1,\dots,1-a_{ij}-\vert\bl\vert\}\) is such that \(\vert \bss\vert_{1;p} < \tfrac{p}{2}\). This means that up to position \(p\), the number of factors \(E_i\) will exceed the number of factors \(F_i\). As the difference between these numbers is not altered by the relation (\ref{U_q(g) relations 2}), this means that the standard ordering of the corresponding term will consist only of terms \(E_i^{N_1}F_i^{N_2}K_i^{N_3}K_j^{-1}\) with \(N_1\geq 1\), which are again killed by \(\epsilon\). 
\end{proof}

This result will help us to simplify the notation used in Proposition \ref{prop term 1 expansion with epsilon and P}. Indeed, by Condition \ref{lemma restrictions on s condition b} in Lemma \ref{lemma restrictions on s}, we know that for any \((\bl,\bss,k)\) contributing non-trivially to (\ref{term 1 expansion with epsilon and P}), we have
\begin{equation}
\label{full sum}
\sum_{r = 1}^{1-a_{ij}}(1-\ell_r)(1-s_{r-\vert\bl\vert_{1;r}}) = 1-a_{ij}-\vert\bl\vert - \vert\bss\vert = \frac{1-a_{ij}-\vert\bl\vert}{2}
\end{equation}
and hence also 
\begin{align}
\label{full sums 2}
\begin{split}
\sum_{r = 1}^{1-a_{ij}-k}(1-\ell_r)(1-s_{r-\vert\bl\vert_{1;r}}) & = 1-a_{ij}-k-\vert\bl\vert_{1;1-a_{ij}-k}-\vert\bss\vert_{1;1-a_{ij}-k-\vert\bl\vert_{1;1-a_{ij}-k}}, \\
\sum_{r = 2-a_{ij}-k}^{1-a_{ij}}(1-\ell_r)(1-s_{r-\vert\bl\vert_{1;r}}) & = \frac{1-a_{ij}-\vert\bl\vert}{2}-\sum_{r = 1}^{1-a_{ij}-k}(1-\ell_r)(1-s_{r-\vert\bl\vert_{1;r}}).
\end{split}
\end{align}

Moreover,  we will need the notion of the even and an odd part of an integer number \(d\in\Z\), denoted by \(d_e\) and \(d_p\) respectively, and defined as
\begin{equation}
\label{even and odd part}
d_e = \left\lfloor\frac{d}{2}\right\rfloor = \left\{\arraycolsep=1.4pt\def\arraystretch{2}\begin{array}{ll}
\dfrac{d}{2} \quad &\mathrm{for}\ d\ \mathrm{even} \\
\dfrac{d-1}{2}\quad & \mathrm{for}\ d\ \mathrm{odd}
\end{array} \right., \qquad\qquad d_p = \left\{\arraycolsep=1.4pt\def\arraystretch{1.5}\begin{array}{ll}
0 \quad &\mathrm{for}\ d\ \mathrm{even} \\
1 & \mathrm{for}\ d\ \mathrm{odd}
\end{array} \right..
\end{equation}
Note that for any \(d\in\Z\) one has \(d = 2d_e+d_p\).

This will now help us to rewrite \(C_{ij}(\bc)\) for Case 1.

\begin{corollary}[Case 1]
	\label{cor F_ij(B_i,B_j) Case 1 with epsilon and P}
	Let \(i\in I\setminus X\) be such that \(\tau(i) = i\) and let \(j\in I\setminus X\) be distinct from \(i\). Then one has
	\begin{equation}
	\label{F_ij(B_i,B_j) Case 1 with epsilon and P}
	F_{ij}(B_i,B_j) = C_{ij}(\bc) = \sum_{m= 0}^{-1-a_{ij}}\sum_{m' = 0}^{-1-a_{ij}-m}\rho_{m,m'}^{(i,j,a_{ij})}\mathcal{Z}_i^{\frac{1-a_{ij}-m-m'}{2}}B_i^mB_jB_i^{m'},
	\end{equation}
	where
	\begin{equation}
	\label{rho_m,m' def}
	\rho_{m,m'}^{(i,j,a_{ij})} = (a_{ij}+m+m')_pc_i^{\frac{1-a_{ij}-m-m'}{2}}\sum_{k = m'}^{1-a_{ij}-m}\sum_{\bl\in\mathcal{L}_{m,m',k}}\sum_{\bss\in\mathscr{S}_{m,m'}}(-1)^{k+1}\begin{bmatrix}
	1-a_{ij}\\k
	\end{bmatrix}_{q_i}(\epsilon\circ P_{-\lambda_{ij}})(\plsk),
	\end{equation}
	with \(\plsk\) as in (\ref{p_l,s,k def}) and
	\begin{align}
	\label{L and S sets for Case 1}
	\begin{split}
	\mathcal{L}_{m,m',k}\ & = \{\bl\in\{0,1\}^{1-a_{ij}}: \vert\bl\vert_{1;1-a_{ij}-k} = m\ \mathrm{and}\ \vert\bl\vert_{2-a_{ij}-k;1-a_{ij}}= m'\}, \\
	\mathscr{S}_{m,m'}\ & = \{\bss\in\{0,1\}^{1-a_{ij}-m-m'}: \vert\bss\vert = \frac{1-a_{ij}-m-m'}{2}\ \mathrm{and}\ \vert\bss\vert_{1;p}\geq \frac{p}{2},\forall p\in\{1,\dots,1-a_{ij}-m-m'\}\}.
	\end{split}
	\end{align}
\end{corollary}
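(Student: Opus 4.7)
The plan is to combine Lemma \ref{lemma Case 1 elimination} with the binary expansion of Proposition \ref{prop term 1 expansion with epsilon and P}, and then to reorganise the resulting sum according to $m := \vert\bl\vert_{1;1-a_{ij}-k}$ and $m' := \vert\bl\vert_{2-a_{ij}-k;1-a_{ij}}$. Since both $i$ and $j$ lie in $I\setminus X$, Lemma \ref{lemma commutation B_i, Z_i, W_ij} gives $[B_i,\mathcal{Z}_i]=[B_j,\mathcal{Z}_i]=0$, so all factors $c_i\mathcal{Z}_i$ appearing on either side of the $B_i$--$B_j$ monomial in (\ref{term 1 expansion with epsilon and P}) can be moved to the left and combined into a single power of $c_i\mathcal{Z}_i$.

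Next I invoke Lemma \ref{lemma restrictions on s} to discard vanishing terms. Condition (a) forces $a_{ij}+\vert\bl\vert = a_{ij}+m+m'$ to be odd, a restriction conveniently encoded by the Boolean factor $(a_{ij}+m+m')_p\in\{0,1\}$; condition (b) forces $\vert\bss\vert=(1-a_{ij}-\vert\bl\vert)/2$; and condition (c) cuts $\bss$ down to $\mathscr{S}_{m,m'}$. Using the identities (\ref{full sum})--(\ref{full sums 2}), the combined exponent of $c_i\mathcal{Z}_i$ in every non-vanishing term then simplifies to $(1-a_{ij}-m-m')/2$, producing the factor $c_i^{(1-a_{ij}-m-m')/2}\mathcal{Z}_i^{(1-a_{ij}-m-m')/2}$ in front of $B_i^m B_j B_i^{m'}$.

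What remains is to reindex the triple sum over $(k,\bl,\bss)$ as a quintuple sum over $(m,m',k,\bl,\bss)$. The constraints $\vert\bl\vert_{1;1-a_{ij}-k}=m\leq 1-a_{ij}-k$ and $\vert\bl\vert_{2-a_{ij}-k;1-a_{ij}}=m'\leq k$ translate into the bounds $m'\leq k\leq 1-a_{ij}-m$, while the exclusion $\vert\bl\vert\neq 1-a_{ij}$ in (\ref{term 1 expansion with epsilon and P}) together with the parity constraint gives the outer range $0\leq m+m'\leq -1-a_{ij}$. Reading off the coefficient of $\mathcal{Z}_i^{(1-a_{ij}-m-m')/2}B_i^m B_j B_i^{m'}$ then yields (\ref{rho_m,m' def}) by direct inspection.

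The main delicate point is the bookkeeping around $\bss$, whose coordinate $s_{r-\vert\bl\vert_{1;r}}$ is indexed only over those positions $r$ where $\ell_r=0$. One must verify that the set $\mathscr{S}_{m,m'}$ records the constraints from Lemma \ref{lemma restrictions on s}(b)--(c) in a manner independent of the placement of the $m+m'$ unit entries of $\bl$. This is the case because $\epsilon\circ P_{-\lambda_{ij}}$ interacts with $\plsk$ only through the ordered sequence of $F_i$'s and $E_iK_i^{-1}$'s that it contains, whose length equals $1-a_{ij}-m-m'$ and whose pattern is parametrised precisely by $\bss\in\{0,1\}^{1-a_{ij}-m-m'}$; the positions occupied by the $B_i$--factors are irrelevant to the projection.
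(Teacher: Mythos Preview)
Your argument is correct and matches the paper's proof: combine Lemma \ref{lemma Case 1 elimination} with Proposition \ref{prop term 1 expansion with epsilon and P}, use $[B_i,\mathcal{Z}_i]=[B_j,\mathcal{Z}_i]=0$ to merge the two $\mathcal{Z}_i$-powers via (\ref{full sum}), reindex by $(m,m')$, and cut down the $\bl$- and $\bss$-sums using Lemma \ref{lemma restrictions on s}. One small correction to your final paragraph: the \emph{value} of $(\epsilon\circ P_{-\lambda_{ij}})(\plsk)$ does depend on the placement of the unit entries of $\bl$ (through powers of $q_i$, as made explicit in Proposition \ref{prop epsilon and other P}); only its \emph{vanishing} is independent of that placement, and this weaker fact is all you need to conclude that $\mathscr{S}_{m,m'}$ depends only on $m+m'$.
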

\begin{proof}
	Upon combining Lemma \ref{lemma Case 1 elimination}, Proposition \ref{prop term 1 expansion with epsilon and P}, the equation (\ref{full sum}) and the fact that \([B_i,\mathcal{Z}_i] = [B_j,\mathcal{Z}_i] = 0\) by Lemma \ref{lemma commutation B_i, Z_i, W_ij}, one finds that \(F_{ij}(B_i,B_j)\) equals
	\[
	\sum_{k = 0}^{1-a_{ij}}\sum_{\substack{\bl\in\{0,1\}^{1-a_{ij}}\\\vert\bl\vert\neq 1-a_{ij}}}\sum_{\bss\in\{0,1\}^{1-a_{ij}-\vert\bl\vert}}(-1)^{k+1}\begin{bmatrix}
	1-a_{ij} \\ k
	\end{bmatrix}_{q_i}(\epsilon\circ P_{-\lambda_{ij}})(\mathfrak{p}_{\bl,\bss,k}^{(i,j, a_{ij})})(c_i\mathcal{Z}_i)^{\frac{1-a_{ij}-\vert\bl\vert}{2}}B_i^{\vert\bl\vert_{1;1-a_{ij}-k}}B_jB_i^{\vert\bl\vert_{2-a_{ij}-k;1-a_{ij}}}.
	\]
	We can restrict the sum over \(\bl\) to one over \(\mathcal{L}_{m,m',k}\), by setting
	\begin{equation}
	\label{restrictions on l}
	m = \vert\bl\vert_{1;1-a_{ij}-k}, \quad m' = \vert\bl\vert_{2-a_{ij}-k;1-a_{ij}}.
	\end{equation}
	This requires an additional summation over \(m\) and \(m'\). A priori, we have \(m+m' = \vert\bl\vert \leq -a_{ij}\), but if \(m+m' = -a_{ij}\), then \((\epsilon\circ P_{-\lambda_{ij}})(\plsk)\) will vanish for any \(\bss\), by Condition \ref{lemma restrictions on s condition a} of Lemma \ref{lemma restrictions on s}. This explains the presence of \((a_{ij}+m+m')_p\) in (\ref{rho_m,m' def}) and the fact that in the sum in (\ref{F_ij(B_i,B_j) Case 1 with epsilon and P}) we restrict to \(m+m' \leq -1-a_{ij}\). Note also that the requirements (\ref{restrictions on l}) imply that
	\[
	1-a_{ij}-k\geq m\ \mathrm{and}\ k\geq m'.
	\]
	Similarly, the sum over \(\bss\) may be restricted to \(\mathscr{S}_{m,m'}\) by Conditions \ref{lemma restrictions on s condition b} and \ref{lemma restrictions on s condition c} of Lemma \ref{lemma restrictions on s}.
\end{proof}

For Case 2, the first line of the right-hand side of (\ref{expr lemma Case 2 elimination}) is identical to the right-hand side of (\ref{expr lemma Case 1 elimination}), and hence the first part of \(C_{ij}(\bc)\) can be expanded as above. Nevertheless, we have to take into account that in this case \(\mathcal{Z}_i\) and \(B_j\) no longer commute, which effects our notation.

\begin{corollary}[Case 2]
	\label{cor F_ij(B_i,B_j) Case 2 part 1 with epsilon and P}
	Let \(i\in I\setminus X\) be such that \(\tau(i) = i\) and let \(j\in X\). Then one has
	\begin{align}
	\label{F_ij(B_i,B_j) Case 2 with epsilon and P}
	\begin{split}
	&F_{ij}(B_i,B_j) = C_{ij}(\bc)\\ = &\sum_{m= 0}^{-1-a_{ij}}\sum_{m' = 0}^{-1-a_{ij}-m}\sum_{t = 0}^{\frac{1-a_{ij}-m-m'}{2}}\rho_{m,m',t}^{(i,j,a_{ij})}\mathcal{Z}_i^tB_i^mB_jB_i^{m'}\mathcal{Z}_i^{\frac{1-a_{ij}-m-m'}{2}-t}\\ 
	&+ (\mathrm{id}\otimes (\epsilon\circ P_{-\lambda_{ij}}))\left(	-F_{ij}(B_i\otimes K_i^{-1}+1\otimes F_i+c_i\mathcal{Z}_i\otimes E_iK_i^{-1}+c_i\mathcal{W}_{ij}K_j\otimes(E_jE_i-q_i^{a_{ij}}E_iE_j)K_i^{-1},1\otimes F_j) \right),
	\end{split}
	\end{align}
	where
	\begin{equation}
	\label{rho_m,m',t def}
	\rho_{m,m',t}^{(i,j,a_{ij})} = (a_{ij}+m+m')_pc_i^{\frac{1-a_{ij}-m-m'}{2}}\sum_{k = m'}^{1-a_{ij}-m}\sum_{\bl\in\mathcal{L}_{m,m',k}}\sum_{\bss\in\mathscr{S}_{m,m',k,t}}(-1)^{k+1}\begin{bmatrix}
	1-a_{ij}\\k
	\end{bmatrix}_{q_i}(\epsilon\circ P_{-\lambda_{ij}})(\plsk),
	\end{equation}
	with \(\plsk\) as in (\ref{p_l,s,k def}), \(\mathcal{L}_{m,m',k}\) as in (\ref{L and S sets for Case 1}) and 
	\begin{align}
	\label{S_m,m',k,t set Case 2}
	\begin{split}
	\mathscr{S}_{m,m',k,t}\ = & \left\{\bss\in\{0,1\}^{1-a_{ij}-m-m'}: \vert\bss\vert = \frac{1-a_{ij}-m-m'}{2},\ \vert\bss\vert_{1;1-a_{ij}-k-m} = 1-a_{ij}-k-m-t\ \mathrm{and}\right.\\ &\left.\vert\bss\vert_{1;p}\geq \frac{p}{2},\forall p\in\{1,\dots,1-a_{ij}-m-m'\}\right\}.
	\end{split}
	\end{align}
\end{corollary}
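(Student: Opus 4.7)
The plan is to mirror the argument of Corollary \ref{cor F_ij(B_i,B_j) Case 1 with epsilon and P}, starting from Lemma \ref{lemma Case 2 elimination} in place of Lemma \ref{lemma Case 1 elimination}. The right-hand side of (\ref{expr lemma Case 2 elimination}) splits into two pieces: the first is formally identical to the right-hand side of (\ref{expr lemma Case 1 elimination}) and so falls directly under the scope of Proposition \ref{prop term 1 expansion with epsilon and P}, while the second, involving \(1\otimes F_j\), is to be left untouched at this stage and handled in a later subsection. Thus what needs to be proved is only the expansion of the first piece.

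First, I would apply Proposition \ref{prop term 1 expansion with epsilon and P} verbatim to the first piece, obtaining a sum over triples \((\bl,\bss,k)\) of terms of the shape
\[
(c_i\mathcal{Z}_i)^{A}B_i^{\vert\bl\vert_{1;1-a_{ij}-k}}B_jB_i^{\vert\bl\vert_{2-a_{ij}-k;1-a_{ij}}}(c_i\mathcal{Z}_i)^{B},
\]
where \(A=\sum_{r=1}^{1-a_{ij}-k}(1-\ell_r)(1-s_{r-\vert\bl\vert_{1;r}})\) and \(B\) is the analogous tail sum, so that by (\ref{full sums 2}) one has \(A+B=\frac{1-a_{ij}-\vert\bl\vert}{2}\) on those triples that contribute non-trivially (Lemma \ref{lemma restrictions on s}\,\ref{lemma restrictions on s condition b}). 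Setting \(m=\vert\bl\vert_{1;1-a_{ij}-k}\) and \(m'=\vert\bl\vert_{2-a_{ij}-k;1-a_{ij}}\), and pulling the \(\mathcal{Z}_i\)-factors through the \(B_i\)-blocks (which is legitimate by Lemma \ref{lemma commutation B_i, Z_i, W_ij}), collects the expression into groups of \(\mathcal{Z}_i^t\) on the left of \(B_j\) and \(\mathcal{Z}_i^{(1-a_{ij}-m-m')/2-t}\) on the right, where \(t=A\).

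The decisive difference with Case 1 is precisely at this step: since \(j\in X\), the relation \([B_j,\mathcal{Z}_i]=0\) of Lemma \ref{lemma commutation B_i, Z_i, W_ij} no longer applies, so the two \(\mathcal{Z}_i\)-blocks cannot be merged, and the exponent \(t\) must be retained as an extra summation variable. Translating the constraint \(t=A=(1-a_{ij}-k)-m-\vert\bss\vert_{1;1-a_{ij}-k-m}\) into a condition on \(\bss\) yields \(\vert\bss\vert_{1;1-a_{ij}-k-m}=1-a_{ij}-k-m-t\), which is exactly the extra defining condition in (\ref{S_m,m',k,t set Case 2}) compared with \(\mathscr{S}_{m,m'}\) in (\ref{L and S sets for Case 1}). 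The remaining two conditions \(\vert\bss\vert=(1-a_{ij}-m-m')/2\) and \(\vert\bss\vert_{1;p}\geq p/2\) come from Lemma \ref{lemma restrictions on s}\,\ref{lemma restrictions on s condition b}--\ref{lemma restrictions on s condition c} just as in Case 1, and the prefactor \((a_{ij}+m+m')_p\) in (\ref{rho_m,m',t def}) reflects the vanishing imposed by Lemma \ref{lemma restrictions on s}\,\ref{lemma restrictions on s condition a} when \(m+m'=-a_{ij}\}\). The range \(k\in\{m',\dots,1-a_{ij}-m\}\) arises from the requirements \(\vert\bl\vert_{1;1-a_{ij}-k}=m\) and \(\vert\bl\vert_{2-a_{ij}-k;1-a_{ij}}=m'\), and the scalar \(c_i^{(1-a_{ij}-m-m')/2}\) is extracted from \((c_i\mathcal{Z}_i)^{A+B}\).

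The main obstacle is purely bookkeeping: one must verify that the partial counts \(\vert\bl\vert_{1;r}\) and \(\vert\bss\vert_{1;p}\) used to track the positions of \(B_i\)'s, \(F_i\)'s and \(E_iK_i^{-1}\)'s produced by the binary expansion translate correctly into the restriction \(\vert\bss\vert_{1;1-a_{ij}-k-m}=1-a_{ij}-k-m-t\) on the partial sum up to the block preceding \(B_j\). Once this identification is done, the rest of the identity is a re-indexing of the sum in Proposition \ref{prop term 1 expansion with epsilon and P}, and the second summand of (\ref{F_ij(B_i,B_j) Case 2 with epsilon and P}) is simply copied from the second line of (\ref{expr lemma Case 2 elimination}).
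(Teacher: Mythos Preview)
Your proposal is correct and follows essentially the same approach as the paper: start from Lemma~\ref{lemma Case 2 elimination}, apply Proposition~\ref{prop term 1 expansion with epsilon and P} to the first piece, introduce \(m,m'\) via the partial sums of \(\bl\), observe that \([B_j,\mathcal{Z}_i]=0\) fails for \(j\in X\) so the left exponent \(t\) of \(\mathcal{Z}_i\) must be tracked separately, and translate the constraint \(t=1-a_{ij}-k-m-\vert\bss\vert_{1;1-a_{ij}-k-m}\) into the extra defining condition of \(\mathscr{S}_{m,m',k,t}\). There is a stray closing brace in your text (``\(m+m'=-a_{ij}\}\)'') that should be removed before compilation.
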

\begin{proof}
	Upon combining Lemma \ref{lemma Case 2 elimination}, Proposition \ref{prop term 1 expansion with epsilon and P} and the equations (\ref{full sum}) and (\ref{full sums 2}), we obtain
	\begin{align*}
	&F_{ij}(B_i,B_j) \\
	= & \sum_{k = 0}^{1-a_{ij}}\sum_{\substack{\bl\in\{0,1\}^{1-a_{ij}}\\\vert\bl\vert\neq 1-a_{ij}}}\sum_{\bss\in\{0,1\}^{1-a_{ij}-\vert\bl\vert}}\left((-1)^{k+1}\begin{bmatrix}
	1-a_{ij}\\k
	\end{bmatrix}_{q_i} c_i^{\frac{1-a_{ij}-\vert\bl\vert}{2}}(\epsilon\circ P_{-\lambda_{ij}})(\plsk)\right.\\&\left. \mathcal{Z}_i^{t_{\bl,\bss,k}}B_i^{\vert\bl\vert_{1;1-a_{ij}-k}}B_jB_i^{\vert\bl\vert_{2-a_{ij}-k;1-a_{ij}}}\mathcal{Z}_i^{\frac{1-a_{ij}-\vert\bl\vert}{2}-t_{\bl,\bss,k}}\right)\\
	& + (\mathrm{id}\otimes (\epsilon\circ P_{-\lambda_{ij}}))\left(-F_{ij}(B_i\otimes K_i^{-1}+1\otimes F_i+c_i\mathcal{Z}_i\otimes E_iK_i^{-1}+c_i\mathcal{W}_{ij}K_j\otimes(E_jE_i-q_i^{a_{ij}}E_iE_j)K_i^{-1},1\otimes F_j)\right),
	\end{align*}
	with, by (\ref{full sums 2}), 
	\begin{equation}
	\label{t_l,s,k def}
	t_{\bl,\bss,k} = 1-a_{ij}-k-\vert\bl\vert_{1;1-a_{ij}-k}-\vert\bss\vert_{1;1-a_{ij}-k-\vert\bl\vert_{1;1-a_{ij}-k}}.
	\end{equation}
	
	The sum over \(\bl\) can be restricted to \(\mathcal{L}_{m,m',k}\), with an additional summation over \(m,m'\), just like in the proof of Corollary \ref{cor F_ij(B_i,B_j) Case 1 with epsilon and P}. Setting \(t_{\bl,\bss,k}\) equal to a parameter \(t\), over which we sum as well, determines the condition
	\[
	\vert\bss\vert_{1;1-a_{ij}-k-m} = 1-a_{ij}-k-m-t,
	\]
	as follows from (\ref{t_l,s,k def}). This restriction, together with Conditions \ref{lemma restrictions on s condition b} and \ref{lemma restrictions on s condition c} of Lemma \ref{lemma restrictions on s}, determines the definition of \(\mathscr{S}_{m,m',k,t}\).
\end{proof}

We will now perform a similar binary expansion for the last line of (\ref{F_ij(B_i,B_j) Case 2 with epsilon and P}). 

\begin{proposition}
	\label{prop term 2 expansion with epsilon and P}
	Let \(i\in I\setminus X\) be such that \(\tau(i) = i\) and let \(j\in X\). Then one has
	\begin{align}
	\label{F_{ij}(B_i,B_j) term 2 with epsilon and P}
	\begin{split}
	&(\mathrm{id}\otimes (\epsilon\circ P_{-\lambda_{ij}}))\left(-F_{ij}(B_i\otimes K_i^{-1}+1\otimes F_i+c_i\mathcal{Z}_i\otimes E_iK_i^{-1}+c_i\mathcal{W}_{ij}K_j\otimes(E_jE_i-q_i^{a_{ij}}E_iE_j)K_i^{-1},1\otimes F_j) \right)\\
	= & \sum_{k = 1}^{1-a_{ij}}\sum_{d = 0}^{k-1}\sum_{\bl\in\{0,1\}^{-a_{ij}}}\sum_{\bss\in\{0,1\}^{-a_{ij}-\vert\bl\vert}}\left((-1)^{k+1}\begin{bmatrix}
	1-a_{ij}\\k
	\end{bmatrix}_{q_i}q_i^{a_{ij}\vert\bl\vert_{1;1-a_{ij}-k+d}}(\epsilon\circ P_{-\lambda_{ij}})(\mathfrak{r}_{\bl,\bss,k,d}^{(i,j,a_{ij})})\right.\\
	&\left. 
	(c_i\mathcal{Z}_i)^{\sum_{r = 1}^{1-a_{ij}-k+d}(1-\ell_r)(1-s_{r-\vert\bl\vert_{1;r}})}(c_i\mathcal{W}_{ij}K_j)(c_i\mathcal{Z}_i)^{\sum_{r = 2-a_{ij}-k+d}^{-a_{ij}}(1-\ell_r)(1-s_{r-\vert\bl\vert_{1;r}})}B_i^{\vert\bl\vert}
	\right),
	\end{split}
	\end{align}
	where
	\begin{equation}
	\label{r_l,s,k,d def}
	\mathfrak{r}_{\bl,\bss,k,d}^{(i,j,a_{ij})} = \left(\overrightarrow{\prod_{r = 1}^{1-a_{ij}-k}}\mathcal{T}^{i}_{\bl,\bss,r}\right)F_j\left(\overrightarrow{\prod_{r = 2-a_{ij}-k}^{1-a_{ij}-k+d}}\mathcal{T}^{i}_{\bl,\bss,r}\right)(E_jE_i-q_i^{a_{ij}}E_iE_j)K_i^{-1}\left(\overrightarrow{\prod_{r = 2-a_{ij}-k+d}^{-a_{ij}}}\mathcal{T}^{i}_{\bl,\bss,r}\right),
	\end{equation}
	with \(\mathcal{T}^{i}_{\bl,\bss,r}\) as in (\ref{Ti_l,s,r def}).
\end{proposition}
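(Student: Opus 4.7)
The strategy parallels the proof of Proposition \ref{prop term 1 expansion with epsilon and P}, differing chiefly in the treatment of the fourth summand of
\[
A \;=\; B_i\otimes K_i^{-1} + 1\otimes F_i + c_i\mathcal{Z}_i\otimes E_iK_i^{-1} + c_i\mathcal{W}_{ij}K_j\otimes(E_jE_i-q_i^{a_{ij}}E_iE_j)K_i^{-1}.
\]
Start by expanding the quantum Serre polynomial:
\[
-F_{ij}(A,1\otimes F_j) \;=\; \sum_{k=0}^{1-a_{ij}}(-1)^{k+1}\begin{bmatrix}1-a_{ij}\\ k\end{bmatrix}_{q_i}\, A^{1-a_{ij}-k}\,(1\otimes F_j)\,A^k,
\]
and then distribute each $A^{1-a_{ij}-k}$ and $A^k$, choosing at each of the $1-a_{ij}$ positions one of the four summands of $A$.

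The projection $\mathrm{id}\otimes(\epsilon\circ P_{-\lambda_{ij}})$ imposes severe restrictions on which distributed terms survive. The central tensor $1\otimes F_j$ is the unique source of $F_j$ in the second tensor factor, and the fourth summand of $A$ is the unique source of $E_j$. Hence exactly one $\mathcal{W}_{ij}$-factor may be selected, since zero leaves the $F_j$ unpaired while two or more yield an unpairable surplus of $E_j$'s; in either case the PBW standardization retains nontrivial $E$- or $F$-components that are killed by $\epsilon$. The main obstacle is to see that, moreover, this unique $\mathcal{W}_{ij}$-factor must sit \emph{to the right} of $F_j$. A left-placement produces a pattern $\cdots E_j \cdots F_j \cdots$ already in $E^\alpha K_\beta F^\gamma$-order with respect to the $E_j$-$F_j$ pair, so standardization cannot extract the $K_j^{-1}$ factor required by $P_{-\lambda_{ij}}$; a right-placement instead forces $\cdots F_j \cdots E_j \cdots$, and normalization invokes $F_jE_j = E_jF_j - \tfrac{K_j-K_j^{-1}}{q_j-q_j^{-1}}$, whose $K_j^{-1}$ component precisely supplies the $K_j^{-1}$-contribution to $K_{-\lambda_{ij}}$.

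Having localized $\mathcal{W}_{ij}$ to the right of $F_j$, I parametrize its position by $d\in\{0,\ldots,k-1\}$ counting the number of other $A$-factors strictly between $F_j$ and $\mathcal{W}_{ij}$; this forces $k\geq 1$. The remaining $-a_{ij}$ positions are handled exactly as in Proposition \ref{prop term 1 expansion with epsilon and P} via binary tuples $\bl\in\{0,1\}^{-a_{ij}}$ distinguishing $B_i\otimes K_i^{-1}$ from the other two summands, and $\bss\in\{0,1\}^{-a_{ij}-\vert\bl\vert}$ distinguishing $1\otimes F_i$ from $c_i\mathcal{Z}_i\otimes E_iK_i^{-1}$. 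This directly yields the second tensor factor $\mathfrak{r}_{\bl,\bss,k,d}^{(i,j,a_{ij})}$ of (\ref{r_l,s,k,d def}). Finally, to collect all $B_i$'s to the far right in the first tensor component I invoke Lemma \ref{lemma commutation B_i, Z_i, W_ij}: $B_i$ commutes freely with $\mathcal{Z}_i$, but $B_i\mathcal{W}_{ij}K_j = q_i^{a_{ij}}\mathcal{W}_{ij}K_jB_i$, so the $\vert\bl\vert_{1;1-a_{ij}-k+d}$ many $B_i$'s originating to the left of $\mathcal{W}_{ij}$ must each commute past $c_i\mathcal{W}_{ij}K_j$ at the cost of $q_i^{a_{ij}}$. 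This produces the overall factor $q_i^{a_{ij}\vert\bl\vert_{1;1-a_{ij}-k+d}}$, completing the derivation.
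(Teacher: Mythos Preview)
Your proposal is correct and follows essentially the same approach as the paper's proof. The only cosmetic difference is that the paper first argues that the $\mathcal{W}_{ij}$-term may be dropped from the factor $A^{1-a_{ij}-k}$ to the left of $1\otimes F_j$ and then counts exactly one occurrence in the right factor $A^k$, whereas you count globally and then argue the unique occurrence must lie to the right; the substance of the $\epsilon$-vanishing arguments and the subsequent binary expansion and use of Lemma~\ref{lemma commutation B_i, Z_i, W_ij} is identical.
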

\begin{proof}
	By the definition (\ref{Fij def}) of \(F_{ij}\), the left-hand side of (\ref{F_{ij}(B_i,B_j) term 2 with epsilon and P}) can be written as
	\begin{align*}
	& (\mathrm{id}\otimes (\epsilon\circ P_{-\lambda_{ij}}))\left(\sum_{k = 0}^{1-a_{ij}} (-1)^{k+1}\begin{bmatrix}
	1-a_{ij}\\k
	\end{bmatrix}_{q_i}\left(B_i\otimes K_i^{-1}+1\otimes F_i+c_i\mathcal{Z}_i\otimes E_iK_i^{-1}\right)^{1-a_{ij}-k}(1\otimes F_j)\right.\\&\left(B_i\otimes K_i^{-1}+1\otimes F_i+c_i\mathcal{Z}_i\otimes E_iK_i^{-1}+c_i\mathcal{W}_{ij}K_j\otimes(E_jE_i-q_i^{a_{ij}}E_iE_j)K_i^{-1}\right)^k
	\Bigg).
	\end{align*}
	In the term \((B_i\otimes K_i^{-1}+1\otimes F_i+c_i\mathcal{Z}_i\otimes E_iK_i^{-1})^{1-a_{ij}-k}\) preceding \(1\otimes F_j\), the term \(c_i\mathcal{W}_{ij}K_j\otimes(E_jE_i-q_i^{a_{ij}}E_iE_j)K_i^{-1}\) does not need to be taken into account. Indeed, the standard ordering of the expansion with respect to this term would consist of terms 
	\begin{equation}
	\label{standard ordering 1}
	E_i^{N_1}E_j^{M}E_i^{N_2}F_i^{N_3}F_jF_i^{N_4}K_i^{N_5},
	\end{equation}
	with \(M\geq 1\). But of course each such term vanishes under \(\epsilon\). In the term \((B_i\otimes K_i^{-1}+1\otimes F_i+c_i\mathcal{Z}_i\otimes E_iK_i^{-1}+c_i\mathcal{W}_{ij}K_j\otimes(E_jE_i-q_i^{a_{ij}}E_iE_j)K_i^{-1})^k\) succeeding \(1\otimes F_j\), it does need to be taken into account. More precisely, in the whole sum we obtain when expanding the \(k\)-th power, each term must contain exactly one factor \(c_i\mathcal{W}_{ij}K_j\otimes(E_jE_i-q_i^{a_{ij}}E_iE_j)K_i^{-1}\), such that we may use the rule \(F_jE_j = E_jF_j - \frac{K_j-K_j^{-1}}{q_j-q_j^{-1}}\) to obtain a non-zero contribution. Indeed, if we were to take more than one such factor, then we would end up with a normal ordering consisting of terms of the form (\ref{standard ordering 1}) with \(M\geq 1\) and
	\[
		E_i^{N_1}E_j^{M}E_i^{N_2}F_i^{N_3}K_i^{N_4}K_j^{N_5}
	\]
	with \(M\geq 1\), which again disappear under \(\epsilon\), whereas if we were to take \(0\) such factors, then we would find 
	\[
	E_i^{N_1}F_i^{N_2}F_jF_i^{N_3}K_i^{N_4},
	\]
	in the normal ordering, which also yields \(0\) under \(\epsilon\) by the presence of \(F_j\). This also explains why we can replace \((B_i\otimes K_i^{-1}+1\otimes F_i+c_i\mathcal{Z}_i\otimes E_iK_i^{-1}+c_i\mathcal{W}_{ij}K_j\otimes(E_jE_i-q_i^{a_{ij}}E_iE_j)K_i^{-1})^k\) by 
	\[
	\sum_{d = 0}^{k-1}(B_i\otimes K_i^{-1}+1\otimes F_i+c_i\mathcal{Z}_i\otimes E_iK_i^{-1})^d(c_i\mathcal{W}_{ij}K_j\otimes(E_jE_i-q_i^{a_{ij}}E_iE_j)K_i^{-1})(B_i\otimes K_i^{-1}+1\otimes F_i+c_i\mathcal{Z}_i\otimes E_iK_i^{-1})^{k-d-1}.
	\]
	The claim now follows upon expanding binarily the powers of \(B_i\otimes K_i^{-1}+1\otimes F_i+c_i\mathcal{Z}_i\otimes E_iK_i^{-1}\), as in the proof of Proposition \ref{prop term 1 expansion with epsilon and P}. Note that this time, we will need a total of \(1-a_{ij}-k+d+(k-d-1) = -a_{ij}\) variables \(\ell_r\). Observe also that we have used Lemma \ref{lemma commutation B_i, Z_i, W_ij} to obtain the factor \(q_i^{a_{ij}\vert\bl\vert_{1;1-a_{ij}-k+d}}\).
\end{proof}

Once more, many of the \(\bss\) in the sum in (\ref{F_{ij}(B_i,B_j) term 2 with epsilon and P}) will not contribute. In analogy to Lemma \ref{lemma restrictions on s}, one can formulate the following result.

\begin{lemma}
	\label{lemma restrictions on s term 2}
	Let \(i\in I\setminus X\) be such that \(\tau(i) = i\) and let \(j\in X\). Let \(\bl\in\{0,1\}^{-a_{ij}}\), \(\bss\in\{0,1\}^{-a_{ij}-\vert\bl\vert}\), \(k\in\{1,\dots,1-a_{ij}\}\) and \(d\in\{0,\dots,k-1\}\). Then one has
	\[
	(\epsilon\circ P_{-\lambda_{ij}})(\mathfrak{r}_{\bl,\bss,k,d}^{(i,j,a_{ij})}) = 0
	\]
	if one of the Conditions \ref{lemma restrictions on s condition a}, \ref{lemma restrictions on s condition b}, \ref{lemma restrictions on s condition c} from Lemma \ref{lemma restrictions on s} is fulfilled, or in case we have
	\begin{enumerate}[(a), start = 4]
		\item\label{lemma restrictions on s condition d} There exists \(p\in\{1-a_{ij}-k+d-\vert\bl\vert_{1;1-a_{ij}-k+d},\dots,-a_{ij}-\vert\bl\vert\}\) such that \(\vert \bss\vert_{1;p} = \tfrac{p}{2}\).
		\item\label{lemma restrictions on s condition e} \(\vert\bss\vert_{1;1-a_{ij}-k+d-\vert\bl\vert_{1;1-a_{ij}-k+d}} = 0\).
	\end{enumerate}
\end{lemma}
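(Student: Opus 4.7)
My plan is to extend the argument of Lemma~\ref{lemma restrictions on s} by carefully tracking the effect of the additional factors in $\rlskd$, namely the $F_j$ and the middle factor $(E_jE_i - q_i^{a_{ij}}E_iE_j)K_i^{-1}$, on the relative positions and counts of $E_i$'s and $F_i$'s. As in Lemma~\ref{lemma restrictions on s}, I would rewrite $\rlskd$ in a PBW-type normal form within the decomposition $\Uqgp = \bigoplus_{\beta}U^+K_{\beta}S(U^-)$ using the Drinfeld--Jimbo relations (\ref{U_q(g) relations})--(\ref{U_q(g) relations 2}), and observe that $\epsilon \circ P_{-\lambda_{ij}}$ extracts precisely the coefficient of $K_{-\lambda_{ij}}$ with trivial $U^+$ and $S(U^-)$ parts. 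Since $i\neq j$ with $i\in I\setminus X$ and $j\in X$, the relations (\ref{U_q(g) relations 2}) give $[E_i,F_j]=[F_i,E_j]=0$, so up to $q$-scalars generated by $K$-commutations, the only non-trivial reductions available are $F_iE_i = E_iF_i - (K_i-K_i^{-1})/(q_i-q_i^{-1})$ together with its $j$-analogue. The question of when $\epsilon \circ P_{-\lambda_{ij}}(\rlskd)$ vanishes thus reduces to a combinatorial matching problem: every $F_i$ must be paired with an $E_i$ lying to its right in $\rlskd$, and the unique $F_j$ must be paired with the $E_j$ in the middle factor.

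For conditions \ref{lemma restrictions on s condition a}, \ref{lemma restrictions on s condition b} and \ref{lemma restrictions on s condition c}, I would reproduce the argument of Lemma~\ref{lemma restrictions on s} essentially verbatim, with the minor adjustment that the middle factor contributes one additional $E_i$. This shifts the balance condition to $|\bss| = \frac{1-a_{ij}-|\bl|}{2}$, which is impossible for $a_{ij}+|\bl|$ even (condition~\ref{lemma restrictions on s condition a}) and fails whenever $|\bss|$ differs from this value (condition~\ref{lemma restrictions on s condition b}), so that surviving $E_i$ or $F_i$ factors are killed by $\epsilon$. Condition~\ref{lemma restrictions on s condition c} violates the prefix matching condition $|\bss|_{1;p}\geq p/2$ at some position $p$ strictly before the middle factor, and the same standard-ordering argument as in Lemma~\ref{lemma restrictions on s} produces an unpaired $E_i$ in the normal form.

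For condition~\ref{lemma restrictions on s condition d}, the extra $E_i$ inserted at the middle factor, whose $\bss$-index boundary is $P := 1-a_{ij}-k+d-|\bl|_{1;1-a_{ij}-k+d}$, increments the running $E_i$-count by one at that point, so the prefix matching requirement at positions $p\geq P$ must be strengthened to $|\bss|_{1;p} > p/2$; hence equality $|\bss|_{1;p} = p/2$ in that range leaves an unpaired $E_i$ and the term vanishes under $\epsilon$. For condition~\ref{lemma restrictions on s condition e}, two subcases arise: if $P\geq 1$, then $|\bss|_{1;P}=0$ forces $|\bss|_{1;1}=0 < 1/2$, which is condition~\ref{lemma restrictions on s condition c} at $p=1$; if $P=0$, the extra $E_i$ from the middle factor has no preceding $F_i$ among the $\mathcal{T}^i_{\bl,\bss,r}$-factors, so it appears strictly to the left of all surviving $F_i$'s in the normal form and cannot be matched, again killing the contribution under $\epsilon$. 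In parallel I would verify that the unique $F_j$--$E_j$ pair reduces, via $F_jE_j = E_jF_j - (K_j-K_j^{-1})/(q_j-q_j^{-1})$, to $K_j^{\pm 1}$-factors that combine with the other $K$'s to give a non-vanishing $K_{-\lambda_{ij}}$-component under $P_{-\lambda_{ij}}$, so the $j$-pairing does not create additional obstructions beyond those listed. The main obstacle I expect is the careful combinatorial bookkeeping for conditions~\ref{lemma restrictions on s condition d} and~\ref{lemma restrictions on s condition e}, in particular locating the extra $E_i$ correctly inside the running prefix counts; the remainder parallels the proof of Lemma~\ref{lemma restrictions on s}.
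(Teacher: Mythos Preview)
Your proposal is correct and follows essentially the same approach as the paper: both arguments reduce to the normal-ordering count of $F_i$'s versus $E_i$'s exactly as in Lemma~\ref{lemma restrictions on s}, noting that the extra $E_i$ contributed by the middle factor $(E_jE_i-q_i^{a_{ij}}E_iE_j)K_i^{-1}$ shifts the balance to $\vert\bss\vert=\tfrac{1-a_{ij}-\vert\bl\vert}{2}$ and strengthens the prefix condition beyond position $P$. The paper's proof is in fact just two sentences pointing back to Lemma~\ref{lemma restrictions on s}; your write-up is more detailed (in particular your case split for condition~\ref{lemma restrictions on s condition e} and your remark that the $F_j$--$E_j$ pair introduces no further obstruction are made explicit), but the substance is the same.
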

\begin{proof}
	As in the proof of Lemma \ref{lemma restrictions on s}, the requirement that \(\rlskd\) must contain an equal number of factors \(F_i\) and \(E_i\) determines the conditions \ref{lemma restrictions on s condition a} and \ref{lemma restrictions on s condition b}. Note that in this case, one comes to the number \(\frac{1-a_{ij}-\vert\bl\vert}{2}\) by considering the \(-a_{ij}-\vert\bl\vert\) factors \(F_i\) or \(E_i\) arising from the \(\mathcal{T}^{i}_{\bl,\bss,r}\) in (\ref{r_l,s,k,d def}), together with the extra factor \(E_i\) in (\ref{r_l,s,k,d def}). The requirement that for each \(p\), the number of factors \(F_i\) must exceed the number of factors \(E_i\) up to position \(p\), determines in this case not only the condition \ref{lemma restrictions on s condition c}, but also the extra conditions \ref{lemma restrictions on s condition d} and \ref{lemma restrictions on s condition e}, again by the presence of \((E_jE_i-q_i^{a_{ij}}E_iE_j)K_i^{-1}\) in (\ref{r_l,s,k,d def}).
\end{proof}

As before, this means that we can determine
\begin{align}
\label{full sum 3}
\begin{split}
\sum_{r = 1}^{-a_{ij}}(1-\ell_r)(1-s_{r-\vert\bl\vert_{1;r}}) & = -a_{ij}-\vert\bl\vert-\vert\bss\vert = \frac{-1-a_{ij}-\vert\bl\vert}{2}, \\
\sum_{r = 1}^{1-a_{ij}-k+d}(1-\ell_r)(1-s_{r-\vert\bl\vert_{1;r}}) & = 1-a_{ij}-k+d-\vert\bl\vert_{1;1-a_{ij}-k+d}-\vert\bss\vert_{1;1-a_{ij}-k+d-\vert\bl\vert_{1;1-a_{ij}-k+d}}, \\
\sum_{r = 2-a_{ij}-k+d}^{-a_{ij}}(1-\ell_r)(1-s_{r-\vert\bl\vert_{1;r}}) & = \frac{-1-a_{ij}-\vert\bl\vert}{2}-\sum_{r = 1}^{1-a_{ij}-k+d}(1-\ell_r)(1-s_{r-\vert\bl\vert_{1;r}}).
\end{split}
\end{align}

Just like in the previous situation, this now leads to a complete description of \(C_{ij}(\bc)\) in Case 2.

\begin{corollary}[Case 2]
	\label{cor F_ij(B_i,B_j) Case 2 part 2 with epsilon and P}
	Let \(i\in I\setminus X\) be such that \(\tau(i) = i\) and let \(j\in X\). Then one has 
	\begin{align}
	\label{F_ij(B_i,B_j) Case 2 with epsilon and P complete}
	\begin{split}
	&F_{ij}(B_i,B_j) = C_{ij}(\bc)\\ = & \sum_{m = 0}^{-1-a_{ij}}\sum_{m' = 0}^{-1-a_{ij}-m}\sum_{t = 0}^{\frac{1-a_{ij}-m-m'}{2}}\rho_{m,m',t}^{(i,j,a_{ij})}\mathcal{Z}_i^tB_i^mB_jB_i^{m'}\mathcal{Z}_i^{\frac{1-a_{ij}-m-m'}{2}-t}\\ 
	&+ \sum_{m = 0}^{-1-a_{ij}}\sum_{t = 0}^{\frac{-1-a_{ij}-m}{2}}\sigma_{m,t}^{(i,j,a_{ij})}\mathcal{Z}_i^t\mathcal{W}_{ij}K_j\mathcal{Z}_i^{\frac{-1-a_{ij}-m}{2}-t}B_i^{m},
	\end{split}
	\end{align}
	with \(\rho_{m,m',t}^{(i,j,a_{ij})}\) as obtained in (\ref{rho_m,m',t def}) and where
	\begin{align}
	\label{sigma_m,t def}
	\begin{split}
	&\sigma_{m,t}^{(i,j,a_{ij})}\\ =\ & (a_{ij}+m)_pc_i^{\frac{1-a_{ij}-m}{2}}\sum_{k = 1}^{1-a_{ij}}\sum_{d = 0}^{k-1}\sum_{m' = 0}^{m}\sum_{\bl\in\mathcal{L}_{m,m',k,d}'}\sum_{\bss\in\mathscr{S}_{m,m',k,t,d}'}(-1)^{k+1}\begin{bmatrix}
	1-a_{ij}\\k
	\end{bmatrix}_{q_i}q_i^{m' a_{ij}}(\epsilon\circ P_{-\lambda_{ij}})(\mathfrak{r}_{\bl,\bss,k,d}^{(i,j,a_{ij})}),
	\end{split}
	\end{align}
	with \(\rlskd\) as in (\ref{r_l,s,k,d def}) and
	\begin{align}
	\label{L' and S' sets Case 2}
	\begin{split}
	\mathcal{L}_{m,m',k,d}' = &\left\{\bl\in\{0,1\}^{-a_{ij}}: \vert\bl\vert = m\ \mathrm{and}\ \vert\bl\vert_{1;1-a_{ij}-k+d} = m'\right\}, \\
	\mathscr{S}_{m,m',k,t,d}' = &\Big\{\bss\in\{0,1\}^{-a_{ij}-m}: \vert\bss\vert = \frac{1-a_{ij}-m}{2},\ \vert\bss\vert_{1;p}\geq \frac{p+\delta^{(p,k,d,m')}}{2}, \forall p\in\{1,\dots,-a_{ij}-m\}\ \mathrm{and}\\&\left. \vert\bss\vert_{1;1-a_{ij}-k-m'+d} = 1-a_{ij}-k-m'-t+d \neq 0\right\},
	\end{split}
	\end{align}
	where
	\[
	\delta^{(p,k,d,m')} = \left\{
	\begin{array}{ll}
	0& \mathrm{for}\ p<1-a_{ij}-k+d-m',\\
	1\quad &\mathrm{for}\ p\geq 1-a_{ij}-k+d-m'.
	\end{array}
	\right.
	\]
\end{corollary}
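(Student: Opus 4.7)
The plan is to combine Corollary \ref{cor F_ij(B_i,B_j) Case 2 part 1 with epsilon and P}, which already accounts for the first line of (\ref{F_ij(B_i,B_j) Case 2 with epsilon and P complete}) in exactly the required form $\sum \rho_{m,m',t}^{(i,j,a_{ij})} \mathcal{Z}_i^t B_i^m B_j B_i^{m'} \mathcal{Z}_i^{(1-a_{ij}-m-m')/2-t}$, with a careful reorganization of the remaining term $(\mathrm{id}\otimes(\epsilon\circ P_{-\lambda_{ij}}))(-F_{ij}(\dots,1\otimes F_j))$ that was just expanded in Proposition \ref{prop term 2 expansion with epsilon and P}. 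So the work left is entirely bookkeeping on the right-hand side of (\ref{F_{ij}(B_i,B_j) term 2 with epsilon and P}).

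First I would substitute the identities (\ref{full sum 3}) into (\ref{F_{ij}(B_i,B_j) term 2 with epsilon and P}) in order to collect the dependence on $\mathcal{Z}_i$ on each side of the $\mathcal{W}_{ij}K_j$-factor as two explicit powers, and the factor $c_i^{(-1-a_{ij}-\vert\bl\vert)/2+1}$ out in front. Here it is crucial that $\mathcal{Z}_i$ commutes with $B_i$ (Lemma \ref{lemma commutation B_i, Z_i, W_ij}), so the rightmost $B_i^{\vert\bl\vert}$ may be left untouched until the end; moreover, $[B_i,\mathcal{W}_{ij}K_j] \neq 0$, so the commutation constant $q_i^{a_{ij}\vert\bl\vert_{1;1-a_{ij}-k+d}}$ already produced by Proposition \ref{prop term 2 expansion with epsilon and P} is precisely what is needed, and no further commutation has to be performed.

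Next I would apply Lemma \ref{lemma restrictions on s term 2} to discard all summands which vanish. Condition (a) forces $a_{ij}+\vert\bl\vert$ to be odd, giving the factor $(a_{ij}+m)_p$ after setting $m=\vert\bl\vert$; condition (b) fixes $\vert\bss\vert = (1-a_{ij}-m)/2$; conditions (c), (d), (e) together yield the inequalities $\vert\bss\vert_{1;p}\geq (p+\delta^{(p,k,d,m')})/2$, where the extra $+1$ for $p\geq 1-a_{ij}-k+d-m'$ encodes the mandatory strict inequality past the inserted factor $(E_jE_i-q_i^{a_{ij}}E_iE_j)K_i^{-1}$ (condition (d)) plus the nonvanishing at position $1-a_{ij}-k+d-m'$ itself (condition (e)). Then I introduce summation variables $m = \vert\bl\vert$, $m' = \vert\bl\vert_{1;1-a_{ij}-k+d}$ and $t = (1-a_{ij}-k-m'+d) - \vert\bss\vert_{1;1-a_{ij}-k-m'+d}$; this last substitution is exactly the one dictated by (\ref{full sum 3}), and it rewrites the power of $\mathcal{Z}_i$ appearing to the left of $\mathcal{W}_{ij}K_j$ as $t$ and the power to the right as $(-1-a_{ij}-m)/2 - t$. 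The restriction $m+m'\leq -1-a_{ij}$ comes from the observation that $\vert\bl\vert=-a_{ij}$ combined with condition (a) would force $a_{ij}$ to be odd in a way incompatible with $(a_{ij}+m)_p$, but more simply $m\leq -a_{ij}-1$ after imposing oddness of $a_{ij}+m$.

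The one step that requires care is identifying the correct domain of the new variable $t$ and verifying that the admissible set $\mathscr{S}'_{m,m',k,t,d}$ precisely captures the surviving $\bss$'s under all five conditions simultaneously, together with the requirement $\vert\bss\vert_{1;1-a_{ij}-k+d-m'}\neq 0$ which comes from condition (e) and forces $t\leq (-1-a_{ij}-m)/2$. Once this is set up, the sums over $k$, $d$, $m'$, $\bl$ and $\bss$ are absorbed into the definition (\ref{sigma_m,t def}) of $\sigma_{m,t}^{(i,j,a_{ij})}$, and the remaining outer sums over $m$ and $t$ are the ones appearing in the statement. Combining with the first line supplied by Corollary \ref{cor F_ij(B_i,B_j) Case 2 part 1 with epsilon and P} yields (\ref{F_ij(B_i,B_j) Case 2 with epsilon and P complete}). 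The main obstacle is purely notational: keeping the five index sets $(k,d,m,m',t,\bl,\bss)$ consistently aligned with the conventions of Lemma \ref{lemma restrictions on s term 2} and the definition (\ref{r_l,s,k,d def}) of $\rlskd$, and in particular correctly encoding the jump at position $1-a_{ij}-k+d-m'$ into the piecewise constant $\delta^{(p,k,d,m')}$.
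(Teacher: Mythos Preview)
Your proposal is correct and follows essentially the same approach as the paper: combine Corollary~\ref{cor F_ij(B_i,B_j) Case 2 part 1 with epsilon and P} with Proposition~\ref{prop term 2 expansion with epsilon and P}, substitute the identities~(\ref{full sum 3}), apply Lemma~\ref{lemma restrictions on s term 2} to discard vanishing summands, and introduce the new summation indices $m=\vert\bl\vert$, $m'=\vert\bl\vert_{1;1-a_{ij}-k+d}$, $t=(1-a_{ij}-k-m'+d)-\vert\bss\vert_{1;1-a_{ij}-k-m'+d}$. Your exposition is in fact more explicit than the paper's in tracing how conditions (a)--(e) assemble into the definition of $\mathscr{S}'_{m,m',k,t,d}$; the only minor wobble is the aside about ``$m+m'\leq -1-a_{ij}$'', which momentarily conflates the $m'$ of the second sum (bounded by $m$) with that of the first, but you immediately recover the correct bound $m\leq -1-a_{ij}$.
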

\begin{proof}
	This follows from Corollary \ref{cor F_ij(B_i,B_j) Case 2 part 1 with epsilon and P}, Proposition \ref{prop term 2 expansion with epsilon and P} and the equations (\ref{full sum 3}) in exactly the same fashion as we have derived Corollaries \ref{cor F_ij(B_i,B_j) Case 1 with epsilon and P} and \ref{cor F_ij(B_i,B_j) Case 2 part 1 with epsilon and P}, i.e.\ upon setting
	\[
	m = \vert\bl\vert, \quad m' = \vert\bl\vert_{1;1-a_{ij}-k+d}, \quad t = 1-a_{ij}-k-m'+d-\vert\bss\vert_{1;1-a_{ij}-k-m'+d}.
	\]
	Again, \(\vert\bl\vert = m\) cannot equal \(-a_{ij}\), since then \(a_{ij}+m\) would be even, which is excluded by Condition \ref{lemma restrictions on s condition a} in Lemma \ref{lemma restrictions on s}. So \(m\) runs from \(0\) to \(-1-a_{ij}\). It follows immediately that \(m'\) runs from \(0\) to \(m\).
	The conditions in Lemma \ref{lemma restrictions on s term 2} determine the definition of \(\mathscr{S}_{m,m',k,t,d}'\).
\end{proof}

The relations we have obtained in Corollaries \ref{cor F_ij(B_i,B_j) Case 1 with epsilon and P} and \ref{cor F_ij(B_i,B_j) Case 2 part 2 with epsilon and P} comply with the explicit calculations performed in \cite{Kolb-2014} and \cite{Balagovic&Kolb-2015} by Balagovi\'{c} and Kolb. They also obtained explicit values for the structure constants for a limited set of possible \(a_{ij}\): they computed \(\rho_{m,m'}^{(i,j,a_{ij})}\) for \(a_{ij}\in\{-1,-2,-3\}\) and \(\rho_{m,m',t}^{(i,j,a_{ij})}\) and \(\sigma_{m,t}^{(i,j,a_{ij})}\) for \(a_{ij}\in\{-1,-2\}\). These values are displayed below.

\renewcommand\figurename{Table}
\begin{figure}[h] 
	\renewcommand\figurename{Table}
	\centering
	\savebox{\imagebox}{\begin{tikzpicture}
		\draw[line width = 1pt] (0,0) -- (9.1,0);
		\draw[line width = 1pt] (1,0.75) -- (1,-1.8);
		\draw (0,0.75) -- (1,0);
		\node at (0.25,0.3){\(m\)};
		\node at (0.75,0.5){\(m'\)};
		\node at (0.5,-0.3){\(0\)};
		\node at (0.5,-0.9){\(1\)};
		\node at (0.5,-1.5){\(2\)};
		\node at (2.25,0.35){\(0\)};
		\node at (5.5,0.35){\(1\)};
		\node at (8.4,0.35){\(2\)};
		\node at (2.25,-0.3) {\(-c_i^2q_i^2[3]_{q_i}^2\)};
		\node at (2.25,-0.9) {\(0\)};
		\node at (2.25,-1.5) {\(c_iq_i(1+[3]_{q_i}^2)\)};
		\node at (5.5,-0.3) {\(0\)};
		\node at (5.5,-0.9) {\(-c_iq_i(q_i^2+3+q_i^{-2})[4]_{q_i}\)};
		\node at (8.4,-0.3) {\(\rho_{2,0}^{(i,j,-3)}\)};
		\draw (0,0.75) rectangle (3.5,-1.8);
		\draw (0,0.75) rectangle (7.6,-1.2);
		\draw (0,0.75) rectangle (9.1,-0.6);
		\end{tikzpicture}}
	\begin{subfigure}[t]{0.12\textwidth}
		\centering
		\raisebox{\dimexpr\ht\imagebox-\height}{
			\begin{tikzpicture}
			\draw[line width = 1pt] (0,0) -- (2,0);
			\draw[line width = 1pt] (1,0.75) -- (1,-0.6);
			\draw (0,0.75) -- (1,0);
			\node at (0.25,0.3){\(m\)};
			\node at (0.75,0.5){\(m'\)};
			\node at (0.5,-0.3){\(0\)};
			\node at (1.5,0.35){\(0\)};
			\node at (1.5,-0.3) {\(c_iq_i\)};
			\draw (0,0.75) rectangle (2,-0.6);
			\end{tikzpicture}}
		\caption{\(a_{ij} = -1\)}
	\end{subfigure}
	\begin{subfigure}[t]{0.35\textwidth}
		\centering\raisebox{\dimexpr\ht\imagebox-\height}{
			\begin{tikzpicture}
			\draw[line width = 1pt] (0,0) -- (5.6,0);
			\draw[line width = 1pt] (1,0.75) -- (1,-1.2);
			\draw (0,0.75) -- (1,0);
			\node at (0.25,0.3){\(m\)};
			\node at (0.75,0.5){\(m'\)};
			\node at (0.5,-0.3){\(0\)};
			\node at (0.5,-0.9){\(1\)};
			\node at (2.4,0.35){\(0\)};
			\node at (4.75,0.35){\(1\)};
			\node at (2.4,-0.3) {\(0\)};
			\node at (2.4,-0.9) {\(c_iq_i(q_i+q_i^{-1})^2 \)};
			\node at (4.75,-0.3) {\(-\rho_{1,0}^{(i,j,-2)}\)};
			\draw (3.9,0.75) -- (3.9,-1.2);
			\draw (5.6,0.75) -- (5.6,-0.6);
			\draw (0,-1.2) -- (3.9,-1.2);
			\draw (0,-0.6) -- (3.9,-0.6);
			\draw (0,0.75) rectangle (3.9,-1.2);
			\draw (3.9,0.75) rectangle (5.6,-0.6);
			\end{tikzpicture}}
		\caption{\(a_{ij} = -2\)}
	\end{subfigure}
	\begin{subfigure}[t]{0.50\textwidth}
		\centering
		\usebox{\imagebox}
		\caption{\(a_{ij} = -3\)}
		\label{Table aij - 2}
	\end{subfigure}
	\caption{Structure constants \({\rho}_{m,m'}^{(i,j,a_{ij})}\) for \(a_{ij}\in\{-1,-2,-3\}\)}
	\label{Table of rho_m,m'}
\end{figure}

\renewcommand\figurename{Table}
\begin{figure}[h] 
	\renewcommand\figurename{Table}
	\centering
	\savebox{\imagebox}{\begin{tikzpicture}
		\draw[line width = 1pt] (-0.9,0) -- (6.2,0);
		\draw[line width = 1pt] (1,0.75) -- (1,-2.2);
		\draw (0,0.75) -- (1,0);
		\node at (-0.25,0.3){\((m,m')\)};
		\node at (0.75,0.5){\(t\)};
		\node at (0,-0.6){\((0,1)\)};
		\node at (0,-1.7){\((1,0)\)};
		\node at (2.4,0.35){\(0\)};
		\node at (5,0.35){\(1\)};
		\node at (2.4,-0.6) {\(-c_iq_i^2\dfrac{q_i^2+2}{q_i-q_i^{-1}}\)};
		\node at (2.4,-1.7) {\(c_iq_i^2\dfrac{[3]_{q_i}}{q_i-q_i^{-1}} \)};
		\node at (5,-0.6) {\(c_i\dfrac{[3]_{q_i}}{q_i-q_i^{-1}}\)};
		\node at (5,-1.7) {\(-c_i\dfrac{q_i^{-2}+2}{q_i-q_i^{-1}} \)};
		\draw (3.9,0.75) -- (3.9,-2.2);
		\draw (6.2,0.75) -- (6.2,-2.2);
		\draw (-0.9,-2.2) -- (6.2,-2.2);
		\draw (-0.9,-1.2) -- (6.2,-1.2);
		\draw (-0.9,0.75) rectangle (6.2,-2.2);
		\end{tikzpicture}}
	\begin{subfigure}[b]{0.45\textwidth}
		\centering
		\raisebox{\dimexpr\ht\imagebox-\height}{
		\begin{tikzpicture}
		\draw[line width = 1pt] (-0.9,0) -- (6.2,0);
		\draw[line width = 1pt] (1,0.75) -- (1,-1.2);
		\draw (0,0.75) -- (1,0);
		\node at (-0.25,0.3){\((m,m')\)};
		\node at (0.75,0.5){\(t\)};
		\node at (0,-0.6){\((0,0)\)};
		\node at (2.4,0.35){\(0\)};
		\node at (5,0.35){\(1\)};
		\node at (2.4,-0.6) {\(\dfrac{c_iq_i^2}{q_i-q_i^{-1}}\)};
		\node at (5,-0.6) {\(\dfrac{-c_i}{q_i-q_i^{-1}}\)};
		\draw (3.9,0.75) -- (3.9,-1.2);
		\draw (-0.9,0.75) rectangle (6.2,-1.2);
		\end{tikzpicture}}
		\caption{\(a_{ij} = -1\)}
	\end{subfigure}
	\begin{subfigure}[b]{0.45\textwidth}
		\centering
		\usebox{\imagebox}
		\caption{\(a_{ij} = -2\)}
	\end{subfigure}
	\caption{Structure constants \({\rho}_{m,m',t}^{(i,j,a_{ij})}\) for \(a_{ij}\in\{-1,-2\}\)}
	\label{Table of rho_m,m',t}
\end{figure}

\begin{figure}[h] 
	\renewcommand\figurename{Table}
	\centering
	\begin{subfigure}[b]{0.25\textwidth}
		\centering
		\begin{tikzpicture}
		\draw[line width = 1pt] (0,0) -- (3.2,0);
		\draw[line width = 1pt] (1,0.75) -- (1,-1.2);
		\draw (0,0.75) -- (1,0);
		\node at (0.25,0.3){\(m\)};
		\node at (0.75,0.5){\(t\)};
		\node at (0.5,-0.6){\(0\)};
		\node at (2.1,0.35){\(0\)};
		\node at (2.1,-0.6) {\(c_i\dfrac{q_i+q_i^{-1}}{q_j-q_j^{-1}}\)};
		\draw (0,0.75) rectangle (3.2,-1.2);
		\end{tikzpicture}
		\caption{\(a_{ij} = -1\)}
	\end{subfigure}
	\begin{subfigure}[b]{0.4\textwidth}
		\centering
		\begin{tikzpicture}
		\draw[line width = 1pt] (0,0) -- (6.4,0);
		\draw[line width = 1pt] (1,0.75) -- (1,-1.2);
		\draw (0,0.75) -- (1,0);
		\node at (0.25,0.3){\(m\)};
		\node at (0.75,0.5){\(t\)};
		\node at (0.5,-0.6){\(1\)};
		\node at (3.7,0.35){\(0\)};
		\node at (3.7,-0.6) {\(-c_iq_i^{-2}[3]_{q_i}\dfrac{(q_i-q_i^{-1})(q_i+q_i^{-1})^2}{q_j-q_j^{-1}}\)};
		\draw (0,0.75) rectangle (6.4,-1.2);
		\end{tikzpicture}
		\caption{\(a_{ij} = -2\)}
	\end{subfigure}
	\caption{Structure constants \({\sigma}_{m,t}^{(i,j,a_{ij})}\) for \(a_{ij}\in\{-1,-2\}\)}
	\label{Table of sigma_m,t}
\end{figure}

The main purpose of this paper will be to find expressions in \(\K(q)\) for the structure constants \(\rho_{m,m'}^{(i,j,a_{ij})}\), \(\rho_{m,m',t}^{(i,j,a_{ij})}\) and \(\sigma_{m,t}^{(i,j,a_{ij})}\), valid without any restrictions on \(a_{ij}\). By Corollaries \ref{cor F_ij(B_i,B_j) Case 1 with epsilon and P}, \ref{cor F_ij(B_i,B_j) Case 2 part 1 with epsilon and P} and \ref{cor F_ij(B_i,B_j) Case 2 part 2 with epsilon and P}, this amounts to deriving how \(\epsilon\circ P_{-\lambda_{ij}}\) acts on \(\plsk\) and \(\rlskd\). This computation will be performed in the next two subsections.

\subsection{Case 1: $\tau(i) = i\in I\setminus X$ and $j\in I\setminus X$}
\label{Subsection Case 1}

Let us now fix \(i\in I\setminus X\) such that \(\tau(i) = i\) and \(j\in I\) distinct from \(i\). A priori, we don't specify whether or not \(j\in X\). Let us also fix \(m,m'\in\N\) such that \(a_{ij}+m+m'\) is odd and \(m+m'\leq -1-a_{ij}\), \(k\in\N\) such that \(m'\leq k\leq 1-a_{ij}-m\), \(t\in\{0,\dots,\frac{1-a_{ij}-m-m'}{2}\}\), \(\bl\in\mathcal{L}_{m,m',k}\) and \(\bss\in\mathscr{S}_{m,m',k,t}\). Note that this automatically implies that \(\bss\in\mathscr{S}_{m,m'}\), by (\ref{L and S sets for Case 1}) and (\ref{S_m,m',k,t set Case 2}). Hence by (\ref{rho_m,m' def}) and (\ref{rho_m,m',t def}) it suffices to compute the action of \(\epsilon\circ P_{-\lambda_{ij}}\) on \(\plsk\), defined in (\ref{p_l,s,k def}), in order to obtain the full polynomial \(C_{ij}(\bc)\) for Case 1, as well as the first of the two parts of this polynomial for Case 2. This computation will now be performed.

Let us introduce the notation \(\widehat{P}^{i}_N\), with \(N\in \Z\), for the projection operator
\begin{equation}
\label{P_i,N def}
\widehat{P}^{i}_{N}: U_q(\g')\to U^+K_i^NS(U^-)
\end{equation}
with respect to the decomposition (\ref{decomposition with P}). Let us also renormalize the element \(E_i\) as
\begin{equation}
\label{E_i tilde def}
\widetilde{E_i} = (q_i-q_i^{-1})E_i.
\end{equation}
Then we can state the following result.

\begin{proposition}
	\label{prop epsilon and other P}
	For \(i,j,m,m',k,\bl\) and \(\bss\) as fixed before, one has
	\begin{align*}
	(\epsilon\circ P_{-\lambda_{ij}})(\plsk) = &\ \left(\frac{q_i^2}{q_i-q_i^{-1}}\right)^{\frac{1-a_{ij}-m-m'}{2}} q_i^{\beta_{\bl,\boldsymbol{s},k}}\left(\epsilon\circ \widehat{P}^{i}_{-\frac{1-a_{ij}-m-m'}{2}}\right)\left(Y_{\bl,\bss}\right),
	\end{align*}
	where
	\begin{align}
	\label{Y_l,s def}
	Y_{\bl,\bss} & = \overrightarrow{\prod_{r = 1}^{1-a_{ij}}}F_i^{(1-\ell_r)s_{r-\vert\bl\vert_{1;r}}}\widetilde{E_i}^{(1-\ell_r)(1-s_{r-\vert\bl\vert_{1;r}})}, \\
	\label{beta_l,s,k def}
	\beta_{\bl,\bss,k} & = -a_{ij}\zeta^{(1-a_{ij}-k)}_{\bl,\bss}-2\sum_{r=1}^{1-a_{ij}}\zeta_{\bl,\bss}^{(r-1)}(\ell_r+(1-\ell_r)(1-s_{r-\vert\bl\vert_{1;r}})),\\
	\label{zeta_l,s,r def}
	\zeta^{(r)}_{\bl,\bss} & = 2\vert\bss\vert_{1;r-\vert\bl\vert_{1;r}}+\vert\bl\vert_{1;r}-r.
	\end{align}
\end{proposition}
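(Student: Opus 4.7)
The plan is to normal-order $\plsk$ by moving every $K_i^{\pm 1}$ factor together with the central $K_j^{-1}$ past all the $E_i$'s and $F_i$'s to the far right, and then use the fact that $\epsilon$ annihilates every PBW summand containing $E_i$ or $F_i$. Introducing indicators $\mu_r^K = \ell_r$, $\mu_r^F = (1-\ell_r)s_{r-|\bl|_{1;r}}$, $\mu_r^E = (1-\ell_r)(1-s_{r-|\bl|_{1;r}})$ so that $\mathcal{T}^{i}_{\bl,\bss,r}$ is one of $K_i^{-1}$, $F_i$, $E_iK_i^{-1}$ according to which indicator is nonzero, and writing $e_r = \sum_{r'\leq r}\mu_{r'}^E$, $f_r = \sum_{r'\leq r}\mu_{r'}^F$, a direct unpacking of (\ref{zeta_l,s,r def}) gives $\zeta^{(r)}_{\bl,\bss} = f_r - e_r$.

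Applying the relations $K_i^{-1}E_i = q_i^{-2}E_iK_i^{-1}$, $K_i^{-1}F_i = q_i^{2}F_iK_i^{-1}$, and for $K_j^{-1}$ the analogous commutations with exponent $\mp a_{ij}$ (using $(\alpha_i,\alpha_j) = \epsilon_i a_{ij} = \epsilon_j a_{ji}$), a $K_i^{-1}$ sitting at a position with $\nu_r := \mu_r^K + \mu_r^E = 1$ accrues a factor $q_i^{2(e_r - f_r)} = q_i^{-2\zeta^{(r)}_{\bl,\bss}}$ on its way to the right, while $K_j^{-1}$ accrues $q_i^{-a_{ij}\zeta^{(1-a_{ij}-k)}_{\bl,\bss}}$; here I use that $\bss\in\mathscr{S}_{m,m',k,t}$ forces equal total counts $\#E_i = \#F_i = |\bss| = (1-a_{ij}-m-m')/2$, so that the ``count after position $r$'' subtraction collapses to $e_r - f_r$. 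Replacing each copy of $E_i$ by $(q_i-q_i^{-1})^{-1}\Ei$ then rewrites $\plsk$ as $C\cdot Y_{\bl,\bss}\cdot K_i^{M}K_j^{-1}$ with $M = -\sum_r\nu_r = -(1-a_{ij}+m+m')/2$ and
\[
C = (q_i-q_i^{-1})^{-(1-a_{ij}-m-m')/2}\,q_i^{-a_{ij}\zeta^{(1-a_{ij}-k)}_{\bl,\bss}-2\sum_r\nu_r\zeta^{(r)}_{\bl,\bss}}.
\]
Since (\ref{beta_l,s,k def}) uses $\zeta^{(r-1)}$ rather than $\zeta^{(r)}$, I invoke the telescoping identity $\zeta^{(r)}-\zeta^{(r-1)} = \mu_r^F-\mu_r^E$ and the obvious $\nu_r\mu_r^F=0$, $\nu_r\mu_r^E=\mu_r^E$ to obtain $-2\sum_r\nu_r\zeta^{(r)}_{\bl,\bss} = -2\sum_r\nu_r\zeta^{(r-1)}_{\bl,\bss} + (1-a_{ij}-m-m')$; absorbing the resulting $q_i^{1-a_{ij}-m-m'}$ into $(q_i-q_i^{-1})^{-(1-a_{ij}-m-m')/2}$ exhibits $C$ in the asserted form $(q_i^2/(q_i-q_i^{-1}))^{(1-a_{ij}-m-m')/2}\,q_i^{\beta_{\bl,\bss,k}}$.

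Finally, for any $w$ in the subalgebra generated by $E_i, F_i$, each PBW summand of $w$ that is not a pure power of $K_i$ is killed by $\epsilon$, and the summand with the correct $K_i$-exponent survives $P_{-\lambda_{ij}}$, so $(\epsilon\circ P_{-\lambda_{ij}})(w\cdot K_i^M K_j^{-1}) = (\epsilon\circ \widehat{P}^{i}_{-(1-a_{ij})-M})(w)$. With $M$ as above, $-(1-a_{ij})-M = -(1-a_{ij}-m-m')/2$, so the claimed identity follows. The main obstacle is the $q$-factor bookkeeping, and in particular reconciling the $\zeta^{(r)}$ that naturally arises from normal-ordering with the $\zeta^{(r-1)}$ prescribed by (\ref{beta_l,s,k def}); once the telescoping identity above is identified, everything else is routine.
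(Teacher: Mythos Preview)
Your proof is correct and follows essentially the same approach as the paper: push all factors $K_i^{-1}$ and the factor $K_j^{-1}$ to the far right, keep track of the accumulated powers of $q_i$, renormalize $E_i$ to $\widetilde{E_i}$, and then identify $(\epsilon\circ P_{-\lambda_{ij}})$ acting on $Y_{\bl,\bss}K_i^{M}K_j^{-1}$ with $(\epsilon\circ\widehat{P}^{i}_{-(1-a_{ij})-M})$ acting on $Y_{\bl,\bss}$. The only cosmetic difference is that the paper treats the two sources of $K_i^{-1}$ (the pure factor $K_i^{-\ell_r}$ with $\ell_r=1$, and the $K_i^{-1}$ inside $E_iK_i^{-1}$) in separate cases, obtaining exponents $-2\zeta^{(r-1)}$ and $-2(\zeta^{(r-1)}-1)$ directly, whereas you treat both uniformly as $-2\zeta^{(r)}$ via the indicator $\nu_r$ and then use the telescoping identity $\zeta^{(r)}-\zeta^{(r-1)}=\mu_r^F-\mu_r^E$ together with $\nu_r\mu_r^F=0$, $\nu_r\mu_r^E=\mu_r^E$ to recover the form~(\ref{beta_l,s,k def}); this is precisely the same computation repackaged.
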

\begin{proof}
	As argued in the proof of Lemma \ref{lemma restrictions on s}, the total number of factors \(F_i\) and \(E_i\) in \(\plsk\) must be equal and must yield
	\begin{equation}
	\label{number of factors F_i and E_i}
	\# (\mathrm{factors}\ E_i) = \# (\mathrm{factors}\ F_i) = \vert\bss\vert = \frac{1-a_{ij}-m-m'}{2}.
	\end{equation}
	
	When shifting the factor \(K_j^{-1}\) through the second term between brackets in (\ref{p_l,s,k def}) using (\ref{U_q(g) relations}), we will induce a factor \(q_i^{-a_{ij}x}\), with
	\begin{align}
	\label{deriving x step 1}
	\begin{split}
	x & = \# (\mathrm{factors}\ E_i\ \mathrm{succeeding}\ K_j^{-1})-\# (\mathrm{factors}\ F_i\ \mathrm{succeeding}\ K_j^{-1})\\
	& = \left(\# (\mathrm{factors}\ E_i)-\# (\mathrm{factors}\ E_i\ \mathrm{preceding}\ K_j^{-1})\right)-\left(\# (\mathrm{factors}\ F_i)-\# (\mathrm{factors}\ F_i\ \mathrm{preceding}\ K_j^{-1})\right).
	\end{split}
	\end{align}
	By (\ref{number of factors F_i and E_i}) this is reduced to
	\begin{align}
	\label{deriving x step 2}
	\begin{split}
	x & = \# (\mathrm{factors}\ F_i\ \mathrm{preceding}\ K_j^{-1})-\# (\mathrm{factors}\ E_i\ \mathrm{preceding}\ K_j^{-1}) \\
	& = \# (\mathrm{factors}\ F_i\ \mathrm{preceding}\ K_j^{-1})\\&\phantom{=}-\left[\# (\mathrm{factors}\ \mathrm{preceding}\ K_j^{-1})-\# (\mathrm{factors}\ K_i^{-1}\ \mathrm{preceding}\ K_j^{-1})-\# (\mathrm{factors}\ F_i\ \mathrm{preceding}\ K_j^{-1})\right] \\
	& = \vert\bss\vert_{1;1-a_{ij}-k-\vert\bl\vert_{1;1-a_{ij}-k}} - \left[(1-a_{ij}-k)-\vert\bl\vert_{1;1-a_{ij}-k}-\vert\bss\vert_{1;1-a_{ij}-k-\vert\bl\vert_{1;1-a_{ij}-k}}\right] \\
	& = \zeta_{\bl,\bss}^{(1-a_{ij}-k)}.
	\end{split}
	\end{align}
	So we have
	\[
	\plsk = q_i^{-a_{ij}\zeta_{\bl,\bss}^{(1-a_{ij}-k)}}
	\left(\overrightarrow{\prod_{r = 1}^{1-a_{ij}}}K_i^{-\ell_r}F_i^{(1-\ell_r)s_{r-\vert\bl\vert_{1;r}}}(E_iK_i^{-1})^{(1-\ell_r)(1-s_{r-\vert\bl\vert_{1;r}})}\right)K_j^{-1}.
	\]
	
	We will perform the same shifting process for the factors \(K_i^{-\ell_r}\) with \(\ell_r = 1\). For each such \(r\) this will induce a factor \(q_i^{-2x'}\) with
	\[
	x' = \# (\mathrm{factors}\ E_i\ \mathrm{succeeding}\ K_i^{-\ell_r})-\# (\mathrm{factors}\ F_i\ \mathrm{succeeding}\ K_i^{-\ell_r}).
	\]
	Applying the same reasoning as in (\ref{deriving x step 1})--(\ref{deriving x step 2}), we obtain \(x' = \zeta_{\bl,\bss}^{(r-1)}\), such that one can write
	\begin{align*}
	\plsk = q_i^{-a_{ij}\zeta_{\bl,\bss}^{(1-a_{ij}-k)}-2\sum_{r = 1}^{1-a_{ij}}\zeta_{\bl,\bss}^{(r-1)}\ell_r}
	\left(\overrightarrow{\prod_{r = 1}^{1-a_{ij}}}F_i^{(1-\ell_r)s_{r-\vert\bl\vert_{1;r}}}(E_iK_i^{-1})^{(1-\ell_r)(1-s_{r-\vert\bl\vert_{1;r}})}\right)K_i^{-m-m'}K_j^{-1}.
	\end{align*}
	
	Finally, we will do the same for the \(K_i^{-1}\) occurring in a factor \((E_iK_i^{-1})^{(1-\ell_r)(1-s_{r-\vert\bl\vert_{1;r}})}\) with \(\ell_r = 0\) and \(s_{r-\vert\bl\vert_{1;r}} = 0\). This will give rise to a factor \(q_i^{-2x''}\) with
	\[
	x'' = \# (\mathrm{factors}\ E_i\ \mathrm{succeeding}\ (E_iK_i^{-1})^{(1-\ell_r)(1-s_{r-\vert\bl\vert_{1;r}})})-\# (\mathrm{factors}\ F_i\ \mathrm{succeeding}\ (E_iK_i^{-1})^{(1-\ell_r)(1-s_{r-\vert\bl\vert_{1;r}})}).
	\]
	The same reasoning now shows that \(x''\) yields
	\begin{align*}
	& \# (\mathrm{factors}\ F_i\ \mathrm{preceding}\ (E_iK_i^{-1})^{(1-\ell_r)(1-s_{r-\vert\bl\vert_{1;r}})})-\# (\mathrm{factors}\ E_i\ \mathrm{preceding}\ (E_iK_i^{-1})^{(1-\ell_r)(1-s_{r-\vert\bl\vert_{1;r}})}) - 1 \\
	=\ & \zeta_{\bl,\bss}^{(r-1)}-1,
	\end{align*}
	where the extra \(-1\) comes from the \(E_i\) inside \((E_iK_i^{-1})^{(1-\ell_r)(1-s_{r-\vert\bl\vert_{1;r}})}\). 
	The total power of \(q_i\) we can put in front hence becomes
	\[
	-a_{ij}\zeta_{\bl,\bss}^{(1-a_{ij}-k)} -2\sum_{r = 1}^{1-a_{ij}}\zeta_{\bl,\bss}^{(r-1)}(\ell_r+(1-\ell_r)(1-s_{r-\vert\bl\vert_{1;r}})) +2\sum_{r = 1}^{1-a_{ij}}(1-\ell_r)(1-s_{r-\vert\bl\vert_{1;r}}) = \beta_{\bl,\bss,k}+(1-a_{ij}-m-m'),
	\]
	where we have applied (\ref{full sum}), and with \(\beta_{\bl,\bss,k}\) as in (\ref{beta_l,s,k def}).
	
	Finally, we will perform the renormalization \(\widetilde{E_i} = (q_i-q_i^{-1})E_i\), which, again taking into account the formula (\ref{number of factors F_i and E_i}), leads to
	\begin{equation}
	\label{p_l,k step 3}
	\plsk = \left(\frac{q_i^2}{q_i-q_i^{-1}}\right)^{\frac{1-a_{ij}-m-m'}{2}} q_i^{\beta_{\bl,\bss,k}}
	\left(\overrightarrow{\prod_{r = 1}^{1-a_{ij}}}F_i^{(1-\ell_r)s_{r-\vert\bl\vert_{1;r}}}\widetilde{E_i}^{(1-\ell_r)(1-s_{r-\vert\bl\vert_{1;r}})}\right)K_i^{-\frac{1-a_{ij}+m+m'}{2}}K_j^{-1}.
	\end{equation}
	
	It now follows from (\ref{Coproduct, counit, antipode def}), (\ref{P_lambda def}), (\ref{lambda_ij def}) and (\ref{P_i,N def}) that
	\[
	(\epsilon\circ P_{-\lambda_{ij}})\left[Y_{\bl,\bss}K_i^{-\frac{1-a_{ij}+m+m'}{2}}K_j^{-1} \right] 
	= \left(\epsilon\circ \widehat{P}^{i}_{-\frac{1-a_{ij}-m-m'}{2}}\right)\left[Y_{\bl,\bss} \right].
	\]
	Together with (\ref{p_l,k step 3}), this yields the anticipated result.
\end{proof}

We have now reduced the computation of \((\epsilon\circ P_{-\lambda_{ij}})(\plsk)\) to a simpler problem, namely computing how \(\epsilon\circ \widehat{P}^{i}_{-\frac{1-a_{ij}-m-m'}{2}}\) acts on a product of an equal number of factors \(F_i\) and \(\widetilde{E_i}\), which is balanced in the sense that up to each position in the product, the number of factors \(F_i\) exceeds or equals the number of factors \(\widetilde{E_i}\), as imposed by Condition \ref{lemma restrictions on s condition c} of Lemma \ref{lemma restrictions on s}. This action can be deduced from the following lemma.

We will need the notation \((N)_{q_i^2}\) for the modified \(q_i^2\)-number
\begin{equation}
\label{modified q_i^2-number}
(N)_{q_i^2} = \frac{1-q_i^{2N}}{1-q_i^2}.
\end{equation}
Note that it relates to the ordinary \(q_i\)-number as
\[
(N)_{q_i^2} = q_i^{N-1}[N]_{q_i}.
\]

\begin{lemma}
	\label{lemma epsilon and projection general}
	Let \(M\in\N\) be such that \(M\geq 1\). Let \(Y\in U_q(\g')\) be a product of \(M\) factors \(F_i\) and \(M\) factors \(\widetilde{E_i}\), appearing in any order, but with \(F_i\) as the first factor. Let \(N\in\N\) be maximal such that the first \(N\) factors of \(Y\) are \(F_i\), such that we can write \(Y = F_i^N\widetilde{E_i}X\), for some \(X\in U_q(\g')\). Then we have
	\begin{equation}
	\label{F_i^N and further}
	\left(\epsilon\circ \widehat{P}^{i}_{-M}\right)(Y) = (N)_{q_i^2}q_i^{-2N+2}\left(\epsilon\circ \widehat{P}^{i}_{-\left(M-1\right)}\right)(F_i^{N-1}X).
	\end{equation}
\end{lemma}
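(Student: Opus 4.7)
My plan is to push the lone factor $\widetilde{E_i}$ in $Y=F_i^N\widetilde{E_i}X$ past the initial block $F_i^N$ using iterated application of the commutator $[\widetilde{E_i},F_i]=K_i-K_i^{-1}$, and then analyze the resulting pieces separately. A straightforward induction on $N$ (combined with $K_iF_i=q_i^{-2}F_iK_i$) delivers the key identity
\[
\widetilde{E_i}F_i^N \;=\; F_i^N\widetilde{E_i} \;+\; (N)_{q_i^2}\,F_i^{N-1}\bigl(q_i^{-2N+2}K_i-K_i^{-1}\bigr),
\]
so that $Y=\widetilde{E_i}(F_i^NX)-(N)_{q_i^2}F_i^{N-1}(q_i^{-2N+2}K_i-K_i^{-1})X$.

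The first summand $\widetilde{E_i}(F_i^NX)$ is annihilated by $\epsilon\circ\widehat{P}^i_{-M}$. Indeed, writing any $Z\in\Uqgp$ in its normal form (\ref{decomposition with P}) as $\sum_\beta u_+^{(\beta)}K_\beta s_-^{(\beta)}$, left multiplication by $\widetilde{E_i}$ merely replaces the $U^+$-factor by $\widetilde{E_i}u_+^{(\beta)}$, which never lies in $\K(q)\cdot 1$; since $\epsilon$ is an algebra homomorphism vanishing on $\widetilde{E_i}$, every such term dies.

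For the second summand, note that $X$ is a word in $\widetilde{E_i}$ and $F_i$ with $M-1$ and $M-N$ factors respectively, hence of weight $(N-1)\alpha_i$. This yields $K_iX=q_i^{2(N-1)}XK_i$ and $K_i^{-1}X=q_i^{-2(N-1)}XK_i^{-1}$, so after cancellation of $q_i$-powers the term becomes
\[
-(N)_{q_i^2}\,F_i^{N-1}X\bigl(K_i-q_i^{-2(N-1)}K_i^{-1}\bigr).
\]
A direct check in normal form shows that $(\epsilon\circ\widehat{P}^i_{-M})(ZK_i^{\pm 1})=(\epsilon\circ\widehat{P}^i_{-M\mp 1})(Z)$ for every $Z\in\Uqgp$ (the scalar $s_-=1$ case is the only one surviving $\epsilon$, and commuting $K_i^{\pm 1}$ past a scalar is free). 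Applying this shift to $Z=F_i^{N-1}X$ reduces the claim to verifying
\[
(\epsilon\circ\widehat{P}^i_{-M-1})(F_i^{N-1}X)\;=\;0.
\]

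This last vanishing is the main obstacle. The element $F_i^{N-1}X$ contains exactly $M-1$ factors of each of $\widetilde{E_i}$ and $F_i$, and lies in the rank-one subalgebra $\langle E_i,F_i,K_i^{\pm 1}\rangle\cong U_{q_i}(\mathfrak{sl}_2)$. Any contribution to the $u_+=u_-=1$ component of its normal form must arise from fully contracting all $M-1$ pairs $\widetilde{E_i}$--$F_i$ through the relation $\widetilde{E_i}F_i=F_i\widetilde{E_i}+K_i-K_i^{-1}$, and each contraction injects exactly one factor $K_i^{\pm 1}$ into the Cartan middle. The resulting Cartan scalar is therefore a polynomial in $K_i^{\pm 1}$ whose exponents have absolute value at most $M-1$, so $\widehat{P}^i_n(F_i^{N-1}X)$ has trivial scalar part whenever $|n|>M-1$. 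I plan to formalize this bound by induction on $M$, handling the leftmost $\widetilde{E_i}$--$F_i$ pair through the commutation formula above. Assembling the pieces then produces the stated identity with the prefactor $(N)_{q_i^2}q_i^{-2N+2}$.
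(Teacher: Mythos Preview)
Your argument is correct and is essentially a repackaging of the paper's proof. The paper proceeds by induction on $N$, commuting $\widetilde{E_i}$ past one factor $F_i$ at a time via $F_i\widetilde{E_i}=\widetilde{E_i}F_i-K_i+K_i^{-1}$ and applying the induction hypothesis to $F_i^{N-1}\widetilde{E_i}(F_iX)$; you instead absorb that induction into the closed-form identity $\widetilde{E_i}F_i^N=F_i^N\widetilde{E_i}+(N)_{q_i^2}F_i^{N-1}(q_i^{-2N+2}K_i-K_i^{-1})$ and then argue once. In both versions the only substantive point is the vanishing of the $K_i$-term, and your ``exponents bounded by $M-1$'' argument is exactly the paper's observation that the lowest power of $K_i$ appearing in the standard ordering of a balanced word in $M-1$ pairs is $-(M-1)$, so the $K_i^{-M-1}$-component (equivalently, the paper's $K_i^{-M}$-component of $F_i^{N-1}K_iX$) is zero.
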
 
\begin{proof}
	We will prove this by induction on \(N\). Our strategy will be to rewrite \(Y\) in its standard ordering, i.e.\ as a \(\K(q)\)-linear combination of terms of the form \(\widetilde{E_i}^{m_1}F_i^{m_2}K_i^{m_3}\), and then observe that for any \(M'\in\Z\) one has
	\[
	\widehat{P}^{i}_{-M'}\left(\widetilde{E_i}^{m_1}F_i^{m_2}K_i^{m_3}\right) = \left\{\begin{array}{ll}
	\widetilde{E_i}^{m_1}F_i^{m_2}K_i^{m_3} \quad & \mathrm{if}\ m_3-m_2 = -M', \\
	0 & \mathrm{otherwise},
	\end{array} \right.
	\]
	by (\ref{P_i,N def}) and the definition (\ref{Coproduct, counit, antipode def}) of the antipode. Hence, again by (\ref{Coproduct, counit, antipode def}), we have
	\[
	(\epsilon\circ \widehat{P}_{-M'}^{i})\left(\widetilde{E_i}^{m_1}F_i^{m_2}K_i^{m_3}\right) = \left\{\begin{array}{ll}
	1 \quad & \mathrm{if}\ m_1 = m_2 = 0\ \mathrm{and}\ m_3 = -M', \\
	0 & \mathrm{otherwise}.
	\end{array} \right.
	\]
	Otherwise stated, the action of \(\epsilon\circ \widehat{P}^{i}_{-M'}\) on \(Y\) equals the coefficient of \(K_i^{-M'}\) in its standard ordering.
	
	For \(N = 1\), we may apply (\ref{U_q(g) relations 2}) and (\ref{E_i tilde def}) to obtain
	\[
	Y = F_i\widetilde{E_i}X = \widetilde{E_i}F_iX - K_iX + K_i^{-1}X.
	\]
	The first term will have a standard ordering consisting of terms \(\widetilde{E_i}^{m_1}F_i^{m_2}K_i^{m_3}\) with \(m_1\geq 1\), which will all be killed by \(\epsilon\). For the second term, observe that \(X\) contains \(M-1\) factors \(F_i\) and the same number of factors \(\widetilde{E_i}\), since \(N = 1\). Each factor \(F_i\), when taken together with a factor \(\widetilde{E_i}\), can contribute at most one factor \(K_i^{-1}\) by (\ref{U_q(g) relations 2}). Hence the lowest possible power of \(K_i\) occurring in the normal ordering of \(K_iX\) will be \(1-(M-1) = -M+2 > -M\). Hence the second term will not contribute either. For the third term, we have \(K_i^{-1}X = XK_i^{-1}\), since \(X\) contains an equal number of factors \(F_i\) and \(\widetilde{E_i}\). So we have
	\[
	\left(\epsilon\circ \widehat{P}^{i}_{-M}\right)(Y) = \left(\epsilon\circ \widehat{P}^{i}_{-M}\right)(XK_i^{-1}) = \left(\epsilon\circ \widehat{P}^{i}_{-(M-1)}\right)(X),
	\]
	in agreement with (\ref{F_i^N and further}).
	
	Now suppose the claim has been proven for \(N-1\geq 1\), then we have
	\[
	Y = F_i^N\widetilde{E_i}X = F_i^{N-1}\widetilde{E_i}X' - F_i^{N-1}K_iX + F_i^{N-1}K_i^{-1}X
	\]
	where \(X' = F_iX\). As before, the second term will not contribute: the coefficient of \(K_i^{-M}\) in its standard ordering will vanish, as the lowest power of \(K_i\) that can occur will again be \(-M+2\). Consider now the third term in this sum. When shifting \(K_i^{-1}\) through \(X\), we will induce a factor \(q_i^{-2x}\), where
	\begin{align*}
	x & = \# (\mathrm{factors}\ \widetilde{E_i}\ \mathrm{in}\ X) - \# (\mathrm{factors}\ F_i\ \mathrm{in}\ X) \\
	& = \left(\# (\mathrm{factors}\ \widetilde{E_i}\ \mathrm{in}\ Y)-1\right)-\left(\# (\mathrm{factors}\ F_i\ \mathrm{in}\ Y)-N \right)\\
	& = \left(M-1\right)-\left(M-N\right) = N - 1,
	\end{align*}
	such that
	\[
	F_i^{N-1}K_i^{-1}X = q_i^{-2N+2}F_i^{N-1}XK_i^{-1}.
	\]
	So we have
	\[
	\left(\epsilon\circ \widehat{P}^{i}_{-M}\right)(Y) = \left(\epsilon\circ \widehat{P}^{i}_{-M}\right)(F_i^{N-1}\widetilde{E_i}X') + q_i^{-2N+2} \left(\epsilon\circ \widehat{P}^{i}_{-(M-1)}\right)(F_i^{N-1}X).
	\]
	Note that \(F_i^{N-1}\widetilde{E_i}X'\) still contains \(M\) factors \(F_i\) and \(M\) factors \(\widetilde{E_i}\), and has \(F_i\) as its first factor, since \(N-1\geq 1\). Hence we may apply the induction hypothesis to write
	\begin{align*}
	\left(\epsilon\circ \widehat{P}^{i}_{-M}\right)(F_i^{N-1}\widetilde{E_i}X') & = (N-1)_{q_i^2}q_i^{-2N+4}\left(\epsilon\circ \widehat{P}^{i}_{-(M-1)}\right)(F_i^{N-2}X').
	\end{align*}
	The statement now follows from \(F_i^{N-2}X'=F_i^{N-1}X\) and upon observing that 
	\[
	(N-1)_{q_i^2}q_i^{-2N+4} + q_i^{-2N+2} = (N)_{q_i^2}q_i^{-2N+2}.
	\] 
\end{proof}

Let once more \(Y,X\in U_q(\g')\) and \(M,N\in\N\) be as in the statement of Lemma \ref{lemma epsilon and projection general}. As already observed, the element \(F_i^{N-1}X\) is again of the type described in Lemma \ref{lemma epsilon and projection general}: it is a product of \(M-1\) factors \(F_i\) and the same number of factors \(\widetilde{E_i}\), and has \(F_i\) as its first factor, provided \(X\) has \(F_i\) as its first factor or \(N-1\geq 1\). If \(N'\geq N-1\) is the maximal number such that the first \(N'\) factors of \(F_i^{N-1}X\) are \(F_i\), then we may write \(F_i^{N-1}X = F_i^{N'}\widetilde{E_i}X'\), for some \(X'\in U_q(\g')\). Consequently, Lemma \ref{lemma epsilon and projection general} asserts
\[
\left(\epsilon\circ \widehat{P}^{i}_{-(M-1)} \right)(F_i^{N-1}X) = (N')_{q_i^2}q_i^{-2N'+2}\left(\epsilon\circ \widehat{P}^{i}_{-(M-2)}\right)(F_i^{N'-1}X'),
\]
and thus
\[
\left(\epsilon\circ \widehat{P}^{i}_{-M} \right)(Y) = (N)_{q_i^2}q_i^{-2N+2}(N')_{q_i^2}q_i^{-2N'+2}\left(\epsilon\circ \widehat{P}^{i}_{-(M-2)}\right)(F_i^{N'-1}X').
\]

This process will only terminate if at some position \(p\) in the product, the number of factors \(\widetilde{E_i}\) preceding \(p\) exceeds the number of factors \(F_i\) preceding \(p\). In that case, we would at some point be left with \(N' = 1\) and a corresponding \(X'\) starting with \(\widetilde{E_i}\) instead of \(F_i\). 

Let us now assume that this is not the case, i.e.\ up to each position \(p\) in \(Y\), the number of factors \(F_i\) preceding \(p\) exceeds or equals the number of factors \(\widetilde{E_i}\) preceding \(p\). Then this process of applying Lemma \ref{lemma epsilon and projection general} consecutively will continue until we have applied it \(M\) times and we have reached \(N' = 1\) and \(X' = 1\), and of course \((\epsilon\circ \widehat{P}^{i}_0)(1) = 1\). Each factor \(\widetilde{E_i}\) can now be assigned a level, which is the exponent \(N\) of \(F_i\) that will occur in front of \(\widetilde{E_i}\) at the moment this factor is cancelled when applying the formula (\ref{F_i^N and further}) in this consecutive process. Then our reasoning in fact asserts
\[
(\epsilon\circ \widehat{P}^{i}_{-M})(Y) = \prod_{\mathrm{factors}\ \widetilde{E_i}}\left(\mathrm{level}(\widetilde{E_i})\right)_{q_i^2}q_i^{-2\,\mathrm{level}(\widetilde{E_i})+2},
\]
where the product runs over all factors \(\widetilde{E_i}\) in \(Y\). Now note that each application of the formula (\ref{F_i^N and further}) cancels one factor \(F_i\) and one factor \(\widetilde{E_i}\), hence each \(F_i\) is in fact coupled to exactly one factor \(\widetilde{E_i}\). Thus instead of running over all \(\widetilde{E_i}\) in \(Y\), we might as well run over all factors \(F_i\) in \(Y\) and assign to each \(F_i\) a level, which equals the level of the \(\widetilde{E_i}\) to which it is coupled. We find
\begin{equation}
\label{informal formula}
(\epsilon\circ \widehat{P}^{i}_{-M})(Y) = \prod_{\mathrm{factors}\ F_i}\left(\mathrm{level}(F_i)\right)_{q_i^2}q_i^{-2\,\mathrm{level}(F_i)+2}.
\end{equation}

Now say the element \(Y\) contains a factor \(\widetilde{E_i}\) at position \(p\) in the product, which, in the process above, is coupled to a factor \(F_i\) at position \(r\), with of course \(r<p\).
From the definition, it follows that the level of the \(\widetilde{E_i}\) at position \(p\) is the total number of factors \(F_i\) preceding it, minus the number of factors \(\widetilde{E_i}\) preceding it, again since each application of (\ref{F_i^N and further}) kills one \(F_i\) and one \(\widetilde{E_i}\). So
\begin{align}
\label{level of F_i step 1}
\begin{split}
&\mathrm{level}(F_i\ \mathrm{at\ position}\ r) = \mathrm{level}(\widetilde{E_i}\ \mathrm{at\ position}\ p)\\ =\ &\#(\mathrm{factors}\ F_i\ \mathrm{preceding}\ p) - \#(\mathrm{factors}\ \widetilde{E_i}\ \mathrm{preceding}\ p)\\
=\ & \#(\mathrm{factors}\ F_i\ \mathrm{preceding}\ r) + 1 + \#(\mathrm{factors}\ F_i\ \mathrm{between}\ r+1\ \mathrm{and}\ p-1)\\&-\left(\#(\mathrm{factors}\ \widetilde{E_i}\ \mathrm{preceding}\ r) + \#(\mathrm{factors}\ \widetilde{E_i}\ \mathrm{between}\ r+1\ \mathrm{and}\ p-1) \right),
\end{split}
\end{align}
where the \(+1\) comes from the \(F_i\) at position \(r\) itself. Moreover, we have that
\begin{equation}
\label{level of F_i step 2}
\#(\mathrm{factors}\ F_i\ \mathrm{between}\ r+1\ \mathrm{and}\ p-1) = \#(\mathrm{factors}\ \widetilde{E_i}\ \mathrm{between}\ r+1\ \mathrm{and}\ p-1).
\end{equation}
Indeed, suppose not, then after coupling all possible \(\widetilde{E_i}\) between positions \(r+1\) and \(p-1\) with an \(F_i\), there would either still be \(F_i\)'s left, hence position \(p\) would be coupled to some position \(r' > r\), or else there would still be \(\widetilde{E_i}\) left, so position \(r\) would be coupled to \(p'<p\). Inserting (\ref{level of F_i step 2}) into (\ref{level of F_i step 1}), we obtain
\begin{equation}
\label{level of F_i final}
\mathrm{level}(F_i\ \mathrm{at\ position}\ r) = \#(\mathrm{factors}\ F_i\ \mathrm{preceding}\ r) + 1 -\#(\mathrm{factors}\ \widetilde{E_i}\ \mathrm{preceding}\ r).
\end{equation}

Let us now return to the statement of Proposition \ref{prop epsilon and other P}. The element
\[
Y_{\bl,\bss} = \overrightarrow{\prod_{r = 1}^{1-a_{ij}}}F_i^{(1-\ell_r)s_{r-\vert\bl\vert_{1;r}}}\widetilde{E_i}^{(1-\ell_r)(1-s_{r-\vert\bl\vert_{1;r}})}
\]
is a product of an equal number of factors \(F_i\) and \(\widetilde{E_i}\), namely
\begin{align*}
\# (\mathrm{factors}\ F_i) & = \vert\bss\vert = \frac{1-a_{ij}-m-m'}{2}, \\
\# (\mathrm{factors}\ \widetilde{E_i}) & = \sum_{r = 1}^{1-a_{ij}}(1-\ell_r)(1-s_{r-\vert\bl\vert_{1;r}}) = \frac{1-a_{ij}-m-m'}{2},
\end{align*}
where we have applied (\ref{full sum}). Moreover, at each position \(p\) in \(Y_{\bl,\bss}\), the number of factors \(F_i\) preceding \(p\), i.e.\ \(\vert\bss\vert_{1;p}\), exceeds or equals the number of factors \(\widetilde{E_i}\) preceding \(p\), i.e.\ \(p-\vert\bss\vert_{1;p}\), by Condition \ref{lemma restrictions on s condition c} of Lemma \ref{lemma restrictions on s}. Hence the formula (\ref{informal formula}) is applicable. 

Running over the factors \(F_i\) in \(Y_{\bl,\bss}\) amounts to running over \(r\in\{1,\dots,1-a_{ij}\}\) and checking for each \(r\) whether the element at position \(r\) is \(F_i\), i.e.\ whether \(\ell_r = 0\) and \(s_{r-\vert\bl\vert_{1;r}}=1\). Thus we have
\begin{align}
\label{levels to insert}
\begin{split}
&(\epsilon\circ \widehat{P}^{i}_{-\frac{1-a_{ij}-m-m'}{2}})(Y_{\bl,\bss}) = \prod_{r=1}^{1-a_{ij}}\left[\left(\mathrm{level}(F_i\ \mathrm{at\ position\ }r)\right)_{q_i^2}q_i^{-2\,\mathrm{level}(F_i\ \mathrm{at\ position\ }r)+2}\right]^{(1-\ell_r)s_{r-\vert\bl\vert_{1;r}}} \\
=\ & q_i^{-2\sum_{r=1}^{1-a_{ij}}(\mathrm{level}(F_i\ \mathrm{at\ position}\ r)-1)(1-\ell_r)s_{r-\vert\bl\vert_{1;r}}}\prod_{r=1}^{1-a_{ij}}\left(\left(\mathrm{level}(F_i\ \mathrm{at\ position\ }r)\right)_{q_i^2}\right)^{(1-\ell_r)s_{r-\vert\bl\vert_{1;r}}}.
\end{split}
\end{align}
Applying the formula (\ref{level of F_i final}), we find
\[
\mathrm{level}(F_i\ \mathrm{at\ position}\ r)
= \vert\bss\vert_{1;r-1-\vert\bl\vert_{1;r-1}}+1-\left(r-1-\vert\bl\vert_{1;r-1}-\vert\bss\vert_{1;r-1-\vert\bl\vert_{1;r-1}} \right) = \zeta_{\bl,\bss}^{(r-1)}+1.
\]
Combining this with (\ref{levels to insert}), we immediately obtain the following result.

\begin{corollary}
	\label{cor epsilon and other P solved}
	For \(i,j,m,m',k,\bl\) and \(\bss\) as fixed before, one has
	\[
	\left(\epsilon\circ \widehat{P}^{i}_{-\frac{1-a_{ij}-m-m'}{2}}\right)\left(\overrightarrow{\prod_{r = 1}^{1-a_{ij}}}F_i^{(1-\ell_r)s_{r-\vert\bl\vert_{1;r}}}\widetilde{E_i}^{(1-\ell_r)(1-s_{r-\vert\bl\vert_{1;r}})}\right) = q_i^{\gamma_{\bl,\bss,k}}\prod_{r = 1}^{1-a_{ij}}\left(\left(\zeta_{\bl,\bss}^{(r-1)}+1\right)_{q_i^2} \right)^{(1-\ell_r)s_{r-\vert\bl\vert_{1;r}}},
	\]
	where
	\[
	\gamma_{\bl,\bss,k} = -2\sum_{r = 1}^{1-a_{ij}}\zeta_{\bl,\bss}^{(r-1)}(1-\ell_r)s_{r-\vert\bl\vert_{1;r}},
	\]
	with \(\zeta_{\bl,\bss}^{(r)}\) as in (\ref{zeta_l,s,r def}).
\end{corollary}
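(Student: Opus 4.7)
The plan is to iterate Lemma \ref{lemma epsilon and projection general} on the element $Y_{\bl,\bss}$ until no factors remain. For this iteration to terminate successfully, I must verify that at every intermediate stage the word still starts with $F_i$ and still has an $F_i$-leading prefix balance. This follows from two facts: $Y_{\bl,\bss}$ contains exactly $\vert\bss\vert = \frac{1-a_{ij}-m-m'}{2}$ copies of each of $F_i$ and $\widetilde{E_i}$ by (\ref{full sum}); and by Condition (c) of Lemma \ref{lemma restrictions on s}, at every prefix of $Y_{\bl,\bss}$ the count of $F_i$'s is at least the count of $\widetilde{E_i}$'s. Each application of (\ref{F_i^N and further}) removes one $F_i$ and one $\widetilde{E_i}$ while preserving prefix balance, so the recursion runs for exactly $M:=\frac{1-a_{ij}-m-m'}{2}$ steps and terminates at $(\epsilon\circ\widehat{P}^i_0)(1)=1$.

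Each such application pairs one $F_i$ with one $\widetilde{E_i}$ and contributes a multiplicative factor $(N)_{q_i^2}q_i^{-2N+2}$, where $N$ is the exponent of $F_i$ at the moment of the peeling. I call this $N$ the level of the pair. Collecting contributions over the $M$ steps gives the informal identity
\[
\bigl(\epsilon\circ \widehat{P}^i_{-M}\bigr)(Y_{\bl,\bss}) = \prod_{\text{factors }F_i}\bigl(\mathrm{level}(F_i)\bigr)_{q_i^2}\,q_i^{-2\,\mathrm{level}(F_i)+2}.
\]

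The key combinatorial step is a closed-form expression for the level of the $F_i$ sitting at position $r$. My plan is a short matching argument: if this $F_i$ is paired with the $\widetilde{E_i}$ at some later position $p$, then any mismatch between the counts of $F_i$'s and $\widetilde{E_i}$'s strictly between $r$ and $p$ would force the peeling algorithm to couple either $r$ with an earlier $\widetilde{E_i}$ or $p$ with a later $F_i$, contradicting the pairing just described. Hence these counts agree, and
\[
\mathrm{level}(F_i \text{ at position }r) = 1 + \#\{F_i \text{ before }r\} - \#\{\widetilde{E_i}\text{ before }r\}.
\]
Translating into the variables $(\bl,\bss)$, the first count is $\vert\bss\vert_{1;\,r-1-\vert\bl\vert_{1;r-1}}$, while the second is $(r-1)-\vert\bl\vert_{1;r-1}-\vert\bss\vert_{1;\,r-1-\vert\bl\vert_{1;r-1}}$. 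Substituting and simplifying in view of (\ref{zeta_l,s,r def}) yields $\mathrm{level}(F_i\text{ at }r)=\zeta_{\bl,\bss}^{(r-1)}+1$.

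To finish, I reindex the product over $F_i$-factors as a product over $r\in\{1,\dots,1-a_{ij}\}$ raised to the exponent $(1-\ell_r)s_{r-\vert\bl\vert_{1;r}}$, which is $1$ exactly when position $r$ carries an $F_i$. Using the simplification $q_i^{-2\,\mathrm{level}+2} = q_i^{-2\zeta_{\bl,\bss}^{(r-1)}}$, the $q_i$-prefactors combine into the single power $q_i^{\gamma_{\bl,\bss,k}}$ with $\gamma_{\bl,\bss,k}$ as in the statement, while the $q_i^2$-numbers assemble into $\prod_r\bigl((\zeta_{\bl,\bss}^{(r-1)}+1)_{q_i^2}\bigr)^{(1-\ell_r)s_{r-\vert\bl\vert_{1;r}}}$. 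The main obstacle is the matching argument for the level formula, since it relies on recognising that the recursive peeling produces a unique, canonical pairing of $F_i$'s with $\widetilde{E_i}$'s; once that is granted, everything else is a direct translation of counts into the $\bl$, $\bss$ notation.
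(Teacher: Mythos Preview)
Your proof is correct and follows essentially the same approach as the paper: iterate Lemma~\ref{lemma epsilon and projection general}, use the balance condition from Lemma~\ref{lemma restrictions on s}\ref{lemma restrictions on s condition c} to ensure the recursion runs to completion, interpret each step as a pairing of an $F_i$ with an $\widetilde{E_i}$, and compute the level via the observation that the $F_i$/$\widetilde{E_i}$ counts strictly between a matched pair must agree. The paper's argument and yours differ only in phrasing; the key matching step and the translation $\mathrm{level}(F_i\text{ at }r)=\zeta_{\bl,\bss}^{(r-1)}+1$ are identical.
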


Alternatively, one can also iterate over the factors \(\Ei\) in \(Y_{\bl,\bss}\) rather than the factors \(F_i\), as initially established. Since (\ref{level of F_i step 1}) implies
\[
\mathrm{level}(\Ei\ \mathrm{at}\ \mathrm{position}\ p) = \zeta_{\bl,\bss}^{(p-1)},
\]
we also have
\begin{equation}
\label{product iteration for Case 2}
\left(\epsilon\circ \widehat{P}^{i}_{-\frac{1-a_{ij}-m-m'}{2}}\right)(Y_{\bl,\bss}) = \prod_{p = 1}^{1-a_{ij}}\left[(\zeta_{\bl,\bss}^{(p-1)})_{q_i^2}q_i^{-2\zeta_{\bl,\bss}^{(p-1)}+2} \right]^{(1-\ell_p)(1-s_{p-\vert\bl\vert_{1;p}})}.
\end{equation}
This formula will be of use in Subsection \ref{Subsection Case 2}.

Corollaries \ref{cor F_ij(B_i,B_j) Case 1 with epsilon and P} and \ref{cor epsilon and other P solved} and Proposition \ref{prop epsilon and other P} now lead to an explicit expression for the structure constants \(\rho_{m,m'}^{(i,j,a_{ij})}\) for Case 1.

\begin{theorem}[Case 1]
	\label{theorem F_ij(B_i,B_j) for Case 1}
	For any \(i\in I\setminus X\) such that \(\tau(i) = i\) and any \(j\in I\setminus X\) distinct from \(i\), one has
	\begin{equation}
	\label{F_ij(B_i,B_j) for Case 1 final}
	F_{ij}(B_i,B_j) = C_{ij}(\bc) = \sum_{m=0}^{-1-a_{ij}}\sum_{m' = 0}^{-1-a_{ij}-m}\rho_{m,m'}^{(i,j,a_{ij})} \mathcal{Z}_i^{\frac{1-a_{ij}-m-m'}{2}}B_i^mB_jB_i^{m'},
	\end{equation}
	where the structure constants are given by
	\begin{align}
	\label{rho_m,m' final}
	\begin{split}
	\rho_{m,m'}^{(i,j,a_{ij})} =\ & (a_{ij}+m+m')_p \left(\frac{c_iq_i^2}{q_i-q_i^{-1}}\right)^{\frac{1-a_{ij}-m-m'}{2}}\\&\sum_{k = m'}^{1-a_{ij}-m}\sum_{\bl\in\mathcal{L}_{m,m',k}}\sum_{\bss\in\mathscr{S}_{m,m'}}(-1)^{k+1}\begin{bmatrix}
	1-a_{ij} \\k
	\end{bmatrix}_{q_i} q_i^{\theta_{\bl,\bss,k}}\prod_{r = 1}^{1-a_{ij}}\left( \left(\zeta_{\bl,\bss}^{(r-1)}+1\right)_{q_i^2} \right)^{(1-\ell_r)s_{r-\vert\bl\vert_{1;r}}},
	\end{split}
	\end{align}
	where
	\[
	\theta_{\bl,\bss,k} = -a_{ij}\zeta_{\bl,\bss}^{(1-a_{ij}-k)}-2\sum_{r = 1}^{1-a_{ij}}\zeta_{\bl,\bss}^{(r-1)},
	\]
	with \(\mathcal{L}_{m,m',k}\) and \(\mathscr{S}_{m,m'}\) as in (\ref{L and S sets for Case 1}) and \(\zeta_{\bl,\bss}^{(r)}\) as in (\ref{zeta_l,s,r def}).
\end{theorem}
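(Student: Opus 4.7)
The proof will essentially be a substitution argument, combining the three preparatory results already established in Subsection \ref{Subsection Case 1}. The plan is to start from the expression
\[
C_{ij}(\bc) = \sum_{m=0}^{-1-a_{ij}}\sum_{m'=0}^{-1-a_{ij}-m} \rho_{m,m'}^{(i,j,a_{ij})}\,\mathcal{Z}_i^{\frac{1-a_{ij}-m-m'}{2}}B_i^m B_j B_i^{m'}
\]
obtained in Corollary \ref{cor F_ij(B_i,B_j) Case 1 with epsilon and P}, in which the coefficient $\rho_{m,m'}^{(i,j,a_{ij})}$ is given by formula (\ref{rho_m,m' def}) as a sum over $k,\bl,\bss$ of $q_i$-binomial coefficients weighted by $(\epsilon\circ P_{-\lambda_{ij}})(\plsk)$. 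The only missing ingredient is a closed expression for the latter quantity, for which I would simply invoke Proposition \ref{prop epsilon and other P} followed by Corollary \ref{cor epsilon and other P solved}.

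More precisely, the first reduction step uses Proposition \ref{prop epsilon and other P} to rewrite
\[
(\epsilon\circ P_{-\lambda_{ij}})(\plsk) = \left(\frac{q_i^2}{q_i-q_i^{-1}}\right)^{\!\frac{1-a_{ij}-m-m'}{2}} q_i^{\beta_{\bl,\bss,k}} \bigl(\epsilon\circ \widehat{P}^{i}_{-\frac{1-a_{ij}-m-m'}{2}}\bigr)(Y_{\bl,\bss}),
\]
pulling out the factor that assembles with $c_i^{(1-a_{ij}-m-m')/2}$ to produce the prefactor $(c_iq_i^2/(q_i-q_i^{-1}))^{(1-a_{ij}-m-m')/2}$ displayed in (\ref{rho_m,m' final}). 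The second step applies Corollary \ref{cor epsilon and other P solved}, whose hypothesis $\bss\in\mathscr{S}_{m,m'}$ guarantees the balance condition needed to apply Lemma \ref{lemma epsilon and projection general} iteratively; this yields the product $\prod_{r=1}^{1-a_{ij}}((\zeta_{\bl,\bss}^{(r-1)}+1)_{q_i^2})^{(1-\ell_r)s_{r-\vert\bl\vert_{1;r}}}$ together with an extra factor $q_i^{\gamma_{\bl,\bss,k}}$.

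The only step requiring verification is that the total $q_i$-exponent $\beta_{\bl,\bss,k}+\gamma_{\bl,\bss,k}$ collapses to $\theta_{\bl,\bss,k}$ as stated. Here one simply notes that the two sums over $r$ in $\beta_{\bl,\bss,k}$ and $\gamma_{\bl,\bss,k}$ combine through the identity
\[
\ell_r + (1-\ell_r)(1-s_{r-\vert\bl\vert_{1;r}}) + (1-\ell_r)s_{r-\vert\bl\vert_{1;r}} = 1,
\]
so that
\[
\beta_{\bl,\bss,k}+\gamma_{\bl,\bss,k} = -a_{ij}\zeta_{\bl,\bss}^{(1-a_{ij}-k)} - 2\sum_{r=1}^{1-a_{ij}}\zeta_{\bl,\bss}^{(r-1)} = \theta_{\bl,\bss,k},
\]
which is precisely the exponent appearing in (\ref{rho_m,m' final}).

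I do not anticipate a real obstacle here: all the hard work has been isolated in the three preceding results. The binary expansion machinery of Proposition \ref{prop term 1 expansion with epsilon and P} and the careful bookkeeping of sign and $K_i$-weight performed in the proof of Proposition \ref{prop epsilon and other P} are what make the simplification routine. The only point to take care of is making sure that the parity factor $(a_{ij}+m+m')_p$ and the truncation of the $m,m'$ sums at $-1-a_{ij}-m$ (which reflect Conditions \ref{lemma restrictions on s condition a}--\ref{lemma restrictions on s condition c} in Lemma \ref{lemma restrictions on s}) are faithfully transported from Corollary \ref{cor F_ij(B_i,B_j) Case 1 with epsilon and P} into the final formula. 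Once this is done, (\ref{F_ij(B_i,B_j) for Case 1 final})--(\ref{rho_m,m' final}) follow by direct substitution.
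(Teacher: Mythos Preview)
Your proposal is correct and follows exactly the same route as the paper: combine Corollary \ref{cor F_ij(B_i,B_j) Case 1 with epsilon and P}, Proposition \ref{prop epsilon and other P}, and Corollary \ref{cor epsilon and other P solved}, then check that $\beta_{\bl,\bss,k}+\gamma_{\bl,\bss,k}=\theta_{\bl,\bss,k}$ via the identity $\ell_r+(1-\ell_r)(1-s_{r-\vert\bl\vert_{1;r}})+(1-\ell_r)s_{r-\vert\bl\vert_{1;r}}=1$. There is nothing to add.
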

\begin{proof}
	This follows upon combining Corollary \ref{cor F_ij(B_i,B_j) Case 1 with epsilon and P}, Proposition \ref{prop epsilon and other P} and Corollary \ref{cor epsilon and other P solved}. Note that for each \(k\), \(\bl\) and \(\bss\), the exponent of \(q_i\) becomes 
	\[
	\beta_{\bl,\bss,k}+\gamma_{\bl,\bss,k} = -a_{ij}\zeta_{\bl,\bss}^{(1-a_{ij}-k)}-2\sum_{r = 1}^{1-a_{ij}}\zeta_{\bl,\bss}^{(r-1)}\left(\ell_r+(1-\ell_r)(1-s_{r-\vert\bl\vert_{1;r}})+(1-\ell_r)s_{r-\vert\bl\vert_{1;r}} \right) = \theta_{\bl,\bss,k}.
	\]
\end{proof}

Similarly, this leads us to an explicit expression for the structure constants \(\rho_{m,m',t}^{(i,j,a_{ij})}\) for the first part of \(C_{ij}(\bc)\) for Case 2.

\begin{corollary}
	\label{cor F_ij(B_i,B_j) Case 2 semi-final}
	Let \(i\in I\setminus X\) be such that \(\tau(i) = i\), \(j\in X\) and \(m,\) \(m'\) and \(t\) as fixed before. Then the structure constants \(\rho_{m,m',t}^{(i,j,a_{ij})}\) are obtained from the expression (\ref{rho_m,m' final}) upon replacing \(\mathscr{S}_{m,m'}\) by \(\mathscr{S}_{m,m',k,t}\) defined in (\ref{S_m,m',k,t set Case 2}).
\end{corollary}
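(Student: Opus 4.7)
The plan is to observe that essentially all of the heavy computational work has already been carried out, and that what remains is a direct transplantation of the argument used in Theorem \ref{theorem F_ij(B_i,B_j) for Case 1} to the slightly different setting of Case 2. The starting point will be the formula (\ref{rho_m,m',t def}) for \(\rho_{m,m',t}^{(i,j,a_{ij})}\) coming from Corollary \ref{cor F_ij(B_i,B_j) Case 2 part 1 with epsilon and P}. This formula has exactly the same shape as (\ref{rho_m,m' def}) from Corollary \ref{cor F_ij(B_i,B_j) Case 1 with epsilon and P}, the only difference being that the summation index \(\bss\) runs over \(\mathscr{S}_{m,m',k,t}\) rather than \(\mathscr{S}_{m,m'}\). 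Hence to obtain a closed form, it will suffice to evaluate \((\epsilon\circ P_{-\lambda_{ij}})(\plsk)\) in closed form, exactly as was done in Case 1.

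To carry this out, I would first check the hypotheses of Proposition \ref{prop epsilon and other P}: they only require \(i\in I\setminus X\) with \(\tau(i)=i\) and \(j\neq i\), without any assumption on whether \(j\) belongs to \(X\), and impose the restrictions \(a_{ij}+m+m'\) odd, \(m+m'\leq -1-a_{ij}\), \(m'\leq k\leq 1-a_{ij}-m\), \(\bl\in\mathcal{L}_{m,m',k}\), and \(\bss\in\mathscr{S}_{m,m'}\). The first set of conditions is exactly the setting of Case 2, and for the last one I note that every element of \(\mathscr{S}_{m,m',k,t}\) already lies in \(\mathscr{S}_{m,m'}\), since the defining conditions of the former include those of the latter together with the extra constraint \(\vert\bss\vert_{1;1-a_{ij}-k-m} = 1-a_{ij}-k-m-t\). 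Thus Proposition \ref{prop epsilon and other P} applies verbatim, producing the same expression
\[
(\epsilon\circ P_{-\lambda_{ij}})(\plsk) = \left(\frac{q_i^2}{q_i-q_i^{-1}}\right)^{\frac{1-a_{ij}-m-m'}{2}} q_i^{\beta_{\bl,\bss,k}}\left(\epsilon\circ \widehat{P}^{i}_{-\frac{1-a_{ij}-m-m'}{2}}\right)(Y_{\bl,\bss}).
\]

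Next I would invoke Corollary \ref{cor epsilon and other P solved}, whose derivation via Lemma \ref{lemma epsilon and projection general} only needs the balanced word property of \(Y_{\bl,\bss}\), which in turn follows from Condition \ref{lemma restrictions on s condition c} of Lemma \ref{lemma restrictions on s}, built into the definition of \(\mathscr{S}_{m,m',k,t}\). This step yields the factor \(q_i^{\gamma_{\bl,\bss,k}}\prod_{r=1}^{1-a_{ij}}((\zeta^{(r-1)}_{\bl,\bss}+1)_{q_i^2})^{(1-\ell_r)s_{r-\vert\bl\vert_{1;r}}}\).

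Finally, combining the two \(q_i\)-powers into \(\theta_{\bl,\bss,k} = \beta_{\bl,\bss,k}+\gamma_{\bl,\bss,k}\), exactly as in the proof of Theorem \ref{theorem F_ij(B_i,B_j) for Case 1}, and plugging the resulting closed form for \((\epsilon\circ P_{-\lambda_{ij}})(\plsk)\) back into (\ref{rho_m,m',t def}) gives the desired expression, namely (\ref{rho_m,m' final}) with \(\mathscr{S}_{m,m'}\) replaced by \(\mathscr{S}_{m,m',k,t}\). There is no real obstacle here: the whole point of writing Proposition \ref{prop epsilon and other P} and Corollary \ref{cor epsilon and other P solved} without reference to whether \(j\in X\) was precisely to allow this corollary to follow by a one-line substitution argument.
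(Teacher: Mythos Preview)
Your proposal is correct and follows essentially the same approach as the paper: the paper's proof is the one-line observation ``This follows upon comparing (\ref{rho_m,m' def}) with (\ref{rho_m,m',t def}),'' and you have simply unpacked what that comparison entails, including the verification (already noted at the start of Subsection~\ref{Subsection Case 1}) that \(\mathscr{S}_{m,m',k,t}\subseteq\mathscr{S}_{m,m'}\) so that Proposition~\ref{prop epsilon and other P} and Corollary~\ref{cor epsilon and other P solved} apply.
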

\begin{proof}
	This follows upon comparing (\ref{rho_m,m' def}) with (\ref{rho_m,m',t def}). 
\end{proof}

It can readily be checked that these expressions comply with the values computed in \cite{Kolb-2014} and \cite{Balagovic&Kolb-2015}, as displayed in Tables \ref{Table of rho_m,m'} and \ref{Table of rho_m,m',t}.

\subsection{Case 2: $\tau(i) = i\in I\setminus X$ and $j\in X$}
\label{Subsection Case 2}

Consequently, we will obtain the second part of the polynomial \(C_{ij}(\bc)\) for Case 2, as described by the last line of (\ref{F_ij(B_i,B_j) Case 2 with epsilon and P complete}). To this end, let us fix \(i\in I\setminus X\) such that \(\tau(i) = i\), \(j\in X\), \(m\in\{0,\dots,-1-a_{ij}\}\), \(t\in\{0,\dots,\frac{-1-a_{ij}-m}{2}\}\), \(k\in\{1,\dots,1-a_{ij}\}\), \(d\in\{0,\dots,k-1\}\), \(m'\in\{0,\dots,m\}\), \(\bl\in\mathcal{L}_{m,m',k,d}'\) and \(\bss\in\mathscr{S}_{m,m',k,t,d}'\). By (\ref{sigma_m,t def}), the calculation of the structure constants \(\sigma_{m,t}^{(i,j,a_{ij})}\) comes down to computing the action of \(\epsilon\circ P_{-\lambda_{ij}}\) on \(\rlskd\), defined in (\ref{r_l,s,k,d def}). This will be the subject of the present subsection.

As a first step, we will again shift all factors \(K_i^{-1}\) in \(\rlskd\) to the back, as we have done for \(\plsk\) in Proposition \ref{prop epsilon and other P}. Recall the notation \(\widetilde{E_i} = (q_i-q_i^{-1})E_i\) and let us write, as an extension of (\ref{P_i,N def}), 
\begin{equation}
\label{P_i,j,N def}
\widehat{P}^{i,j}_{N,M}: \Uqgp\to U^+K_i^NK_j^MS(U^-),
\end{equation}
for the projection operator with respect to the decomposition (\ref{decomposition with P}), where \(M,N\in\Z\). 

\begin{proposition}
	\label{prop epsilon and other P Case 2}
	For \(i,j,m,t,k,d,m',\bl\) and \(\bs\) as fixed before, we have
	\[
	(\epsilon\circ P_{-\lambda_{ij}})(\rlskd) = \frac{q_i^{\eta_{\bl,\bss,k,d,t,m'}}}{(q_i-q_i^{-1})^{\frac{1-a_{ij}-m}{2}}(q_j-q_j^{-1})}\left(\epsilon\circ \widehat{P}^{i,j}_{-\frac{1-a_{ij}-m}{2},-1}\right)\left(Y_{\bl,\bss,k,d}^{(0)}-q_i^{a_{ij}}Y_{\bl,\bss,k,d}^{(1)}\right),
	\]
	where
	\begin{align}
	\label{Y_l,s,k,d 0 def}
	Y_{\bl,\bss,k,d}^{(0)} =\ & \left(\overrightarrow{\prod_{r = 1}^{1-a_{ij}-k}}\mathcal{V}^{i}_{\bl,\bss,r}\right)F_j\left(\overrightarrow{\prod_{r = 2-a_{ij}-k}^{1-a_{ij}-k+d}}\mathcal{V}^{i}_{\bl,\bss,r}\right)\widetilde{E_j}\widetilde{E_i}\left(\overrightarrow{\prod_{r = 2-a_{ij}-k+d}^{-a_{ij}}}\mathcal{V}^{i}_{\bl,\bss,r}\right),\\
	\label{Y_l,s,k,d 1 def}
	Y_{\bl,\bss,k,d}^{(1)} =\ & \left(\overrightarrow{\prod_{r = 1}^{1-a_{ij}-k}}\mathcal{V}^{i}_{\bl,\bss,r}\right)F_j\left(\overrightarrow{\prod_{r = 2-a_{ij}-k}^{1-a_{ij}-k+d}}\mathcal{V}^{i}_{\bl,\bss,r}\right)\widetilde{E_i}\widetilde{E_j}\left(\overrightarrow{\prod_{r = 2-a_{ij}-k+d}^{-a_{ij}}}\mathcal{V}^{i}_{\bl,\bss,r}\right),
	\end{align}
	\begin{align}
	\label{V_i,l,s,r def}
	\mathcal{V}^{i}_{\bl,\bss,r} =\ & F_i^{(1-\ell_r)s_{r-\vert\bl\vert_{1;r}}}\widetilde{E_i}^{(1-\ell_r)(1-s_{r-\vert\bl\vert_{1;r}})}, \\
	\label{eta_l,s,k,d,t,m def}
	\begin{split}
	\eta_{\bl,\bss,k,d,t,m'} =\ & -2\zeta_{\bl,\bss}^{(1-a_{ij}-k+d)} - 2\sum_{r = 1}^{-a_{ij}}\zeta_{\bl,\bss}^{(r-1)}\left(\ell_r + (1-\ell_r)(1-s_{r-\vert\bl\vert_{1;r}}) \right)\\&
	-a_{ij}(1+a_{ij}+k+m'+t+\vert\bss\vert_{1;1-a_{ij}-k-\vert\bl\vert_{1;1-a_{ij}-k}})-2(m'+t),
	\end{split}
	\end{align}
	with \(\zeta_{\bl,\bss}^{(r)}\) as in (\ref{zeta_l,s,r def}).
\end{proposition}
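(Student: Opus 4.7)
The plan is to mirror the proof of Proposition \ref{prop epsilon and other P} applied to $\plsk$: shift every factor $K_i^{\pm 1}$ appearing in $\rlskd$ to the rightmost position using the commutation relations (\ref{U_q(g) relations}), track the $q_i$-powers induced at each step, renormalize $E_i\mapsto\Ei$ and $E_j\mapsto\Ej$, and identify what remains. Two new features distinguish Case 2 from Case 1: the middle operator $(E_jE_i-q_i^{a_{ij}}E_iE_j)K_i^{-1}$ contains an additional interior factor $K_i^{-1}$, which must be treated separately, and the interior letters $F_j$ and $E_j$ commute nontrivially with $K_i^{\pm 1}$, namely $K_i^{-1}F_j=q_i^{a_{ij}}F_jK_i^{-1}$ and $K_i^{-1}E_j=q_i^{-a_{ij}}E_jK_i^{-1}$.

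First I would shift the interior $K_i^{-1}$ through the trailing block $\prod_{r=2-a_{ij}-k+d}^{-a_{ij}}\mathcal{T}^{i}_{\bl,\bss,r}$. An analog of the excess-of-$E_i$-over-$F_i$ computation in (\ref{deriving x step 1})--(\ref{deriving x step 2}) --- combined with the equality $\#F_i=\#E_i=(1-a_{ij}-m)/2$ guaranteed by Condition \ref{lemma restrictions on s condition b} of Lemma \ref{lemma restrictions on s term 2} (once the extra $E_i$ from the middle is included) --- rewrites the resulting $q_i$-exponent as $-2\zeta_{\bl,\bss}^{(1-a_{ij}-k+d)}$, the first summand of $\eta_{\bl,\bss,k,d,t,m'}$. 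Next I shift each remaining $K_i^{-\ell_r}$ with $\ell_r=1$, together with each $K_i^{-1}$ embedded in an $(E_iK_i^{-1})$-factor, to the right exactly as in the proof of Proposition \ref{prop epsilon and other P}. The contributions from commutations with subsequent $E_i$'s and $F_i$'s reassemble, via the same argument, into $-2\sum_{r=1}^{-a_{ij}}\zeta_{\bl,\bss}^{(r-1)}(\ell_r+(1-\ell_r)(1-s_{r-\vert\bl\vert_{1;r}}))$. The contributions from commutations with $F_j$ and $E_j$ can be grouped by counting, for each of these two interior letters, the number of $K_i^{-1}$-factors that pass it; using the defining constraints on $\bl$ in $\mathcal{L}'_{m,m',k,d}$ and on $\bss$ in $\mathscr{S}'_{m,m',k,t,d}$, these collapse into the summand $-a_{ij}(1+a_{ij}+k+m'+t+\vert\bss\vert_{1;1-a_{ij}-k-\vert\bl\vert_{1;1-a_{ij}-k}})$ of $\eta$. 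Finally, renormalizing $E_i\mapsto\Ei$ and $E_j\mapsto\Ej$ produces the announced prefactor $(q_i-q_i^{-1})^{-(1-a_{ij}-m)/2}(q_j-q_j^{-1})^{-1}$ (since $\rlskd$ contains $(1-a_{ij}-m)/2$ factors $E_i$ and a single $E_j$), transforms the middle into $\Ej\Ei-q_i^{a_{ij}}\Ei\Ej$ so that what is left becomes $Y_{\bl,\bss,k,d}^{(0)}-q_i^{a_{ij}}Y_{\bl,\bss,k,d}^{(1)}$, and absorbs the remaining $q_i$-corrections into the last summand $-2(m'+t)$ of $\eta$. The $K_i$-$K_j$-factor sitting on the far right is then identified with the target of $\widehat{P}^{i,j}_{-\frac{1-a_{ij}-m}{2},-1}$ in the same way as at the end of the proof of Proposition \ref{prop epsilon and other P}.

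I expect the main obstacle to be the combinatorial bookkeeping of the $q_i$-powers contributed by the $K_i^{-1}$-factors as they cross the interior $F_j$ and $E_j$: the number of crossings depends intricately on how the entries $\ell_r=1$ and the $s$-variables distribute relative to the position of $F_j$ and the middle operator, and verifying that the resulting sums collapse into the closed form $-a_{ij}(1+a_{ij}+k+m'+t+\vert\bss\vert_{1;1-a_{ij}-k-\vert\bl\vert_{1;1-a_{ij}-k}})-2(m'+t)$ will require repeatedly expressing partial sums in terms of $\vert\bl\vert_{1;r}$, $\vert\bss\vert_{1;p}$, and the indexing constants $m,m',k,d,t$.
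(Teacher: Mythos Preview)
Your proposal is correct and follows essentially the same strategy as the paper's proof: shift the interior $K_i^{-1}$ from $(E_jE_i-q_i^{a_{ij}}E_iE_j)K_i^{-1}$ to the right, then shift the $K_i^{-\ell_r}$ and the $K_i^{-1}$'s inside $(E_iK_i^{-1})$-factors, splitting the induced $q_i$-exponent according to the position of $r$ relative to the $F_j$-block and the middle operator, then renormalize and simplify using (\ref{full sum 3}) and the constraints defining $\mathcal{L}'_{m,m',k,d}$ and $\mathscr{S}'_{m,m',k,t,d}$. The only cosmetic difference is organizational: the paper computes the exponent shift-by-shift (obtaining intermediate expressions $x,x'_r,x''_r$ that depend on which of the three ranges $r$ lies in) and sums at the end, whereas you group the contributions directly by which summand of $\eta_{\bl,\bss,k,d,t,m'}$ they target; both routes arrive at the same closed form.
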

\begin{proof}
	Let us start by shifting the factor \(K_i^{-1}\) arising from \((E_jE_i-q_i^{a_{ij}}E_iE_j)K_i^{-1}\) in (\ref{r_l,s,k,d def}) to the back. Reasoning as in the proof of Proposition \ref{prop epsilon and other P}, this induces a factor \(q_i^{-2x}\), where
	\begin{align*}
	x &= \#(\mathrm{factors}\ F_i\ \mathrm{preceding}\ K_i^{-1}) - \#(\mathrm{factors}\ E_i\ \mathrm{preceding}\ K_i^{-1})\\& = \vert\bss\vert_{1;1-a_{ij}-k+d-\vert\bl\vert_{1;1-a_{ij}-k+d}}-(1-a_{ij}-k+d-\vert\bl\vert_{1;1-a_{ij}-k+d}-\vert\bss\vert_{1;1-a_{ij}-k+d-\vert\bl\vert_{1;1-a_{ij}-k+d}})-1\\
	& = \zeta_{\bl,\bss}^{(1-a_{ij}-k+d)}-1,
	\end{align*}
	where the \(-1\) comes from the factor \(E_i\) in \((E_jE_i-q_i^{a_{ij}}E_iE_j)K_i^{-1}\).
	
	Now let us perform the same shifting for the factors \(K_i^{-\ell_r}\) with \(\ell_r = 1\), which leads to a factor \(q_i^{x'_r}\), where this time \(x'_r\) depends on \(r\). In general, we have
	\begin{align*}
	x'_r =\ & -2\left(\#(\mathrm{factors}\ F_i\ \mathrm{preceding}\ K_i^{-\ell_r}) - \#(\mathrm{factors}\ E_i\ \mathrm{preceding}\ K_i^{-\ell_r})\right)\\& -a_{ij}\left( \#(\mathrm{factors}\ F_j\ \mathrm{preceding}\ K_i^{-\ell_r}) - \#(\mathrm{factors}\ E_j\ \mathrm{preceding}\ K_i^{-\ell_r})\right),
	\end{align*}
	again since \(\rlskd\) contains an equal number of factors \(F_i\) and \(E_i\), and precisely 1 factor \(F_j\) and 1 factor \(E_j\).	For \(r\in\{1,\dots,1-a_{ij}-k\}\) we have
	\[
	x'_r = -2\left( \vert\bss\vert_{r-1-\vert\bl\vert_{1;r-1}}-\left(r-1-\vert\bl\vert_{1;r-1}-\vert\bss\vert_{1;r-1-\vert\bl\vert_{1;r-1}}\right)\right) = -2\zeta_{\bl,\bss}^{(r-1)}.
	\]
	For \(r\in\{2-a_{ij}-k,\dots,1-a_{ij}-k+d\}\) on the other hand, by the same reasoning this becomes
	\[
	x'_r = -2\zeta_{\bl,\bss}^{(r-1)}-a_{ij},
	\]
	whereas for \(r\in\{2-a_{ij}-k+d,\dots,-a_{ij}\}\) one has
	\[
	x'_r = -2(\zeta_{\bl,\bss}^{(r-1)}-1),
	\]
	where the \(-1\) arises from the factor \(E_i\) in \(E_jE_i-q_i^{a_{ij}}E_iE_j\).
	
	Finally, this shifting process for the factor \(K_i^{-1}\) in \((E_iK_i^{-1})^{(1-\ell_r)(1-s_{r-\vert\bl\vert_{1;r}})}\) with \(\ell_r = 0\) and \(s_{r-\vert\bl\vert_{1;r}}=0\) induces a factor \(q_i^{x_r''}\), with, reasoning as above,
	\[
	x_r'' = \left\{
	\begin{array}{ll}
	-2(\zeta_{\bl,\bss}^{(r-1)}-1) & \mathrm{for}\ r\in\{1,\dots,1-a_{ij}-k\}, \\
	-2(\zeta_{\bl,\bss}^{(r-1)}-1)-a_{ij}\quad & \mathrm{for}\ r\in\{2-a_{ij}-k,\dots,1-a_{ij}-k+d\} ,\\
 	-2(\zeta_{\bl,\bss}^{(r-1)}-2) & \mathrm{for}\ r\in\{2-a_{ij}-k+d,\dots,-a_{ij}\}.
	\end{array}
	\right.
	\] 

	In total, this shifting gives rise to a factor \(q_i^{\eta}\), with
	\begin{align*}
	\eta =\ & -2\zeta_{\bl,\bss}^{(1-a_{ij}-k+d)}+2 - 2\sum_{r = 1}^{-a_{ij}}\zeta_{\bl,\bss}^{(r-1)}\left(\ell_r + (1-\ell_r)(1-s_{r-\vert\bl\vert_{1;r}}) \right)-a_{ij}\sum_{r=2-a_{ij}-k}^{1-a_{ij}-k+d}\left(\ell_r+(1-\ell_r)(1-s_{r-\vert\bl\vert_{1;r}})\right)\\&+2\sum_{r = 2-a_{ij}-k+d}^{-a_{ij}}\left(\ell_r + (1-\ell_r)(1-s_{r-\vert\bl\vert_{1;r}}) \right) +2 \sum_{r = 1}^{-a_{ij}}(1-\ell_r)(1-s_{r-\vert\bl\vert_{1;r}}) \\
	=\ & -2\zeta_{\bl,\bss}^{(1-a_{ij}-k+d)}+2- 2\sum_{r = 1}^{-a_{ij}}\zeta_{\bl,\bss}^{(r-1)}\left(\ell_r + (1-\ell_r)(1-s_{r-\vert\bl\vert_{1;r}}) \right)\\&-a_{ij}\left(m'+t-(1-a_{ij}-k)+\vert\bss\vert_{1;1-a_{ij}-k-\vert\bl\vert_{1;1-a_{ij}-k}}\right)+2\left(\frac{1-a_{ij}+m}{2}-m'-t-1\right)+(-1-a_{ij}-m),
	\end{align*}
	in agreement with (\ref{eta_l,s,k,d,t,m def}), where we have used (\ref{full sum 3}) and the definition (\ref{L' and S' sets Case 2}) of \(\mathscr{S}_{m,m',k,t,d}'\).
	
	Finally, the renormalization (\ref{E_i tilde def}) gives rise to a factor \((q_i-q_i^{-1})^{-\frac{1-a_{ij}-m}{2}}(q_j-q_j^{-1})^{-1}\), since by (\ref{full sum 3}) we have
	\[
	\#(\mathrm{factors}\ E_i) = \#(\mathrm{factors}\ F_i) = \vert\bss\vert = \frac{1-a_{ij}-m}{2}
	\]
	and of course \(\#(\mathrm{factors}\ E_j) = \#(\mathrm{factors}\ F_j) = 1\). So we find
	\[
	\rlskd = \frac{q_i^{\eta_{\bl,\bss,k,d,t,m'}}}{(q_i-q_i^{-1})^{\frac{1-a_{ij}-m}{2}}(q_j-q_j^{-1})}\left(Y_{\bl,\bss,k,d}^{(0)}-q_i^{a_{ij}}Y_{\bl,\bss,k,d}^{(1)}\right) K_i^{-\frac{1-a_{ij}+m}{2}},
	\]
	which yields the claim by (\ref{Coproduct, counit, antipode def}), (\ref{P_lambda def}), (\ref{lambda_ij def}) and (\ref{P_i,j,N def}).
\end{proof}

We have hence reduced our problem to computing how \(\epsilon\circ \widehat{P}^{i,j}_{-\frac{1-a_{ij}-m}{2},-1}\) acts on \(Y_{\bl,\bss,k,d}^{(0)}-q_i^{a_{ij}}Y_{\bl,\bss,k,d}^{(1)}\). Each of the latter terms is a product of an equal number of factors \(F_i\) and \(\widetilde{E_i}\) and precisely one factor \(F_j\) and \(\widetilde{E_j}\), which is balanced in the sense that up to each position in the product, the number of factors \(F_i\) exceeds or equals the number of factors \(\widetilde{E_i}\), and that the factor \(F_j\) preceeds the factor \(\widetilde{E_j}\). The presence of \(F_j\) and \(\widetilde{E_j}\) now complicates matters substantially in comparison to the situation in Case 1, because \(F_i\) does not commute with \(F_j\) and similarly for \(\Ei\) and \(\Ej\). We will need to derive an analog of Lemma \ref{lemma epsilon and projection general} which takes into account the presence of these factors.

Recall the notation \((N)_{q_i^2}\) for the modified \(q_i^2\)-number (\ref{modified q_i^2-number}) and let us also define
\begin{align}
\label{alpha_N def}
\alpha_N &= (N)_{q_i^2}q_i^{-2N+2}, \\
\label{gamma_M,N def}
\gamma_{M,N} &= (N-M)_{q_i^2}q_i^{-a_{ij}-2N+2}, 
\end{align}
for \(M,N\in\N\). Write also \(\alpha_{N} = 0\) for \(N < 0\). Then one can prove the following result.

\begin{lemma}
	\label{lemma epsilon and projection general Case 2}
	Let \(M\in\N\) be such that \(M\geq 1\). Let \(Y\in\Uqgp\) be a product of \(M\) factors \(F_i\), \(M\) factors \(\widetilde{E_i}\), 1 factor \(F_j\) and 1 factor \(\widetilde{E_j}\), appearing in any order but with \(F_i\) as its first \(N_0\) factors, for some \(N_0\in\N\), followed by a factor \(F_j\). Let \(N_1\in\N\) be maximal such that the first \(N_1\) factors of \(Y\) succeeding \(F_j\) are \(F_i\), so that we can write \(Y = F_i^{N_0}F_jF_i^{N_1}\widetilde{E_i}X\), for some \(X\in\Uqgp\). Then we have
	\[
		\left(\epsilon\circ \widehat{P}^{i,j}_{-M,-1}\right)(Y) =\alpha_{N_0}\left(\epsilon\circ \widehat{P}^{i,j}_{-(M-1),-1}\right)(F_i^{N_0-1}F_jF_i^{N_1}X) + \gamma_{N_0,N_0+N_1}\left(\epsilon\circ \widehat{P}^{i,j}_{-(M-1),-1}\right)(F_i^{N_0}F_jF_i^{N_1-1}X).
	\]
\end{lemma}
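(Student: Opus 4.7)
The plan is to mimic the induction in Lemma~\ref{lemma epsilon and projection general}, with a branching to account for the intervening $F_j$. First I would rewrite $F_i^{N_1}\Ei$ and $F_i^{N_0}\Ei$ using the iterated identity
\[
F_i^k\Ei=\Ei F_i^k-F_i^{k-1}[k]_{q_i}\bigl(q_i^{-k+1}K_i-q_i^{k-1}K_i^{-1}\bigr),
\]
which follows by induction from (\ref{U_q(g) relations 2}) and (\ref{E_i tilde def}), exploiting along the way the commutation $F_j\Ei=\Ei F_j$ valid for $j\neq i$. This decomposes $Y$ as the sum of a leading term $\Ei F_i^{N_0}F_jF_i^{N_1}X$ together with two \emph{contraction} terms, one supported in the $F_i^{N_0}$-block and one in the $F_i^{N_1}$-block, each of the schematic form $F_i^{\cdots}[k]_{q_i}(q_i^{\cdots}K_i-q_i^{\cdots}K_i^{-1})(\cdots)X$.

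I would then dispose of the leading term by the $U^+$-degree argument used in Lemma~\ref{lemma epsilon and projection general}: since standard ordering proceeds by moving $F$'s rightward past $E$'s, the leftmost $\Ei$ remains leftmost in every summand, forcing $m_1\geq 1$ and hence annihilation by $\epsilon$. The $K_i$-halves of the two contraction terms vanish under $\widehat{P}^{i,j}_{-M,-1}$ by a simple count: only $M-1$ remaining $F_i\Ei$-pairs can drive the $K_i$-exponent downward, so together with the explicit $K_i$ the lowest attainable exponent of $K_i$ is $-M+2$, never $-M$.

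The real work lies in the two surviving $K_i^{-1}$-halves, which I would cast into the form required by $\widehat{P}^{i,j}_{-(M-1),-1}$ by transporting $K_i^{-1}$ past everything to its right. For the contribution $F_i^{N_0-1}[N_0]_{q_i}q_i^{N_0-1}K_i^{-1}F_jF_i^{N_1}X$, the factor $K_i^{-1}$ must traverse $F_j$ (contributing $q_i^{a_{ij}}$ via (\ref{U_q(g) relations})), then $F_i^{N_1}$ (contributing $q_i^{2N_1}$), and finally $X$, which by hypothesis contains $M-1$ factors $\Ei$, $M-N_0-N_1$ factors $F_i$ and one $\Ej$ (contributing $q_i^{-2(M-1)+2(M-N_0-N_1)-a_{ij}}$). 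The $a_{ij}$-contributions cancel, and together with the prefactor this collapses to $(N_0)_{q_i^2}q_i^{-2N_0+2}=\alpha_{N_0}$ from (\ref{alpha_N def}). For the contribution $F_i^{N_0}F_jF_i^{N_1-1}[N_1]_{q_i}q_i^{N_1-1}K_i^{-1}X$, the $K_i^{-1}$ starts already to the right of $F_j$ and never crosses it; the analogous bookkeeping through $X$ alone leaves the $q_i^{-a_{ij}}$ uncancelled, yielding $\gamma_{N_0,N_0+N_1}$ from (\ref{gamma_M,N def}).

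The main obstacle I anticipate is this careful $q_i$-accounting as $K_i^{-1}$ traverses $X$, whose internal ordering is unspecified; the key simplification is that only the \emph{total counts} of $F_i$, $\Ei$ and $\Ej$ in $X$ (fixed by the hypotheses) influence the resulting scalar. The asymmetry between the two contraction contributions, namely whether $K_i^{-1}$ must cross $F_j$, is exactly what produces the additional factor $q_i^{-a_{ij}}$ distinguishing $\gamma_{M,N}$ from $\alpha_N$. Assembling the two surviving contributions yields the claimed identity, and the degenerate cases $N_0=0$ or $N_1=0$ are handled automatically by the vanishing $(0)_{q_i^2}=0$.
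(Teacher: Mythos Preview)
Your argument is correct and a bit more direct than the paper's. The paper inducts on $N_1$: for $N_1=0$ it commutes $\Ei$ past $F_j$ and invokes (a trivial extension of) Lemma~\ref{lemma epsilon and projection general}; for $N_1\geq 1$ it peels off a single $F_i$ from the $N_1$-block via $F_i\Ei=\Ei F_i-K_i+K_i^{-1}$, discards the $\Ei$- and $K_i$-terms by the same power-count you use, transports the surviving $K_i^{-1}$ through $X$, and then applies the induction hypothesis at $N_1-1$, closing with the identity $\gamma_{N_0,N_0+N_1-1}+q_i^{-2(N_0+N_1-1)-a_{ij}}=\gamma_{N_0,N_0+N_1}$. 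You instead front-load the induction into the closed-form commutator $F_i^k\Ei=\Ei F_i^k-F_i^{k-1}[k]_{q_i}(q_i^{-k+1}K_i-q_i^{k-1}K_i^{-1})$ and then push $\Ei$ past \emph{both} blocks in one step, which immediately produces the two contraction terms and makes the origin of the asymmetry between $\alpha_{N_0}$ and $\gamma_{N_0,N_0+N_1}$ (whether $K_i^{-1}$ must cross $F_j$, cancelling the $\Ej$-contribution from $X$) completely transparent. Your $q_i$-bookkeeping checks out: the $N_0$-contraction gives $[N_0]_{q_i}q_i^{-N_0+1}=\alpha_{N_0}$ and the $N_1$-contraction gives $[N_1]_{q_i}q_i^{-N_1-2N_0+1-a_{ij}}=\gamma_{N_0,N_0+N_1}$, exactly matching (\ref{alpha_N def}) and (\ref{gamma_M,N def}). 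The only cosmetic point is that your opening sentence about ``mimicking the induction'' slightly undersells what you actually do; you are not inducting on $N_1$ at the level of the lemma, but collapsing that induction into the iterated commutator identity and then arguing directly.
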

\begin{proof}
	We prove this by induction on \(N_1\). As before, our strategy will be to write \(Y\) in its standard ordering, i.e.\ as a \(\K(q)\)-linear combination of \(\Ei^{m_1}\Ej^{\delta}\Ei^{m_2}F_i^{m_3}F_j^{\delta}F_i^{m_4}K_i^{m_5}K_j^{\delta'}\), with \(m_1,\dots,m_4\in\N\), \(m_5\in\Z\), \(\delta\in\{0,1\}\) and \(\delta'\in\{-1,0,1\}\), and then observe that for any \(M'\in\Z\) one has
	\begin{align}
	\label{standard ordering Case 2}
	\begin{split}
	&(\epsilon\circ \widehat{P}^{i,j}_{-M',-1})\left(\Ei^{m_1}\Ej^{\delta}\Ei^{m_2}F_i^{m_3}F_j^{\delta}F_i^{m_4}K_i^{m_5}K_j^{\delta'}\right)\\ =\ & \left\{
	\begin{array}{ll}
	1 \quad & \mathrm{if}\ m_1 = m_2 = m_3 = m_4 = \delta = 0,\ m_5 = -M'\ \mathrm{and}\ \delta' = -1,\\
	0 & \mathrm{otherwise},
	\end{array}
	\right.
	\end{split}
	\end{align}
	by (\ref{P_i,j,N def}) and (\ref{Coproduct, counit, antipode def}). Hence \(\epsilon\circ \widehat{P}^{i,j}_{-M',-1}\) in fact projects \(Y\) onto the coefficient of \(K_i^{-M'}K_j^{-1}\) in its standard ordering.
	
	For \(N_1 = 0\), we may write \(Y = F_i^{N_0}\Ei F_jX = F_i^{N_0}\Ei X'\) since \(F_j\) and \(\Ei\) commute. A straightforward generalization of Lemma \ref{lemma epsilon and projection general} then asserts
	\[
		\left(\epsilon\circ \widehat{P}^{i,j}_{-M,-1}\right)(Y) = (N_0)_{q_i^2}q_i^{-2N_0+2}\left(\epsilon\circ \widehat{P}^{i,j}_{-(M-1),-1}\right)(F_i^{N_0-1}X'),
	\]
	which yields the claim since \(X' = F_jX\) and by the definition (\ref{alpha_N def}) of \(\alpha_{N_0}\) and the fact that \(\gamma_{N_0,N_0} = 0\). 
	
	Suppose now the claim has been proven for \(N_1-1\geq 0\). Note that by (\ref{U_q(g) relations 2}) and (\ref{E_i tilde def}) we have
	\[
	Y = F_i^{N_0}F_jF_i^{N_1-1}\Ei X' -  F_i^{N_0}F_jF_i^{N_1-1}K_iX +  F_i^{N_0}F_jF_i^{N_1-1}K_i^{-1}X,
	\]
	with \(X' = F_iX\). 
	
	The second term will not contribute, since its standard ordering cannot contain a multiple of \(K_i^{-M}K_j^{-1}\). Indeed, this term contains \(M-1\) factors \(F_i\) and the same number of factors \(\Ei\), and each factor \(F_i\) can only contribute one factor \(K_i^{-1}\) to the normal ordering upon combining it with a factor \(\Ei\), by (\ref{U_q(g) relations 2}). Hence the lowest possible power of \(K_i\) occurring in the standard ordering of this term will be \(-(M-1)+1 > -M\).
	
	The third term contains again as many \(F_i\) as \(\Ei\) and can whence be rewritten as \(q_i^xF_i^{N_0}F_jF_i^{N_1-1}XK_i^{-1}\), with
	\begin{align*}
	x =\ & -2\left(\#(\mathrm{factors}\ F_i\ \mathrm{preceding}\ K_i^{-1}) - \#(\mathrm{factors}\ \Ei\ \mathrm{preceding}\ K_i^{-1})\right)\\& - a_{ij}\left(\#(\mathrm{factors}\ F_j\ \mathrm{preceding}\ K_i^{-1})-\#(\mathrm{factors}\ \Ej\ \mathrm{preceding}\ K_i^{-1}) \right) \\ =\ & -2(N_0+N_1-1) - a_{ij}.
	\end{align*}
	So we have
	\[
		\left(\epsilon\circ \widehat{P}^{i,j}_{-M,-1}\right)(Y) = \left(\epsilon\circ \widehat{P}^{i,j}_{-M,-1}\right)(F_i^{N_0}F_jF_i^{N_1-1}\Ei X') + q_i^{-2(N_0+N_1-1) - a_{ij}}\left(\epsilon\circ \widehat{P}^{i,j}_{-(M-1),-1}\right)(F_i^{N_0}F_jF_i^{N_1-1}X).
	\]
	As \(F_i^{N_0}F_jF_i^{N_1-1}\Ei X'\) still contains \(M\) factors \(F_i\) and the same number of factors \(\Ei\), and meets all other requirements of the statement as well, we may apply the induction hypothesis to write
	\begin{align*}
	 \left(\epsilon\circ \widehat{P}^{i,j}_{-M,-1}\right)(F_i^{N_0}F_jF_i^{N_1-1}\Ei X') =\ & \alpha_{N_0} \left(\epsilon\circ \widehat{P}^{i,j}_{-(M-1),-1}\right)(F_i^{N_0-1}F_jF_i^{N_1-1}X')\\& + \gamma_{N_0,N_0+N_1-1} \left(\epsilon\circ \widehat{P}^{i,j}_{-(M-1),-1}\right)(F_i^{N_0}F_jF_i^{N_1-2} X').
	\end{align*}
	The statement now follows from \(X' = F_iX\) and the fact that
	\[
	\gamma_{N_0,N_0+N_1-1} + q_i^{-2(N_0+N_1-1) - a_{ij}} = \gamma_{N_0,N_0+N_1}.
	\]
\end{proof}

The formula obtained in Lemma \ref{lemma epsilon and projection general} can easily be iterated, since its right-hand side consists of only one term, leading to a product iteration of the form (\ref{informal formula}). The formula obtained in Lemma \ref{lemma epsilon and projection general Case 2} however, is much more complicated, since its right-hand side consists of two different terms, each containing a projection operator and the counit \(\epsilon\). One iteration of Lemma \ref{lemma epsilon and projection general Case 2} hence leads to a right-hand side containing three terms. Indeed, if \(Y = F_i^{N_0}F_jF_i^{N_1}\Ei F_i^{N_2}\Ei X\) is of the type described in Lemma \ref{lemma epsilon and projection general Case 2}, then 
\begin{align*}
	\left(\epsilon\circ \widehat{P}^{i,j}_{-M,-1}\right)(Y) =\ & \alpha_{N_0}\alpha_{N_0-1}\left(\epsilon\circ \widehat{P}^{i,j}_{-(M-2),-1}\right)(F_i^{N_0-2}F_jF_i^{N_1+N_2}X) \\& + \alpha_{N_0}(\gamma_{N_0-1,N_0+N_1+N_2-1}+\gamma_{N_0,N_0+N_1})\left(\epsilon\circ \widehat{P}^{i,j}_{-(M-2),-1}\right)(F_i^{N_0-1}F_jF_i^{N_1+N_2-1}X)\\&+\gamma_{N_0,N_0+N_1}\gamma_{N_0,N_0+N_1+N_2-1}\left(\epsilon\circ \widehat{P}^{i,j}_{-(M-2),-1}\right)(F_i^{N_0}F_jF_i^{N_1+N_2-2}X).
\end{align*}
A second iteration will then lead to four terms in the right-hand side and so on. Meanwhile, the occurring coefficients become increasingly intricate at each further iteration. To describe the full outcome after \(T\) iterations, for any \(T\in\N\), let us introduce the notation
\begin{equation}
\label{c_n,m,N def}
c_{a,\boldsymbol{N}}^{(b)} = \sum_{p_1\leq p_2\leq \dots\leq p_{b-a} = 0}^a \prod_{r = 1}^{b-a} \gamma_{N_0-a+p_r,\vert\boldsymbol{N}\vert_{0;b-p_r-r+1}-(b-p_r-r)},
\end{equation}
where \(a< b\in\N\), \(\boldsymbol{N} = (N_0,N_1,\dots,N_{b+1})\in\N^{b+2}\) and \(\vert\boldsymbol{N}\vert_{0;p} = N_0+N_1+\dots+N_p\). We also set \(c_{a,\bN}^{(a)} = 1\).

\begin{proposition}
	\label{prop epsilon and projection general full induction}
	Let \(M\in\N\) be such that \(M\geq 1\). Let \(Y\in\Uqgp\) be a product of \(M\) factors \(F_i\), \(M\) factors \(\widetilde{E_i}\), 1 factor \(F_j\) and 1 factor \(\widetilde{E_j}\), of the form
	\begin{equation}
	\label{form of Y for induction}
	Y = F_i^{N_0}F_jF_i^{N_1}\Ei F_i^{N_2}\Ei \dots F_i^{N_T}\Ei \Ej X,
	\end{equation}
	for some \(X\in\Uqgp\), where \(\bN = (N_0,N_1,\dots,N_T)\in\N^{T+1}\) and \(T\geq 1\). Then we have
	\[
	\left(\epsilon\circ \widehat{P}^{i,j}_{-M,-1}\right)(Y) = \upsilon_{\bN}\left(\epsilon\circ \widehat{P}^{i}_{-(M-T)}\right)\left(F_i^{\vert\bN\vert_{0;T}-T}X\right),
	\]
	with
	\[
	\upsilon_{\bN} = \sum_{u = \max(0,T-\vert\bN\vert_{1;T}-1)}^{T-1}q_i^{-a_{ij}(N_0-u)}c_{u,\bN}^{(T-1)}\left(q_i^{a_{ij}}\alpha_{N_0-u}+\gamma_{N_0-u,\vert\bN\vert_{0;T}-(T-1)}\right)\left(\prod_{r=0}^{u-1}\alpha_{N_0-r}\right).
	\]
\end{proposition}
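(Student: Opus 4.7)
My plan is to prove the proposition by induction on $T$, using Lemma \ref{lemma epsilon and projection general Case 2} as the one-step reduction.

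For the base case $T=1$, I would apply Lemma \ref{lemma epsilon and projection general Case 2} to $Y = F_i^{N_0}F_jF_i^{N_1}\Ei\Ej X$, reducing to two terms of the form $(\epsilon\circ\widehat{P}^{i,j}_{-(M-1),-1})(F_i^aF_jF_i^b\Ej X)$ with $(a,b)\in\{(N_0-1,N_1),(N_0,N_1-1)\}$ and coefficients $\alpha_{N_0}$ and $\gamma_{N_0,N_0+N_1}$. For each such term I commute $\Ej$ past the $F_i^b$ block (since $\Ej$ and $F_i$ commute) and apply $F_j\Ej = \Ej F_j - K_j + K_j^{-1}$, splitting into three pieces. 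The $\Ej F_j$ piece, brought to the standard form $\Ei^{m_1}\Ej^\delta\Ei^{m_2}F_i^{m_3}F_j^\delta F_i^{m_4}K_i^{m_5}K_j^{\delta'}$ used in the proof of Lemma \ref{lemma epsilon and projection general Case 2}, produces only monomials with $\delta=1$ and hence contributes nothing to the pure-$K$ coefficient; the $-K_j$ piece carries $K_j$ with the wrong sign and also vanishes after projection; only the $+K_j^{-1}$ piece survives. Commuting $K_j^{-1}$ through $F_i^{a+b}X$ produces a factor $q_i^{(b+p-q)a_{ij}}$, where $p,q$ denote the $F_i$- and $\Ei$-counts in $X$ and satisfy $p-q=T-\vert\bN\vert_{0;T}$. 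Combining both summands and substituting $p-q=1-N_0-N_1$ recovers $\upsilon_{(N_0,N_1)}=q_i^{-a_{ij}N_0}(q_i^{a_{ij}}\alpha_{N_0}+\gamma_{N_0,N_0+N_1})$, which is precisely the $u=0$ value in the displayed formula.

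For the inductive step, I would apply Lemma \ref{lemma epsilon and projection general Case 2} once to the $T+1$ expression, choosing head $F_i^{N_0}F_jF_i^{N_1}\Ei$ and tail $F_i^{N_2}\Ei\cdots\Ei\Ej X$. This yields two summands that both fit the proposition's template with $T$ remaining $\Ei$'s and shifted tuples $\bN'=(N_0-1,N_1+N_2,N_3,\ldots,N_{T+1})$ and $\bN''=(N_0,N_1+N_2-1,N_3,\ldots,N_{T+1})$ — the second and third entries of $\bN$ merge because killing the first $\Ei$ fuses the two adjacent $F_i$-blocks. Invoking the inductive hypothesis on each summand reduces the statement for $\bN$ to the combinatorial recurrence
\[
\upsilon_{\bN} = \alpha_{N_0}\upsilon_{\bN'} + \gamma_{N_0,N_0+N_1}\upsilon_{\bN''}.
\]

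The main obstacle will be establishing this recurrence directly from the defining formula for $\upsilon_{\bN}$ and $c_{a,\bN}^{(b)}$. I would split the sum for $\upsilon_{\bN}$ according to whether the $r=0$ factor in $\prod_{r=0}^{u-1}\alpha_{N_0-r}$ originates from an $\alpha$-step (matched to $\alpha_{N_0}\upsilon_{\bN'}$ via the substitution $u\mapsto u-1$, which shifts $N_0\mapsto N_0-1$ in all remaining factors) or from a $\gamma$-step (matched to $\gamma_{N_0,N_0+N_1}\upsilon_{\bN''}$), re-indexing the inner $(p_1,\ldots,p_{b-a})$-summation in (\ref{c_n,m,N def}) accordingly. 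The alignment of the $\gamma$-arguments in the two cases follows from the identity $\vert\bN\vert_{0;T+1}-T=\vert\bN'\vert_{0;T}-(T-1)=\vert\bN''\vert_{0;T}-(T-1)$. The exponent $q_i^{-a_{ij}(N_0-u)}$ is exactly what accumulates from commuting $K_j^{-1}$ past the residual $F_i$'s over all reductions, and the lower bound $\max(0,T-\vert\bN\vert_{1;T}-1)$ encodes the positivity constraint on the $F_i$-exponents encountered in the reduction process.
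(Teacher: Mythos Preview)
Your proposal is correct and follows essentially the same route as the paper: induction on $T$ with Lemma \ref{lemma epsilon and projection general Case 2} as the one-step reduction, the same shifted tuples $\bN'=(N_0-1,N_1+N_2,N_3,\ldots,N_{T+1})$ and $\bN''=(N_0,N_1+N_2-1,N_3,\ldots,N_{T+1})$ in the inductive step, and the same combinatorial recurrence $\upsilon_{\bN}=\alpha_{N_0}\upsilon_{\bN'}+\gamma_{N_0,N_0+N_1}\upsilon_{\bN''}$, which the paper verifies by establishing $c_{0,\bN}^{(T)}=\gamma_{N_0,N_0+N_1}c_{0,\bN''}^{(T-1)}$ and $c_{u,\bN}^{(T)}=c_{u-1,\bN'}^{(T-1)}+\gamma_{N_0,N_0+N_1}c_{u,\bN''}^{(T-1)}$. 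One small slip: in the base case you commute $K_j^{-1}$ through $F_i^{b}X$, not $F_i^{a+b}X$; your exponent $(b+p-q)a_{ij}$ is already the one for $F_i^{b}X$.
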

\begin{proof}
	We will prove this by induction on \(T\). For \(T = 1\) we have \(Y = F_i^{N_0}F_jF_i^{N_1}\Ei \Ej X\) and so its follows from Lemma \ref{lemma epsilon and projection general Case 2} that
	\[
	\left(\epsilon\circ \widehat{P}^{i,j}_{-M,-1}\right)(Y) = \alpha_{N_0}\left(\epsilon\circ \widehat{P}^{i,j}_{-(M-1),-1}\right)(F_i^{N_0-1}F_j\Ej F_i^{N_1}X) + \gamma_{N_0,N_0+N_1}\left(\epsilon\circ \widehat{P}^{i,j}_{-(M-1),-1}\right)(F_i^{N_0}F_j\Ej F_i^{N_1-1}X),
	\]
	where we have used the fact that \(F_i\) and \(\Ej\) commute. When rewriting \(F_j\Ej\) in its standard ordering via
	\[
	F_j\Ej = \Ej F_j -  K_j + K_j^{-1},
	\]
	only the last term will contribute by (\ref{standard ordering Case 2}), so we may replace \(F_j\Ej\) by \(K_j^{-1}\) in the equation above. Since both \(F_i^{N_0-1}K_j^{-1} F_i^{N_1}X\) and \(F_i^{N_0}K_j^{-1} F_i^{N_1-1}X\) contain as many \(F_i\) as \(\Ei\), it is evident that
	\[
	F_i^{N_0-1}K_j^{-1} F_i^{N_1}X = q_i^{-a_{ij}(N_0-1)}F_i^{N_0+N_1-1}XK_j^{-1}, \qquad F_i^{N_0}K_j^{-1} F_i^{N_1-1}X = q_i^{-a_{ij}N_0}F_i^{N_0+N_1-1}XK_j^{-1}.
	\]
	Hence it follows from (\ref{P_i,N def}) and (\ref{P_i,j,N def}) that
	\[
		\left(\epsilon\circ \widehat{P}^{i,j}_{-M,-1}\right)(Y) = q_i^{-a_{ij}N_0}\left(q_i^{a_{ij}}\alpha_{N_0} +  \gamma_{N_0,N_0+N_1}  \right) \left(\epsilon\circ \widehat{P}^{i}_{-(M-1)}\right)(F_i^{N_0+N_1-1}X),
	\]
	which agrees with the claim since \(c_{0,(N_0,N_1)}^{(0)} = 1\) and \(T-\vert\bN\vert_{1;T}-1\leq 0\) for \(T = 1\). 
	
	Suppose now the claim has been proven for \(T\geq 1\) and set
	\[
	Y = F_i^{N_0}F_jF_i^{N_1}\Ei F_i^{N_2}\Ei \dots F_i^{N_T}\Ei F_i^{N_{T+1}}\Ei \Ej X.
	\]
	Then Lemma \ref{lemma epsilon and projection general Case 2} asserts
	\[
	\left(\epsilon\circ \widehat{P}^{i,j}_{-M,-1}\right)(Y) = \alpha_{N_0}\left(\epsilon\circ \widehat{P}^{i,j}_{-(M-1),-1}\right)(Y') + \gamma_{N_0,N_0+N_1}\left(\epsilon\circ \widehat{P}^{i,j}_{-(M-1),-1}\right)(Y''),
	\]
	with
	\begin{align*}
	Y' =\ & F_i^{N_0-1}F_jF_i^{N_1+N_2}\Ei F_i^{N_3}\Ei\dots F_i^{N_{T+1}}\Ei\Ej X, \\
	Y'' =\ & F_i^{N_0}F_jF_i^{N_1+N_2-1}\Ei F_i^{N_3}\Ei\dots F_i^{N_{T+1}}\Ei\Ej X.
	\end{align*}
	Both \(Y'\) and \(Y''\) satisfy the requirements of the statement: they each contain \(M-1\) factors \(F_i\), the same number of factors \(\Ei\), 1 factor \(F_j\) and 1 factor \(\Ej\), and they are of the form (\ref{form of Y for induction}) with 
	\[
	\bN' = (N_0-1,N_1+N_2,N_3,\dots,N_{T+1})\ \mathrm{and}\ \bN'' = (N_0,N_1+N_2-1,N_3,\dots,N_{T+1})
	\]
	respectively. Both \(N_0-1\) and \(N_1+N_2-1\) might become negative, but in this case the corresponding coefficients \(\alpha_{N_0}\) and \(\gamma_{N_0,N_0+N_1}\) will vanish.	We may hence assume that \(\bN',\bN'' \in\N^{T+1}\) and apply the induction hypothesis to obtain
	\[
	\left(\epsilon\circ \widehat{P}^{i,j}_{-M,-1}\right)(Y) = \Theta_{\bN',\bN''}\left(\epsilon\circ \widehat{P}^{i}_{-(M-T-1)}\right)(F_i^{\vert\bN\vert_{0;T+1}-(T+1)}X)
	\]
	where \(\Theta_{\bN',\bN''}\) is given by
	\begin{align}
	\label{Theta_N',N'' def}
	\begin{split}
	& \alpha_{N_0}\left[\sum_{u = \max(0,T-\vert\bN\vert_{1;T+1}-1)}^{T-1}q_i^{-a_{ij}(N_0-1-u)}c_{u,\bN'}^{(T-1)}\left(q_i^{a_{ij}}\alpha_{N_0-1-u}+\gamma_{N_0-1-u,\vert\bN\vert_{0;T+1}-T} \right)\left(\prod_{r = 0}^{u-1}\alpha_{N_0-1-r}\right) \right]\\
	&+\gamma_{N_0,N_0+N_1}\left[\sum_{u = \max(0,T-\vert\bN\vert_{1;T+1})}^{T-1}q_i^{-a_{ij}(N_0-u)}c_{u,\bN''}^{(T-1)}\left(q_i^{a_{ij}}\alpha_{N_0-u}+\gamma_{N_0-u,\vert\bN\vert_{0;T+1}-T} \right)\left(\prod_{r = 0}^{u-1}\alpha_{N_0-r}\right) \right],
	\end{split}
	\end{align}
	where we have used the fact that \(\vert\bN'\vert_{0;T}-T = \vert\bN''\vert_{0;T}-T = \vert\bN\vert_{0;T+1}-(T+1)\). It now suffices to show that
	\begin{equation}
	\label{TBP in full induction}
	\Theta_{\bN',\bN''} =  \left[\sum_{u = \max(0,T-\vert\bN\vert_{1;T+1})}^{T}q_i^{-a_{ij}(N_0-u)}c_{u,\bN}^{(T)}\left(q_i^{a_{ij}}\alpha_{N_0-u}+\gamma_{N_0-u,\vert\bN\vert_{0;T+1}-T}\right)\left(\prod_{r=0}^{u-1}\alpha_{N_0-r}\right)\right].
	\end{equation}
	
	Upon replacing the summation index \(u\) in the first line in (\ref{Theta_N',N'' def}) by \(u' = u+1\), which we thereafter rename to \(u\) again, this term becomes
	\begin{equation}
	\label{replacement of first line}
	\alpha_{N_0}\left[\sum_{u = \max(1,T-\vert\bN\vert_{1;T+1})}^{T}q_i^{-a_{ij}(N_0-u)}c_{u-1,\bN'}^{(T-1)}\left(q_i^{a_{ij}}\alpha_{N_0-u}+\gamma_{N_0-u,\vert\bN\vert_{0;T+1}-T} \right)\left(\prod_{r = 0}^{u-2}\alpha_{N_0-1-r}\right) \right]
	\end{equation}
	and it is immediate that \(\alpha_{N_0}\left(\prod_{r = 0}^{u-2}\alpha_{N_0-1-r}\right) = \prod_{r = 0}^{u-1}\alpha_{N_0-r}\). Replacing the first line of (\ref{Theta_N',N'' def}) by (\ref{replacement of first line}) and separating the term corresponding to \(u = 0\) in the second line and the one with \(u = T\) in the first line, we find that \(\Theta_{\bN',\bN''}\) equals
	\begin{align}
	\label{Theta after induction hypothesis}
	\begin{split}
	&q_i^{-a_{ij}N_0}c_{0,\bN''}^{(T-1)}\gamma_{N_0,N_0+N_1}\left(q_i^{a_{ij}}\alpha_{N_0}+\gamma_{N_0,\vert\bN\vert_{0;T+1}-T} \right)\nu_{\bN,T} \\
	+& \sum_{u = \max(1,T-\vert\bN\vert_{1;T+1})}^{T-1}\left[q_i^{-a_{ij}(N_0-u)}\left(c_{u-1,\bN'}^{(T-1)} + \gamma_{N_0,N_0+N_1}c_{u,\bN''}^{(T-1)} \right)\left(q_i^{a_{ij}}\alpha_{N_0-u}+\gamma_{N_0-u,\vert\bN\vert_{0;T+1}-T} \right)\left(\prod_{r = 0}^{u-1}\alpha_{N_0-r}\right)  \right] \\
	+&\, q_i^{-a_{ij}(N_0-T)}c_{T-1,\bN'}^{(T-1)}\left(q_i^{a_{ij}}\alpha_{N_0-T}+\gamma_{N_0-T,\vert\bN\vert_{0;T+1}-T} \right)\left(\prod_{r = 0}^{T-1}\alpha_{N_0-r}\right),
	\end{split}
	\end{align}
	with
	\[
	\nu_{\bN, T} = \left\{
	\begin{array}{ll}
	1 \quad & \mathrm{if}\ T-\vert\bN\vert_{1;T+1}\leq 0, \\
	0 & \mathrm{otherwise}.
	\end{array}
	\right.
	\]
	By definition of \(c_{a,\bN}^{(b)}\) we have that \(c_{T-1,\bN'}^{(T-1)} = c_{T,\bN}^{(T)} = 1\), such that the last line in (\ref{Theta after induction hypothesis}) agrees with the term in the right-hand side of (\ref{TBP in full induction}) corresponding to \(u = T\). Hence it suffices to prove the following two claims:
	\begin{align}
	\label{TBP 1}
	c_{0,\bN}^{(T)} & = c_{0,\bN''}^{(T-1)}\gamma_{N_0,N_0+N_1}, \\
	\label{TBP 2}
	c_{u,\bN}^{(T)} & = c_{u-1,\bN'}^{(T-1)} + \gamma_{N_0,N_0+N_1}c_{u,\bN''}^{(T-1)},
	\end{align}
	for all \(u\in\{\max(1,T-\vert\bN\vert_{1;T+1}),\dots,T-1\}\).
	
	It follows immediately from (\ref{c_n,m,N def}) that one has
	\[
	c_{0,\bN''}^{(T-1)}\gamma_{N_0,N_0+N_1} = \left(\prod_{r=1}^{T-1}\gamma_{N_0,\vert\bN''\vert_{0;T-r}-(T-r-1)}\right)\gamma_{N_0,N_0+N_1}.
	\]
	The definition of \(\bN''\) asserts that \(\vert\bN''\vert_{0;T-r} = \vert\bN\vert_{0;T-r+1}-1\) for any \(r\in\{1,\dots,T-1\}\), and hence 
	\[
	c_{0,\bN''}^{(T-1)}\gamma_{N_0,N_0+N_1} = \prod_{r = 1}^{T}\gamma_{N_0,\vert\bN\vert_{0;T-r+1}-(T-r)} = c_{0,\bN}^{(T)},
	\]
	which proves (\ref{TBP 1}).
	
	Now let \(u\in\{\max(1,T-\vert\bN\vert_{1;T+1}),\dots,T-1\}\) be fixed. By (\ref{c_n,m,N def}) we have
	\[
	c_{u-1,\bN'}^{(T-1)} = \sum_{p_1\leq p_2\leq\dots\leq p_{T-u} = 0}^{u-1}\prod_{r = 1}^{T-u}\gamma_{N_0-u+p_r,\vert\bN'\vert_{0;T-p_r-r}-(T-p_r-r-1)},
	\]
	where we have used the fact that \(N_0' = N_0-1\). Now since for every occurring \(r\) one has \(r\leq T-u\) and \(p_r\leq u-1\), we have that \(T-p_r-r\geq 1\) and hence \(\vert\bN'\vert_{0;T-p_r-r}= \vert\bN\vert_{0;T-p_r-r+1}-1\), such that
	\begin{equation}
	\label{TBP 2 step 1}
		c_{u-1,\bN'}^{(T-1)} = \sum_{p_1\leq p_2\leq\dots\leq p_{T-u} = 0}^{u-1}\prod_{r = 1}^{T-u}\gamma_{N_0-u+p_r,\vert\bN\vert_{0;T+1-p_r-r}-(T-p_r-r)}.
	\end{equation}
	It is evident that one has
	\begin{align*}
	&\{(p_1,\dots,p_{T-u-1},p_{T-u})\in\N^{T-u}: 0\leq p_1\leq\dots\leq p_{T-u-1}\leq p_{T-u}\leq u\}\\ =\ & \{(p_1,\dots,p_{T-u-1},p_{T-u})\in\N^{T-u}: 0\leq p_1\leq\dots\leq p_{T-u-1}\leq p_{T-u}\leq u-1\}\\& \cup
	\{(p_1,\dots,p_{T-u-1},u)\in\N^{T-u}: 0\leq p_1\leq\dots\leq p_{T-u-1}\leq u\}.
	\end{align*}
	Hence (\ref{TBP 2 step 1}) implies
	\begin{align}
	\label{TBP 2 minus}
	\begin{split}
	c_{u-1,\bN'}^{(T-1)} =\ & \sum_{p_1\leq p_2\leq\dots\leq p_{T-u} = 0}^{u}\prod_{r = 1}^{T-u}\gamma_{N_0-u+p_r,\vert\bN\vert_{0;T+1-p_r-r}-(T-p_r-r)} \\
	& - \gamma_{N_0,\vert\bN\vert_{0;1}}\sum_{p_1\leq p_2\leq\dots\leq p_{T-u-1} = 0}^{u}\prod_{r = 1}^{T-u-1}\gamma_{N_0-u+p_r,\vert\bN\vert_{0;T+1-p_r-r}-(T-p_r-r)},
	\end{split}
	\end{align}
	where in the last line we have separated the factor in the product corresponding to \(r = T-u\), since here we have set \(p_{T-u} = u\). One immediately recognizes the first line as \(c_{u,\bN}^{(T)}\), and moreover one has
	\begin{equation}
	\label{TBP 2 step 2}
	c_{u,\bN''}^{(T-1)} = \sum_{p_1\leq p_2\leq\dots\leq p_{T-u-1} = 0}^{u}\prod_{r = 1}^{T-u-1}\gamma_{N_0-u+p_r,\vert\bN''\vert_{0;T-p_r-r}-(T-p_r-r-1)}.
	\end{equation}
	Again every \(T-p_r-r\geq 1\) and hence \(\vert\bN''\vert_{0;T-p_r-r} = \vert\bN\vert_{0;T+1-p_r-r}-1\), such that (\ref{TBP 2 step 2}) coincides with the sum in the last line in (\ref{TBP 2 minus}). We conclude that
	\[
	c_{u-1,\bN'}^{(T-1)} = c_{u,\bN}^{(T)} - \gamma_{N_0,N_0+N_1}c_{u,\bN''}^{(T-1)}
	\]
	and so we have shown (\ref{TBP 2}). This concludes the proof.
\end{proof}

The question now remains how one can apply Proposition \ref{prop epsilon and projection general full induction} to compute the action of \(\epsilon\circ \widehat{P}^{i,j}_{-\frac{1-a_{ij}-m}{2},-1}\) on \(Y_{\bl,\bss,k,d}^{(0)}-q_i^{a_{ij}}Y_{\bl,\bss,k,d}^{(1)}\), as defined in (\ref{Y_l,s,k,d 0 def})--(\ref{Y_l,s,k,d 1 def}). This will be addressed in the following proposition.

\begin{proposition}
	\label{prop epsilon and P final Case 2}
	Let \(i,j,m,t,k,d,m',\bl\) and \(\bss\) be as fixed before and let \(\lambda\in\{0,1\}\), then one has
	\begin{align*}
	&\left(\epsilon\circ \widehat{P}^{i,j}_{-\frac{1-a_{ij}-m}{2},-1}\right)\left(Y_{\bl,\bss,k,d}^{(\lambda)}\right)\\ = &\ q_i^{-a_{ij}N_0}
	\left(\prod_{r \in \mathcal{R}_{k,d}}\left(\alpha_{\zeta_{\bl,\bss}^{(r-1)}-\nu_{r,k}}\right)^{(1-\ell_r)(1-s_{r-\vert\bl\vert_{1;r}})}\right)\left(\alpha_{\zeta_{\bl,\bss}^{(1-a_{ij}-k+d)}}\right)^{1-\lambda}\\& \left(\sum_{u = \max(0,\xi_{\lambda})}^{T_{\bl,\bss,k,t}+\lambda-1}\left[q_i^{a_{ij}u}c_{u,\bN^{(\lambda)}}^{(T_{\bl,\bss,k,t}+\lambda-1)}\left(q_i^{a_{ij}}\alpha_{N_0-u}+\gamma_{N_0-u,\vert\bN\vert_{0;T_{\bl,\bss,k,t}+\lambda}-(T_{\bl,\bss,k,t}+\lambda-1)} \right)\left(\prod_{r = 0}^{u-1}\alpha_{N_0-r}\right) \right]\right)^{1-\delta_{T_{\bl,\bss,k,t}+\lambda,0}},
	\end{align*}
	where \(\bN^{(\lambda)} = (N_0,N_1,\dots,N_{T_{\bl,\bss,k,t}+\lambda})\), with
	\begin{align}
	\label{T_k,m',d def}
	T_{\bl,\bss,k,t} &= \zeta_{\bl,\bss}^{(1-a_{ij}-k)}+t-\vert\bss\vert_{1;1-a_{ij}-k-\vert\bl\vert_{1;1-a_{ij}-k}}, \\
	\label{N_0 def}
	N_0 & = \zeta_{\bl,\bss}^{(1-a_{ij}-k)},\\
	\label{N_1;p def}
	\vert\bN\vert_{1;b} & = r_b+a_{ij}+k-b-1-\vert\bl\vert_{2-a_{ij}-k;r_b}, \\
	\label{r_p def}
	r_b & = \sum_{r = 2-a_{ij}-k}^{1-a_{ij}-k+d}r(1-\ell_r)(1-s_{r-\vert\bl\vert_{1;r}})\delta_{r+a_{ij}+k-b-1-\vert\bl\vert_{2-a_{ij}-k;r},\vert\bss\vert_{2-a_{ij}-k-\vert\bl\vert_{1;2-a_{ij}-k};r-\vert\bl\vert_{1;r}}}
	\end{align}
	for any \(b\in\{1,\dots,T_{\bl,\bss,k,t}\}\), and
	\begin{equation}
	\label{N_0,T+1 def}
	\vert\bN\vert_{0;T_{\bl,\bss,k,t}+1}= \zeta_{\bl,\bss}^{(1-a_{ij}-k+d)}+T_{\bl,\bss,k,t},
	\end{equation}
	and moreover
	\begin{align}
	\label{R_k,d and nu_r,k def}
	\begin{split}
	\xi_{\lambda} & = T_{\bl,\bss,k,t}-\vert\bN\vert_{1;T_{\bl,\bss,k,t}+\lambda}+\lambda-1, \\
	\mathcal{R}_{k,d} & = \{1,\dots,-a_{ij}\}\setminus\{2-a_{ij}-k,\dots,1-a_{ij}-k+d\},\\
	\nu_{r,k} & = \left\{
	\begin{array}{ll}
	0\quad & \mathrm{if}\ r\leq 1-a_{ij}-k,\\
	1 & \mathrm{if}\ r > 1-a_{ij}-k.
	\end{array}
	\right.
	\end{split}
	\end{align}
\end{proposition}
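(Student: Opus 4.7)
The plan is to reduce the evaluation to Proposition~\ref{prop epsilon and projection general full induction} by first cancelling, via the level argument of Lemma~\ref{lemma epsilon and projection general}, all the $F_i$/$\widetilde{E_i}$ pairs lying in the outer blocks (positions $r\in\mathcal{R}_{k,d}$), and then recognising the surviving middle expression as exactly the form treated in that proposition. What makes the outer cancellation possible is that $\widetilde{E_i}$ commutes with $F_j$ and with $\widetilde{E_j}$ by (\ref{U_q(g) relations 2}), so the level argument inside each outer block proceeds without interference from the distinguished factors $F_j$ and $\widetilde{E_j}$ sitting in between.

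First I would treat the initial block $r\in\{1,\dots,1-a_{ij}-k\}$. The restriction $\vert\bss\vert_{1;p}\geq p/2$ from Lemma~\ref{lemma restrictions on s term 2}(c) guarantees that at each position in this block the $F_i$'s weakly outnumber the $\widetilde{E_i}$'s, so the iteration of Lemma~\ref{lemma epsilon and projection general} leading to (\ref{product iteration for Case 2}) applies, and each $\widetilde{E_i}$ at position $r$ contributes a factor $\alpha_{\zeta^{(r-1)}_{\bl,\bss}}$. The final block $r\in\{2-a_{ij}-k+d,\dots,-a_{ij}\}$ is handled identically, except that the additional $\widetilde{E_i}$ appearing in $X_\lambda$ sits to the left of this block and so reduces the effective level at each of its positions by one, yielding $\alpha_{\zeta^{(r-1)}_{\bl,\bss}-\nu_{r,k}}$; together these cancellations produce the displayed product over $\mathcal{R}_{k,d}$. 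When $\lambda=0$, the $\widetilde{E_i}$ inside $X_0=\widetilde{E_j}\widetilde{E_i}$ (whose level is $\zeta^{(1-a_{ij}-k+d)}_{\bl,\bss}$, since $\widetilde{E_j}$ does not affect the $F_i$/$\widetilde{E_i}$ balance) can be cancelled by the same mechanism, which accounts for the prefactor $(\alpha_{\zeta^{(1-a_{ij}-k+d)}_{\bl,\bss}})^{1-\lambda}$.

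After these cancellations, the remaining expression has precisely the form $F_i^{N_0}F_jF_i^{N_1}\widetilde{E_i}F_i^{N_2}\widetilde{E_i}\cdots F_i^{N_{T'}}\widetilde{E_i}\widetilde{E_j}$ required by Proposition~\ref{prop epsilon and projection general full induction}, with $T'=T_{\bl,\bss,k,t}+\lambda$ equal to the number of $\widetilde{E_i}$'s surviving between $F_j$ and $\widetilde{E_j}$, with $N_0=\zeta^{(1-a_{ij}-k)}_{\bl,\bss}$ the surplus of $F_i$'s at the end of the initial block, and with the remaining $N_b$'s read off from the positions of consecutive $\widetilde{E_i}$'s among the middle $\mathcal{V}^i_{\bl,\bss,r}$'s as prescribed by (\ref{N_1;p def})--(\ref{r_p def}). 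A direct application of Proposition~\ref{prop epsilon and projection general full induction} then produces the stated sum, with $q_i^{-a_{ij}N_0}$ extracted as the $u$-independent part of $q_i^{-a_{ij}(N_0-u)}$ and with the summation bound $\max(0,\xi_\lambda)$ matching $\max(0,T'-\vert\bN\vert_{1;T'}-1)$ upon substituting $T'=T_{\bl,\bss,k,t}+\lambda$.

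The main obstacle will be the combinatorial identification of the $N_b$'s in terms of $\bl,\bss,k,d$ and $t$, and in particular the Kronecker-delta definition of $r_b$ in (\ref{r_p def}): it encodes the condition that the $b$-th $\widetilde{E_i}$ surviving in the reordered middle block is located at the first position whose running balance attains the prescribed surplus. Verifying that the formulas (\ref{N_1;p def})--(\ref{r_p def}) for $\vert\bN\vert_{1;b}$ and $r_b$ are consistent with this level-counting interpretation, together with the careful bookkeeping of $q_i$-powers produced when $K_i^{\pm 1}$'s eventually cross $F_j$ and $\widetilde{E_j}$ (these powers having already been absorbed in $\eta_{\bl,\bss,k,d,t,m'}$ via Proposition~\ref{prop epsilon and other P Case 2}, hence playing no further rôle here), will be the delicate but routine part of the argument.
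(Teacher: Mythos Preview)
Your strategy is the paper's: use the level argument to factor out the contribution of the $\widetilde{E_i}$'s in the two outer blocks (and the extra $\widetilde{E_i}$ after $\widetilde{E_j}$ when $\lambda=0$), then invoke Proposition~\ref{prop epsilon and projection general full induction} for the $F_j\cdots\widetilde{E_j}$ segment, with the combinatorial identification of $N_0,T,\vert\bN\vert_{1;b}$ and $r_b$ exactly as you describe. The paper simply orders the steps left-to-right: initial block via the extension of (\ref{product iteration for Case 2}), then Proposition~\ref{prop epsilon and projection general full induction} (whose output still carries the tail $X$), and only then the extra $\widetilde{E_i}$ via Lemma~\ref{lemma epsilon and projection general} followed by the final block via (\ref{product iteration for Case 2}). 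Since the answer is multiplicative, your reordering is harmless.

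One point does need correcting. You write that the outer cancellation works because $\widetilde{E_i}$ commutes with $F_j$ and with $\widetilde{E_j}$. The first is true by~(\ref{U_q(g) relations 2}), but $[\widetilde{E_i},\widetilde{E_j}]\neq 0$ in general --- the only relations among the $E$'s are the quantum Serre relations~(\ref{U_q(g) relations 3}). Fortunately the level argument never moves $\widetilde{E_i}$ past $\widetilde{E_j}$: each step of Lemma~\ref{lemma epsilon and projection general} only commutes $K_i^{\pm 1}$ through the tail $X$, and since $X$ contains exactly one $F_j$ and one $\widetilde{E_j}$ the resulting factors $q_i^{\mp a_{ij}}$ and $q_i^{\pm a_{ij}}$ cancel. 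That cancellation, not any commutation of $E_i$ with $E_j$, is what makes the initial-block reduction go through unchanged; you should replace your justification accordingly.
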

\begin{proof}
	Following the same reasoning that led us to the formula (\ref{product iteration for Case 2}), we find that \(\left(\epsilon\circ \widehat{P}^{i,j}_{-\frac{1-a_{ij}-m}{2},-1}\right)\left(Y_{\bl,\bss,k,d}^{(\lambda)}\right)\) equals
	\[
	\left(\prod_{r=1}^{1-a_{ij}-k}\alpha_{\zeta_{\bl,\bss}^{(r-1)}}^{(1-\ell_r)(1-s_{r-\vert\bl\vert_{1;r}})} \right)\left(\epsilon\circ \widehat{P}^{i,j}_{-x,-1}\right)\left[F_i^{\zeta_{\bl,\bss}^{(1-a_{ij}-k)}}F_j\left(\overrightarrow{\prod_{r=2-a_{ij}-k}^{1-a_{ij}-k+d}}\mathcal{V}^{i}_{\bl,\bss,r}\right)\Ei^{\lambda}\Ej\Ei^{1-\lambda}\left(\overrightarrow{\prod_{r=2-a_{ij}-k+d}^{-a_{ij}}}\mathcal{V}^{i}_{\bl,\bss,r}\right) \right],
	\]
	where
	\[
	x = \frac{1-a_{ij}-m}{2}-\#(\mathrm{factors}\ \Ei\ \mathrm{preceding}\ F_j) = \zeta_{\bl,\bss}^{(1-a_{ij}-k)}-\vert\bss\vert_{2-a_{ij}-k-\vert\bl\vert_{1;2-a_{ij}-k};-a_{ij}-m}.
	\]
	
	To proceed, we will need to write the term between square brackets in the form
	\[
	F_i^{N_0}F_jF_i^{N_1}\Ei F_i^{N_2}\Ei\dots F_i^{N_T}\Ei\Ej X,
	\]
	for some \(N_0,\dots,N_T\in\N\), \(T\in\N\), \(X\in\Uqgp\). It is immediately clear that \(N_0\) agrees with (\ref{N_0 def}). Furthermore, let \(T\) be the total number of factors \(\Ei\) in \(\overrightarrow{\prod_{r=2-a_{ij}-k}^{1-a_{ij}-k+d}}\mathcal{V}^{i}_{\bl,\bss,r}\) and let us define \(r_1< r_2<\dots<r_T\in\{2-a_{ij}-k,\dots,1-a_{ij}-k+d\}\) such that  
	\[
	\mathcal{V}^{i}_{\bl,\bss,r_b} = \Ei
	\]
	for all \(b\in\{1,\dots,T\}\). This amounts to saying that \(r_1,\dots,r_T\) are the positions of the factors \(\Ei\) in this product. Then for any \(b\) one has
	\begin{align*}
	\vert\bN\vert_{1;b} &= \#(\mathrm{elements}\ r\in\{2-a_{ij}-k,\dots,r_b\}\ \mathrm{such} \ \mathrm{that}\ \mathcal{V}^{i}_{\bl,\bss,r} = F_i) \\
	&= \#(\mathrm{elements}\ r\in\{2-a_{ij}-k,\dots,r_b\}\ \mathrm{such} \ \mathrm{that}\ \ell_r = 0\ \mathrm{and}\ s_{r-\vert\bl\vert_{1;r}}=1) \\
	& = \#(\mathrm{elements}\ r\in\{2-a_{ij}-k,\dots,r_b\})- \#(\mathrm{elements}\ r\in\{2-a_{ij}-k,\dots,r_b\}\ \mathrm{such} \ \mathrm{that}\ \ell_r = 1)\\&\phantom{=} -\#(\mathrm{elements}\ r\in\{2-a_{ij}-k,\dots,r_b\}\ \mathrm{such} \ \mathrm{that}\ \mathcal{V}^{i}_{\bl,\bss,r} = \Ei) \\
	& = (r_b-(1-a_{ij}-k)) - \vert\bl\vert_{2-a_{ij}-k;r_b}-b.
	\end{align*} 
	Note also that this number equals
	\[
	\vert\bN\vert_{1;b}=\vert\bss\vert_{2-a_{ij}-k-\vert\bl\vert_{1;2-a_{ij}-k};r_b-\vert\bl\vert_{1;r_b}}.
	\]
	Hence for any \(b\in\{1,\dots,T\}\), the element \(r_b\) can be found as the unique \(r\in\{2-a_{ij}-k,\dots,1-a_{ij}-k+d\}\) such that \(\ell_r = 0\), \(s_{r-\vert\bl\vert_{1;r}} = 0\) and
	\[
	\vert\bss\vert_{2-a_{ij}-k-\vert\bl\vert_{1;2-a_{ij}-k};r-\vert\bl\vert_{1;r}} = (r-(1-a_{ij}-k)) - \vert\bl\vert_{2-a_{ij}-k;r}-b.
	\]
	This agrees with (\ref{N_1;p def})--(\ref{r_p def}). The total number \(T\) of factors \(\Ei\) in \(\overrightarrow{\prod_{r=2-a_{ij}-k}^{1-a_{ij}-k+d}}\mathcal{V}^{i}_{\bl,\bss,r}\) can be found as
	\begin{align*}
	T =\ & \#(r\in\{2-a_{ij}-k,\dots,1-a_{ij}-k+d\}) - \#(r\in\{2-a_{ij}-k,\dots,1-a_{ij}-k+d\}\ \mathrm{such}\ \mathrm{that}\ \ell_r = 1)\\&-\#(r\in\{2-a_{ij}-k,\dots,1-a_{ij}-k+d\}\ \mathrm{such}\ \mathrm{that}\ \ell_r = 0\ \mathrm{and}\ s_{r-\vert\bl\vert_{1;r}}=1) \\
	=\ & d - \vert\bl\vert_{2-a_{ij}-k;1-a_{ij}-k+d}-\sum_{r = 2-a_{ij}-k}^{1-a_{ij}-k+d}(1-\ell_r)s_{r-\vert\bl\vert_{1;r}} \\
	=\ & \vert\bl\vert_{1;1-a_{ij}-k}+\vert\bss\vert_{1;1-a_{ij}-k-\vert\bl\vert_{1;1-a_{ij}-k}}+a_{ij}+k+t-1,
	\end{align*}
	in agreement with (\ref{T_k,m',d def}). 
	
	With these notations one may now write
	\[
	F_i^{\zeta_{\bl,\bss}^{(1-a_{ij}-k)}}F_j\left(\overrightarrow{\prod_{r=2-a_{ij}-k}^{1-a_{ij}-k+d}}\mathcal{V}^{i}_{\bl,\bss,r}\right)\Ej\Ei = F_i^{N_0}F_jF_i^{N_1}\Ei F_i^{N_2}\Ei\dots F_i^{N_{T_{\bl,\bss,k,t}}}\Ei\Ej F_i^{N_{T_{\bl,\bss,k,t}+1}}\Ei,
	\]
	for some \(N_{T_{\bl,\bss,k,t}+1}\in\N\), where we have used the fact that \([F_i,\Ej] = 0\). The analogous term with \(\lambda = 1\) becomes
	\[
	F_i^{\zeta_{\bl,\bss}^{(1-a_{ij}-k)}}F_j\left(\overrightarrow{\prod_{r=2-a_{ij}-k}^{1-a_{ij}-k+d}}\mathcal{V}^{i}_{\bl,\bss,r}\right)\Ei\Ej = F_i^{N_0}F_jF_i^{N_1}\Ei F_i^{N_2}\Ei\dots F_i^{N_{T_{\bl,\bss,k,t}}}\Ei F_i^{N_{T_{\bl,\bss,k,t}+1}}\Ei\Ej,
	\]
	for the same unknown \(N_{T_{\bl,\bss,k,t}+1}\in\N\).	By Proposition \ref{prop epsilon and projection general full induction} we thus have, for \(\lambda = 0\)
	\begin{align*}
	&\left(\epsilon\circ \widehat{P}^{i,j}_{-x,-1}\right)\left[F_i^{\zeta_{\bl,\bss}^{(1-a_{ij}-k)}}F_j\left(\overrightarrow{\prod_{r=2-a_{ij}-k}^{1-a_{ij}-k+d}}\mathcal{V}^{i}_{\bl,\bss,r}\right)\Ej\Ei\left(\overrightarrow{\prod_{r=2-a_{ij}-k+d}^{-a_{ij}}}\mathcal{V}^{i}_{\bl,\bss,r}\right) \right]\\ =\ & \left[\sum_{u = \max(0,\xi_0)}^{T_{\bl,\bss,k,t}-1}q_i^{-a_{ij}(N_0-u)}c_{u,\bN^{(0)}}^{(T_{\bl,\bss,k,t}-1)}\left(q_i^{a_{ij}}\alpha_{N_0-u}+\gamma_{N_0-u,\vert\bN\vert_{0;T_{\bl,\bss,k,t}}-(T_{\bl,\bss,k,t}-1)}\right)\left(\prod_{r=0}^{u-1}\alpha_{N_0-r}\right)\right]^{1-\delta_{T_{\bl,\bss,k,t},0}}\\&\left(q_i^{-a_{ij}N_0}\right)^{\delta_{T_{\bl,\bss,k,t},0}}\left(\epsilon\circ \widehat{P}^{i}_{-(x-T_{\bl,\bss,k,t})}\right)\left(F_i^{\vert\bN\vert_{0;T_{\bl,\bss,k,t}+1}-T_{\bl,\bss,k,t}}\Ei\left(\overrightarrow{\prod_{r=2-a_{ij}-k+d}^{-a_{ij}}}\mathcal{V}^{i}_{\bl,\bss,r}\right)\right).
	\end{align*}
	Here we have observed that Proposition \ref{prop epsilon and projection general full induction} is only applicable for \(T\geq 1\), which explains the power \(\delta_{T_{\bl,\bss,k,t},0}\). The analogous term with \(\Ej\Ei\) replaced by \(\Ei\Ej\) becomes
	\begin{align*}
	& \left[\sum_{u = \max(0,\xi_1)}^{T_{\bl,\bss,k,t}}q_i^{-a_{ij}(N_0-u)}c_{u,\bN^{(1)}}^{(T_{\bl,\bss,k,t})}\left(q_i^{a_{ij}}\alpha_{N_0-u}+\gamma_{N_0-u,\vert\bN\vert_{0;T_{\bl,\bss,k,t}+1}-T_{\bl,\bss,k,t}}\right)\left(\prod_{r=0}^{u-1}\alpha_{N_0-r}\right)\right]\\&\left(\epsilon\circ\widehat{P}^{i}_{-(x-T_{\bl,\bss,k,t}-1)}\right)\left(F_i^{\vert\bN\vert_{0;T_{\bl,\bss,k,t}+1}-T_{\bl,\bss,k,t}-1}\left(\overrightarrow{\prod_{r=2-a_{ij}-k+d}^{-a_{ij}}}\mathcal{V}^{i}_{\bl,\bss,r}\right)\right).
	\end{align*}
	Note also that we have
	\begin{align*}
	&\vert\bN\vert_{0;T_{\bl,\bss,k,t}+1}-T_{\bl,\bss,k,t}\\ =\ & \#(\mathrm{factors}\ F_i\ \mathrm{in}\ Y_{\bl,\bss,k,d}^{(0)}\ \mathrm{preceding}\ \Ej\Ei)-\#(\mathrm{factors}\ \Ei\ \mathrm{in}\ Y_{\bl,\bss,k,d}^{(0)}\ \mathrm{preceding}\ \Ej\Ei)\\ =\ & \zeta_{\bl,\bss}^{(1-a_{ij}-k+d)},
	\end{align*}
	which determines the unknown \(N_{T_{\bl,\bss,k,t}+1}\), in agreement with (\ref{N_0,T+1 def}). By Lemma \ref{lemma epsilon and projection general} this also implies
	\begin{align*}
	&\left(\epsilon\circ \widehat{P}^{i}_{-(x-T_{\bl,\bss,k,t})}\right)\left(F_i^{\vert\bN\vert_{0;T_{\bl,\bss,k,t}+1}-T_{\bl,\bss,k,t}}\Ei\left(\overrightarrow{\prod_{r=2-a_{ij}-k+d}^{-a_{ij}}}\mathcal{V}^{i}_{\bl,\bss,r}\right)\right) \\=\ & \alpha_{\zeta_{\bl,\bss}^{(1-a_{ij}-k+d)}}\left(\epsilon\circ \widehat{P}^{i}_{-(x-T_{\bl,\bss,k,t}-1)}\right)\left(F_i^{\zeta_{\bl,\bss}^{(1-a_{ij}-k+d)}-1}\left(\overrightarrow{\prod_{r=2-a_{ij}-k+d}^{-a_{ij}}}\mathcal{V}^{i}_{\bl,\bss,r}\right)\right).
	\end{align*}
	
	It now remains only to apply the formula (\ref{product iteration for Case 2}) and find 
	\begin{align*}
	&\left(\epsilon\circ \widehat{P}^{i}_{-(x-T_{\bl,\bss,k,t}-1)}\right)\left(F_i^{\zeta_{\bl,\bss}^{(1-a_{ij}-k+d)}-1}\left(\overrightarrow{\prod_{r=2-a_{ij}-k+d}^{-a_{ij}}}\mathcal{V}^{i}_{\bl,\bss,r}\right)\right) = \prod_{r = 2-a_{ij}-k+d}^{-a_{ij}}\left(\alpha_{\zeta_{\bl,\bss}^{(r-1)}-1}\right)^{(1-\ell_r)(1-s_{r-\vert\bl\vert_{1;r}})}.
	\end{align*}
\end{proof}

With this result, we now have all necessary tools in hand to write down the polynomial \(C_{ij}(\bc)\) for Case 2.

\begin{theorem}[Case 2]
	\label{theorem F_ij(B_i,B_j) Case 2}
	For any \(i\in I\setminus X\) such that \(\tau(i) = i\) and any \(j\in X\), one has
	\begin{align}
	\label{F_ij(B_i,B_j) Case 2 complete}
	\begin{split}
	&F_{ij}(B_i,B_j) = C_{ij}(\bc)\\ = &\sum_{m=0}^{-1-a_{ij}}\sum_{m' = 0}^{-1-a_{ij}-m}\sum_{t = 0}^{\frac{1-a_{ij}-m-m'}{2}}\rho_{m,m',t}^{(i,j,a_{ij})}\mathcal{Z}_i^tB_i^mB_jB_i^{m'}\mathcal{Z}_i^{\frac{1-a_{ij}-m-m'}{2}-t}\\ &+ \sum_{m = 0}^{-1-a_{ij}}\sum_{t = 0}^{\frac{-1-a_{ij}-m}{2}}\sigma_{m,t}^{(i,j,a_{ij})}\mathcal{Z}_i^t\mathcal{W}_{ij}K_j\mathcal{Z}_i^{\frac{-1-a_{ij}-m}{2}-t}B_i^{m},
	\end{split}
	\end{align}
	with \(\rho_{m,m',t}^{(i,j,a_{ij})}\) as obtained in Corollary \ref{cor F_ij(B_i,B_j) Case 2 semi-final} and
	\begin{align*}
	\sigma_{m,t}^{(i,j,a_{ij})} =\ & (a_{ij}+m)_p\,c_i^{\frac{1-a_{ij}-m}{2}}\sum_{k = 1}^{1-a_{ij}}\sum_{d = 0}^{k-1}\sum_{m' = 0}^{m}\sum_{\bl\in\mathcal{L}_{m,m',k,d}'}\sum_{\bss\in\mathscr{S}_{m,m',k,t,d}'}(-1)^{k+1}\begin{bmatrix}
	1-a_{ij}\\k
	\end{bmatrix}_{q_i}\\&\frac{q_i^{\kappa_{\bl,\bss,k,t,d,m'}}}{(q_i-q_i^{-1})^{\frac{1-a_{ij}-m}{2}}(q_j-q_j^{-1})}\left(\prod_{r\in\mathcal{R}_{k,d}}\left(\alpha_{\zeta_{\bl,\bss}^{(r-1)}-\nu_{r,k}} \right)^{(1-\ell_r)(1-s_{r-\vert\bl\vert_{1;r}})}\right)\\&
	\left[
	\sum_{u = \max(0,\xi_0)}^{T_{\bl,\bss,k,t}-1}q_i^{a_{ij}u}\omega_{N^{(0)},u}+\sum_{u = \max(0,\xi_1)}^{T_{\bl,\bss,k,t}}q_i^{a_{ij}u}\omega_{N^{(1)},u} \right]^{1-\delta_{T_{\bl,\bss,k,t},0}}\\&
	\left[ 
	\alpha_{\zeta_{\bl,\bss}^{(1-a_{ij}-k+d)}}-q_i^{a_{ij}}\left(q_i^{a_{ij}}\alpha_{N_0}+\gamma_{N_0,N_0+N_1}\right)
	\right]^{\delta_{T_{\bl,\bss,k,t},0}},
	\end{align*}
	where
	\begin{align*}
	\omega_{N^{(0)},u} =\ & \alpha_{\zeta_{\bl,\bss}^{(1-a_{ij}-k+d)}}c_{u,\bN^{(0)}}^{(T_{\bl,\bss,k,t}-1)}\left(q_i^{a_{ij}}\alpha_{N_0-u}+\gamma_{N_0-u,\vert\bN\vert_{0;T_{\bl,\bss,k,t}}-(T_{\bl,\bss,k,t}-1)}\right)\left(\prod_{r = 0}^{u-1}\alpha_{N_0-r}\right), \\
	\omega_{N^{(1)},u} = \ &
	-q_i^{a_{ij}}c_{u,\bN^{(1)}}^{(T_{\bl,\bss,k,t})}\left(q_i^{a_{ij}}\alpha_{N_0-u}+\gamma_{N_0-u,\vert\bN\vert_{0;T_{\bl,\bss,k,t}+1}-T_{\bl,\bss,k,t}}\right)\left(\prod_{r = 0}^{u-1}\alpha_{N_0-r}\right), \\
	\kappa_{\bl,\bss,k,t,d,m'} = \ & -2\sum_{r = 1}^{-a_{ij}}\zeta_{\bl,\bss}^{(r-1)}\left(\ell_r+(1-\ell_r)(1-s_{r-\vert\bl\vert_{1;r}}) \right)\\& - a_{ij}(N_0-\vert\bN\vert_{1;T_{\bl,\bss,k,t}+1}-m'+d)+2(k+t-d-1).
	\end{align*}
	Here we have used the notations (\ref{L' and S' sets Case 2}), (\ref{zeta_l,s,r def}), (\ref{alpha_N def}), (\ref{gamma_M,N def}), (\ref{c_n,m,N def}), (\ref{T_k,m',d def}),  (\ref{N_0 def}), (\ref{N_1;p def}), (\ref{N_0,T+1 def}) and (\ref{R_k,d and nu_r,k def}).
\end{theorem}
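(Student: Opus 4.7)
The plan is to assemble the formula by stringing together the three preparatory results of this subsection with the general binary-expansion result for Case 2. Concretely, the starting point is Corollary~\ref{cor F_ij(B_i,B_j) Case 2 part 2 with epsilon and P}, which already produces the two-line shape of the right-hand side of (\ref{F_ij(B_i,B_j) Case 2 complete}). The coefficients $\rho_{m,m',t}^{(i,j,a_{ij})}$ of the first line are treated by Corollary~\ref{cor F_ij(B_i,B_j) Case 2 semi-final}, so the only remaining work is to evaluate $(\epsilon\circ P_{-\lambda_{ij}})(\mathfrak{r}_{\bl,\bss,k,d}^{(i,j,a_{ij})})$ inside the formula (\ref{sigma_m,t def}) for $\sigma_{m,t}^{(i,j,a_{ij})}$.

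First I would insert Proposition~\ref{prop epsilon and other P Case 2}, which trades $\rlskd$ for the normalised difference $Y_{\bl,\bss,k,d}^{(0)}-q_i^{a_{ij}}Y_{\bl,\bss,k,d}^{(1)}$ under $\epsilon\circ \widehat{P}^{i,j}_{-\frac{1-a_{ij}-m}{2},-1}$, with prefactor $q_i^{\eta_{\bl,\bss,k,d,t,m'}}/\bigl[(q_i-q_i^{-1})^{\frac{1-a_{ij}-m}{2}}(q_j-q_j^{-1})\bigr]$. Next I would apply Proposition~\ref{prop epsilon and P final Case 2} to each of $Y_{\bl,\bss,k,d}^{(0)}$ and $Y_{\bl,\bss,k,d}^{(1)}$ separately, which produces the two $u$-sums appearing inside $\omega_{N^{(0)},u}$ and $\omega_{N^{(1)},u}$. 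The factor $\alpha_{\zeta_{\bl,\bss}^{(1-a_{ij}-k+d)}}^{1-\lambda}$ in Proposition~\ref{prop epsilon and P final Case 2} is absorbed into $\omega_{N^{(0)},u}$ for $\lambda=0$ (and is absent for $\lambda=1$), while the overall sign $-q_i^{a_{ij}}$ between the two $Y$-terms is absorbed into $\omega_{N^{(1)},u}$. The product over $r\in\mathcal{R}_{k,d}$ of $\alpha_{\zeta_{\bl,\bss}^{(r-1)}-\nu_{r,k}}^{(1-\ell_r)(1-s_{r-\vert\bl\vert_{1;r}})}$ that appears uniformly in both $\lambda=0,1$ cases is pulled outside the $u$-sums.

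There are two pieces of bookkeeping which will require a little care. The first is the exponent of $q_i$: in addition to $\eta_{\bl,\bss,k,d,t,m'}$ from Proposition~\ref{prop epsilon and other P Case 2}, Proposition~\ref{prop epsilon and P final Case 2} contributes a factor $q_i^{-a_{ij}N_0}$ and an extra $q_i^{a_{ij}u}$ inside each $u$-sum (obtained by pulling the $q_i^{-a_{ij}(N_0-u)}$ out of the sum and keeping the $u$-dependent part in). Collecting these with the choice $N_0=\zeta_{\bl,\bss}^{(1-a_{ij}-k)}$ and using $\vert\bN\vert_{1;T_{\bl,\bss,k,t}+1}=T_{\bl,\bss,k,t}+(r_{T_{\bl,\bss,k,t}+1}-\cdots)$ as in (\ref{N_1;p def}), together with the evaluations of $\zeta_{\bl,\bss}^{(1-a_{ij}-k+d)}$ and $\vert\bss\vert_{1;1-a_{ij}-k-\vert\bl\vert_{1;1-a_{ij}-k}}$ in terms of $m'$, $t$, $k$ and $d$ dictated by the restrictions on $\bl\in\mathcal{L}_{m,m',k,d}'$ and $\bss\in\mathscr{S}_{m,m',k,t,d}'$, should give exactly the exponent $\kappa_{\bl,\bss,k,t,d,m'}$ displayed in the statement. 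This algebraic identification is the main obstacle: it is a routine but bulky verification, and the indexing conventions (\ref{T_k,m',d def})--(\ref{N_0,T+1 def}) must be matched precisely with the running indices $r_b$ from Proposition~\ref{prop epsilon and P final Case 2}.

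The second piece of bookkeeping is the boundary case $T_{\bl,\bss,k,t}=0$. Proposition~\ref{prop epsilon and P final Case 2} forces the entire $u$-sum for $\lambda=0$ to be replaced by the prefactor $\alpha_{\zeta_{\bl,\bss}^{(1-a_{ij}-k+d)}}$, while the $\lambda=1$ sum reduces to its single term $u=0$ with $c_{0,\bN^{(1)}}^{(0)}=1$ and evaluates to $q_i^{a_{ij}}\alpha_{N_0}+\gamma_{N_0,N_0+N_1}$. Taking the difference $Y^{(0)}-q_i^{a_{ij}}Y^{(1)}$ produces precisely $\alpha_{\zeta_{\bl,\bss}^{(1-a_{ij}-k+d)}}-q_i^{a_{ij}}(q_i^{a_{ij}}\alpha_{N_0}+\gamma_{N_0,N_0+N_1})$, which is the factor raised to $\delta_{T_{\bl,\bss,k,t},0}$ in the statement. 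Multiplying through by the outside prefactors from (\ref{sigma_m,t def}) and the Proposition~\ref{prop epsilon and other P Case 2} normalisation, and recognising the power $c_i^{\frac{1-a_{ij}-m}{2}}$, finishes the identification of $\sigma_{m,t}^{(i,j,a_{ij})}$ with the expression claimed in the theorem.
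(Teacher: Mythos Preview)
Your proposal is correct and follows essentially the same route as the paper: combine Corollary~\ref{cor F_ij(B_i,B_j) Case 2 part 2 with epsilon and P} with Propositions~\ref{prop epsilon and other P Case 2} and~\ref{prop epsilon and P final Case 2}, then collect the $q_i$-exponents to obtain $\kappa_{\bl,\bss,k,t,d,m'}$. The one concrete identity the paper singles out for the exponent bookkeeping is $\vert\bss\vert_{2-a_{ij}-k-\vert\bl\vert_{1;2-a_{ij}-k};\,1-a_{ij}-k-m'+d} = \vert\bN\vert_{1;T_{\bl,\bss,k,t}+1}$, which is precisely the matching you allude to when invoking the restrictions from $\mathcal{L}_{m,m',k,d}'$ and $\mathscr{S}_{m,m',k,t,d}'$; your treatment of the $T_{\bl,\bss,k,t}=0$ boundary case is also handled correctly.
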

\begin{proof}
	This follows upon combining Corollary \ref{cor F_ij(B_i,B_j) Case 2 part 2 with epsilon and P} and Propositions \ref{prop epsilon and other P Case 2} and \ref{prop epsilon and P final Case 2}, after expanding \(\eta_{\bl,\bs,k,d,t,m'}\) using (\ref{L' and S' sets Case 2}) and observing that
	\[
	\vert\bss\vert_{2-a_{ij}-k-\vert\bl\vert_{1;2-a_{ij}-k};1-a_{ij}-k-m'+d} = \vert\bN\vert_{1;T_{\bl,\bss,k,t}+1}.
	\]
\end{proof}

These expressions for the structure constants \(\sigma_{m,t}^{(i,j,a_{ij})}\) comply with the values computed in \cite{Kolb-2014}, as displayed in Table \ref{Table of sigma_m,t}. Moreover, Theorems \ref{theorem F_ij(B_i,B_j) for Case 1} and \ref{theorem F_ij(B_i,B_j) Case 2} and Corollary \ref{cor F_ij(B_i,B_j) Case 2 semi-final} make it possible to compute the structure constants for higher values of \(\vert a_{ij}\vert\). For example, it follows from Theorem \ref{theorem F_ij(B_i,B_j) for Case 1} that for \(a_{ij} = -4\) one has
\begin{align*}
F_{ij}(B_i,B_j) =\ & \rho_{0,1}^{(i,j,-4)}\mathcal{Z}_i^2B_jB_i + \rho_{1,0}^{(i,j,-4)}\mathcal{Z}_i^2B_iB_j + \rho_{0,3}^{(i,j,-4)}\mathcal{Z}_iB_jB_i^3 + \rho_{3,0}^{(i,j,-4)}\mathcal{Z}_iB_i^3B_j\\&+\rho_{1,2}^{(i,j,-4)}\mathcal{Z}_iB_iB_jB_i^2+\rho_{2,1}^{(i,j,-4)}\mathcal{Z}_iB_i^2B_jB_i,
\end{align*}
if \(i,j\in I\setminus X\) are distinct such that \(\tau(i) = i\), where the structure constants \(\rho_{m,m'}^{(i,j,-4)}\) are given in Table \ref{Table of rho_m,m' for a_ij = -4}. Similarly, for \(a_{ij} = -3\), \(i\in I\setminus X\) with \(\tau(i) = i\) and \(j\in X\) one has
\begin{align*}
F_{ij}(B_i,B_j) =\ & \rho_{0,0,0}^{(i,j,-3)}B_j\mathcal{Z}_i^2 + \rho_{0,0,1}^{(i,j,-3)}\mathcal{Z}_iB_j\mathcal{Z}_i + \rho_{0,0,2}^{(i,j,-3)}\mathcal{Z}_i^2B_j + \rho_{0,2,0}^{(i,j,-3)}B_jB_i^2\mathcal{Z}_i + \rho_{0,2,1}^{(i,j,-3)}\mathcal{Z}_iB_jB_i^2\\&+\rho_{1,1,0}^{(i,j,-3)}B_iB_jB_i\mathcal{Z}_i + \rho_{1,1,1}^{(i,j,-3)}\mathcal{Z}_iB_iB_jB_i + \rho_{2,0,0}^{(i,j,-3)}B_i^2B_j\mathcal{Z}_i + \rho_{2,0,1}^{(i,j,-3)}\mathcal{Z}_iB_i^2B_j \\ & + \sigma_{0,0}^{(i,j,-3)}\mathcal{W}_{ij}K_j\mathcal{Z}_i + \sigma_{0,1}^{(i,j,-3)}\mathcal{Z}_i\mathcal{W}_{ij}K_j + \sigma_{2,0}^{(i,j,-3)}\mathcal{W}_{ij}K_jB_i^2,
\end{align*}
with \(\rho_{m,m',t}^{(i,j,-3)}\) and \(\sigma_{m,t}^{(i,j,-3)}\) as in Tables \ref{Table of rho_m,m',t for a_ij = -3} and \ref{Table of sigma_m,t for a_ij = -3}.

\renewcommand\figurename{Table}
\begin{figure}[h] 
	\renewcommand\figurename{Table}
	\centering
	\begin{tikzpicture}
	\draw[line width = 1pt] (0,0) -- (13.6,0);
	\draw[line width = 1pt] (1,0.75) -- (1,-2.4);
	\draw (0,0.75) -- (1,0);
	\node at (0.25,0.3){\(m\)};
	\node at (0.75,0.5){\(m'\)};
	\node at (0.5,-0.3){\(0\)};
	\node at (0.5,-0.9){\(1\)};
	\node at (0.5,-1.5){\(2\)};
	\node at (0.5,-2.1){\(3\)};
	\node at (2.25,0.35){\(0\)};
	\node at (5.5,0.35){\(1\)};
	\node at (9,0.35){\(2\)};
	\node at (12,0.35){\(3\)};
	\node at (2.25,-0.3) {\(0\)};
	\node at (2.25,-0.9) {\(-\rho_{0,1}^{(i,j,-4)}\)};
	\node at (2.25,-1.5) {\(0\)};
	\node at (2.25,-2.1) {\(-\rho_{0,3}^{(i,j,-4)}\)};
	\node at (5.5,-0.3) {\(c_i^2q_i^2[2]_{q_i}^2[4]_{q_i^2}^2\)};
	\node at (5.5,-0.9) {\(0\)};
	\node at (5.5,-1.5) {\(-\rho_{1,2}^{(i,j,-4)}\)};
	\node at (9,-0.3) {\(0\)};
	\node at (9,-0.9) {\(c_iq_i[2]_{q_i}^2[3]_{q_i}[5]_{q_i}\)};
	\node at (12,-0.3) {\(c_iq_i([2]_{q_i}^2+[4]_{q_i}^2)\)};
	\draw (0,0.75) rectangle (3.5,-2.4);
	\draw (0,0.75) rectangle (7.6,-1.8);
	\draw (0,0.75) rectangle (10.4,-1.2);
	\draw (0,0.75) rectangle (13.6,-0.6);
	\end{tikzpicture}
	\caption{Structure constants \({\rho}_{m,m'}^{(i,j,a_{ij})}\) for \(a_{ij} = -4\)}
	\label{Table of rho_m,m' for a_ij = -4}
\end{figure}

\renewcommand\figurename{Table}
\begin{figure}[h] 
	\renewcommand\figurename{Table}
	\centering
	\begin{tikzpicture}
	\draw[line width = 1pt] (-0.9,0) -- (10.5,0);
	\draw[line width = 1pt] (1,0.75) -- (1,-4.4);
	\draw (0,0.75) -- (1,0);
	\node at (-0.25,0.3){\((m,m')\)};
	\node at (0.75,0.5){\(t\)};
	\node at (0,-0.6){\((0,0)\)};
	\node at (0,-1.7){\((0,2)\)};
	\node at (0,-2.8){\((1,1)\)};
	\node at (0,-3.9){\((2,0)\)};
	\node at (2.6,0.35){\(0\)};
	\node at (5.8,0.35){\(1\)};
	\node at (9,0.35){\(2\)};
	\node at (2.6,-0.6) {\(-c_i^2q_i^6\dfrac{[3]_{q_i}}{(q_i-q_i^{-1})^2}\)};
	\node at (5.8,-0.6) {\(c_i^2q_i^2\dfrac{[3]_{q_i}(q_i^2+q_i^{-2})}{(q_i-q_i^{-1})^2}\)};
	\node at (9,-0.6) {\(-c_i^2q_i^{-2}\dfrac{[3]_{q_i}}{(q_i-q_i^{-1})^2}\)};
	\node at (2.6,-1.7) {\(c_iq_i^2\dfrac{2+q_i^2[2]_{q_i}^2}{q_i-q_i^{-1}} \)};
	\node at (5.8,-1.7) {\(-c_i\dfrac{[3]_{q_i}(q_i^2+q_i^{-2})}{q_i-q_i^{-1}} \)};
	\node at (2.6,-2.8) {\(-c_iq_i^2\dfrac{[4]_{q_i}(q_i^2+2)}{q_i-q_i^{-1}} \)};
	\node at (5.8,-2.8) {\(c_i\dfrac{[4]_{q_i}(q_i^{-2}+2)}{q_i-q_i^{-1}} \)};
	\node at (2.6,-3.9) {\(c_iq_i^2\dfrac{[3]_{q_i}(q_i^2+q_i^{-2})}{q_i-q_i^{-1}} \)};
	\node at (5.8,-3.9) {\(-c_i\dfrac{2+q_i^{-2}[2]_{q_i}^2}{q_i-q_i^{-1}} \)};
	\draw (4.1,0.75) -- (4.1,-4.4);
	\draw (7.5,0.75) -- (7.5,-4.4);
	\draw (-0.9,-2.2) -- (7.5,-2.2);
	\draw (-0.9,-1.2) -- (7.5,-1.2);
	\draw (-0.9,-3.3) -- (7.5,-3.3);
	\draw (-0.9,0.75) rectangle (7.5,-4.4);
	\draw (-0.9,0.75) rectangle (10.5,-1.2);
	\end{tikzpicture}
	\caption{Structure constants \({\rho}_{m,m',t}^{(i,j,a_{ij})}\) for \(a_{ij} = -3\)}
	\label{Table of rho_m,m',t for a_ij = -3}
\end{figure}

\begin{figure}[h] 
	\renewcommand\figurename{Table}
	\centering
		\begin{tikzpicture}
		\draw[line width = 1pt] (0,0) -- (11.2,0);
		\draw[line width = 1pt] (1,0.75) -- (1,-2.4);
		\draw (0,0.75) -- (1,0);
		\node at (0.25,0.3){\(m\)};
		\node at (0.75,0.5){\(t\)};
		\node at (0.5,-0.6){\(0\)};
		\node at (3.7,0.35){\(0\)};
		\node at (8.8,0.35){\(1\)};
		\node at (0.5,-1.8){\(2\)};
		\node at (3.7,-0.6) {\(-c_i^2q_i^{2}\dfrac{[3]_{q_i}[4]_{q_i}}{(q_i-q_i^{-1})(q_j-q_j^{-1})}\)};
		\node at (3.7,-1.8) {\(c_iq_i^{-5}[2]_{q_i}[3]_{q_i}^2[4]_{q_i}\dfrac{(q_i-q_i^{-1})^2}{q_j-q_j^{-1}}\)};
		\node at (8.8,-0.6) {\(c_i^2q_i^{-4}\dfrac{[3]_{q_i}[4]_{q_i}}{(q_i-q_i^{-1})(q_j-q_j^{-1})}\)};
		\draw (0,0.75) rectangle (6.4,-2.4);
		\draw (0,0.75) rectangle (11.2,-1.2);
		\end{tikzpicture}
	\caption{Structure constants \({\sigma}_{m,t}^{(i,j,a_{ij})}\) for \(a_{ij} = -3\)}
	\label{Table of sigma_m,t for a_ij = -3}
\end{figure}

Theorems \ref{theorem F_ij(B_i,B_j) for Case 1} and \ref{theorem F_ij(B_i,B_j) Case 2}, together with the previously obtained Theorems \ref{theorem these are defining relations}, \ref{theorem F_ij(B_i,B_j) = 0} and \ref{theorem Balagovic} now yield a complete set of defining relations for the quantum symmetric pair coideal subalgebras \(\Bcs\).

\section{Alternative expressions for Case 1}
\label{Section Alternative expressions for Case 1}

In this section, we will derive alternative expressions for the polynomial \(C_{ij}(\bc)\) in Case 1. We will start from a result by Chen, Lu and Wang. In \cite{Chen&Lu&Wang-2019} these authors provide defining relations of \(q\)-Serre type for what they call \(\iota\)-quantum groups, which are in fact quasi-split quantum symmetric pair coideal subalgebras, coinciding with the algebras \(B_{\bc,\bs}\) in the special case \(X = \emptyset\). These correspond to Satake diagrams without black nodes. Since \(\mathcal{Z}_i = -1\) in this situation, the polynomial \(C_{ij}(\bc)\) will be given by
\begin{equation}
\label{F_ij(B_i,B_j) for quasi-split case}
F_{ij}(B_i,B_j) = C_{ij}(\bc) = \sum_{m=0}^{-1-a_{ij}}\sum_{m' = 0}^{-1-a_{ij}-m}(-1)^{\frac{1-a_{ij}-m-m'}{2}}\rho_{m,m'}^{(i,j,a_{ij})}B_i^mB_jB_i^{m'},
\end{equation}
if \(\tau(i) = i\), as follows from Corollary \ref{cor F_ij(B_i,B_j) Case 1 with epsilon and P}, and where the structure constants \(\rho_{m,m'}^{(i,j,a_{ij})}\) were obtained in Theorem \ref{theorem F_ij(B_i,B_j) for Case 1}. In this section we will derive equivalent expressions for \(\rho_{m,m'}^{(i,j,a_{ij})}\), based on the results of \cite{Chen&Lu&Wang-2019}. These expressions will also be valid beyond the quasi-split case. Indeed, our result (\ref{rho_m,m' def}) shows that \(\rho_{m,m'}^{(i,j,a_{ij})}\) is independent of \(X\) and can be obtained solely from the \(\Uqgp\)-relations (\ref{U_q(g) relations})--(\ref{U_q(g) relations 2})--(\ref{U_q(g) relations 3}). Hence the expressions for \(\rho_{m,m'}^{(i,j,a_{ij})}\) we will derive in this section will be valid not only for \(X = \emptyset\), but for any admissible pair \((X,\tau)\) provided we restrict to Case 1, i.e.\ \(\tau(i) = i\in I\setminus X\) and \(j\in I\setminus X\) distinct from \(i\). 

Before we can state the result from \cite{Chen&Lu&Wang-2019}, we need to introduce the following notation. 

\begin{definition}[{\cite[formulae (3.2)--(3.3)]{Chen&Lu&Wang-2019}}]
	\label{def iota divided powers}
	For any \(i\in I\) and \(m\in\N\) one defines the \(\iota\)-divided powers of \(B_i\) as the elements
	\begin{align}
	\label{def B_i,0 divided power}
	B_{i,0}^{(m)} &= 
	\dfrac{B_i^{m_p}}{[m]_{q_i}!}\prod\limits_{k = 1}^{m_e}\left(B_i^2+q_ic_i[2(k-1+m_p)]_{q_i}^2 \right), \\
	\label{def B_i,1 divided power}
	B_{i,1}^{(m)} &= 
	\dfrac{B_i^{m_p}}{[m]_{q_i}!}\prod\limits_{k = 1}^{m_e}\left(B_i^2+q_ic_i[2k-1]_{q_i}^2 \right),
	\end{align}
	where we have again used the notation (\ref{even and odd part}).
\end{definition}

Using Lusztig's theory of modified quantum groups \cite[Section 23.1]{Lusztig-1994} and a class of intricate \(q\)-binomial identities, Chen, Lu and Wang were able to prove a result, which, translated to our notations, can be formulated as follows.

\begin{theoremKolb}[{\cite[Theorem 3.1]{Chen&Lu&Wang-2019}}]
	\label{theorem CLW}
	Consider the quantum symmetric pair coideal algebra \(B_{\bc,\bs}\) corresponding to an admissible pair \((X = \emptyset,\tau)\). For any \(i\in I\) satisfying \(\tau(i) = i\) and any \(j\in I\) distinct from \(i\), the \(\iota\)-divided powers satisfy the relations
	\begin{equation}
	\label{divided powers relation with 0}
	\sum_{m = 0}^{1-a_{ij}}(-1)^m B_{i,(a_{ij})_p}^{(m)}B_jB_{i,0}^{(1-a_{ij}-m)} = 0,
	\end{equation}
	and
	\begin{equation}
	\label{divided powers relation with 1}
	\sum_{m = 0}^{1-a_{ij}}(-1)^m B_{i,1-(a_{ij})_p}^{(m)}B_jB_{i,1}^{(1-a_{ij}-m)} = 0.
	\end{equation}
\end{theoremKolb}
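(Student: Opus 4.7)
The plan is to reduce the divided-power Serre-like relations to an equivalent polynomial identity in $B_i$ and $B_j$, and to verify this identity against the inhomogeneous quantum Serre relation $F_{ij}(B_i,B_j) = C_{ij}(\bc)$ already established in Theorem \ref{theorem F_ij(B_i,B_j) for Case 1}. Note that in the quasi-split setting $X = \emptyset$ one has $\mathcal{Z}_i = -1$, so that the right-hand side $C_{ij}(\bc)$ specialises to the expression (\ref{F_ij(B_i,B_j) for quasi-split case}), and the question is purely one of matching scalar coefficients in the polynomial ring $\K(q)[c_i]\langle B_i,B_j\rangle$.

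First, I would distribute the products in (\ref{def B_i,0 divided power})--(\ref{def B_i,1 divided power}) to rewrite each $\iota$-divided power as
\[
B_{i,\epsilon}^{(m)} = \frac{1}{[m]_{q_i}!}\sum_{p = 0}^{m_e} c_i^{p}\, \alpha_{m,p}^{(\epsilon)} B_i^{m - 2p},
\]
where the coefficients $\alpha_{m,p}^{(\epsilon)}\in\K(q)$ are the elementary symmetric polynomials of degree $p$ in the scalars $q_i[2(k-1+m_p)]_{q_i}^2$ for $\epsilon = 0$, and in $q_i[2k-1]_{q_i}^2$ for $\epsilon = 1$. Substituting these expansions into the left-hand sides of (\ref{divided powers relation with 0})--(\ref{divided powers relation with 1}) produces a double sum of monomials $c_i^{p+q} B_i^{a} B_j B_i^{b}$ which can be regrouped by total degree $a+b$ and by the power of $c_i$.

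Separating the top-degree piece ($p=q=0$) recovers, up to an overall factor of $[1-a_{ij}]_{q_i}!$, precisely the quantum Serre polynomial $F_{ij}(B_i,B_j)$. The remaining monomials, which carry strictly positive powers of $c_i$, should cancel against the expansion of $C_{ij}(\bc)$ in Case 1. Matching coefficients reduces the problem to showing that, for each $m,m'\geq 0$ with $m+m'\leq-1-a_{ij}$ and $a_{ij}+m+m'$ odd, the coefficient of $c_i^{(1-a_{ij}-m-m')/2}\, B_i^{m}B_jB_i^{m'}$ produced by the double expansion of the divided-power sum equals $(-1)^{(1-a_{ij}-m-m')/2+1}\rho_{m,m'}^{(i,j,a_{ij})}/[1-a_{ij}]_{q_i}!$, with $\rho_{m,m'}^{(i,j,a_{ij})}$ as given by (\ref{rho_m,m' final}).

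The principal obstacle lies in this combinatorial identity: the coefficient arising from the elementary-symmetric expansion of the $\iota$-divided powers has a very different shape from the intricate binary-tuple sum over $\mathcal{L}_{m,m',k}\times\mathscr{S}_{m,m'}$ which defines $\rho_{m,m'}^{(i,j,a_{ij})}$, and a brute-force termwise comparison seems infeasible. Two more conceptual routes appear promising. The first is a generating-function approach: encode the divided powers via formal series $\sum_m B_{i,\epsilon}^{(m)} z^m$, express their defining products through $q_i$-shifted factorials, and reinterpret (\ref{divided powers relation with 0})--(\ref{divided powers relation with 1}) as an identity of rational functions in $z$ amenable to residue or coefficient extraction. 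The second is induction on $1-a_{ij}$: derive from (\ref{def B_i,0 divided power})--(\ref{def B_i,1 divided power}) a three-term recursion expressing $B_i\cdot B_{i,\epsilon}^{(m)}$ in terms of $B_{i,\epsilon}^{(m+1)}$ and $B_{i,\epsilon}^{(m-1)}$, check the base cases $-a_{ij}\in\{0,1\}$ by hand, and propagate the relation along this recursion. Either route ultimately hinges on a family of $q_i$-binomial identities, but both avoid a direct confrontation with (\ref{rho_m,m' final}).
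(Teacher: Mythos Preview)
This statement is not proved in the paper at all; it is quoted from \cite{Chen&Lu&Wang-2019} as an external input, and the paper notes only that the original argument there relies on Lusztig's theory of modified quantum groups together with a family of $q$-binomial identities. So there is no proof in the paper to compare against.

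Your strategy --- expand the $\iota$-divided powers as polynomials in $B_i$, isolate the top-degree piece as $F_{ij}(B_i,B_j)$, and match the lower-degree remainder against $C_{ij}(\bc)$ from Theorem~\ref{theorem F_ij(B_i,B_j) for Case 1} --- is precisely the manipulation the paper carries out in Section~\ref{Subsection iota-divided powers}, but run in the opposite logical direction: there Theorem~\ref{theorem CLW} is taken as given and combined with Theorem~\ref{theorem F_ij(B_i,B_j) for Case 1} to extract the closed form (\ref{structure constants with 0}) for $\rho_{m,m'}^{(i,j,a_{ij})}$. Reversing that logic, as you propose, is sound in principle, and you correctly identify that everything reduces to a scalar identity between the elementary-symmetric coefficients coming from the divided powers (the paper's $\Theta_{m,m'}^{(s,i,a_{ij})}$ in (\ref{Theta 0})--(\ref{Theta 1})) and the binary-tuple expression (\ref{rho_m,m' final}).

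Two remarks. First, your phrase ``purely one of matching scalar coefficients'' glosses over a genuine step: the monomials $B_i^m B_j B_i^{m'}$ are specific elements of $U_q(\g')$, not free variables, so coefficient comparison requires the linear independence of those with $m+m' \leq -1-a_{ij}$. The paper handles this (in the proof of Theorem~\ref{theorem Case 1 with CLW}) by peeling off the $F_i$-part of each $B_i$ and invoking the minimality of the quantum Serre relation in $U^-$; you would need the same argument. Second, and more substantively, the combinatorial identity you isolate is the entire content of the theorem once Theorem~\ref{theorem F_ij(B_i,B_j) for Case 1} is in hand, and neither of your suggested routes (generating functions, three-term recursion in $1-a_{ij}$) is actually carried out. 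The proof in \cite{Chen&Lu&Wang-2019} does not pass through (\ref{rho_m,m' final}) at all, so establishing this identity directly would amount to an independent proof of their result --- a real undertaking, not a formality. As it stands, your proposal is a correct reduction with the essential step left open.
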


The relations (\ref{divided powers relation with 0}) and (\ref{divided powers relation with 1}) can be rewritten in the form \(F_{ij}(B_i,B_j) = C_{ij}(\bc)\), where \(C_{ij}(\bc)\) is an explicit polynomial in \(\sum_{J\in\mathcal{J}_{i,j}}\K(q)B_J\). The computation of this polynomial will be the subject of the following subsection.

\subsection{Quantum Serre relations from $\iota$-divided powers}
\label{Subsection iota-divided powers}

Let us now fix \(\tau\in\mathrm{Aut}(A,\emptyset)\) such that \((X=\emptyset,\tau)\) is an admissible pair, and fix also a corresponding quantum symmetric pair coideal subalgebra \(\Bcs\). In the present section, we will rewite the relations (\ref{divided powers relation with 0}) and (\ref{divided powers relation with 1}) as inhomogeneous quantum Serre relations, so as to derive two new expressions for the structure constants \(\rho_{m,m'}^{(i,j,a_{ij})}\) for Case 1. Let us start by introducing the following notation.

\begin{definition}
	\label{definition alpha_k,N}
	Let \(k\in\N\), \(N\in\N\cup\{-1\}\), \(s\in \{0,1\}\) and \(i\in I\). We will denote by \(\alpha^{(s,i)}_{k,N}\) the following elements of \(\K(q)\):
	\[
	\alpha_{k,N}^{(s,i)} = \left\{ \begin{array}{ll}\sum\limits_{\substack{\ell_1,\ell_2,\dots,\ell_k = 1-s \\ \ell_1 < \ell_2< \dots < \ell_k}}^{N}[2\ell_1+s]_{q_i}^2[2\ell_2+s]_{q_i}^2\dots[2\ell_k+s]_{q_i}^2,\quad & \mathrm{for}\ 1\leq k\leq N+s \\
	1 & \mathrm{for}\ k = 0, \\
	0 & \mathrm{else}.
	\end{array}\right.
	\]
\end{definition}

These \(\alpha_{k,N}^{(s,i)}\) arise as coefficients when expanding the \(\iota\)-divided powers from Definition \ref{def iota divided powers} as polynomials in \(B_i\).

\begin{lemma}
	\label{lemma divided powers reformulated}
	For any \(i\in I\), \(s\in\{0,1\}\) and \(r\in \N\), one can write
	\[
	B_{i,s}^{(r)} = \frac{1}{[r]_{q_i}!}\sum_{k = 0}^{r_e}(q_ic_i)^k\alpha_{k,r_e+r_p(1-s)-1}^{(s,i)}B_i^{r-2k}.
	\]
\end{lemma}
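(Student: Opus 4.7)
The plan is to prove the lemma by direct distributive expansion of the products defining $B_{i,0}^{(r)}$ and $B_{i,1}^{(r)}$. In both cases the product appearing in Definition~\ref{def iota divided powers} has exactly $r_e$ factors of the form $B_i^2+q_ic_i\beta_k^2$, where $\beta_k=[2(k-1+r_p)]_{q_i}$ when $s=0$ and $\beta_k=[2k-1]_{q_i}$ when $s=1$. Expanding distributively one gets
\[
\prod_{k=1}^{r_e}\bigl(B_i^2+q_ic_i\beta_k^2\bigr) = \sum_{k'=0}^{r_e}(q_ic_i)^{k'}B_i^{2(r_e-k')}\sum_{1\leq k_1<\dots<k_{k'}\leq r_e}\beta_{k_1}^2\cdots\beta_{k_{k'}}^2.
\]
Multiplying by $B_i^{r_p}$ and using the identity $r_p+2(r_e-k')=r-2k'$, the $B_i$-part of every term in the expansion comes out as $B_i^{r-2k'}$, which matches the right-hand side of the claim after dividing by $[r]_{q_i}!$. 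The remaining task is to identify the inner sum with $\alpha_{k',\,r_e+r_p(1-s)-1}^{(s,i)}$.

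For $s=1$, substituting $\ell_j=k_j-1$ converts the range $1\leq k_1<\dots<k_{k'}\leq r_e$ into $0\leq\ell_1<\dots<\ell_{k'}\leq r_e-1$, while $[2k_j-1]_{q_i}^2=[2\ell_j+1]_{q_i}^2$. Referring back to Definition~\ref{definition alpha_k,N}, this is precisely $\alpha_{k',r_e-1}^{(1,i)}$, which equals $\alpha_{k',\,r_e+r_p(1-s)-1}^{(s,i)}$ since $r_p(1-s)=0$ here. For $s=0$, the analogous substitution $\ell_j=k_j+r_p-1$ converts the index set to $r_p\leq\ell_1<\dots<\ell_{k'}\leq r_e+r_p-1$, with $\beta_{k_j}^2=[2\ell_j]_{q_i}^2$. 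When $r_p=1$, this matches $\alpha_{k',r_e}^{(0,i)}$ directly.

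The subtle point, and the only place where the routine bookkeeping requires a little care, is the case $s=0$, $r_p=0$. There the substitution delivers the range $0\leq\ell_1<\dots<\ell_{k'}\leq r_e-1$ rather than the range $\ell_1\geq 1$ demanded by Definition~\ref{definition alpha_k,N} for $s=0$. However, any summand with $\ell_1=0$ carries a factor $[0]_{q_i}^2=0$ and therefore vanishes, so the sum collapses to the same expression as a sum over $1\leq\ell_1<\dots<\ell_{k'}\leq r_e-1$, which is $\alpha_{k',r_e-1}^{(0,i)}=\alpha_{k',\,r_e+r_p(1-s)-1}^{(0,i)}$. Note furthermore that if the truncated range is too small to accommodate $k'$ strictly increasing indices then $\alpha_{k',N}^{(s,i)}$ is zero by the last clause in Definition~\ref{definition alpha_k,N}, consistently with the fact that the corresponding $k'$-th elementary symmetric function of the $\beta_k^2$'s vanishes in that regime. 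Combining the three cases one obtains the announced formula, and the proof is complete.
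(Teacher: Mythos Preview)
Your proof is correct and follows essentially the same approach as the paper: a direct distributive expansion of the product in Definition~\ref{def iota divided powers}, followed by the observation that for $s=0$ and $r$ even the extraneous terms with $\ell_1=0$ vanish because $[0]_{q_i}=0$. Your organization is slightly more compact, handling the even and odd cases uniformly via the substitution $\ell_j=k_j+r_p-1$ rather than splitting into separate parity cases as the paper does, but the content is the same.
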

\begin{proof}
	Expanding (\ref{def B_i,1 divided power}) distributively, it is clear that
	\[
	B_{i,1}^{(r)} = \begin{cases}
	\dfrac{1}{[r]_{q_i}!}\sum\limits_{k = 0}^{\frac{r}{2}}(q_ic_i)^k\alpha^{(1,i)}_{k,\frac{r}{2}-1}B_i^{r-2k} \quad & \mathrm{for}\ r\ \mathrm{even}, \\
	\dfrac{1}{[r]_{q_i}!}\sum\limits_{k = 0}^{\frac{r-1}{2}}(q_ic_i)^k\alpha^{(1,i)}_{k,\frac{r-1}{2}-1}B_i^{r-2k} \quad & \mathrm{for}\ r\ \mathrm{odd}, 
	\end{cases}
	\]
	in agreement with the proposed formula. Similarly, for \(r\) odd one has
	\[
	B_{i,0}^{(r)} = \frac{1}{[r]_{q_i}!}\sum_{k = 0}^{\frac{r-1}{2}}(q_ic_i)^k\alpha^{(0,i)}_{k,\frac{r-1}{2}}B_i^{r-2k},
	\]
	whereas for \(r\) even, we have
	\[
	B_{i,0}^{(r)} = \frac{1}{[r]_{q_i}!}\sum_{k = 0}^{\frac{r}{2}}(q_ic_i)^k\gamma_{k,r}^{(i)} B_i^{r-2k},
	\]
	where
	\[
	\gamma_{k,r}^{(i)} = \sum_{\substack{\ell_1,\ell_2,\dots,\ell_k = 0\\ \ell_1<\dots<\ell_k}}^{\frac{r}{2}-1}[2\ell_1]_{q_i}^2[2\ell_2]_{q_i}^2\dots[2\ell_k]_{q_i}^2.
	\]
	But of course, since \([0]_{q_i} = 0\), we have that
	\[
	\gamma_{k,r}^{(i)} = \sum_{\substack{\ell_1,\ell_2,\dots,\ell_k = 1\\\ell_1<\dots<\ell_k}}^{\frac{r}{2}-1}[2\ell_1]_{q_i}^2[2\ell_2]_{q_i}^2\dots[2\ell_k]_{q_i}^2 = \alpha_{k,\frac{r}{2}-1}^{(0,i)},
	\]
	which again agrees with the statement of the lemma.
\end{proof}

In the upcoming proofs, we will often be required to switch the order of summation in a particular kind of nested sums. Below, we propose a general strategy for this resummation.

\begin{lemma}
	\label{lemma switching order of summation}
	Let \(f\) be any function of three discrete variables \(k, \ell\) and \(m\), and let \(N\) be any natural number, then one has
	\[
	\sum_{m = 0}^N\sum_{k = 0}^m\sum_{\ell = 0}^{N-m}f(k,\ell,m) = \sum_{k = 0}^N\sum_{\ell = 0}^{N-k}\sum_{m = 0}^{\ell}f(m,\ell-m,m+k).
	\]
\end{lemma}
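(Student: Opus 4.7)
The plan is to recognize both sides of the identity as sums of $f$ over a common finite set of integer triples, parametrized in two different ways, and to exhibit an explicit bijection between the two parametrizations that carries one summand to the other.

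First I would describe both index sets explicitly. On the left, $(k,\ell,m)$ ranges over $L=\{(k,\ell,m)\in\N^3:k\leq m\text{ and }\ell+m\leq N\}$, since the outer constraint $m\leq N$ is automatic from $\ell+m\leq N$ together with $\ell\geq 0$. On the right, after renaming the summation variables to $(k',\ell',m')$ to avoid collision, the index set is $R=\{(k',\ell',m')\in\N^3:m'\leq\ell'\text{ and }\ell'+k'\leq N\}$. The key step is to introduce the map $\Phi\colon L\to R$ defined by $\Phi(k,\ell,m)=(m-k,\,\ell+k,\,k)$ and to check directly that it is a bijection with inverse $(k',\ell',m')\mapsto(m',\,\ell'-m',\,m'+k')$, by verifying that the defining inequalities of $L$ and $R$ translate into one another; all of the verifications reduce to one-line arithmetic such as $m-k\geq 0$, $\ell+k+(m-k)=\ell+m\leq N$, and their reverse counterparts.

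It then remains only to substitute the change of variables in the right-hand summand, which gives
\[
f(m',\ell'-m',m'+k')=f(k,\,(\ell+k)-k,\,k+(m-k))=f(k,\ell,m),
\]
reproducing the summand on the left. The argument involves no real obstacle beyond careful bookkeeping of which variable occupies which of the three slots of $f$, so I would present only the bijection $\Phi$, the matching of inequalities, and this one-line identification of the summands; the equality of the two iterated sums then follows by rewriting each as a sum over $L$ (or equivalently over $R$) of the same unordered collection of scalars.
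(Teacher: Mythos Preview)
Your argument is correct: the bijection $\Phi(k,\ell,m)=(m-k,\ell+k,k)$ between the index sets $L$ and $R$ is well-defined, has the stated inverse, and carries the right-hand summand $f(m',\ell'-m',m'+k')$ to $f(k,\ell,m)$, so the two iterated sums agree.

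The paper reaches the same conclusion by a different route: rather than writing down a single bijection, it performs a chain of five elementary steps, alternating substitutions of summation indices (such as $k\mapsto m-k$, $\ell\mapsto N-m-\ell$, $m\mapsto N-m$) with Fubini-type interchanges of the order of two adjacent sums, and tracks the bounds at each stage. Your approach is more direct and conceptually cleaner, since it identifies the underlying index set once and exhibits the reparametrization in one stroke; the paper's step-by-step derivation has the minor advantage of being mechanically verifiable line by line without needing to check a pair of maps are mutual inverses, but at the cost of more bookkeeping. Both are valid, and yours is arguably the more elegant presentation.
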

\begin{proof}
	We will derive this identity in several steps, which are explained below.
	\begin{align*}
	&\sum_{m = 0}^N\sum_{k = 0}^m\sum_{\ell = 0}^{N-m}f(k,\ell,m) & \overset{(1)}{=}& \sum_{m = 0}^N\sum_{k = 0}^m\sum_{\ell = 0}^{N-m}f(m-k,N-m-\ell,m)\\
	\overset{(2)}{=}& \sum_{k = 0}^N\sum_{m = k}^N\sum_{\ell = 0}^{N-m}f(m-k,N-m-\ell,m) &
	\overset{(3)}{=}&\sum_{k = 0}^N\sum_{m = 0}^{N-k}\sum_{\ell = 0}^{m}f(N-m-k,m-\ell,N-m) \\
	\overset{(4)}{=}&\sum_{k = 0}^N\sum_{\ell = 0}^{N-k}\sum_{m = \ell}^{N-k}f(N-m-k,m-\ell,N-m) &
	\overset{(5)}{=}&\sum_{k = 0}^N\sum_{\ell = 0}^{N-k}\sum_{m = 0}^{\ell}f(m,\ell-m,m+k).
	\end{align*}
	\begin{description}
		\item[\(\boldsymbol{(1)}\)] Replace \(k\) by the new summation index \(k' = m-k\), replace \(\ell\) by \(\ell' = N-m-\ell\) and rename \(k'\to k\), \(\ell'\to\ell\).
		\item[\(\boldsymbol{(2)}\)] Switch the summation order of the sums over \(m\) and \(k\).
		\item[\(\boldsymbol{(3)}\)] Replace \(m\) by the new summation index \(m' = N-m\) and then rename \(m' \to m\).
		\item[\(\boldsymbol{(4)}\)] Switch the summation order of the sums over \(m\) and \(\ell\).
		\item[\(\boldsymbol{(5)}\)] Replace \(\ell\) by the new summation index \(\ell' = N-k-\ell\), replace \(m\) by \(m' = N-m-k\) and rename \(\ell'\to\ell\), \(m'\to m\).
	\end{description}
\end{proof}

We will now rewrite the results of Theorem \ref{theorem CLW} using Lemma \ref{lemma divided powers reformulated}.

\begin{proposition}
	\label{prop inhomogeneous quantum Serre with 0}
	The relations (\ref{divided powers relation with 0})--(\ref{divided powers relation with 1}) can equivalently be expressed as
	\begin{equation}
	\label{inhomogeneous quantum Serre with 0}
	F_{ij}(B_i,B_j) = \sum_{m = 0}^{-1-a_{ij}}\sum_{m' = 0}^{-1-a_{ij}-m}(a_{ij}+m+m')_p(-1)^{a_{ij}+m}(q_ic_i)^{\frac{1-a_{ij}-m-m'}{2}}\Theta_{m,m'}^{(0,i,a_{ij})}B_i^mB_jB_i^{m'},
	\end{equation}
	where
	\begin{equation}
	\label{Theta 0}
	\Theta_{m,m'}^{(0,i,a_{ij})} = \sum_{r = 0}^{\frac{1-a_{ij}-m-m'}{2}}\begin{bmatrix}
	1-a_{ij}\\ m+2r
	\end{bmatrix}_{q_i}\alpha_{r,r+m_e+m_p-1}^{(0,i)}\alpha_{\frac{1-a_{ij}-m-m'}{2}-r,-(a_{ij})_e-(a_{ij})_p-r-m_e-m_p}^{((a_{ij})_p,i)},
	\end{equation}
	with \(\alpha_{k,N}^{(s,i)}\) as in Definition \ref{definition alpha_k,N}.
\end{proposition}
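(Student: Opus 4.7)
The plan is to derive (\ref{inhomogeneous quantum Serre with 0}) directly from the divided‐power relation (\ref{divided powers relation with 0}) by expanding the two \(\iota\)-divided powers as polynomials in \(B_i\) via Lemma~\ref{lemma divided powers reformulated}, isolating the contribution that reconstructs \(F_{ij}(B_i,B_j)\), and re‑parametrising the surviving lower‑degree part. Multiplying (\ref{divided powers relation with 0}) through by \([1-a_{ij}]_{q_i}!\) removes the factorial denominators and turns the common coefficient into \((-1)^m\binom{1-a_{ij}}{m}_{q_i}\). The expansion of Lemma~\ref{lemma divided powers reformulated} then rewrites the left‑hand side as a triple sum over the loop index \(m\) and the two internal indices \(k\) (arising from \(B_{i,(a_{ij})_p}^{(m)}\)) and \(\ell\) (from \(B_{i,0}^{(1-a_{ij}-m)}\)), whose general summand is a scalar multiple of \((q_ic_i)^{k+\ell}\,\alpha_{k,\cdot}^{((a_{ij})_p,i)}\alpha_{\ell,\cdot}^{(0,i)}\,B_i^{m-2k}B_jB_i^{1-a_{ij}-m-2\ell}\).

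Since \(\alpha_{0,N}^{(s,i)}=1\) by Definition~\ref{definition alpha_k,N}, the slice \((k,\ell)=(0,0)\) of that triple sum is exactly \(\sum_{m=0}^{1-a_{ij}}(-1)^m\binom{1-a_{ij}}{m}_{q_i}B_i^m B_j B_i^{1-a_{ij}-m}\), which after the substitution \(m\mapsto 1-a_{ij}-m\) and the binomial symmetry (\ref{prop q-binom symbol}) equals \((-1)^{a_{ij}+1}F_{ij}(B_i,B_j)\). Transposing this slice to the left of (\ref{divided powers relation with 0}) yields \(F_{ij}(B_i,B_j)=(-1)^{a_{ij}}\sum_{(k,\ell)\neq(0,0)}(\dots)B_i^{m-2k}B_jB_i^{1-a_{ij}-m-2\ell}\). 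I would then pass to the outer variables \(M=m-2k\), \(M'=1-a_{ij}-m-2\ell\) and keep one of \(k,\ell\) (call it \(r\)) as the inner summation variable; automatically \(k+\ell=(1-a_{ij}-M-M')/2\), so the exponent of \(q_ic_i\) becomes \((1-a_{ij}-M-M')/2\), and the requirement that this be a nonnegative integer is captured by the range \(M+M'\leq -1-a_{ij}\) together with the selector \((a_{ij}+M+M')_p\). A reordering of the triple sum as in Lemma~\ref{lemma switching order of summation} brings the expression into the double outer sum \(\sum_m\sum_{m'}\) required by (\ref{inhomogeneous quantum Serre with 0}).

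It then remains to check that the coefficient of \(B_i^m B_j B_i^{m'}\) equals \(\Theta_{m,m'}^{(0,i,a_{ij})}\) as defined in (\ref{Theta 0}). This amounts to substituting the even/odd decompositions \(m=2m_e+m_p\), \(m'=2m'_e+m'_p\) and the new values of \(k\) and \(\ell\) (in terms of \(r,m,m'\)) into the second subscripts of the two \(\alpha\)-factors coming from Lemma~\ref{lemma divided powers reformulated}, and to rewriting the original \(q_i\)-binomial coefficient in terms of \(r\) using (\ref{prop q-binom symbol}). The analogous derivation starting from (\ref{divided powers relation with 1}) yields an alternative representation of the same polynomial \(F_{ij}(B_i,B_j)\), so that together the two derivations cover the relations (\ref{divided powers relation with 0})--(\ref{divided powers relation with 1}). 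The principal technical obstacle will be the parity bookkeeping during the reindexation; in particular, one must verify that the constraints \(k\le m_e\) and \(\ell\le(1-a_{ij}-m)_e\) collapse cleanly, after passing to \((m,m',r)\), to the single range \(0\leq r\leq (1-a_{ij}-m-m')/2\) stated in (\ref{Theta 0}), and that the two second subscripts appearing in \(\Theta_{m,m'}^{(0,i,a_{ij})}\) arise correctly from the even/odd parts of \(n\) and \(1-a_{ij}-n\).
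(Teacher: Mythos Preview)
Your overall strategy---expand the \(\iota\)-divided powers via Lemma~\ref{lemma divided powers reformulated}, isolate the top-degree slice as \((-1)^{a_{ij}+1}F_{ij}(B_i,B_j)\), and reparametrise the remainder using the resummation Lemma~\ref{lemma switching order of summation}---is exactly what the paper does. However, your plan to obtain the specific coefficients \(\Theta_{m,m'}^{(0,i,a_{ij})}\) from relation~(\ref{divided powers relation with 0}) \emph{alone}, for all parities of \(a_{ij}\), does not work as stated.

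The issue is this: in~(\ref{divided powers relation with 0}) the left-hand factor is \(B_{i,(a_{ij})_p}^{(m)}\), which by Lemma~\ref{lemma divided powers reformulated} expands with coefficients \(\alpha_{k,\cdot}^{((a_{ij})_p,i)}\). After your substitution \(M=m-2k\) and \(r=k\), this \(\alpha\)-factor becomes the one attached to the left exponent \(M\) and to the binomial argument \(M+2r\). But in the target expression~(\ref{Theta 0}), that factor is \(\alpha_{r,r+m_e+m_p-1}^{(0,i)}\), with superscript \((0,i)\) independently of \(a_{ij}\). For \(a_{ij}\) odd you would therefore produce \(\alpha_r^{(1,i)}\) in that slot, which is the shape of \(\Theta_{m,m'}^{(1,i,a_{ij})}\) from Proposition~\ref{prop inhomogeneous quantum Serre with 1}, not of \(\Theta_{m,m'}^{(0,i,a_{ij})}\). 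The paper avoids this by a parity split on \(a_{ij}\): it derives~(\ref{inhomogeneous quantum Serre with 0}) from~(\ref{divided powers relation with 0}) when \(a_{ij}\) is even, and from~(\ref{divided powers relation with 1}) when \(a_{ij}\) is odd (since then the left factor in~(\ref{divided powers relation with 1}) is \(B_{i,1-(a_{ij})_p}^{(m)}=B_{i,0}^{(m)}\), yielding the required \(\alpha^{(0,i)}\)). Your closing remark that~(\ref{divided powers relation with 1}) merely furnishes an ``alternative representation'' has the roles reversed for odd \(a_{ij}\): there it is~(\ref{divided powers relation with 1}) that gives~(\ref{inhomogeneous quantum Serre with 0}), while~(\ref{divided powers relation with 0}) gives the \(\Theta^{(1,\cdot)}\)-version. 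The parity bookkeeping you flag as the main obstacle thus bites one level higher than anticipated---at the choice of which Chen--Lu--Wang relation to expand---rather than only inside the reindexation.
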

\begin{proof}
	Let us consider the case where \(a_{ij}\) is even.
	We will start by splitting the sum over \(m\) in (\ref{divided powers relation with 0}) into a sum over \(m\) even and one over \(m\) odd:
	\[
	\sum_{m = 0}^{-\frac{a_{ij}}{2}} B_{i,0}^{(2m)}B_jB_{i,0}^{(1-a_{ij}-2m)} - \sum_{m = 0}^{-\frac{a_{ij}}{2}}B_{i,0}^{(2m+1)}B_jB_{i,0}^{(-a_{ij}-2m)} = 0.
	\]
	Substituting the expressions for \(B_{i,0}^{(r)}\) obtained in Lemma \ref{lemma divided powers reformulated}, we find
	\begin{align*}
	& \sum_{m = 0}^{-\frac{a_{ij}}{2}}\sum_{k = 0}^m\sum_{\ell = 0}^{-\frac{a_{ij}}{2}-m}\frac{(q_ic_i)^{k+\ell}}{[2m]_{q_i}![1-a_{ij}-2m]_{q_i}!}\alpha_{k,m-1}^{(0,i)}\alpha_{\ell,-\frac{a_{ij}}{2}-m}^{(0,i)}B_i^{2m-2k}B_jB_i^{1-a_{ij}-2m-2\ell}\\
	-& \sum_{m = 0}^{-\frac{a_{ij}}{2}}\sum_{k = 0}^m\sum_{\ell = 0}^{-\frac{a_{ij}}{2}-m}\frac{(q_ic_i)^{k+\ell}}{[2m+1]_{q_i}![-a_{ij}-2m]_{q_i}!}\alpha_{k,m}^{(0,i)}\alpha_{\ell,-\frac{a_{ij}}{2}-m-1}^{(0,i)}B_i^{2m-2k+1}B_jB_i^{-a_{ij}-2m-2\ell} = 0.
	\end{align*}
	Multiplying both sides with \([1-a_{ij}]_{q_i}!\) and applying Lemma \ref{lemma switching order of summation}, this becomes
	\begin{align*}
	&\sum_{k = 0}^{-\frac{a_{ij}}{2}}\sum_{\ell = 0}^{-\frac{a_{ij}}{2}-k}(q_ic_i)^{\ell}\left(\sum_{m = 0}^{\ell}\begin{bmatrix}
	1-a_{ij}\\ 2m+2k
	\end{bmatrix}_{q_i}\alpha^{(0,i)}_{m,m+k-1}\alpha^{(0,i)}_{\ell-m,-\frac{a_{ij}}{2}-m-k}\right)B_i^{2k}B_jB_i^{1-a_{ij}-2k-2\ell}\\
	-& \sum_{k = 0}^{-\frac{a_{ij}}{2}}\sum_{\ell = 0}^{-\frac{a_{ij}}{2}-k}(q_ic_i)^{\ell}\left(\sum_{m = 0}^{\ell}\begin{bmatrix}
	1-a_{ij}\\ 2m+2k+1
	\end{bmatrix}_{q_i}\alpha^{(0,i)}_{m,m+k}\alpha^{(0,i)}_{\ell-m,-\frac{a_{ij}}{2}-m-k-1}\right)B_i^{2k+1}B_jB_i^{-a_{ij}-2k-2\ell} = 0.
	\end{align*}
	Referring to the notation (\ref{Theta 0}), we may write the terms between brackets above as \(\Theta_{2k,1-a_{ij}-2k-2\ell}^{(0,i,a_{ij})}\) and \(\Theta_{2k+1,-a_{ij}-2k-2\ell}^{(0,i,a_{ij})}\) respectively. Replacing then \(2k\) by \(m\) in the first sum and \(2k+1\) by \(m\) in the second, this becomes
	\begin{align}
	\label{intermediary sum 0}
	\begin{split}
	&\sum_{\substack{m = 0 \\ m\ \mathrm{even}}}^{1-a_{ij}}\sum_{\ell = 0}^{(1-a_{ij}-m)_e}(q_ic_i)^{\ell}\Theta_{m,1-a_{ij}-m-2\ell}^{(0,i,a_{ij})}B_i^{m}B_jB_i^{1-a_{ij}-m-2\ell}\\ - & \sum_{\substack{m = 0 \\ m\ \mathrm{odd}}}^{1-a_{ij}}\sum_{\ell = 0}^{(1-a_{ij}-m)_e}(q_ic_i)^{\ell}\Theta_{m,1-a_{ij}-m-2\ell}^{(0,i,a_{ij})}B_i^{m}B_jB_i^{1-a_{ij}-m-2\ell} \\
	=& \sum_{m = 0}^{1-a_{ij}}\sum_{\ell = 0}^{(1-a_{ij}-m)_e}(-1)^m(q_ic_i)^{\ell}\Theta_{m,1-a_{ij}-m-2\ell}^{(0,i,a_{ij})}B_i^{m}B_jB_i^{1-a_{ij}-m-2\ell} = 0.
	\end{split}
	\end{align}
	Now observe that the term corresponding to \(\ell = 0\) can be written as
	\begin{align*}
	& \sum_{m = 0}^{1-a_{ij}}(-1)^m\Theta_{m,1-a_{ij}-m}^{(0,i,a_{ij})}B_i^mB_jB_i^{1-a_{ij}-m}
	= (-1)^{1+a_{ij}}\sum_{m = 0}^{1-a_{ij}} (-1)^{m}\begin{bmatrix}
	1-a_{ij}\\ m
	\end{bmatrix}_{q_i}B_i^{1-a_{ij}-m}B_jB_i^{m}\\ = &\  (-1)^{1+a_{ij}}F_{ij}(B_i,B_j),
	\end{align*}
	where we have replaced \(m\) by the new summation index \(m' = 1-a_{ij}-m\) for the first equality, which we have thereafter renamed to \(m\) again, and where we have used the fact that 
	\(
	\Theta_{1-a_{ij}-m,m}^{(0,i,a_{ij})} = \begin{bmatrix}
	1-a_{ij}\\ m
	\end{bmatrix}_{q_i}
	\) by (\ref{prop q-binom symbol}).
	Consequently, when separating the term corresponding to \(\ell = 0\) in (\ref{intermediary sum 0}), we obtain
	\[
	F_{ij}(B_i,B_j) = (-1)^{a_{ij}}\sum_{m = 0}^{1-a_{ij}}\sum_{\ell = 1}^{(1-a_{ij}-m)_e}(-1)^m(q_ic_i)^{\ell}\Theta_{m,1-a_{ij}-m-2\ell}^{(0,i,a_{ij})}B_i^{m}B_jB_i^{1-a_{ij}-m-2\ell}
	\]
	Now observe that when \(m\) equals \(-a_{ij}\) or \(1-a_{ij}\), the range of the second summation index \(\ell\) is empty. Hence the sum over \(m\) runs in fact from 0 to \(-1-a_{ij}\). Moreover, we may replace \(\ell\) by the new summation index \(m' = 1-a_{ij}-m-2\ell\), which runs over \(\{0,2,\dots,-1-a_{ij}-m\}\) if \(1-a_{ij}-m\) is even and over \(\{1,3,\dots,-1-a_{ij}-m\} \) if \(1-a_{ij}-m\) is odd, hence over \(\{0,1,\dots,-1-a_{ij}-m\}\) after multiplying the summandum with \((a_{ij}+m+m')_p\). This leads us to
	\[
	F_{ij}(B_i,B_j)
	= \sum_{m = 0}^{-1-a_{ij}}\sum_{m' = 0}^{-1-a_{ij}-m}(a_{ij}+m+m')_p(-1)^{a_{ij}+m}(q_ic_i)^{\frac{1-a_{ij}-m-m'}{2}}\Theta_{m,m'}^{(0,i,a_{ij})}B_i^{m}B_jB_i^{m'},
	\]
	as was to be proven. The statement for \(a_{ij}\) odd follows analogously, starting from the relation (\ref{divided powers relation with 1}).
\end{proof}

In a similar fashion, one can combine the relation (\ref{divided powers relation with 0}) for \(a_{ij}\) odd with the relation (\ref{divided powers relation with 1}) for \(a_{ij}\) even. This gives rise to the following expressions. 

\begin{proposition}
	\label{prop inhomogeneous quantum Serre with 1}
	The relations (\ref{divided powers relation with 0})--(\ref{divided powers relation with 1}) can equivalently be expressed as
	\begin{equation}
	\label{inhomogeneous quantum Serre with 1}
	F_{ij}(B_i,B_j) = \sum_{m = 0}^{-1-a_{ij}}\sum_{m' = 0}^{-1-a_{ij}-m}(a_{ij}+m+m')_p(-1)^{a_{ij}+m}(q_ic_i)^{\frac{1-a_{ij}-m-m'}{2}}\Theta_{m,m'}^{(1,i,a_{ij})}B_i^mB_jB_i^{m'},
	\end{equation}
	where
	\begin{equation}
	\label{Theta 1}
	\Theta_{m,m'}^{(1,i,a_{ij})} =\sum_{r = 0}^{\frac{1-a_{ij}-m-m'}{2}}\begin{bmatrix}
	1-a_{ij}\\ m+2r
	\end{bmatrix}_{q_i}\alpha_{r,r+m_e-1}^{(1,i)}\alpha_{\frac{1-a_{ij}-m-m'}{2}-r,-(a_{ij})_e-r-m_e-1}^{(1-(a_{ij})_p,i)}.
	\end{equation}
\end{proposition}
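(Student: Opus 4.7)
The plan is to mirror the proof of Proposition \ref{prop inhomogeneous quantum Serre with 0}, but this time pairing the relations in Theorem \ref{theorem CLW} with the parity of $a_{ij}$ reversed. Concretely, when $a_{ij}$ is even we start from relation (\ref{divided powers relation with 1}), which reads
\[
\sum_{m=0}^{1-a_{ij}}(-1)^m B_{i,1}^{(m)}B_jB_{i,1}^{(1-a_{ij}-m)} = 0,
\]
since $(a_{ij})_p = 0$ and $1-(a_{ij})_p = 1$. When $a_{ij}$ is odd we instead start from (\ref{divided powers relation with 0}), in which the roles of the two $\iota$-divided powers are swapped accordingly. In either case, the shape of the argument is identical; let me describe it for $a_{ij}$ even.

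First I would split the outer sum by the parity of $m$, obtaining two sums over $B_{i,1}^{(2m)}B_jB_{i,1}^{(1-a_{ij}-2m)}$ and $B_{i,1}^{(2m+1)}B_jB_{i,1}^{(-a_{ij}-2m)}$ respectively. Substituting the expansions from Lemma \ref{lemma divided powers reformulated} yields a triple sum over $m, k, \ell$ of monomials $B_i^{2m-2k+\delta}B_jB_i^{1-a_{ij}-2m-2\ell-\delta}$ with $\delta\in\{0,1\}$, multiplied by $(q_ic_i)^{k+\ell}$ times a product of two $\alpha$-symbols whose second arguments differ from those in the $s=0$ case precisely by the shifts dictated by the definition of $\alpha_{k,N}^{(s,i)}$. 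After multiplying by $[1-a_{ij}]_{q_i}!$ and swapping the order of summation via Lemma \ref{lemma switching order of summation}, the inner sum over $m$ reassembles exactly into $\Theta_{m,m'}^{(1,i,a_{ij})}$ as defined in (\ref{Theta 1}); the bookkeeping of the shifted second arguments of the $\alpha$-symbols is the one spot where I expect most of the attention to go, but it is purely combinatorial.

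Next I would isolate the $\ell = 0$ contribution, which by (\ref{prop q-binom symbol}) and the identity $\Theta_{1-a_{ij}-m,m}^{(1,i,a_{ij})} = \begin{bmatrix}1-a_{ij}\\m\end{bmatrix}_{q_i}$ (holding because all $\alpha$-factors of positive weight disappear except for the trivial $\alpha_{0,\ast}^{(s,i)}=1$) collapses to $(-1)^{1+a_{ij}}F_{ij}(B_i,B_j)$, exactly as in the previous proposition. Isolating this term and relabelling $\ell$ by $m' = 1-a_{ij}-m-2\ell$ converts the remaining double sum into the form (\ref{inhomogeneous quantum Serre with 1}); the parity factor $(a_{ij}+m+m')_p$ appears because $m'$ only takes values of the appropriate parity, and the range of $m'$ shrinks to $\{0,\dots,-1-a_{ij}-m\}$ because the boundary values $m\in\{-a_{ij},1-a_{ij}\}$ produce an empty inner sum. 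The case of $a_{ij}$ odd is handled by the same mechanism starting from (\ref{divided powers relation with 0}), and the main (very mild) obstacle is only to keep track of which of the two families $\alpha_{k,N}^{(0,i)}$, $\alpha_{k,N}^{(1,i)}$ appears in each factor, which is precisely what the superscript $1-(a_{ij})_p$ in (\ref{Theta 1}) encodes.
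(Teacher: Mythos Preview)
Your proposal is correct and follows exactly the approach the paper takes: the paper does not write out a separate proof for this proposition but simply remarks that it is obtained ``in a similar fashion'' by combining relation (\ref{divided powers relation with 0}) for $a_{ij}$ odd with relation (\ref{divided powers relation with 1}) for $a_{ij}$ even, which is precisely the parity-reversed pairing you describe. Your sketch in fact supplies more detail than the paper itself does.
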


Comparing the relations (\ref{inhomogeneous quantum Serre with 0}) and (\ref{inhomogeneous quantum Serre with 1}) with (\ref{F_ij(B_i,B_j) for quasi-split case}), we obtain alternative expressions for the structure constants \(\rho_{m,m'}^{(i,j,a_{ij})}\). As explained in the introduction of Section \ref{Section Alternative expressions for Case 1}, these will not only be valid for the quasi-split case, but for any admissible pair, provided we restrict to Case 1. Hence from now on we may again assume \((X,\tau)\) to be an arbitrary admissible pair and consider the corresponding quantum symmetric pair coideal subalgebra \(\Bcs\).

\begin{theorem}
	\label{theorem Case 1 with CLW}
	For any distinct \(i,j\in I\setminus X\) such that \(\tau(i) = i\), one has
	\[
	F_{ij}(B_i,B_j) = C_{ij}(\bc) = \sum_{m=0}^{-1-a_{ij}}\sum_{m' = 0}^{-1-a_{ij}-m}\rho_{m,m'}^{(i,j,a_{ij})} \mathcal{Z}_i^{\frac{1-a_{ij}-m-m'}{2}}B_i^mB_jB_i^{m'},
	\]
	where the structure constants are given by
	\begin{equation}
	\label{structure constants with 0}
	\rho_{m,m'}^{(i,j,a_{ij})} = (a_{ij}+m+m')_p(-1)^{a_{ij}+m}(-q_ic_i)^{\frac{1-a_{ij}-m-m'}{2}}\Theta_{m,m'}^{(s,i,a_{ij})}
	\end{equation}
	with \(s\in\{0,1\}\) and where we have used the notations (\ref{Theta 0}) and (\ref{Theta 1}).
\end{theorem}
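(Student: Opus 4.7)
The plan is to establish the identity first in the quasi-split case $X = \emptyset$, where Chen--Lu--Wang's Theorem \ref{theorem CLW} applies directly, and then to propagate it to an arbitrary admissible pair by exploiting the manifest $X$-independence of $\rho_{m,m'}^{(i,j,a_{ij})}$ visible in the closed form (\ref{rho_m,m' final}).

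First I would observe that when $X = \emptyset$, one has $\mathcal{Z}_i = -1$: the monomial $Z_i^+$ of Lemma \ref{lemma Z_i^+} is empty, the $K$-factors in (\ref{Z_i def}) collapse, and the normalization forces $\mathcal{Z}_i$ to the scalar $-1$ (this is precisely what underlies the form (\ref{F_ij(B_i,B_j) for quasi-split case})). Substituting this into Theorem \ref{theorem F_ij(B_i,B_j) for Case 1} yields the expansion
\[
F_{ij}(B_i,B_j) = \sum_{m,m'}(-1)^{\frac{1-a_{ij}-m-m'}{2}}\rho_{m,m'}^{(i,j,a_{ij})} B_i^mB_jB_i^{m'}.
\]
On the other hand, Propositions \ref{prop inhomogeneous quantum Serre with 0} and \ref{prop inhomogeneous quantum Serre with 1} furnish a second expansion of the same element $F_{ij}(B_i,B_j)$ in the same monomials, with coefficients involving $\Theta_{m,m'}^{(s,i,a_{ij})}$ for $s \in \{0,1\}$. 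Matching the coefficient of $B_i^m B_j B_i^{m'}$ in the two expressions gives
\[
(-1)^{\frac{1-a_{ij}-m-m'}{2}} \rho_{m,m'}^{(i,j,a_{ij})} = (a_{ij}+m+m')_p(-1)^{a_{ij}+m}(q_i c_i)^{\frac{1-a_{ij}-m-m'}{2}} \Theta_{m,m'}^{(s,i,a_{ij})},
\]
for either $s \in \{0,1\}$, and solving for $\rho_{m,m'}^{(i,j,a_{ij})}$ by absorbing the sign $(-1)^{\frac{1-a_{ij}-m-m'}{2}}$ into the base yields precisely the formula (\ref{structure constants with 0}).

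To transfer this from the quasi-split case to an arbitrary admissible pair, I would appeal to formula (\ref{rho_m,m' final}): its right-hand side is a rational expression in $q_i$ and $c_i$ whose summation ranges $\mathcal{L}_{m,m',k}$, $\mathscr{S}_{m,m'}$ and exponents $\zeta_{\bl,\bss}^{(r)}$, $\theta_{\bl,\bss,k}$ depend only on $a_{ij}$, $m$, $m'$ and the binary tuples $\bl,\bss$, with neither $X$ nor $\tau$ entering. Hence $\rho_{m,m'}^{(i,j,a_{ij})}$ is literally the same rational function in every admissible pair falling under Case 1, and the identity established for $X = \emptyset$ transfers verbatim to the general setting stipulated in the theorem.

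The main obstacle is the coefficient comparison in the quasi-split step, which requires the monomials $\{B_i^m B_j B_i^{m'}: 0 \leq m+m' \leq -1-a_{ij}\}$, together with those appearing in $F_{ij}(B_i,B_j)$ itself, to be linearly independent in $\Bcs$. This is in essence a PBW-type assertion for the quasi-split coideal subalgebra; it can be read off either from the uniqueness built into Theorem \ref{theorem these are defining relations} (for $X=\emptyset$, the relation $C_{ij}(\bc) \in \sum_{J \in \mathcal{J}_{i,j}} \mathcal{M}_X^+ {U_{\Theta}^0}' B_J$ collapses to a polynomial expression in the $B_J$ alone, whose shape is intrinsic), or directly from the distinctness of the images of these monomials under the projections $P_{-\beta}$ in the decomposition (\ref{decomposition with P}) of $\Uqgp$. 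With that linear independence in hand, the coefficient comparison is unambiguous and the proof concludes.
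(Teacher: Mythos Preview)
Your overall strategy matches the paper's: reduce to $X=\emptyset$ via the $X$-independence visible in (\ref{rho_m,m' final}), compare the two expansions of $F_{ij}(B_i,B_j)$ coming from Theorem \ref{theorem F_ij(B_i,B_j) for Case 1} and from Propositions \ref{prop inhomogeneous quantum Serre with 0}--\ref{prop inhomogeneous quantum Serre with 1}, and extract coefficients. The difference lies in the justification of the coefficient comparison. Your two suggestions there are not quite adequate as stated: Theorem \ref{theorem these are defining relations} asserts existence of $C_{ij}(\bc)$ but does not assert uniqueness of its expression in the $B_J$, and the projections $P_{-\beta}$ do not separate the monomials $B_i^mB_jB_i^{m'}$ directly, since each such monomial spreads across many $\beta$-components of (\ref{decomposition with P}). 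The paper's device is to write $B_i=F_i-c_iE_iK_i^{-1}+s_iK_i^{-1}$ and isolate the pure $U^-$-contribution: the difference of the two expansions decomposes as $\mathcal{F}_{i,j}+\mathcal{D}_{i,j}$, where $\mathcal{F}_{i,j}=\sum_{m,m'}(\cdots)F_i^mF_jF_i^{m'}$ and $\mathcal{D}_{i,j}$ lies in a subspace disjoint from $U^-$, forcing $\mathcal{F}_{i,j}=0$. Since the only relation among such monomials in $U^-$ is the quantum Serre relation, of $F_i$-degree $1-a_{ij}$, while here $m+m'\leq -1-a_{ij}$, all coefficients must vanish. This is morally what your projection idea aims at, but the paper's execution via the leading $F$-term is what makes it precise.
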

\begin{proof}
	We will use the same strategy as in the proof of \cite[Proposition 6.1]{Kolb-2014}. Assume first that \(X = \emptyset\). If we write
	\[
	\omega_{m,m'}^{(i,j,a_{ij})} = (a_{ij}+m+m')_p(-1)^{a_{ij}+m}(q_ic_i)^{\frac{1-a_{ij}-m-m'}{2}}\Theta_{m,m'}^{(s,i,a_{ij})},
	\]
	with \(s\in\{0,1\}\), then comparison of (\ref{inhomogeneous quantum Serre with 0}) and (\ref{inhomogeneous quantum Serre with 1}) with (\ref{F_ij(B_i,B_j) for quasi-split case}) yields
	\begin{equation}
	\label{sigma minus}
	\sum_{m=0}^{-1-a_{ij}}\sum_{m' = 0}^{-1-a_{ij}-m}\left((-1)^{\frac{1-a_{ij}-m-m'}{2}}\rho_{m,m'}^{(i,j,a_{ij})}-\omega_{m,m'}^{(i,j,a_{ij})}\right)B_i^mB_jB_i^{m'} = 0.
	\end{equation}
	Separating the term \(F_i\) in each \(B_i = F_i+c_i\theta_q(F_iK_i)K_i^{-1}+s_iK_i^{-1} = F_i-c_iE_{\tau(i)}K_i^{-1}+s_iK_i^{-1}\) and the \(F_j\) in \(B_j\), the relation (\ref{sigma minus}) asserts \(\mathcal{F}_{i,j} + \mathcal{D}_{i,j} = 0\), where
	\[
	\mathcal{F}_{i,j} = \sum_{m=0}^{-1-a_{ij}}\sum_{m' = 0}^{-1-a_{ij}-m}\left((-1)^{\frac{1-a_{ij}-m-m'}{2}}\rho_{m,m'}^{(i,j,a_{ij})}-\omega_{m,m'}^{(i,j,a_{ij})}\right)F_i^mF_jF_i^{m'}
	\]
	and \(\mathcal{D}_{i,j}\) lies in the set \(\mathscr{E}_{i,j}\) of \(\K(q)\)-linear combination of monomials in \(\Uqgp\) containing at most \(-1-a_{ij}\) factors \(F_i\), and either one factor \(K_j^{-1}\), or one factor \(F_j\) and at least one factor \(K_i^{-1}\). Since the \(\Uqgp\)-relations (\ref{U_q(g) relations})--(\ref{U_q(g) relations 2})--(\ref{U_q(g) relations 3}) imply \(\mathscr{E}_{i,j}\cap U^- = \{0\}\), both \(\mathcal{D}_{i,j}\) and \(\mathcal{F}_{i,j}\) must vanish. The assertion \(\mathcal{F}_{i,j} = 0\) is a polynomial equation of degree 1 in \(F_j\) and at most of degree \(-1-a_{ij}\) in \(F_i\). But such a polynomial must have trivial coefficients, since the lowest degree \(\K(q)\)-linear combination of \(F_j\) and powers of \(F_i\) with non-trivial coefficients that vanishes, is precisely the quantum Serre polynomial \(F_{ij}(F_i,F_j)\), which is of degree \(1-a_{ij}\) in \(F_i\).
	Hence we find
	\[
	(-1)^{\frac{1-a_{ij}-m-m'}{2}}\rho_{m,m'}^{(i,j,a_{ij})}=\omega_{m,m'}^{(i,j,a_{ij})},
	\]
	for any \(m,m'\) with \(m+m'\leq -1-a_{ij}\). This holds for the special case \(X = \emptyset\), and since \(\rho_{m,m'}^{(i,j,a_{ij})}\) is independent of \(X\) as explained above, this establishes the same relation for admissible pairs with \(X\neq \emptyset\).
\end{proof}

\begin{remark}
	It follows from Propositions \ref{prop inhomogeneous quantum Serre with 0} and \ref{prop inhomogeneous quantum Serre with 1} that \(\Theta_{m,m'}^{(0,i,a_{ij})} = \Theta_{m,m'}^{(1,i,a_{ij})}\) for any \(m,m'\in\N\) with \(m+m'\leq -1-a_{ij}\) and any distinct \(i,j\in I\setminus X\) with \(\tau(i) = i\). Hence the expressions (\ref{Theta 0}) and (\ref{Theta 1}) must be equal, which determines a non-trivial identity of \(q\)-binomial type.
\end{remark}

To conclude, we will show that the structure constants \(\rho_{m,m'}^{(i,j,a_{ij})}\) exhibit certain symmetry properties, as suggested by the values in Table \ref{Table of rho_m,m'}. In practical calculations, this significantly reduces the number of couples \((m,m')\) for which the structure constants must be computed.

\begin{proposition}
	\label{structure constants symmetry}
	The structure constants \(\rho_{m,m'}^{(i,j,a_{ij})}\) are symmetric in \(m\) and \(m'\) if \(a_{ij}\) is odd and antisymmetric if \(a_{ij}\) is even. In other words:
	\[
	\rho_{m,m'}^{(i,j,a_{ij})} = (-1)^{1-a_{ij}}\rho_{m',m}^{(i,j,a_{ij})}.
	\]
\end{proposition}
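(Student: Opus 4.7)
The proof naturally splits into a prefactor-counting step and a combinatorial core. From Theorem \ref{theorem Case 1 with CLW},
\[
\rho_{m,m'}^{(i,j,a_{ij})}=(a_{ij}+m+m')_p(-1)^{a_{ij}+m}(-q_ic_i)^{\frac{1-a_{ij}-m-m'}{2}}\Theta_{m,m'}^{(s,i,a_{ij})},
\]
for either $s\in\{0,1\}$. The parity factor $(a_{ij}+m+m')_p$ forces $\rho_{m,m'}^{(i,j,a_{ij})}=0$ unless $a_{ij}+m+m'$ is odd, in which case $m-m'\equiv m+m'\equiv 1-a_{ij}\pmod 2$. Comparing with the expression for $\rho_{m',m}^{(i,j,a_{ij})}$, all factors are symmetric in $m\leftrightarrow m'$ except the sign $(-1)^{a_{ij}+m}$, whose ratio equals $(-1)^{m-m'}=(-1)^{1-a_{ij}}$. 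Thus Proposition \ref{structure constants symmetry} reduces to the combinatorial identity $\Theta_{m,m'}^{(s,i,a_{ij})}=\Theta_{m',m}^{(s,i,a_{ij})}$.

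My preferred route to this identity is via a reversal argument at the algebra level. First observe that $F_{ij}(x,y)^{\mathrm{rev}}=(-1)^{1-a_{ij}}F_{ij}(x,y)$: after substituting $\ell\mapsto 1-a_{ij}-\ell$ in (\ref{Fij def}), the $q_i$-binomial symmetry (\ref{prop q-binom symbol}) and the sign $(-1)^{1-a_{ij}-\ell}$ yield this intrinsic $(-1)^{1-a_{ij}}$-antisymmetry. I would then construct an antialgebra automorphism $\Omega$ of $\Bcs$ sending $B_i\mapsto B_i$, $E_i\mapsto E_i$ (for $i\in X$), $K_\beta\mapsto K_\beta^{-1}$ (for $\beta\in Q^\Theta$), and (as will be verified) $\mathcal{Z}_i\mapsto \mathcal{Z}_i$. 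Compatibility with relations (\ref{defining relations B_c,s 2}) and (\ref{defining relations B_c,s 3}), together with the $\mathcal{M}_X^+$ and ${U_\Theta^0}'$ relations, is a routine check; compatibility with (\ref{defining relations B_c,s 1}) is precisely the statement we are trying to prove.

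Applying $\Omega$ to $F_{ij}(B_i,B_j)=C_{ij}(\bc)$, the left-hand side becomes $(-1)^{1-a_{ij}}F_{ij}(B_i,B_j)=(-1)^{1-a_{ij}}C_{ij}(\bc)$ by the preceding observation. On the right-hand side, using the centrality of $\mathcal{Z}_i$ in $\Bcs$ from Lemma \ref{lemma commutation B_i, Z_i, W_ij}, one obtains $\sum_{m,m'}\rho_{m,m'}^{(i,j,a_{ij})}\mathcal{Z}_i^{\frac{1-a_{ij}-m-m'}{2}}B_i^{m'}B_jB_i^m$, which after the relabelling $m\leftrightarrow m'$ becomes $\sum_{m,m'}\rho_{m',m}^{(i,j,a_{ij})}\mathcal{Z}_i^{\frac{1-a_{ij}-m-m'}{2}}B_i^mB_jB_i^{m'}$. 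Matching coefficients of the distinct monomials $\mathcal{Z}_i^{N}B_i^mB_jB_i^{m'}$ on both sides yields $\rho_{m',m}^{(i,j,a_{ij})}=(-1)^{1-a_{ij}}\rho_{m,m'}^{(i,j,a_{ij})}$.

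The main obstacle will be establishing that $\Omega$ is well-defined as an antiautomorphism, and in particular that the monomials $\mathcal{Z}_i^NB_i^mB_jB_i^{m'}$ with $m+m'\leq -1-a_{ij}$ are linearly independent in $\sum_{J\in\mathcal{J}_{i,j}}\mathcal{M}_X^+{U_\Theta^0}' B_J$ modulo strictly lower-degree terms, so that coefficient matching is valid. If constructing $\Omega$ abstractly proves awkward, a purely combinatorial backup is to prove $\Theta_{m,m'}^{(s,i,a_{ij})}=\Theta_{m',m}^{(s,i,a_{ij})}$ directly from (\ref{Theta 0})--(\ref{Theta 1}) by performing the substitution $r\mapsto \tfrac{1-a_{ij}-m-m'}{2}-r$, invoking (\ref{prop q-binom symbol}) to convert $\binom{1-a_{ij}}{m+2r}_{q_i}$ into $\binom{1-a_{ij}}{m'+2r}_{q_i}$, and using the Remark following Theorem \ref{theorem Case 1 with CLW} that $\Theta_{m,m'}^{(0,i,a_{ij})}=\Theta_{m,m'}^{(1,i,a_{ij})}$ to realign the superscripts $(s,i)$ and $((a_{ij})_p,i)$ of the two $\alpha$-factors.
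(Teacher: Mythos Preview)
Your backup route is essentially the paper's own argument: the paper starts from $\rho_{m',m}$ using the $s=0$ formula (\ref{structure constants with 0}), performs exactly the substitution $r\mapsto \frac{1-a_{ij}-m-m'}{2}-r$ and applies (\ref{prop q-binom symbol}), and then recognises the outcome as $\rho_{m,m'}$ via the $s=1$ formula; this is precisely how the two $\Theta$-expressions are aligned, so your plan of invoking the Remark that $\Theta^{(0)}=\Theta^{(1)}$ to swap the superscripts is the right move. Your prefactor reduction in the first paragraph is also correct and matches the paper's observation that $(-1)^{m'}=(-1)^m$ (since $m+m'$ has fixed parity).

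Your ``preferred route'' via an antiautomorphism $\Omega$, however, has a genuine circularity that you do not resolve. If $\Omega$ is to be defined on $\Bcs$ via generators and relations, then---as you yourself note---compatibility with (\ref{defining relations B_c,s 1}) \emph{is} the proposition you want to prove, so you cannot use $\Omega$ to establish it. The only way around this is to realise $\Omega$ as the restriction of an antiautomorphism of the ambient $\Uqgp$, where the identity $F_{ij}(B_i,B_j)=C_{ij}(\bc)$ holds unconditionally. But the obvious candidate $\sigma\colon E_i\mapsto E_i,\ F_i\mapsto F_i,\ K_i\mapsto K_i^{-1}$ does not fix $B_i$: already in the split case one computes $\sigma(B_i)=F_i-c_iq_i^{2}E_iK_i+s_iK_i\neq B_i$, and twisting by the Chevalley involution, the antipode, or the bar map does not repair this. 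Constructing an honest $\Uqgp$-antiautomorphism fixing each $B_i$ (and $\mathcal{Z}_i$) is therefore nontrivial and not a shortcut; at best it is equivalent to the combinatorial identity you were trying to avoid. The paper accordingly bypasses this entirely and works directly with the $\Theta$-formulae, which is what your backup does.
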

\begin{proof}
	We will treat the case \(a_{ij}\) odd, which is the most subtle case in some sense. The statement is trivial for \(m+m'\) odd, since in this case \(\rho_{m,m'}^{(i,j,a_{ij})}\) will vanish, because of the factor \((a_{ij}+m+m')_p\) in (\ref{structure constants with 0}). So we may assume \(m+m'\) to be even. Let us start by observing that \(\rho_{m',m}^{(i,j,a_{ij})}\) yields 
	\[ (-1)^{a_{ij}+m'}(-q_ic_i)^{\frac{1-a_{ij}-m-m'}{2}}\sum_{r = 0}^{\frac{1-a_{ij}-m-m'}{2}}\begin{bmatrix}
	1-a_{ij}\\ m'+2r
	\end{bmatrix}_{q_i}\alpha_{r,r+m'_e+m'_p-1}^{(0,i)}\alpha_{\frac{1-a_{ij}-m-m'}{2}-r,\frac{-a_{ij}-1}{2}-r-m'_e-m'_p}^{(1,i)}
	\]
	by (\ref{structure constants with 0}) with \(s = 0\).
	Since \(m+m'\) is even, we have \((-1)^{m'} = (-1)^m\). 
	Moreover, we can rewrite the sum above using a new summation index \(r' = \frac{1-a_{ij}-m-m'}{2}-r\), which we thereafter rename to \(r\) again. This way, \(\rho_{m',m}^{(i,j,a_{ij})}\) becomes
	\[ 
	(-1)^{a_{ij}+m}(-q_ic_i)^{\frac{1-a_{ij}-m-m'}{2}}\sum_{r = 0}^{\frac{1-a_{ij}-m-m'}{2}}\begin{bmatrix}
	1-a_{ij}\\ m+2r
	\end{bmatrix}_{q_i}\alpha_{\frac{1-a_{ij}-m-m'}{2}-r,\frac{1-a_{ij}-m-m'}{2}-r+m'_e+m'_p-1}^{(0,i)}\alpha_{r,r+\frac{m+m'}{2}-1-m'_e-m'_p}^{(1,i)},
	\]
	where we have used the property (\ref{prop q-binom symbol}) of the \(q_i\)-binomial symbol. Next, since \(m'\) and \(m\) have the same parity, we find
	\begin{align*}
	m'_e+m'_p & = \left.\begin{cases}
	\frac{m'}{2} \qquad  & \mathrm{if}\ m\ \mathrm{is\ even}, \\
	\frac{m'+1}{2} \qquad  & \mathrm{if}\ m\ \mathrm{is\ odd}
	\end{cases}\right\}  = \frac{m'+m_p}{2}
	\end{align*}
	and so
	\[
	\frac{1-a_{ij}-m-m'}{2}-r+m'_e+m'_p = \frac{1-a_{ij}}{2}-r-\left(\frac{m-m_p}{2}\right) = \frac{1-a_{ij}}{2}-r-m_e = -(a_{ij})_e-r-m_e.
	\]
	Thus we obtain
	\begin{align*}
	\rho_{m',m}^{(i,j,a_{ij})} = (-1)^{a_{ij}+m}(-q_ic_i)^{\frac{1-a_{ij}-m-m'}{2}}\sum_{r = 0}^{\frac{1-a_{ij}-m-m'}{2}}\begin{bmatrix}
	1-a_{ij}\\ m+2r
	\end{bmatrix}_{q_i} \alpha_{\frac{1-a_{ij}-m-m'}{2}-r,-(a_{ij})_e-r-m_e-1}^{(0,i)}\alpha_{r,r+m_e-1}^{(1,i)},
	\end{align*}	
	which precisely equals \(\rho_{m,m'}^{(i,j,a_{ij})}\) according to (\ref{structure constants with 0}) with \(s = 1\). This proves the symmetry.
	
	For \(a_{ij}\) even, the proof goes along the same lines, starting from (\ref{structure constants with 0}) with either \(s = 0\) or \(s = 1\). 
\end{proof}

\subsection{Generalized $q$-Onsager algebras and their classical counterparts}
\label{Subsection Generalized q-Onsager algebras}

A special class of quantum symmetric pair coideal subalgebras is known under the name generalized \(q\)-Onsager algebras. They coincide with the algebras \(B_{\bc,\bs}\) in the split case, i.e.\ for the trivial admissible pair \((X = \emptyset, \tau = \mathrm{id})\), corresponding to Satake diagrams without black nodes and with the trivial diagram involution. In this case we have \(\theta_q(F_iK_i) = -E_i\) by Lemma \ref{lemma Z_i^+} and moreover \(Q^{\Theta} = \{0\}\) since \(w_X = \mathrm{id}\). Hence we may formulate the following definition.

\begin{definition}
	\label{generalized q-Onsager algebra def}
	The generalized \(q\)-Onsager algebra \(\mathscr{O}_q(\g)\) associated to the Kac-Moody algebra \(\g\) is the subalgebra of \(U_q(\g')\) generated by the elements
	\begin{equation}
	B_i = F_i-c_iE_iK_i^{-1}+s_iK_i^{-1},
	\end{equation}
	with \(i\in I\), and where \((\bc,\bs)\) takes values in the set \(\mathcal{C}\times\mathcal{S}\) defined in (\ref{def C})--(\ref{def S}). By Theorem \ref{theorem these are defining relations}, Corollary \ref{cor F_ij(B_i,B_j) Case 1 with epsilon and P} and the fact that in this case \(\mathcal{Z}_i = -1\), it is abstractly defined by the relations
	\begin{equation}
	\label{F_ij(B_i,B_j) for q-Onsager}
	F_{ij}(B_i,B_j) = \sum_{m = 0}^{-1-a_{ij}}\sum_{m' = 0}^{-1-a_{ij}-m}(-1)^{\frac{1-a_{ij}-m-m'}{2}}\rho_{m,m'}^{(i,j,a_{ij})}B_i^mB_jB_i^{m'},
	\end{equation}
	for \(i\neq j\in I\), with \(\rho_{m,m'}^{(i,j,a_{ij})}\) as obtained in (\ref{rho_m,m' final}) or equivalently in (\ref{structure constants with 0}).
\end{definition}

In the special case \(\g = \widehat{\mathfrak{sl}_2}\), this algebra coincides with the \(q\)-Onsager algebra \cite{Baseilhac-2005}, which is typically described as generated by two elements \(B_0\) and \(B_1\) subject to the \(q\)-Dolan-Grady relations
\begin{equation}
\label{q-Onsager relations original}
[B_0,[B_0,[B_0,B_1]_{q}]_{q^{-1}}] = -c_0q(q+q^{-1})^2[B_0,B_1], \qquad [B_1,[B_1,[B_1,B_0]_{q}]_{q^{-1}}] = -c_1q(q+q^{-1})^2[B_1,B_0],
\end{equation}
for certain \(c_0,c_1\in\K(q)\), where \([\cdot,\cdot]_q\) denotes the \(q\)-commutator, defined by
\[
[A,B]_q = qAB-q^{-1}BA.
\]
Its generalization \(\mathscr{O}_q(\g)\) to other Kac-Moody algebras \(\g\) was introduced in \cite{Baseilhac&Belliard-2010}, where defining relations were presented for the cases \(a_{ij}\in\{0,-1,-2,-3,-4\}\) under some additional restrictions on \(a_{ji}\). The relations (\ref{F_ij(B_i,B_j) for q-Onsager}) we have derived in this paper establish for the first time a complete set of defining relations for the generalized \(q\)-Onsager algebras, valid without restrictions on \(a_{ij}\). By Remark \ref{Remark specialization}, we may equivalently write these relations as
\begin{equation}
\label{q-Dolan-Grady relations}
\left(\overrightarrow{\prod_{m = \frac{a_{ij}}{2}}^{-\frac{a_{ij}}{2}}}\mathrm{ad}_{q_i^m}(B_i)\right)(B_j) = \sum_{m = 0}^{-1-a_{ij}}\sum_{m' = 0}^{-1-a_{ij}-m}(-1)^{\frac{1-a_{ij}-m-m'}{2}}\rho_{m,m'}^{(i,j,a_{ij})}B_i^mB_jB_i^{m'},
\end{equation}
which, by the presence of nested \(q\)-commutators, can be considered relations of \(q\)-Dolan-Grady type.

To conclude, we will consider the limit of the generalized \(q\)-Onsager algebra \(\mathscr{O}_q(\g)\) under the specialization \(q\to 1\) described in Remark \ref{Remark specialization}, which is precisely the algebra \(b_{\bs}=b_{\bs}(X,\tau)\) from Definition \ref{b_s definition} in the special case \(X = \emptyset\) and \(\tau = \mathrm{id}\). It follows immediately that in this case \(\mathrm{Ad}(s(X,\tau)) = \mathrm{Ad}(m_X) = \mathrm{id}\), and hence the automorphism \(\theta(X,\tau)\) coincides with the classical Chevalley involution \(\omega\) defined in (\ref{Chevalley involution def}). Moreover, Definition \ref{b_s definition} asserts that we may state the following.

\begin{definition}
	The (classical) generalized Onsager algebra is the Lie subalgebra \(\mathscr{O}(\g)\) of the Kac-Moody algebra \(\g\) generated by the elements
	\[
	b_i = f_i+\omega(f_i) = f_i-e_i,
	\]
	with \(i\in I\). By \cite[Lemma 2.2]{Stokman-2019}, \(\mathscr{O}(\g)\) is the fixed point Lie subalgebra of \(\g\) under the Chevalley involution \(\omega\).
\end{definition}

For ease of notation, we have chosen to retain only the case \(\bs = \boldsymbol{0}\) from Definition \ref{b_s definition}, which, in the terminology of \cite[Section 7]{Letzter-2002}, is known as the standard case.

The algebras \(\mathscr{O}(\g)\) were studied by Stokman in \cite{Stokman-2019}, where a complete set of defining relations of inhomogeneous Serre type or Dolan-Grady type was given. To describe these relations, we will need the following recursively defined coefficients.

\begin{definition}
	\label{c_s^ij coeff def}
	Let \(i,j\) be distinct elements of \(I\) and \(r\in \N\) arbitrary. For any \(s\in\N\) satisfying \(s\leq r\) we define \(c_s^{ij}[r]\) through the recursion relation
	\begin{equation}
	\label{recurrence relation for c_s^ij}
	c_s^{ij}[r] = c_{s-1}^{ij}[r-1] + (r-1)c_s^{ij}[r-2],
	\end{equation}
	for \(r\geq 2\), with the convention that \(c_{-1}^{ij}[r] = 0\) for any \(r\), and with boundary conditions \(c_r^{ij}[r] = 1\) for \(r\geq 0\) and \(c_{r-1}^{ij}[r] = 0\) for \(r\geq 1\).
\end{definition}

The relation (\ref{recurrence relation for c_s^ij}) coincides with \cite[formula (2.4)]{Stokman-2019} upon setting \(r = 1-a_{ij}\), as we will do in the upcoming proposition. 

\begin{theoremKolb}[{\cite[Proposition 2.4, Theorem 2.7]{Stokman-2019}}]
	\label{prop inhomogeneous Serre classical}
	The algebra \(\mathscr{O}(\g)\) is abstractly defined by the inhomogeneous Serre relations
	\begin{equation}
	\label{inhomogeneous Serre}
	\sum_{s = 0}^{1-a_{ij}}(-1)^{s+1}c_s^{ij}[1-a_{ij}]\left(\ad\, b_i\right)^sb_j = 0,
	\end{equation}
	for any distinct \(i,j\in I\).
\end{theoremKolb}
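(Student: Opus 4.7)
Since this result is quoted from Stokman's paper, the plan is to sketch a self-contained approach via specialization of the corresponding quantum presentation, which fits naturally into the framework of this paper. For the trivial admissible pair $(X=\emptyset,\tau=\mathrm{id})$ with specializable parameters $\bc=\boldsymbol{1}$, $\bs=\boldsymbol{0}$, Theorem \ref{theorem specialization} asserts that $\mathscr{O}_q(\g) = \Bcs$ is specializable with $q \to 1$ limit equal to $\mathscr{O}(\g)$, and moreover is maximal with this property. By Theorem \ref{theorem these are defining relations} together with (\ref{F_ij(B_i,B_j) for q-Onsager}), the algebra $\mathscr{O}_q(\g)$ is abstractly presented by the inhomogeneous quantum Serre relations $F_{ij}(B_i,B_j) = \sum_{m,m'}(-1)^{(1-a_{ij}-m-m')/2}\rho_{m,m'}^{(i,j,a_{ij})}B_i^mB_jB_i^{m'}$ for all distinct $i,j \in I$.

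Next, I would take the specialization $q \to 1$ of these relations. By Remark \ref{Remark specialization} the left-hand side becomes $(\ad b_i)^{1-a_{ij}}b_j$, while the right-hand side turns into a polynomial in $b_i,b_j$ which is linear in $b_j$. Every such polynomial of degree at most $1-a_{ij}$ in $b_i$ admits a unique expansion in the basis $\{(\ad b_i)^s b_j : 0 \leq s \leq 1-a_{ij}\}$, so the limit relation takes the form $\sum_{s=0}^{1-a_{ij}}(-1)^{s+1}\widetilde{c}_s^{ij}[1-a_{ij}](\ad b_i)^s b_j = 0$ for specific coefficients $\widetilde{c}_s^{ij}[1-a_{ij}]\in\K$ extracted from $\lim_{q\to 1}\rho_{m,m'}^{(i,j,a_{ij})}$. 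This yields the desired form (\ref{inhomogeneous Serre}) as soon as the $\widetilde{c}_s^{ij}[r]$ are shown to agree with the $c_s^{ij}[r]$ of Definition \ref{c_s^ij coeff def}.

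The identification $\widetilde{c}_s^{ij}[r] = c_s^{ij}[r]$ is the main obstacle and amounts to verifying that the explicit $\widetilde{c}_s^{ij}[r]$ satisfy the three-term recurrence (\ref{recurrence relation for c_s^ij}) together with the boundary conditions $\widetilde{c}_r^{ij}[r] = 1$ and $\widetilde{c}_{r-1}^{ij}[r] = 0$. This is essentially the content of the upcoming Theorem \ref{theorem Stokman via limits}. My strategy there would be to use the closed expressions of Theorem \ref{theorem Case 1 with CLW}, whose $q\to 1$ degeneration is far more tractable than that of Theorem \ref{theorem F_ij(B_i,B_j) for Case 1}, since the coefficients $\alpha_{k,N}^{(s,i)}$ collapse into elementary symmetric polynomials in squares of consecutive integers at $q=1$. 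One then verifies (\ref{recurrence relation for c_s^ij}) by induction on $r$ through an elementary but non-trivial combinatorial identity on these specialized symmetric polynomials.

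Finally, the abstract-definition clause requires showing that the relations (\ref{inhomogeneous Serre}) are sufficient, i.e.\ no further relations among the $b_i$ are needed. This follows from the maximality clause of Theorem \ref{theorem specialization}: any additional relation holding in $\mathscr{O}(\g)$ but not implied by (\ref{inhomogeneous Serre}) would have to arise as the $q\to 1$ limit of a relation holding in a proper extension of $\mathscr{O}_q(\g)$ inside $\Uqgp$ still specializing to $\mathscr{O}(\g)$, contradicting maximality. Making this rigorous requires flatness of the specialization of $\mathscr{O}_q(\g)$ over the localization $\K[q]_{(q-1)}$, which is ensured by the PBW-type structure theory of the coideal subalgebras $\Bcs$ developed in \cite{Kolb-2014}.
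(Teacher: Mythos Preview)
The paper does not prove this statement; it is cited from \cite{Stokman-2019} and stated without proof, as with the other results in the \texttt{theoremKolb} environment. So there is no ``paper's proof'' to compare against, and your proposal is an attempt at an independent derivation.

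Your specialization strategy is reasonable in outline, but two points deserve care. First, your invocation of Theorem~\ref{theorem Stokman via limits} is circular as stated: that theorem \emph{assumes} the present result (via the rewriting in Lemma preceding it) and uses it to extract the closed formulae~(\ref{Classical Onsager coefficients 1})--(\ref{Classical Onsager coefficients 2}). You seem to recognize this, since you then propose to verify the recursion~(\ref{recurrence relation for c_s^ij}) directly for the specialized $\alpha_{k,N}^{(s,i)}$. That is the correct non-circular route, but note that the recursion links values at different $r$, whereas for a fixed Cartan matrix only $r=1-a_{ij}$ appears; you must therefore treat the verification as a purely combinatorial identity in the symmetric polynomials, independent of any particular $\g$. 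This step is genuine work not carried out anywhere in the paper.

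Second, your completeness argument via maximality is not quite what Theorem~\ref{theorem specialization} provides. Maximality there says $\Bcs$ is the largest coideal subalgebra specializing to $b_{\bs}$; it does not directly say that a presentation of $\Bcs$ specializes to a presentation of $b_{\bs}$. What you actually need is that the abstract algebra presented by the specialized relations surjects onto $\mathscr{O}(\g)$ (clear) and that this surjection is injective, which follows from comparing PBW-type bases: the monomials $b_{i_1}\cdots b_{i_k}$ in $\mathscr{O}(\g)$ are linearly independent because the corresponding $B_{i_1}\cdots B_{i_k}$ are in $\Bcs$ (by \cite[Proposition~6.2]{Kolb-2014}) and specialization preserves this. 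Your reference to flatness points in the right direction, but the phrasing via ``proper extensions'' does not capture the actual mechanism.
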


Note that the relations (\ref{inhomogeneous Serre}) differ from those given in \cite{Stokman-2019} by a factor \((-1)^{s+1}\). This is caused by the fact that the generators used in \cite{Stokman-2019} differ from ours by a sign as well, but of course this does not alter the algebra under consideration.

It follows from Theorem \ref{theorem specialization} that the generators \(B_i\) of the generalized \(q\)-Onsager algebra \(\mathscr{O}_q(\g)\) reduce to the generators \(b_i\) of \(\mathscr{O}(\g)\) under the specialization \(q\to 1\), provided the parameters \(\bc\in\mathcal{C}\) are specializable and \(\bs = \boldsymbol{0}\). Consequently, the same holds true for the defining relations of the \(q\)-deformed and classical Onsager algebras. It will hence be possible to derive closed expressions for the recursively defined coefficients \(c^{ij}_s[1-a_{ij}]\) in (\ref{inhomogeneous Serre}) from the previously obtained equation (\ref{q-Dolan-Grady relations}). We begin with a straightforward identity.

\begin{lemma}
	\label{lemma commutator classical identity}
	For any \(A,B\in U(\g)\) and \(r\in\N\), one has
	\[
	(\ad\, A)^r(B) = \sum_{k = 0}^r(-1)^k\binom{r}{k}A^{r-k}BA^k.
	\]	
\end{lemma}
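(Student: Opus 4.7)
The statement is a classical, $q=1$ analog of the nested $q$-commutator identity (\ref{nested q-commutators induction}) already recorded in Remark \ref{Remark specialization}, so the proof will be entirely routine and I do not anticipate any genuine obstacle. The plan is to give a short direct argument by induction on $r$.

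For the base case $r=0$, both sides reduce to $B$, since $\binom{0}{0}=1$ and the exponents vanish. For the inductive step, I would assume the identity for some $r\geq 0$ and compute
\[
(\ad\, A)^{r+1}(B) = A\cdot (\ad\, A)^r(B)-(\ad\, A)^r(B)\cdot A,
\]
substitute the induction hypothesis on the right, and rewrite the result as
\[
\sum_{k=0}^{r}(-1)^k\binom{r}{k}A^{r-k+1}BA^{k}-\sum_{k=0}^{r}(-1)^k\binom{r}{k}A^{r-k}BA^{k+1}.
\]
After the index shift $k\mapsto k-1$ in the second sum and isolating the boundary terms $k=0$ and $k=r+1$, the remaining middle terms combine by Pascal's rule $\binom{r}{k}+\binom{r}{k-1}=\binom{r+1}{k}$, yielding the claimed identity at level $r+1$.

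Alternatively, and perhaps more in the spirit of the paper, one may obtain the identity as the formal specialization $q\to 1$ of (\ref{nested q-commutators induction}): the operator $\ad_{q_i^m}$ reduces to $\ad$ and the $q_i$-binomial $\begin{bmatrix}r\\k\end{bmatrix}_{q_i}$ reduces to the ordinary binomial coefficient $\binom{r}{k}$ under the limit described in \cite{DeConcini&Kac-1990}. Either route establishes the lemma without difficulty.
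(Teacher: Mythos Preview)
Your proposal is correct, and your second alternative---taking the limit \(q\to 1\) of (\ref{nested q-commutators induction})---is exactly the one-line argument the paper gives. The direct induction you outline first is also perfectly valid and equally routine.
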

\begin{proof}
	This follows immediately from the equation (\ref{nested q-commutators induction}) in the limit \(q\to 1\).
\end{proof}

This identity allows to expand the nested commutators in the relation (\ref{inhomogeneous Serre}).

\begin{lemma}
	The inhomogeneous Serre relations (\ref{inhomogeneous Serre}) defining the algebra \(\mathscr{O}(\g)\) can be rewritten as
	\begin{equation}
	\label{inhomogeneous Serre with sum}
	(\ad\, b_i)^{1-a_{ij}}b_j = \sum_{m= 0}^{-1-a_{ij}}\sum_{m' = 0}^{-1-a_{ij}-m}(-1)^{a_{ij}+m}\binom{m+m'}{m'}c^{ij}_{m+m'}[1-a_{ij}]b_i^mb_jb_i^{m'}.
	\end{equation}
\end{lemma}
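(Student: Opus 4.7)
The plan is to start from the relation (\ref{inhomogeneous Serre}) and isolate the term of highest degree in $\mathrm{ad}\, b_i$, namely the one corresponding to $s = 1-a_{ij}$. By Definition \ref{c_s^ij coeff def} we have $c_{1-a_{ij}}^{ij}[1-a_{ij}] = 1$, and the sign in front of this term is $(-1)^{(1-a_{ij})+1} = (-1)^{a_{ij}}$. Moving all remaining terms to the right-hand side and multiplying through by $(-1)^{a_{ij}}$, I would rewrite (\ref{inhomogeneous Serre}) as
\[
(\ad\, b_i)^{1-a_{ij}}b_j = (-1)^{a_{ij}}\sum_{s = 0}^{-a_{ij}}(-1)^{s}c_s^{ij}[1-a_{ij}](\ad\, b_i)^s b_j.
\]

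Next I would apply Lemma \ref{lemma commutator classical identity} to expand the nested commutators on the right, turning $(\ad\, b_i)^s b_j$ into $\sum_{k=0}^s (-1)^k\binom{s}{k}b_i^{s-k}b_jb_i^k$. Substituting $m = s-k$ and $m' = k$ replaces this one-dimensional sum by a sum over pairs $(m,m')$ with $m+m' = s$, and interchanging the order of summation produces a double sum over all $m,m'\geq 0$ with $m+m' \leq -a_{ij}$. Collecting signs and using $(m+m')+m' \equiv m \pmod 2$ gives
\[
(\ad\, b_i)^{1-a_{ij}}b_j = \sum_{m+m' \leq -a_{ij}}(-1)^{a_{ij}+m}\binom{m+m'}{m'}c_{m+m'}^{ij}[1-a_{ij}]b_i^m b_j b_i^{m'}.
\]

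To match the stated upper bound $m+m' \leq -1-a_{ij}$, I would finally invoke the boundary condition $c_{r-1}^{ij}[r] = 0$ from Definition \ref{c_s^ij coeff def} with $r = 1-a_{ij}$, which yields $c_{-a_{ij}}^{ij}[1-a_{ij}] = 0$ and thus kills all terms with $m+m' = -a_{ij}$. The remaining sum is precisely the one in the statement.

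There is no real obstacle here: the argument is a straightforward reorganization, and the only thing that requires care is bookkeeping of the signs $(-1)^{s+1}$, $(-1)^k$, and $(-1)^{a_{ij}}$ as the summation indices are changed from $(s,k)$ to $(m,m')$. The slightly subtle point is recognizing that the apparent shift from $-a_{ij}$ to $-1-a_{ij}$ in the upper bound of the double sum is not the result of a cancellation between different terms but simply a consequence of the vanishing of a single coefficient prescribed by the recursion's boundary data.
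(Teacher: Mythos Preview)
Your proof is correct and follows essentially the same approach as the paper's own argument: both isolate the top-degree term using $c_{1-a_{ij}}^{ij}[1-a_{ij}]=1$, expand via Lemma \ref{lemma commutator classical identity}, reindex $(s,k)\to(m,m')$, and use $c_{-a_{ij}}^{ij}[1-a_{ij}]=0$ to trim the summation range. The only cosmetic difference is that the paper invokes the vanishing of $c_{-a_{ij}}^{ij}[1-a_{ij}]$ before expanding, whereas you do it after.
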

\begin{proof}
	It follows immediately from Lemma \ref{lemma commutator classical identity} and the fact that \(c_{1-a_{ij}}^{ij}[1-a_{ij}] = 1\) and \(c^{ij}_{-a_{ij}}[1-a_{ij}] = 0\) that the relations (\ref{inhomogeneous Serre}) can be rewritten as
	\begin{align*}
	(\ad\, b_i)^{1-a_{ij}}b_j & = (-1)^{a_{ij}}\sum_{s = 0}^{-1-a_{ij}}(-1)^sc^{ij}_s[1-a_{ij}](\ad\, b_i)^sb_j
	= \sum_{s = 0}^{-1-a_{ij}}\sum_{m' = 0}^s(-1)^{a_{ij}+s+m'}\binom{s}{m'}c^{ij}_s[1-a_{ij}]b_i^{s-m'}b_jb_i^{m'}.
	\end{align*}
	The claim now follows upon changing the order of summation, replacing \(s\) by the new summation index \(m = s-m'\) and observing that
	\begin{align*}
	&\left\{(m,m'): m\in\{0,\dots,-1-a_{ij}\}, \ m'\in\{0,\dots,-1-a_{ij}-m\} \right\} \\ =\ & 
	\left\{(m,m'): m\in\{0,\dots,-1-a_{ij}-m'\}, \ m'\in\{0,\dots,-1-a_{ij}\} \right\}.
	\end{align*}
\end{proof}

An alternative set of defining relations for the generalized Onsager algebras \(\mathscr{O}(\g)\) can be found by taking the limit \(q\to 1\) of the \(\mathscr{O}_q(\g)\)-relations (\ref{F_ij(B_i,B_j) for q-Onsager}). Comparison of both types of relations leads to closed expressions for the recursively defined coefficients \(c_s^{ij}[r]\).

\begin{theorem}
	\label{theorem Stokman via limits}
	For any distinct \(i,j\in I\) and any \(r,s\in \N\) with \(s\leq r\) we have
	\begin{equation}
	\label{Classical Onsager coefficients 1}
	c_s^{ij}[r] = (r-s+1)_p\sum_{\substack{\ell_1,\dots,\ell_{\frac{r-s}{2}} = r_p \\ \ell_1 < \dots < \ell_{\frac{r-s}{2}}}}^{r_e+r_p-1}(2\ell_1+1-r_p)^2(2\ell_2+1-r_p)^2\dots(2\ell_{\frac{r-s}{2}}+1-r_p)^2,
	\end{equation}
	or equivalently
	\begin{equation}
	\label{Classical Onsager coefficients 2}
	c_s^{ij}[r] = (r-s+1)_p\sum_{m = 0}^{\frac{r-s}{2}}\left[\binom{r}{2m}\left(\prod_{k = 0}^m(2k-1)^2\right)\sum_{\substack{\ell_1,\dots,\ell_{\frac{r-s}{2}-m}=1-r_p\\\ell_1<\dots<\ell_{\frac{r-s}{2}-m}}}^{r_e-m-1}(2\ell_1+r_p)^2\dots(2\ell_{\frac{r-s}{2}-m}+r_p)^2\right],
	\end{equation}
	where the sum in (\ref{Classical Onsager coefficients 1}) and (\ref{Classical Onsager coefficients 2}) should be read as \(1\) if \(s = r\) respectively if \(m = \frac{r-s}{2}\), and where we have used the notation (\ref{even and odd part}).
\end{theorem}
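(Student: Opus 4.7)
The strategy is to obtain the closed expressions (\ref{Classical Onsager coefficients 1}) and (\ref{Classical Onsager coefficients 2}) by specializing the $q$-Dolan-Grady relations (\ref{q-Dolan-Grady relations}) at $q \to 1$, and then matching the resulting relations in $\mathscr{O}(\g)$ coefficient-by-coefficient against Stokman's inhomogeneous Serre relations in the expanded form (\ref{inhomogeneous Serre with sum}).

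I would begin by fixing a Kac-Moody algebra $\g$ whose generalized Cartan matrix realizes any prescribed $a_{ij} \in \mathbb{Z}_{\leq -1}$ (so that $r = 1 - a_{ij}$ ranges over all integers $\geq 2$), together with a specializable vector of parameters $(\bc,\mathbf{0}) \in \mathcal{C}\times\mathcal{S}$ satisfying $\lim_{q\to 1}c_i = 1$. Theorem \ref{theorem specialization} then identifies the $q\to 1$ limit of $\mathscr{O}_q(\g)$ with $\mathscr{O}(\g)$, sending $B_i \leadsto b_i$. By Lemma \ref{lemma commutator classical identity}, the left-hand side $F_{ij}(B_i, B_j)$ of the defining relation (\ref{F_ij(B_i,B_j) for q-Onsager}) specializes to $(\ad\, b_i)^{1-a_{ij}}(b_j)$. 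For the right-hand side, I would invoke the alternative expressions for $\rho_{m,m'}^{(i,j,a_{ij})}$ provided by Theorem \ref{theorem Case 1 with CLW}, and simply replace every $q$-number by its classical value, so that each $\alpha_{k,N}^{(s,i)}$ becomes the classical analog $\bar{\alpha}_{k,N}^{(s)}$ obtained by the obvious substitution $[2\ell+s]_{q_i}^2 \leadsto (2\ell+s)^2$ in Definition \ref{definition alpha_k,N}. This produces a specialized relation whose right-hand side is explicit in the $\bar{\Theta}_{m,m'}^{(s)}$ for either $s = 0$ or $s = 1$.

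I would then compare this limit relation with Stokman's (\ref{inhomogeneous Serre with sum}) by matching coefficients of $b_i^m b_j b_i^{m'}$ for $m+m' \leq -1-a_{ij}$. This requires the uniqueness of the expansion of $(\ad b_i)^{1-a_{ij}}(b_j)$ in terms of these monomials, which follows from a PBW-type argument in $U(\g')$: using the principal $\mathbb{Z}$-grading in which $f_i$ has degree $-1$ and $e_i$ has degree $+1$, the lowest-weight component of $b_i^m b_j b_i^{m'}$ is $f_i^m f_j f_i^{m'}$, and the set $\{f_i^m f_j f_i^{m'} : m+m' = d\}$ is linearly independent in $U^-$ for each $d < 1-a_{ij}$ (the unique relation among them of total degree $\leq 1-a_{ij}$ being the classical Serre relation itself, of degree exactly $1-a_{ij}$). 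The coefficient matching delivers the key identity
\begin{equation*}
\binom{m+m'}{m'}\,c_{m+m'}^{ij}[1-a_{ij}] \;=\; (a_{ij}+m+m')_p\,\bar{\Theta}_{m,m'}^{(s)}
\end{equation*}
for both $s \in \{0,1\}$ and all admissible $(m,m')$.

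Finally, specializing $(m,m') = (0,s)$ kills the binomial factor and reduces the problem to a routine combinatorial simplification. For $s_0 = 0$ in $\bar{\Theta}^{(0)}$, the fact that $\bar{\alpha}_{k,k-1}^{(0)} = 0$ for all $k \geq 1$ collapses the sum in (\ref{Theta 0}) to its single nontrivial $k = 0$ term, which after the translations $r_p = 1-(a_{ij})_p$, $r_e = -(a_{ij})_e$ yields the single-sum expression (\ref{Classical Onsager coefficients 1}); for $s_0 = 1$, the elementary identity $\bar{\alpha}_{k,k-1}^{(1)} = \prod_{\ell=1}^{k}(2\ell-1)^2$ yields the double-sum expression (\ref{Classical Onsager coefficients 2}) after the same translations. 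The parity factor $(a_{ij}+s)_p = (r-s+1)_p$ correctly encodes the vanishing of $c_s^{ij}[r]$ for $r-s$ odd (which follows inductively from the recursion (\ref{recurrence relation for c_s^ij})); and the two base cases $r \in \{0,1\}$, not covered by any Kac-Moody value of $a_{ij}$, are directly verified against the boundary conditions of Definition \ref{c_s^ij coeff def}. The main obstacle is the uniqueness argument in the middle step: while its quantum counterpart was handled in the proof of Theorem \ref{theorem Case 1 with CLW} via PBW in $\Uqgp$, here one must transport the argument to $U(\g')$, which is where the principal grading enters crucially.
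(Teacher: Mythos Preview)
Your proposal is correct and follows essentially the same route as the paper: specialize the relation (\ref{q-Dolan-Grady relations}) at $q\to 1$, match coefficients against (\ref{inhomogeneous Serre with sum}) via the same linear-independence argument used in the proof of Theorem~\ref{theorem Case 1 with CLW} (the paper simply refers back to that proof rather than redoing it in $U(\g')$), then set $m=0$ and read off (\ref{Classical Onsager coefficients 1}) from $\Theta^{(0)}$ and (\ref{Classical Onsager coefficients 2}) from $\Theta^{(1)}$. Your explicit treatment of the base cases $r\in\{0,1\}$ is a useful addition that the paper leaves implicit.
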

\begin{proof}
	By the above observations, in the limit \(q\to 1\) the relation (\ref{q-Dolan-Grady relations}) becomes
	\[
	(\ad\, b_i)^{1-a_{ij}}b_j = \sum_{m= 0}^{-1-a_{ij}}\sum_{m' = 0}^{-1-a_{ij}-m}\left[\lim_{q\to 1}\left((-1)^{\frac{1-a_{ij}-m-m'}{2}}\rho_{m,m'}^{(i,j,a_{ij})}\right)\right]b_i^mb_jb_i^{m'}.
	\]
	Upon comparison with (\ref{inhomogeneous Serre with sum}), and following the same reasoning as in the proof of Theorem \ref{theorem Case 1 with CLW}, it follows that
	\begin{align*}
	(-1)^{a_{ij}+m}\binom{m+m'}{m'}c^{ij}_{m+m'}[1-a_{ij}] = \lim_{q\to 1}\left((-1)^{\frac{1-a_{ij}-m-m'}{2}}\rho_{m,m'}^{(i,j,a_{ij})}\right),
	\end{align*}
	for any \(m,m'\in\N\) with \(m+m'\leq -1-a_{ij}\). For \(m = 0\), upon using the expression (\ref{structure constants with 0}) with \(s = 0\), this becomes
	\[
	(-1)^{a_{ij}}c^{ij}_{m'}[1-a_{ij}] = (a_{ij}+m')_p \lim_{q\to 1}\left((-1)^{a_{ij}}(q_ic_i)^{\frac{1-a_{ij}-m'}{2}}\alpha_{\frac{1-a_{ij}-m'}{2}, -(a_{ij})_e-(a_{ij})_p}^{((a_{ij})_p,i)} \right),
	\]
	by (\ref{Theta 0}), where we have used the fact that \(\alpha_{r,r-1}^{(0,i)} = \delta_{r,0}\). The expression (\ref{Classical Onsager coefficients 1}) now follows upon setting \(r = 1-a_{ij}\), renaming \(m'\) to \(s\) and using Definition \ref{definition alpha_k,N} and the assumption of specializability of \(\bc\). Equation (\ref{Classical Onsager coefficients 2}) follows similarly from (\ref{structure constants with 0}) with \(s = 1\).
\end{proof}

\section*{Acknowledgements}

HDC is a PhD Fellow of the Research Foundation Flanders (FWO). This work was also supported by FWO Grant EOS 30889451.

\end{document}